\theoremstyle{plain}
	\newtheorem{thm}{Theorem}[section]
	\newtheorem{cor}[thm]{Corollary}
	\newtheorem{lem}[thm]{Lemma}
	\newtheorem{prop}[thm]{Proposition}
\theoremstyle{definition}
	\newtheorem{dfn}[thm]{Definition}
	\newtheorem{ntn}[thm]{Notation}
\theoremstyle{remark}
	\newtheorem{rem}[thm]{Remark}
	\newtheorem{rems}[thm]{Remarks}
\numberwithin{equation}{section}
\numberwithin{figure}{section}
\newcommand{\C}{\mathbb{C}}
\newcommand{\D}{\mathcal{D}}
\newcommand{\E}{\mathcal{E}}
\newcommand{\N}{\mathbb{N}}
\newcommand{\R}{\mathbb{R}}
\newcommand{\X}{\mathcal{X}}
\newcommand{\cov}[2]{\Cov\!\left( #1, #2 \right)}
\newcommand{\dx}{\dmesure\!}
\newcommand{\debar}{\overline{\partial}}
\newcommand{\deron}[2]{\frac{\partial #1}{\partial #2}}
\newcommand{\esp}[2][]{\mathbb{E}_{#1}\!\left[ #2 \right]}
\newcommand{\espcond}[3][]{\mathbb{E}_{#1}\!\left[ #2\hspace{-1mm} \rule{0pt}{4mm}\mvert \! #3 \right]}
\newcommand{\odet}[1]{\norm{\det\! ^\perp\left(#1\right)}}
\newcommand{\mvert}{\mathrel{}\middle|\mathrel{}}
\newcommand{\norm}[1]{\left\lvert #1 \right\rvert}
\newcommand{\Norm}[1]{\left\lVert #1 \right\rVert}
\newcommand{\prsc}[2]{\left\langle #1\,, #2 \right\rangle}
\newcommand{\rmes}[1]{\norm{\dmesure\! V_{#1}}}
\newcommand{\var}[1]{\Var\!\left( #1 \right)}
\newcommand{\vol}[1]{\Vol\left(#1\right)}
\renewcommand{\L}{\mathcal{L}}
\renewcommand{\P}{\mathbb{P}}
\renewcommand{\S}{\mathbb{S}}
\renewcommand{\H}{H^0(\X,\E \otimes \L^d)}
\renewcommand{\epsilon}{\varepsilon}
\renewcommand{\geq}{\geqslant}
\renewcommand{\leq}{\leqslant}
\renewcommand{\tilde}{\widetilde}
\DeclareMathOperator{\Cov}{Cov}
\DeclareMathOperator{\dmesure}{d}
\DeclareMathOperator{\End}{End}
\DeclareMathOperator{\ev}{ev}
\DeclareMathOperator{\Id}{Id}
\DeclareMathOperator{\tr}{Tr}
\DeclareMathOperator{\Var}{Var}
\DeclareMathOperator{\Vol}{Vol}
\author{Thomas Letendre \thanks{Thomas Letendre, École Normale Supérieure de Lyon, Unité de Mathématiques Pures et Appliquées, UMR CNRS 5669, 46 allée d'Italie, 69634 Lyon Cedex 07, France; \newline e-mail address: \url{thomas.letendre@ens-lyon.fr}.}}
\date{\today}
\title{Variance of the volume of random real algebraic submanifolds}
\begin{document}

\maketitle

\begin{abstract}
Let $\X$ be a complex projective manifold of dimension $n$ defined over the reals and let $M$ denote its real locus. We study the vanishing locus $Z_{s_d}$ in $M$ of a random real holomorphic section $s_d$ of $\E \otimes \L^d$, where $\L \to \X$ is an ample line bundle and $\E\to \X$ is a rank $r$ Hermitian bundle. When $r\in \{1,\dots,n-1\}$, we obtain an asymptotic of order $d^{r-\frac{n}{2}}$, as $d$ goes to infinity, for the variance of the linear statistics associated with $Z_{s_d}$, including its volume. Given an open set $U \subset M$, we show that the probability that $Z_{s_d}$ does not intersect $U$ is a $O$ of $d^{-\frac{n}{2}}$ when $d$ goes to infinity. When $n\geq 3$, we also prove almost sure convergence for the linear statistics associated with a random sequence of sections of increasing degree. Our framework contains the case of random real algebraic submanifolds of $\R\P^n$ obtained as the common zero set of $r$ independent Kostlan--Shub--Smale polynomials.
\end{abstract}

\paragraph*{Keywords:} Random submanifolds, Kac--Rice formula, Linear statistics, Kostlan--Shub--Smale polynomials, Bergman kernel, Real projective manifold.

\paragraph*{Mathematics Subject Classification 2010:} 14P99, 32A25, 53C40, 60G57, 60G60.

\section{Introduction}
\label{sec introduction}

\paragraph*{Framework.}

Let us first describe our framework and state the main results of this article (see Section~\ref{sec random real algebraic submanifolds} for more details). Let $\X$ be a smooth complex projective manifold of positive complex dimension $n$. Let $\L$ be an ample holomorphic line bundle over $\X$ and let $\E$ be a rank $r$ holomorphic vector bundle over $\X$, with $r \in \{1,\dots,n\}$. We assume that $\X$, $\E$ and $\L$ are endowed with compatible real structures and that the real locus $M$ of $\X$ is not empty. Let $h_\E$ and $h_\L$ denote Hermitian metrics on $\E$ and $\L$ respectively that are compatible with the real structures. We assume that $h_\L$ has positive curvature $\omega$. Then $\omega$ is a Kähler form on $\X$ and it induces a Riemannian metric $g$ on $M$.

For any $d \in \N$, the Kähler form $\omega$, $h_\E$ and $h_\L$ induce a $L^2$-inner product on the space $\R \H$ of real holomorphic sections of $\E \otimes \L^d \to \X$ (see \eqref{eq definition inner product}). Let $d \in \N$ and $s \in \R \H$, we denote by $Z_s$ the real zero set $s^{-1}(0)\cap M$ of $s$. For $d$ large enough, for almost every $s \in \R \H$, $Z_s$ is a codimension $r$ smooth submanifold of $M$ and we denote by $\rmes{s}$ the Riemannian measure on $Z_s$ induced by $g$ (see Sect.~\ref{subsec random submanifolds}). In the sequel, we will consider $\rmes{s}$ as a positive Radon measure on $M$. Let us also denote by $\rmes{M}$ the Riemannian measure on $M$.

Let $s_d$ be a standard Gaussian vector in $\R \H$. Then $\rmes{s_d}$ is a random positive Radon measure on $M$. We set $Z_{d}=Z_{s_d}$ and $\rmes{d}=\rmes{s_d}$ to avoid too many subscripts. In a previous paper \cite[thm.~1.3]{Let2016}, we computed the asymptotic of the expected Riemannian volume of $Z_d$ as $d \to +\infty$. Namely, we proved that:
\begin{equation}
\label{eq reminder expectation volume}
\esp{\vol{Z_d}} = d^\frac{r}{2}\vol{M} \frac{\vol{\S^{n-r}}}{\vol{\S^n}} + O\!\left(d^{\frac{r}{2}-1}\right),
\end{equation}
where $\vol{M}$ is the volume of $M$ for $\rmes{M}$ and the volumes of spheres are Euclidean volumes. Here and throughout this paper, $\esp{\cdot}$ denotes the expectation of the random variable between the brackets, and $\S^m$ stands for the unit Euclidean sphere of dimension $m$.

Let $\phi \in \mathcal{C}^0(M)$, we denote by $\Norm{\phi}_\infty = \max_{x \in M} \norm{\phi(x)}$ its norm sup. Besides, we denote by $\prsc{\cdot}{\cdot}$ the duality pairing between $\left(\mathcal{C}^0(M),\Norm{\cdot}_\infty\right)$ and its topological dual. Then, \eqref{eq reminder expectation volume} can be restated as:
\begin{equation*}
\esp{\prsc{\rmes{d}}{\mathbf{1}}} = d^\frac{r}{2}\vol{M} \frac{\vol{\S^{n-r}}}{\vol{\S^n}} + O\!\left(d^{\frac{r}{2}-1}\right),
\end{equation*}
where $\mathbf{1} \in \mathcal{C}^0(M)$ stands for the unit constant function on $M$. The same proof gives similar asymptotics for $\esp{\prsc{\rmes{d}}{\phi}}$ for any continuous $\phi : M \to \R$ (see \cite[section 5.3]{Let2016}).
\begin{thm}
\label{thm reminder expectation test function}
Let $\X$ be a complex projective manifold of positive dimension $n$ defined over the reals, we assume that its real locus $M$ is non-empty. Let $\E \to \X$ be a rank $r \in \{1,\dots,n\}$ Hermitian vector bundle and let $\L \to \X$ be a positive Hermitian line bundle, both equipped with compatible real structures. For every $d \in \N$, let $s_d$ be a standard Gaussian vector in $\R \H$. Then the following holds as $d \to +\infty$:
\begin{equation}
\label{eq reminder expectation test function}
\forall \phi \in \mathcal{C}^0(M), \qquad \esp{\prsc{\rmes{d}}{\phi}} = d^\frac{r}{2} \left(\int_M \phi \rmes{M}\right) \frac{\vol{\S^{n-r}}}{\vol{\S^n}} + \Norm{\phi}_\infty O\!\left(d^{\frac{r}{2}-1}\right).
\end{equation}
Moreover the error term $O\!\left(d^{\frac{r}{2}-1}\right)$ does not depend on $\phi$.
\end{thm}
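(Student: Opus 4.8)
The plan is to deduce \eqref{eq reminder expectation test function} from the Kac--Rice formula together with the near-diagonal asymptotics of the Bergman kernel of $\E \otimes \L^d$, following the computation of the expected volume \eqref{eq reminder expectation volume} in \cite{Let2016}; the only new points are to carry the test function $\phi$ through the argument and to keep track of the uniformity of the error term.

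First I would unwind the pairing, $\prsc{\rmes{d}}{\phi} = \int_{Z_d} \phi\, \rmes{d}$. For $d$ large enough, the positivity of $\L$ guarantees that the global sections of $\E \otimes \L^d$ generate $1$-jets at every point of $\X$, so the evaluation--derivative map $s \mapsto \left(s(x), \nabla_x s\right)$ is onto for every $x \in M$; hence $Z_{s_d}$ is almost surely a smooth codimension-$r$ submanifold and the Kac--Rice formula applies. It yields
\begin{equation*}
\esp{\prsc{\rmes{d}}{\phi}} = \int_M \phi(x)\, \mathcal{D}_d(x)\, \rmes{M}(x), \qquad \mathcal{D}_d(x) = \espcond{\odet{\nabla_x s_d}}{s_d(x) = 0}\, \rho_{d,x}(0),
\end{equation*}
where $\rho_{d,x}$ denotes the Gaussian density at the origin of the random vector $s_d(x) \in \left(\E \otimes \L^d\right)_x$ and $\odet{\cdot}$ is the normal Jacobian with respect to $g$. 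Since all the analysis is concentrated in the density $\mathcal{D}_d$, no regularity of $\phi$ beyond continuity (in fact, measurability and boundedness) is needed here.

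Second, I would estimate $\mathcal{D}_d(x)$ uniformly in $x \in M$. The joint law of $\left(s_d(x), \nabla_x s_d\right)$ is a centered Gaussian whose covariance is governed by the near-diagonal behavior of the Bergman kernel $E_d$ of $\E \otimes \L^d$. The near-diagonal expansion of $E_d$, uniform on the compact manifold $M$, shows that after rescaling the jet variables by the suitable powers of $\sqrt{d}$ this covariance converges to the universal Bargmann--Fock model, with a remainder that is an $O\!\left(d^{-1}\right)$ uniform in $x$. Since $\mathcal{D}_d(x)$ is a finite-dimensional Gaussian integral depending continuously on the covariance parameters, substituting this expansion gives
\begin{equation*}
\mathcal{D}_d(x) = d^{\frac{r}{2}} \frac{\vol{\S^{n-r}}}{\vol{\S^n}} + O\!\left(d^{\frac{r}{2}-1}\right)
\end{equation*}
with an error term independent of $x \in M$. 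This is exactly the content of \cite[section 5.3]{Let2016}, restricted to the diagonal.

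Finally, plugging this expansion back in and splitting the integral,
\begin{equation*}
\esp{\prsc{\rmes{d}}{\phi}} = d^{\frac{r}{2}} \frac{\vol{\S^{n-r}}}{\vol{\S^n}} \int_M \phi\, \rmes{M} + \int_M \phi(x)\, O\!\left(d^{\frac{r}{2}-1}\right) \rmes{M}(x).
\end{equation*}
Because $M$ is compact, $\vol{M} < +\infty$, and because the error term in $\mathcal{D}_d$ does not depend on $x$, the last integral is bounded in absolute value by $\Norm{\phi}_\infty \vol{M}\, O\!\left(d^{\frac{r}{2}-1}\right) = \Norm{\phi}_\infty\, O\!\left(d^{\frac{r}{2}-1}\right)$, with a constant independent of $\phi$; this proves \eqref{eq reminder expectation test function}. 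I expect the main obstacle to be precisely the uniform-in-$x$ control of $\mathcal{D}_d(x)$ -- that is, the uniform near-diagonal Bergman kernel asymptotics combined with the joint continuity of the Gaussian expectation of $\odet{\cdot}$ in the covariance data -- but this is already carried out in \cite{Let2016}, and once it is granted the presence of $\phi$ only costs the factor $\Norm{\phi}_\infty$ in the error.
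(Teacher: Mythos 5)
Your proposal is correct and follows essentially the same route as the paper: the paper defers this statement to \cite[sect.~5.3]{Let2016}, and its own machinery (the Kac--Rice formula of Thm.~\ref{thm Kac-Rice exp} with the test function $\phi$ inserted, combined with the uniform diagonal estimates of Lemmas~\ref{lem estimates odet evx} and~\ref{lem estimates cond exp x} coming from the Bergman kernel expansion) is exactly the decomposition you describe, with $\mathcal{D}_d(x)=(2\pi)^{-r/2}\odet{\ev_x^d}^{-1}\espcond{\odet{\nabla^d_xs_d}}{s_d(x)=0}=d^{r/2}\frac{\vol{\S^{n-r}}}{\vol{\S^n}}\left(1+O\!\left(d^{-1}\right)\right)$ uniformly in $x$. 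Integrating this against $\phi$ over the compact $M$ then yields the stated error $\Norm{\phi}_\infty O\!\left(d^{\frac{r}{2}-1}\right)$ independent of $\phi$, just as you argue.
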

In particular, we can define a sequence of Radon measures $\left(\esp{\rmes{d}}\right)_{d\geq d_0}$ on $M$ by: for every $d \geq d_0$ and every $\phi \in \mathcal{C}^0(M)$, $\prsc{\esp{\rmes{d}}}{\phi} = \esp{\prsc{\rmes{d}}{\phi}}$. Then Thm.~\ref{thm reminder expectation test function} implies that:
\begin{equation}
\left(d^{-\frac{r}{2}}\right)\esp{\rmes{d}} \xrightarrow[d \to + \infty]{} \frac{\vol{\S^{n-r}}}{\vol{\S^n}} \rmes{M},
\end{equation}
as continuous linear functionals on $\left(\mathcal{C}^0(M),\Norm{\cdot}_\infty\right)$.

\paragraph*{Statement of the results.}

The main result of this paper is an asymptotic for the covariances of the linear statistics $\left\{\prsc{\rmes{d}}{\phi}\mvert \phi \in \mathcal{C}^0(M)\right\}$. Before we can state our theorem, we need to introduce some additional notations.

As usual, we denote by $\var{X} = \esp{\left(X-\esp{X}\right)^2}$ the variance of the real random variable $X$, and by $\cov{X}{Y} = \esp{\left(X-\esp{X}\right)\left(Y-\esp{Y}\right)}$ the covariance of the real random variables $X$ and $Y$. We call \emph{variance} of $\rmes{d}$ and we denote by $\var{\rmes{d}}$ the symmetric bilinear form on $\mathcal{C}^0(M)$ defined by:
\begin{equation}
\label{def cov dVd}
\forall \phi_1,\phi_2 \in \mathcal{C}^0(M), \qquad \var{\rmes{d}}\left(\phi_1,\phi_2\right) = \cov{\prsc{\rmes{d}}{\phi_1}}{\prsc{\rmes{d}}{\phi_2}}.
\end{equation}

\begin{dfn}
\label{def continuity modulus}
Let $\phi \in \mathcal{C}^0(M)$, we denote by $\varpi_\phi$ its \emph{continuity modulus}, which is defined by:
\begin{equation*}
\begin{array}{cccc}
\varpi_\phi: & (0,+\infty) & \longrightarrow & [0,+\infty) \\
 & \epsilon & \longmapsto & \sup \left\{ \norm{\phi(x)-\phi(y)} \mvert (x,y) \in M^2, \rho_g(x,y)\leq \epsilon \right\},
\end{array}
\end{equation*}
where $\rho_g(\cdot,\cdot)$ stands for the geodesic distance on $(M,g)$.
\end{dfn}
Since $M$ is compact, $\varpi_\phi$ is well-defined for every $\phi \in \mathcal{C}^0(M)$. Moreover every $\phi \in \mathcal{C}^0(M)$ is uniformly continuous and we have:
\begin{equation*}
\forall \phi \in \mathcal{C}^0(M), \qquad \varpi_\phi (\epsilon) \xrightarrow[\epsilon \to 0]{} 0.
\end{equation*}
Note that, if $\phi: M \to \R$ is Lipschitz continuous, then $\varpi_\phi(\epsilon) = O\!\left(\epsilon\right)$ as $\epsilon \to 0$.

\begin{dfn}
\label{def Jacobian}
Let $L : V \to V'$ be a linear map between two Euclidean spaces, we denote by $\odet{L}$ the \emph{Jacobian} of $L$:
\begin{equation*}
\label{eq def odet}
\odet{L} = \sqrt{\det\left(LL^*\right)},
\end{equation*}
where $L^*:V' \to V$ is the adjoint operator of $L$.
\end{dfn}
See Section~\ref{subsec Kac--Rice formula} for a quick discussion of the properties of this Jacobian. If $A$ is an element of $\mathcal{M}_{rn}(\R)$, the space of matrices of size $r\times n$ with real coefficients, we denote by $\odet{A}$ the Jacobian of the linear map from $\R^n$ to $\R^r$ associated with $A$ in the canonical bases of~$\R^n$ and $\R^r$.

\begin{dfn}
\label{def XYt}
For every $t>0$, we define $\left(X(t),Y(t)\right)$ to be a centered Gaussian vector in $\mathcal{M}_{rn}(\R)\times \mathcal{M}_{rn}(\R)$ with variance matrix:
\begin{equation*}
\left(\begin{array}{ccccc|ccccc}
1-\frac{te^{-t}}{1-e^{-t}} & 0 & \cdots & \cdots & 0 & e^{-\frac{t}{2}}-\frac{te^{-\frac{t}{2}}}{1-e^{-t}} & 0 & \cdots & \cdots & 0\\
0 & 1 & \ddots & & \vdots & 0 & e^{-\frac{t}{2}} & \ddots & & \vdots\\
\vdots & \ddots & \ddots & \ddots & \vdots & \vdots & \ddots & \ddots & \ddots & \vdots \\
\vdots & & \ddots & 1 & 0 & \vdots & & \ddots & e^{-\frac{t}{2}} & 0\\
0 & \cdots & \cdots & 0 & 1 & 0 & \cdots & \cdots & 0 & e^{-\frac{t}{2}}\\
\hline
e^{-\frac{t}{2}}-\frac{te^{-\frac{t}{2}}}{1-e^{-t}} & 0 & \cdots & \cdots & 0 & 1-\frac{te^{-t}}{1-e^{-t}} & 0 & \cdots & \cdots & 0\\
0 & e^{-\frac{t}{2}} & \ddots & & \vdots & 0 & 1 & \ddots & & \vdots\\
\vdots & \ddots & \ddots & \ddots & \vdots & \vdots & \ddots & \ddots & \ddots & \vdots \\
\vdots & & \ddots & e^{-\frac{t}{2}} & 0 & \vdots & & \ddots & 1 & 0\\
0 & \cdots & \cdots & 0 & e^{-\frac{t}{2}} & 0 & \cdots & \cdots & 0 & 1
\end{array}\right) \otimes I_r,
\end{equation*}
where $I_r$ is the identity matrix of size $r$. That is, if we denote by $X_{ij}(t)$ (resp.~$Y_{ij}(t)$) the coefficients of $X(t)$ (resp.~$Y(t)$), the couples $\left(X_{ij}(t),Y_{ij}(t)\right)$ with $1 \leq i \leq r$ and $1 \leq j \leq n$ are independent from one another and the variance matrix of $\left(X_{ij}(t),Y_{ij}(t)\right)$ is:
\begin{align*}
&\begin{pmatrix}
1-\frac{te^{-t}}{1-e^{-t}}& e^{-\frac{t}{2}}\left(1-\frac{t}{1-e^{-t}}\right)\\
e^{-\frac{t}{2}}\left(1-\frac{t}{1-e^{-t}}\right) & 1-\frac{te^{-t}}{1-e^{-t}}
\end{pmatrix} & &\text{if } j=1, \text{ and} & &\begin{pmatrix}
1& e^{-\frac{t}{2}}\\ e^{-\frac{t}{2}} & 1
\end{pmatrix} & &\text{otherwise.}
\end{align*}
\end{dfn}

\begin{ntn}
\label{ntn alpha0}
We set $\alpha_0 = \dfrac{n-r}{2(2r+1)(2n+1)}$.
\end{ntn}

We can now state our main result.
\begin{thm}
\label{thm asymptotics variance}
Let $\X$ be a complex projective manifold of dimension $n\geq 2$ defined over the reals, we assume that its real locus $M$ is non-empty. Let $\E \to \X$ be a rank $r \in \{1,\dots,n-1\}$ Hermitian vector bundle and let $\L \to \X$ be a positive Hermitian line bundle, both equipped with compatible real structures. For every $d \in \N$, let $s_d$ be a standard Gaussian vector in $\R \H$.

Let $\beta \in (0,\frac{1}{2})$, then there exists $C_\beta >0$ such that, for all $\alpha \in (0,\alpha_0)$, for all $\phi_1$ and $\phi_2 \in \mathcal{C}^0(M)$, the following holds as $d \to +\infty$:
\begin{multline}
\label{eq asymptotics variance}
\var{\rmes{d}}\left(\phi_1,\phi_2\right) = d^{r-\frac{n}{2}} \left(\int_M \phi_1 \phi_2 \rmes{M}\right) \frac{\vol{\S^{n-1}}}{(2\pi)^r} \mathcal{I}_{n,r}\\
+ \Norm{\phi_1}_{\infty}\Norm{\phi_2}_{\infty}  O\!\left(d^{r-\frac{n}{2}-\alpha}\right) + \Norm{\phi_1}_{\infty}\varpi_{\phi_2}\left(C_\beta d^{-\beta}\right) O\!\left(d^{r-\frac{n}{2}}\right),
\end{multline}
where
\begin{equation}
\label{eq def Inr}
\mathcal{I}_{n,r} = \frac{1}{2} \int_0^{+\infty} \left(\frac{\esp{\odet{X(t)}\odet{Y(t)}}}{\left(1-e^{-t}\right)^\frac{r}{2}} -(2\pi)^r\left(\frac{\vol{\S^{n-r}}}{\vol{\S^n}}\right)^2\right)t^{\frac{n-2}{2}}\dx t< + \infty.
\end{equation}
Moreover the error terms $O\!\left(d^{r-\frac{n}{2}-\alpha}\right)$ and $O\!\left(d^{r-\frac{n}{2}}\right)$ in~\eqref{eq asymptotics variance} do not depend on $\left(\phi_1,\phi_2\right)$.
\end{thm}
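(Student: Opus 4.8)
The plan is to use the Kac--Rice formula to express $\var{\rmes{d}}(\phi_1,\phi_2)$ as a double integral over $M\times M$ of a two-point density against $\phi_1(x)\phi_2(y)$, then to analyze the asymptotics of this density using off-diagonal and near-diagonal estimates for the Bergman kernel of $\E\otimes\L^d$. Concretely, I would first write
\[
\var{\rmes{d}}(\phi_1,\phi_2) = \int_{M\times M} \phi_1(x)\phi_2(y)\, \D_d(x,y)\, \rmes{M}(x)\rmes{M}(y) - \left(\int_M \phi_1 \esp{\rmes{d}}\right)\!\left(\int_M \phi_2 \esp{\rmes{d}}\right),
\]
where $\D_d(x,y)$ is the expected product of the Jacobian densities of $Z_{s_d}$ at $x$ and $y$, conditioned on $s_d(x)=0=s_d(y)$, times the joint Gaussian density of $(s_d(x),s_d(y))$ evaluated at $(0,0)$. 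All of this requires the relevant $2$-jet evaluation map to be non-degenerate, which holds for $d$ large by the Bergman kernel asymptotics; I would cite the expectation computation of \cite{Let2016} for the subtraction term and for the diagonal Kac--Rice setup.

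The heart of the argument is splitting $M\times M$ into three regions according to the geodesic distance $\rho_g(x,y)$: a far region $\rho_g(x,y) \geq d^{-\beta}$, a near-diagonal region $d^{-\gamma} \leq \rho_g(x,y) \leq d^{-\beta}$ in rescaled coordinates, and a diagonal region. Away from the diagonal, the off-diagonal decay of the Bergman kernel (exponentially small in $\sqrt{d}\,\rho_g(x,y)$) forces $\D_d(x,y)$ to be extremely close to the product $\esp{\rmes{d}/\rmes{M}}(x)\cdot\esp{\rmes{d}/\rmes{M}}(y)$, so the far-region contribution nearly cancels against the product of expectations up to a negligible error; this is where the $\Norm{\phi_1}_\infty\Norm{\phi_2}_\infty O(d^{r-\frac n2-\alpha})$ term comes from, and the precise exponent $\alpha_0$ is dictated by balancing the error in the near-diagonal Bergman expansion (which loses powers of $d^{-\alpha}$ per normal-Taylor term) against the volume of the region. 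In the near-diagonal region I would use the rescaled Bergman kernel: after setting $y = x + \tfrac{u}{\sqrt d}$ in normal coordinates, the normalized $2$-jet covariance structure of $s_d$ converges to the universal Gaussian model of Definition~\ref{def XYt} with $t = \|u\|^2$ (this is the standard ``universal local model'' coming from the Bargmann--Fock / $\infty$-dimensional Bergman kernel), so the conditional Jacobian expectation converges to $\esp{\odet{X(t)}\odet{Y(t)}}$ with the Gaussian prefactor producing the $(1-e^{-t})^{-r/2}$ weight. Changing variables $u \mapsto t$ contributes the $t^{(n-2)/2}\,dt$ measure on $(0,\infty)$, and the subtracted square of expectations produces the $(2\pi)^r(\vol{\S^{n-r}}/\vol{\S^n})^2$ term, yielding exactly $\mathcal{I}_{n,r}$ after the factor $\tfrac12$ from symmetry. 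The replacement of $\phi_1(x)\phi_2(y)$ by $\phi_1(x)\phi_2(x) = \phi_1\phi_2(x)$ on this thin region is what generates the continuity-modulus error $\Norm{\phi_1}_\infty \varpi_{\phi_2}(C_\beta d^{-\beta}) O(d^{r-\frac n2})$.

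Two technical points need care. First, the integrability of $\mathcal{I}_{n,r}$: as $t\to 0^+$ one must check that $\esp{\odet{X(t)}\odet{Y(t)}}(1-e^{-t})^{-r/2} - (2\pi)^r(\vol{\S^{n-r}}/\vol{\S^n})^2$ vanishes fast enough to beat the $t^{(n-2)/2}$ singularity (when $n=2$ this is a genuine constraint and uses that $X(t),Y(t)$ become perfectly correlated as $t\to 0$, so the integrand is $O(t)$ or better), while as $t\to\infty$ the covariance matrix tends to that of two independent copies, so the bracket decays like $t e^{-t}$ up to polynomial factors and convergence at infinity is easy; the condition $r \leq n-1$ is exactly what makes the diagonal density finite (codimension strictly less than dimension) and what keeps $\alpha_0>0$. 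Second, I must control the conditional Jacobian expectations uniformly — bounding $\esp{\odet{\cdot}}$-type quantities by moments of Gaussians and showing the error in the Bergman expansion propagates linearly through the Kac--Rice integrand; this uniform control is what lets the error terms be taken independent of $(\phi_1,\phi_2)$. The main obstacle I anticipate is the near-diagonal analysis: making the convergence of the rescaled $2$-jet density to the universal model quantitative, with an explicit rate in $d$ that is uniform over the whole near-diagonal annulus (not just for fixed $t$), and then integrating that rate against $t^{(n-2)/2}\,dt$ without losing the gain — this is precisely the step that forces the somewhat delicate exponent $\alpha_0 = \tfrac{n-r}{2(2r+1)(2n+1)}$ and the two-scale ($d^{-\beta}$ and $d^{-\gamma}$) decomposition.
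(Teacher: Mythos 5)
Your plan is essentially the paper's proof: a two-point Kac--Rice formula giving $\var{\rmes{d}}(\phi_1,\phi_2)$ as an integral of a density against $\phi_1(x)\phi_2(y)$, exponential off-diagonal decay of the Bergman kernel to kill the far region, rescaling by $\frac{1}{\sqrt d}$ near the diagonal so that the conditional $1$-jet law converges to the universal model $(X(t),Y(t))$ with $t=\Norm{z}^2$ (with a secondary small-scale cutoff to tame the singular conditioning at the diagonal, which is indeed where $\alpha_0$ comes from), and the continuity modulus of $\phi_2$ to replace $\phi_2(y)$ by $\phi_2(x)$.

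One sub-claim is wrong, though it does not sink the argument: near $t=0$ the bracket in $\mathcal{I}_{n,r}$ is not $O(t)$ — since only the first column of $X(t)$ degenerates, $\esp{\odet{X(t)}\odet{Y(t)}}$ tends to a positive constant and the bracket blows up like $t^{-r/2}$; perfect correlation of $X(t)$ and $Y(t)$ gives no cancellation. Integrability at $0$ follows instead from the crude uniform bound $\esp{\odet{X(t)}\odet{Y(t)}}\leq n^r$, which makes the integrand $O\!\left(t^{\frac{n-2-r}{2}}\right)$, integrable precisely because $r\leq n-1$ (this is the paper's route, and it is also why the case $r=n$ fails).
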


We obtain the variance of the volume of $Z_d$ by applying Thm.~\ref{thm asymptotics variance} to $\phi_1=\phi_2= \mathbf{1}$. When $\phi_1=\phi_2= \phi$ we get the following.
\begin{cor}[Variance of the linear statistics]
\label{cor variance of linear statistics}
In the same setting as Thm.~\ref{thm asymptotics variance}, let $\beta \in (0,\frac{1}{2})$, then there exists $C_\beta >0$ such that, for all $\alpha \in (0,\alpha_0)$ and all $\phi \in \mathcal{C}^0(M)$, the following holds as $d \to +\infty$:
\begin{multline}
\label{eq variance of linear statistics}
\var{\prsc{\rmes{d}}{\phi}} = d^{r-\frac{n}{2}} \left(\int_M \phi^2 \rmes{M}\right) \frac{\vol{\S^{n-1}}}{(2\pi)^r} \mathcal{I}_{n,r}\\
+ \Norm{\phi}_{\infty}^2 O\!\left(d^{r-\frac{n}{2}-\alpha}\right) + \Norm{\phi}_{\infty}\varpi_{\phi}\left(C_\beta d^{-\beta}\right) O\!\left(d^{r-\frac{n}{2}}\right).
\end{multline}
Moreover, the error terms do not depend on $\phi$.
\end{cor}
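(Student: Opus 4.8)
The plan is to obtain Corollary~\ref{cor variance of linear statistics} as the diagonal case $\phi_1=\phi_2=\phi$ of Theorem~\ref{thm asymptotics variance}, so the proof is essentially a verification of how the various norms transform under this substitution. First I would recall that, by the definition~\eqref{def cov dVd} of the variance bilinear form, one has $\var{\rmes{d}}(\phi,\phi) = \cov{\prsc{\rmes{d}}{\phi}}{\prsc{\rmes{d}}{\phi}} = \var{\prsc{\rmes{d}}{\phi}}$, so that the left-hand side of~\eqref{eq asymptotics variance} with $\phi_1=\phi_2=\phi$ is exactly the left-hand side of~\eqref{eq variance of linear statistics}.

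Next I would substitute $\phi_1=\phi_2=\phi$ on the right-hand side of~\eqref{eq asymptotics variance}. The leading term becomes $d^{r-\frac{n}{2}}\left(\int_M \phi^2\, \rmes{M}\right)\frac{\vol{\S^{n-1}}}{(2\pi)^r}\mathcal{I}_{n,r}$, with the same universal constant $\mathcal{I}_{n,r}$, which is finite by Theorem~\ref{thm asymptotics variance}; this matches the leading term of~\eqref{eq variance of linear statistics}. The first error term $\Norm{\phi_1}_\infty\Norm{\phi_2}_\infty\, O(d^{r-\frac{n}{2}-\alpha})$ becomes $\Norm{\phi}_\infty^2\, O(d^{r-\frac{n}{2}-\alpha})$, and the second error term $\Norm{\phi_1}_\infty\varpi_{\phi_2}(C_\beta d^{-\beta})\, O(d^{r-\frac{n}{2}})$ becomes $\Norm{\phi}_\infty\varpi_\phi(C_\beta d^{-\beta})\, O(d^{r-\frac{n}{2}})$. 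Finally, since Theorem~\ref{thm asymptotics variance} provides $C_\beta>0$ and implied $O$-constants that do not depend on the pair $(\phi_1,\phi_2)$, they are in particular independent of $\phi$, which gives the last assertion of the corollary.

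I do not expect any genuine obstacle: the corollary is a literal specialisation of the main theorem, and the only content is keeping track of how $\Norm{\cdot}_\infty$ and $\varpi_{\cdot}$ behave when the two test functions are identified. (If a self-contained argument were wanted, one could instead redo the Kac--Rice computation of the second moment directly on the diagonal, but this would merely duplicate the proof of Theorem~\ref{thm asymptotics variance}.)
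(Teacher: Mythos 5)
Your proposal is correct and coincides with the paper's own treatment: the corollary is obtained exactly by specialising Theorem~\ref{thm asymptotics variance} to $\phi_1=\phi_2=\phi$, using that $\var{\rmes{d}}(\phi,\phi)=\var{\prsc{\rmes{d}}{\phi}}$ and that the constant $C_\beta$ and the $O$-constants in the theorem are independent of the pair of test functions. Nothing further is needed.
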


\begin{rems}
Some remarks are in order.
\begin{itemize}
\item The value of the constant $\alpha_0$ should not be taken too seriously. This constant appears for technical reasons and it is probably far from optimal.
\item If $\phi_2$ is Lipschitz continuous with Lipschitz constant $K$, then the error term in eq.~\eqref{eq asymptotics variance} can be replaced by:
\begin{equation*}
\Norm{\phi_1}_\infty \left(\Norm{\phi_2}_\infty + K\right) O\!\left(d^{r-\frac{n}{2}-\alpha}\right)
\end{equation*}
by fixing $\beta > \alpha_0$, which is possible since $\frac{1}{2} > \alpha_0$.
\item Thm.~\ref{thm asymptotics variance} shows that $\var{\rmes{d}}$ is a continuous bilinear form on $\left(\mathcal{C}^0(M),\Norm{\cdot}_\infty\right)$ for $d$ large enough. Moreover, denoting by $\prsc{\cdot}{\cdot}_M$ the $L^2$-inner product on $\mathcal{C}^0(M)$ defined by $\prsc{\phi_1}{\phi_2}_M = \displaystyle\int_M \phi_1\phi_2 \rmes{M}$, we have:
\begin{equation*}
d^{\frac{n}{2}-r} \var{\rmes{d}} \xrightarrow[d \to + \infty]{} \frac{\vol{\S^{n-1}}}{(2\pi)^r} \mathcal{I}_{n,r} \prsc{\cdot}{\cdot}_M
\end{equation*}
in the weak sense. A priori, there is no such convergence as continuous bilinear forms on $\left(\mathcal{C}^0(M),\Norm{\cdot}_\infty\right)$ since the estimate~\eqref{eq asymptotics variance} involves the continuity modulus of $\phi_2$.
\item The fact that the constant $\mathcal{I}_{n,r}$ is finite is part of the statement and is proved below (Lemma~\ref{lem integrability Dnr}). This constant is necessarily non-negative. Numerical evidence suggests that it is positive but we do not know how to prove it at this point.
\item Thm.~\ref{thm asymptotics variance} does not apply in the case of maximal codimension ($r=n$). This case presents an additional singularity which causes our proof to fail. However, we believe a similar result to be true for $r=n$, at least in the case of the Kostlan--Shub--Smale polynomials described below (compare \cite{Dal2015,Wsc2005}).
\end{itemize}
\end{rems}

\begin{cor}[Concentration in probability]
\label{cor concentration}
In the same setting as Thm.~\ref{thm asymptotics variance}, let $\alpha > \frac{r}{2}- \frac{n}{4}$ and let $\phi \in \mathcal{C}^0(M)$. Then, for every $\epsilon >0$, we have:
\begin{equation*}
\P\left( \norm{\prsc{\rmes{d}}{\phi}-\rule{0pt}{4mm}\esp{\prsc{\rmes{d}}{\phi}}} > d^{\alpha} \epsilon \right)  = \frac{1}{\epsilon^2} O\!\left( d^{r-\frac{n}{2}-2\alpha} \right),
\end{equation*}
where the error term is independent of $\epsilon$, but depends on $\phi$.
\end{cor}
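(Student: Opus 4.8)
The plan is to deduce this statement directly from the variance estimate of Corollary~\ref{cor variance of linear statistics} via Chebyshev's inequality, so that essentially all the work has already been done. Fix $\phi \in \mathcal{C}^0(M)$ and, for $d$ large enough that $Z_d$ is almost surely a smooth submanifold, write $X_d = \prsc{\rmes{d}}{\phi}$; by Theorem~\ref{thm reminder expectation test function} and Corollary~\ref{cor variance of linear statistics}, $X_d$ is a real random variable with finite first and second moments. Applying Chebyshev's inequality to $X_d$ gives, for every $\epsilon > 0$,
\begin{equation*}
\P\left(\norm{X_d - \esp{X_d}} > d^\alpha \epsilon\right) \leq \frac{\var{X_d}}{d^{2\alpha}\epsilon^2}.
\end{equation*}

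Next I would bound $\var{X_d}$ using Corollary~\ref{cor variance of linear statistics}. Fixing once and for all some $\beta \in (0,\frac{1}{2})$ and some exponent in $(0,\alpha_0)$ in that statement, and using that the continuity modulus $\varpi_\phi$ is bounded (for instance by $2\Norm{\phi}_\infty$) since $\phi$ is continuous on the compact manifold $M$, every term on the right-hand side of~\eqref{eq variance of linear statistics} is $O\!\left(d^{r-\frac{n}{2}}\right)$, with a constant depending only on $\phi$. Hence $\var{X_d} = O\!\left(d^{r-\frac{n}{2}}\right)$, and substituting this into the Chebyshev bound yields
\begin{equation*}
\P\left(\norm{X_d - \esp{X_d}} > d^\alpha\epsilon\right) \leq \frac{1}{\epsilon^2}\, O\!\left(d^{r-\frac{n}{2}-2\alpha}\right),
\end{equation*}
with an error term that is independent of $\epsilon$ (but depends on $\phi$ through the variance constant). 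This is exactly the claimed estimate.

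Finally, I would note that the hypothesis $\alpha > \frac{r}{2} - \frac{n}{4}$ is exactly the condition making the exponent $r - \frac{n}{2} - 2\alpha$ negative, so that the right-hand side tends to $0$ as $d \to +\infty$ and the corollary is a genuine concentration statement; without this assumption the estimate remains formally valid but vacuous. There is no real obstacle in this argument: the content is entirely in Theorem~\ref{thm asymptotics variance}, and the only point requiring any care is that the $O$ in the variance estimate be uniform enough to absorb the continuity-modulus term, which is immediate from the boundedness of $\varpi_\phi$ for a fixed continuous $\phi$.
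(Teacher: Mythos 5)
Your proposal is correct and follows essentially the same route as the paper: the paper also applies the Markov/Chebyshev inequality to $d^{-\alpha}\prsc{\rmes{d}}{\phi}$ and bounds $\var{\prsc{\rmes{d}}{\phi}} = O\!\left(d^{r-\frac{n}{2}}\right)$ via the variance asymptotic~\eqref{eq variance of linear statistics}, with the error term depending on $\phi$. Your remarks on absorbing the continuity-modulus term and on the role of the hypothesis $\alpha > \frac{r}{2}-\frac{n}{4}$ are accurate but add nothing beyond the paper's argument.
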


\begin{cor}
\label{cor connected components}
In the same setting as Thm.~\ref{thm asymptotics variance}, let $U \subset M$ be an open subset, then as $d\to + \infty$ we have:
\begin{equation*}
\P\left( Z_d \cap U = \emptyset \right) = O\!\left( d^{-\frac{n}{2}} \right).
\end{equation*}
\end{cor}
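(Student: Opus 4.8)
The plan is to compare the event $\{Z_d \cap U = \emptyset\}$ with the vanishing of a well-chosen linear statistic, and then to control the latter by a Chebyshev-type inequality fed with the sharp first- and second-moment estimates already at our disposal (Thm.~\ref{thm reminder expectation test function} and Cor.~\ref{cor variance of linear statistics}). We may assume $U \neq \emptyset$. First I would fix, once and for all, a function $\phi \in \mathcal{C}^0(M)$ with $\phi \geq 0$, $\phi \not\equiv 0$ and $\operatorname{supp}(\phi) \subset U$; such a $\phi$ exists because $U$ is a non-empty open subset of the manifold $M$ (take a coordinate ball contained in $U$ and a bump function supported in it). With this choice, if $Z_d \cap U = \emptyset$ then $\phi$ vanishes on $Z_d$, hence $\prsc{\rmes{d}}{\phi} = \int_{Z_d} \phi\, \rmes{d} = 0$, so that
\[
\P\left(Z_d \cap U = \emptyset\right) \leq \P\left(\prsc{\rmes{d}}{\phi} = 0\right).
\]

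Next, since $\prsc{\rmes{d}}{\phi} \geq 0$, the event $\{\prsc{\rmes{d}}{\phi} = 0\}$ is contained in $\left\{\left\lvert \prsc{\rmes{d}}{\phi} - \esp{\prsc{\rmes{d}}{\phi}} \right\rvert \geq \esp{\prsc{\rmes{d}}{\phi}}\right\}$, and Markov's inequality applied to the square yields
\[
\P\left(\prsc{\rmes{d}}{\phi} = 0\right) \leq \frac{\var{\prsc{\rmes{d}}{\phi}}}{\esp{\prsc{\rmes{d}}{\phi}}^2}.
\]
I would then invoke Thm.~\ref{thm reminder expectation test function}, which gives $\esp{\prsc{\rmes{d}}{\phi}} = d^{\frac{r}{2}} \left(\int_M \phi\, \rmes{M}\right) \frac{\vol{\S^{n-r}}}{\vol{\S^n}} + O\!\left(d^{\frac{r}{2}-1}\right)$ with a strictly positive leading coefficient by the choice of $\phi$; hence $\esp{\prsc{\rmes{d}}{\phi}}^2 \geq c\, d^{r}$ for some $c>0$ and $d$ large. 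Similarly, Cor.~\ref{cor variance of linear statistics} (with any fixed $\alpha \in (0,\alpha_0)$ and any fixed $\beta \in (0,\tfrac12)$, the continuity-modulus contribution being harmless for a fixed $\phi$) gives $\var{\prsc{\rmes{d}}{\phi}} = O\!\left(d^{r-\frac{n}{2}}\right)$. Combining these, $\P\left(Z_d \cap U = \emptyset\right) \leq c^{-1}\, d^{-r}\, O\!\left(d^{r-\frac{n}{2}}\right) = O\!\left(d^{-\frac{n}{2}}\right)$, which is the claim.

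I do not expect a genuine obstacle here: the corollary is a soft consequence of the main estimates. The only points deserving a word of care are the existence of a non-negative, non-trivial test function supported in $U$ (immediate from $U$ open and non-empty), and the non-degeneracy of the denominator in the Chebyshev bound, i.e.\ that $\esp{\prsc{\rmes{d}}{\phi}}$ really grows like $d^{r/2}$ with a positive constant — which follows from $\phi \geq 0$, $\phi \not\equiv 0$ together with the positivity of $\vol{\S^{n-r}}/\vol{\S^n}$.
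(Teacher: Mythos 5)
Your proposal is correct and takes essentially the same route as the paper: the paper chooses the test function $\phi_U(x)=$ distance from $x$ to $M\setminus U$ (non-negative, vanishing outside $U$, with $\int_M \phi_U \rmes{M}>0$), observes that $Z_d\cap U=\emptyset$ forces $\prsc{\rmes{d}}{\phi_U}=0$, and then applies Cor.~\ref{cor concentration} with $\alpha=\frac{r}{2}$, which is exactly the Chebyshev bound you use directly, fed by Thm.~\ref{thm reminder expectation test function} and Cor.~\ref{cor variance of linear statistics}. The only cosmetic differences are your choice of a bump function in place of the distance function and invoking Chebyshev directly rather than through Cor.~\ref{cor concentration}.
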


Our last corollary is concerned with the convergence of a random sequence of sections of increasing degree. Let us denote by $\dx \nu_d$ the standard Gaussian measure on $\R\H$ (see~\eqref{eq Gaussian density}). Let $\dx \nu$ denote the product measure $\bigotimes_{d \in \N} \dx \nu_d$ on $\prod_{d \in \N} \R \H$. Then we have the following.

\begin{cor}[Almost sure convergence]
\label{cor as convergence}
In the same setting as Thm.~\ref{thm asymptotics variance}, let us assume that $n\geq 3$. Let $(s_d)_{d \in \N} \in \prod_{d \in \N} \R \H$ be a random sequence of sections. Then, $\dx \nu$-almost surely, we have:
\begin{equation*}
\forall \phi \in \mathcal{C}^0(M), \qquad d^{-\frac{r}{2}}\prsc{\rmes{s_d}}{\phi} \xrightarrow[d \to +\infty]{} \frac{\vol{\S^{n-r}}}{\vol{\S^n}} \left(\int_M \phi \rmes{M}\right).
\end{equation*}
That is, $\dx \nu$-almost surely,
\begin{equation*}
d^{-\frac{r}{2}}\rmes{s_d} \xrightarrow[d \to +\infty]{} \frac{\vol{\S^{n-r}}}{\vol{\S^n}}\rmes{M},
\end{equation*}
in the sense of the weak convergence of measures.
\end{cor}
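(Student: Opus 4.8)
The plan is to derive the almost sure convergence from the first- and second-moment estimates already available, namely the expectation asymptotic of Theorem~\ref{thm reminder expectation test function} and the variance estimate of Corollary~\ref{cor variance of linear statistics}, via a Borel--Cantelli argument, and then to make the exceptional null set uniform in the test function by a separability argument.

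First I would fix $\phi \in \mathcal{C}^0(M)$ and set $X_d = d^{-\frac{r}{2}}\prsc{\rmes{s_d}}{\phi}$ and $L(\phi) = \frac{\vol{\S^{n-r}}}{\vol{\S^n}}\int_M \phi\,\rmes{M}$. By Theorem~\ref{thm reminder expectation test function}, $\esp{X_d} = L(\phi) + \Norm{\phi}_\infty O(d^{-1}) \to L(\phi)$. By Corollary~\ref{cor variance of linear statistics}, and using that $\varpi_\phi(C_\beta d^{-\beta}) \to 0$, we get $\var{X_d} = d^{-r}\var{\prsc{\rmes{s_d}}{\phi}} = O(d^{-\frac{n}{2}})$. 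Since $n \geq 3$, the series $\sum_{d \geq 1} d^{-\frac{n}{2}}$ converges, hence $\sum_d \var{X_d} < +\infty$. By Chebyshev's inequality, for every $\epsilon > 0$ we have $\sum_d \P(\norm{X_d - \esp{X_d}} > \epsilon) \leq \epsilon^{-2}\sum_d \var{X_d} < +\infty$, so the Borel--Cantelli lemma gives that, $\dx\nu$-almost surely, $\norm{X_d - \esp{X_d}} \leq \epsilon$ for $d$ large enough. Applying this with $\epsilon = \frac{1}{k}$ for each integer $k \geq 1$ and intersecting the corresponding full events, we obtain that $\dx\nu$-almost surely $X_d - \esp{X_d} \to 0$, hence $X_d \to L(\phi)$. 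Note that the independence of the $s_d$ plays no role here; only the summability of the variances matters.

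Next I would remove the dependence of the exceptional set on $\phi$. Since $M$ is a compact metric space, $\left(\mathcal{C}^0(M),\Norm{\cdot}_\infty\right)$ is separable; fix a countable dense subset $\mathcal{D}$ containing $\mathbf{1}$. By the previous step and a countable intersection, there is a $\dx\nu$-full event $\Omega_0$ on which $d^{-\frac{r}{2}}\prsc{\rmes{s_d}}{\psi} \to L(\psi)$ for every $\psi \in \mathcal{D}$. In particular $d^{-\frac{r}{2}}\vol{Z_{s_d}} = d^{-\frac{r}{2}}\prsc{\rmes{s_d}}{\mathbf{1}} \to L(\mathbf{1})$ on $\Omega_0$, so on $\Omega_0$ these total masses are bounded by some random constant $C$. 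Then, on $\Omega_0$, given $\phi \in \mathcal{C}^0(M)$ and $\epsilon > 0$, choose $\psi \in \mathcal{D}$ with $\Norm{\phi - \psi}_\infty < \epsilon$; the bounds $\norm{\prsc{\rmes{s_d}}{\phi - \psi}} \leq \Norm{\phi-\psi}_\infty \vol{Z_{s_d}}$ and $\norm{L(\phi) - L(\psi)} \leq \Norm{\phi-\psi}_\infty \frac{\vol{\S^{n-r}}}{\vol{\S^n}}\vol{M}$, together with $d^{-\frac{r}{2}}\prsc{\rmes{s_d}}{\psi} \to L(\psi)$, yield $\limsup_d \norm{d^{-\frac{r}{2}}\prsc{\rmes{s_d}}{\phi} - L(\phi)} \leq \epsilon\left(C + \frac{\vol{\S^{n-r}}}{\vol{\S^n}}\vol{M}\right)$. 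Letting $\epsilon \to 0$ proves the convergence for every $\phi$ on $\Omega_0$, which is the first displayed statement; the convergence of measures is a reformulation of it.

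All the analytic work is packaged in Theorem~\ref{thm reminder expectation test function} and Corollary~\ref{cor variance of linear statistics}, so the only delicate point is quantitative: the variance of $X_d$ must decay fast enough for $\sum_d \var{X_d}$ to converge, and this is exactly where $n \geq 3$ is used. For $n = 2$ one only gets $\var{X_d} = O(d^{-1})$, which is not summable, so a direct Borel--Cantelli argument fails and one would need, e.g., to first establish convergence along a sparse subsequence and then control the oscillation in between. The final extension from $\mathcal{D}$ to all of $\mathcal{C}^0(M)$ is routine given the uniform bound on the total masses.
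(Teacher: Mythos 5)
Your proposal is correct and follows essentially the same route as the paper: the second-moment bound $d^{-r}\var{\prsc{\rmes{d}}{\phi}} = O\!\left(d^{-\frac{n}{2}}\right)$ from Corollary~\ref{cor variance of linear statistics} (summable precisely because $n\geq 3$), combined with Theorem~\ref{thm reminder expectation test function}, gives the almost sure convergence for each fixed $\phi$, and the extension to all of $\mathcal{C}^0(M)$ uses the same separability argument with a dense sequence containing $\mathbf{1}$ to control the total masses. The only cosmetic difference is that the paper deduces $X_d-\esp{X_d}\to 0$ from the almost sure finiteness of $\sum_d\left(X_d-\esp{X_d}\right)^2$, whereas you use Chebyshev plus Borel--Cantelli; these are interchangeable forms of the same argument.
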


\begin{rem}
We expect this result to hold for $n=2$ as well, but our proof fails in this case. 
\end{rem}

\paragraph*{The Kostlan--Shub--Smale polynomials}

Let us consider the simplest example of our framework. We choose $\X$ to be the complex projective space $\C\P^n$, with the real structure defined by the usual conjugation in $\C^{n+1}$. Then $M$ is the real projective space $\R \P^n$. Let $\L = \mathcal{O}(1)$ be the hyperplane line bundle, equipped with its natural real structure and the metric dual to the standard metric on the tautological line bundle over $\C\P^n$. Then the curvature form of $\L$ is the Fubini--Study form $\omega_{FS}$, normalized so that the induced Riemannian metric is the quotient of the Euclidean metric on the unit sphere of $\C^{n+1}$. Let $\E = \C^r \times \C\P^n \to \C\P^n$ be the rank $r$ trivial bundle with the trivial real structure and the trivial metric.

In this setting, the global holomorphic sections of $\L^d$ are the complex homogeneous polynomials of degree $d$ in $n+1$ variables and those of $\E \otimes \L^d$ are $r$-tuples of such polynomials, since $\E$ is trivial. Finally, the real structures being just the usual conjugations, we have:
\begin{equation*}
\R \H = \R_{\text{hom}}^d[X_0,\dots,X_n]^r,
\end{equation*}
where $\R_{\text{hom}}^d[X_0,\dots,X_n]$ is the space of real homogeneous polynomials of degree $d$ in $n+1$ variables. The $r$ copies of this space in $\R \H$ are pairwise orthogonal for the inner product~\eqref{eq definition inner product}. Hence a standard Gaussian in $\R\H$ is a $r$-tuple of independent standard Gaussian in $\R_{\text{hom}}^d[X_0,\dots,X_n] = \R H^0\left(\X,\L^d\right)$.

It is well-known (cf.~\cite{BSZ2000a,BBL1996,Kos1993}) that the monomials are pairwise orthogonal for the $L^2$-inner product~\eqref{eq definition inner product}, but not orthonormal. Let $\alpha =\left(\alpha_0,\dots,\alpha_n\right) \in \N^{n+1}$, we denote its length by $\norm{\alpha} = \alpha_0 + \dots + \alpha_n$. We also define $X^{\alpha} = X_0^{\alpha_0} \cdots X_n^{\alpha_n}$ and $\alpha ! = (\alpha_0!) \cdots (\alpha_n!)$. Finally, if $\norm{\alpha}=d$, we denote by $\binom{d}{\alpha}$ the multinomial coefficient $\frac{d!}{\alpha!}$. Then, an orthonormal basis of $\R_{\text{hom}}^d[X_0,\dots,X_n]$ for the inner product~\eqref{eq definition inner product} is given by the family:
\begin{equation*}
\left(\sqrt{\frac{(d+n)!}{\pi^n d!}} \sqrt{\binom{d}{\alpha}}X^\alpha\right)_{\norm{\alpha} = d}.
\end{equation*}
Thus a standard Gaussian vector in $\R_{\text{hom}}^d[X_0,\dots,X_n]$ is a random polynomial:
\begin{equation*}
\sqrt{\frac{(d+n)!}{\pi^n d!}} \sum_{\norm{\alpha} =d} a_\alpha \sqrt{\binom{d}{\alpha}}X^\alpha,
\end{equation*}
where the coefficients $(a_\alpha)_{\norm{\alpha}=d}$ are independent real standard Gaussian variables. Since we are only concerned with the zero set of this random polynomial, we can drop the factor $\sqrt{\frac{(d+n)!}{\pi^n d!}}$.

Finally, in this setting, $\rmes{d}$ is the common zero set of $r$ independent random polynomials in $\R_{\text{hom}}^d[X_0,\dots,X_n]$ of the form:
\begin{equation}
\label{KSS polynomials}
\sum_{\norm{\alpha} =d} a_\alpha \sqrt{\binom{d}{\alpha}}X^\alpha,
\end{equation}
with independent coefficients $(a_\alpha)_{\norm{\alpha}=d}$ distributed according to the real standard Gaussian distribution. Such polynomials are known as the Kostlan--Shub--Smale polynomials. They were introduced in \cite{Kos1993,SS1993} and were actively studied since (cf.~\cite{AW2005,Buer2007,Dal2015,Pod2001,Wsc2005}).

\paragraph*{Related works.}

As we just said, zero sets of systems of independent random polynomials distributed as~\eqref{KSS polynomials} were studied by Kostlan \cite{Kos1993} and Shub and Smale \cite{SS1993}. The expected volume of these random algebraic manifolds was computed by Kostlan \cite{Kos1993} and their expected Euler characteristic was computed by Podkorytov \cite{Pod2001} in codimension $1$, and by Bürgisser \cite{Buer2007} in higher codimension. Both these results were extended to the setting of the present paper in \cite{Let2016}.

In \cite{Wsc2005}, Wschebor obtained an asymptotic bound, as the dimension $n$ goes to infinity, for the variance of number of real roots of a system of $n$  independent Kostlan--Shub--Smale polynomials. Recently, Dalmao \cite{Dal2015} computed an asymptotic of order $\sqrt{d}$ for the variance of the number of real roots of one Kostlan--Shub--Smale polynomial in dimension $n=1$. His result is very similar to~\eqref{eq asymptotics variance}, which leads us to think that such a result should hold for $r=n$. He also proved a central limit theorem for this number of real roots, using Wiener chaos methods.

In \cite[thm.~3]{KL2001}, Kratz and Le\`{o}n considered the level curves of a centered stationary Gaussian field with unit variance on the plane $\R^2$. More precisely, they considered the length of a level curve intersected with some large square $[-T,T]\times [-T,T]$. As $T \to + \infty$, they proved asymptotics of order $T^2$ for both the expectation and the variance of this length. They also proved that it satisfies a central limit theorem as $T \to +\infty$. In particular, their result applies to the centered Gaussian field on $\R^2$ with correlation function $(x,y)\mapsto \exp\left(-\frac{1}{2}\Norm{x-y}^2\right)$. This field can be seen as the scaling limit, in the sense of \cite{NS2015}, of the centered Gaussian field $\left(s_d(x)\right)_{x \in M}$ defined by our random sections, when $n=2$ and $r=1$.

The study of more general random algebraic submanifolds, obtained as the zero sets of random sections, was pioneered by Shiffman and Zelditch \cite{SZ1999,SZ2008,SZ2010}. They considered the integration current over the common complex zero set $Z_d$ of $r$ independent random sections in $H^0(\X,\L^d)$, distributed as standard complex Gaussians. In \cite{SZ1999}, they computed the asymptotic, as $d$ goes to infinity, of the expectation of the associated smooth statistics when $r=1$. They also provided an upper bound for the variance of these quantities and proved the equivalent of Cor.~\ref{cor as convergence} in this complex algebraic setting. In \cite{SZ2008}, they gave an asymptotic of order $d^{2r-n-\frac{1}{2}}$ for the variance of the volume of $Z_d \cap U$, where $U \subset \X$ is a domain satisfying some regularity conditions. In \cite{SZ2010}, they proved a similar asymptotic for the variance of the smooth statistics associated with $Z_d$. When $r=1$, they deduced a central limit theorem from these estimates and an asymptotic normality result of Sodin and Tsirelson \cite{ST2004}. Finally, in \cite[thm.~1.4]{SZZ2008}, Shiffman, Zelditch and Zrebiec proved that the probability that $Z_d \cap U = \emptyset$, where $U$ is any open subset of $\X$, decreases exponentially fast as $d$ goes to infinity. 

Coming back to our real algebraic setting, one should be able to deduce from the general result of Nazarov and Sodin \cite[thm.~3]{NS2015} that, given an open set $U \subset M$, the probability that $Z_d \cap U = \emptyset$ goes to $0$ as $d$ goes to infinity. Corollary~\ref{cor connected components} gives an upper bound for the convergence rate. In particular, this bounds the probability for $Z_d$ to be empty. In the same spirit, Gayet and Welschinger \cite{GW2015} proved the following result. Let $\Sigma$ be a fixed diffeomorphism type of codimension $r$ submanifold of $\R^n$, let $x \in M$ and let $B_d(x)$ denote the geodesic ball of center $x$ and radius $\frac{1}{\sqrt{d}}$. Then, the probability that $Z_d \cap B_d(x)$ contains a submanifold diffeomorphic to $\Sigma$ is bounded from below. On the other hand, when $n=2$ and $r=1$, the Harnack--Klein inequality shows that the number of connected components of $Z_d$ is bounded by a polynomial in $d$. In \cite{GW2011}, Gayet and Welschinger proved that the probability for $Z_d$ to have the maximal number of connected components decreases exponentially fast with $d$.

Another well-studied model of random submanifolds is that of Riemannian random waves, i.e.~zero sets of random eigenfunctions of the Laplacian associated with some eigenvalue $\lambda$. In this setting, Rudnick and Wigman \cite{RW2008} computed an asymptotic bound, as $\lambda \to +\infty$, for the variance of the volume of a random hypersurface on the flat $n$-dimensional torus $\mathbb{T}^n$. On $\mathbb{T}^2$, this result was improved by Krishnapur, Kurlberg and Wigman \cite{KKW2013} who computed the precise asymptotic of the variance of the length of a random curve. In \cite{Wig2010}, Wigman computed the asymptotic variance of the linear statistics associated with a random curve on the Euclidean sphere $\S^2$. His result holds for a large class of test-function that contains the characteristic functions of open sets satisfying some regularity assumption. In relation with Cor.~\ref{cor connected components}, Nazarov and Sodin \cite{NS2009} proved that, on the Euclidean sphere $\S^2$, the number of connected components of a random curve times $\frac{1}{\lambda}$ converges exponentially fast in probability to a deterministic constant as $\lambda \to + \infty$.

\paragraph*{About the proof.}

The idea of the proof is the following. The random section $s_d$ defines a centered Gaussian field $(s_d(x))_{x \in \X}$. The correlation kernel of this field equals the Bergman kernel, that is the kernel of the orthogonal projection onto $\H$ for the inner product~\eqref{eq definition inner product} (compare \cite{BSZ2000a,Let2016,SZ1999,SZ2008,SZ2010}).

In order to compute the covariance of the smooth statistics $\prsc{\rmes{s}}{\phi_1}$ and $\prsc{\rmes{s}}{\phi_2}$, we apply a Kac--Rice formula (cf.~\cite{AW2009,BSZ2000a,Dal2015,TA2007,Wig2010}). This allows us to write $\var{\rmes{d}}(\phi_1,\phi_2)$ as the integral over $M\times M$ of some function $\D_d(x,y)$, defined by~\eqref{eq def density}. This density $\D_d(x,y)$ is the difference of two terms, coming respectively from
\begin{align*}
\esp{\prsc{\rmes{d}}{\phi_1}\prsc{\rmes{d}}{\phi_2}}& & &\text{and} & \esp{\prsc{\rmes{d}}{\phi_1}} \esp{\prsc{\rmes{d}}{\phi_2}}.
\end{align*}

Since the Bergman kernel decreases exponentially fast outside of the diagonal $\Delta$ in $M^2$ (see Section~\ref{subsec far off diagonal estimates}), the values of $s_d(x)$ and $s_d(y)$ are almost uncorrelated for $(x,y)$ far from~$\Delta$. As a consequence, when the distance between $x$ and $y$ is much larger than $\frac{1}{\sqrt{d}}$, the above two terms in the expression of $\D_d(x,y)$ are equal, up to a small error (see Sect.~\ref{subsubsec far off diagonal correlated} for a precise statement). Thus, $\D_d(x,y)$ is small far from $\Delta$, and its integral over this domain only contributes a remainder term to $\var{\rmes{d}}(\phi_1,\phi_2)$.

The main contribution to the value of $\var{\rmes{d}}(\phi_1,\phi_2)$ comes from the integration of $\D_d(x,y)$ over a neighborhood of $\Delta$ of size about~$\frac{1}{\sqrt{d}}$. We perform a change of variable in order to express this term as an integral over a domain of fixed size. This rescaling by $\frac{1}{\sqrt{d}}$ explains the factor $d^{-\frac{n}{2}}$ in~\eqref{eq asymptotics variance}. Besides, the order of growth of $\D_d(x,y)$ close to $\Delta$ is $d^r$, that is the order of growth of the square of $\esp{\rmes{d}}$ (see Thm.~\ref{thm reminder expectation test function}). Finally, we get an order of growth of $d^{r-\frac{n}{2}}$ for $\var{\rmes{d}}(\phi_1,\phi_2)$. The constant in~\eqref{eq asymptotics variance} appears as the scaling limit of the integral of $\D_d(x,y)$ over a neighborhood of $\Delta$ of typical size $\frac{1}{\sqrt{d}}$.

The difficulty in making this sketch of proof rigorous comes from the combination of two facts. First, we do not know exactly the value of the Bergman kernel (our correlation function) and its derivatives, but only asymptotics. In addition, the conditioning in the Kac--Rice formula is singular along $\Delta$, and so is $\D_d$. Because of this, we lose all uniformity in the control of the error terms close to the diagonal. Nonetheless, by careful bookkeeping of the error terms, we can make the above heuristic precise.

\paragraph*{Outline of the paper.}

In Section~\ref{sec random real algebraic submanifolds} we describe precisely our framework and the construction of the random measures $\rmes{s_d}$. We also introduce the Bergman kernel and explain how it is related to our random submanifolds.

In Section~\ref{sec estimates for the bergman kernel}, we recall various estimates for the Bergman kernel that we use in the proof of our main theorem. These estimates were established by Dai, Liu and Ma \cite{DLM2006}, and Ma and Marinescu \cite{MM2007,MM2013,MM2015} in a complex setting. Our main contribution in this section consists in checking that the preferred trivialization used by Ma and Marinescu to state their near-diagonal estimates is well-behaved with respect to the real structures on $\X$, $\E$ and $\L$ (see Section~\ref{subsec real normal trivialization}).

Section~\ref{sec proof of the main theorem} is concerned with the proof of Thm.~\ref{thm asymptotics variance}. In Sect.~\ref{subsec Kac--Rice formula}, we prove a Kac--Rice formula adapted to our problem, using Federer's coarea formula and Kodaira's embedding theorem. In Sect.~\ref{subsec an integral formula for the variance} we prove an integral formula for the variance, using the Kac--Rice formula (Thm.~\ref{thm Kac-Rice var}). The core of the proof is contained in Sect.~\ref{subsec asymptotic for the variance}.

Finally, we prove Corollaries~\ref{cor concentration}, \ref{cor connected components} and~\ref{cor as convergence} in Section~\ref{sec proofs of the corollaries}.

\paragraph*{Acknowledgments.}

I am thankful to Damien Gayet for his guidance in the course of this work and for countless mathematical discussions, on this topic and others.

\tableofcontents

\section{Random real algebraic submanifolds}
\label{sec random real algebraic submanifolds}

\subsection{General setting}
\label{subsec general setting}

In this section, we introduce our framework. It is the same as the algebraic setting of \cite{Let2016}, see also~\cite{GW2014b,GW2015}. Classical references for the material of this section are \cite[chap.~0]{GH1994} and \cite[chap.~1]{Sil1989}.

Let $\X$ be a smooth complex projective manifold of complex dimension $n\geq 2$. We assume that $\X$ is defined over the reals, that is $\X$ is equipped with an anti-holomorphic involution $c_\X$. The real locus of $(\X,c_\X)$ is the set of fixed points of $c_\X$. In the sequel, we assume that it is non-empty and we denote it by $M$. It is a classical fact that $M$ is a smooth closed (i.e.~compact without boundary) submanifold of $\X$ of real dimension $n$ (see \cite[chap.~1]{Sil1989}).

Let $\E \to \X$ be a holomorphic vector bundle of rank $r \in \{1,\dots,n-1\}$. Let $c_\E$ be a real structure on $\E$, compatible with $c_\X$ in the sense that the projection $\pi_\E : \E \to \X$ satisfies $c_\X \circ \pi_\E = \pi_\E \circ c_\E$ and $c_\E$ is fiberwise $\C$-anti-linear. Let $h_\E$ be a real Hermitian metric on $\E$, that is $c_\E^\star(h_\E)=\overline{h_\E}$.

Similarly, let $\L \to \X$ be an ample holomorphic line bundle equipped with a compatible real structure $c_\L$ and a real Hermitian metric $h_\L$. Moreover, we assume that the curvature form $\omega$ of $h_\L$ is a Kähler form. Recall that if $\zeta$ is any non-vanishing holomorphic section on the open set $\Omega \subset \X$, then the restriction of $\omega$ to $\Omega$ is given by:
\begin{equation*}
\omega_{/\Omega} = \frac{1}{2i}\partial \bar{\partial} \ln\left(h_\L(\zeta,\zeta)\right).
\end{equation*}
This Kähler form is associated with a Hermitian metric $g_\C$ on $\X$. The real part of $g_\C$ defines a Riemannian metric $g = \omega(\cdot,i\cdot)$ on $\X$, compatible with the complex structure. Note that, since $h_\L$ is compatible with the real structures on $\X$ and $\L$, we have $c_\L^\star(h_\L)=\overline{h_\L}$ and $c_\X^\star\omega = - \omega$. Then we have $c_\X^\star g_\C = \overline{g_\C}$, hence $c_\X^\star g = g$ and $c_\X$ is an isometry of $(\X,g)$.

Then $g$ induces a Riemannian measure on every smooth submanifold of $\X$. In the case of $\X$, this measure is given by the volume form $\dx V_\X=\frac{\omega^n}{n!}$. We denote by $\rmes{M}$ the Riemannian measure on $(M,g)$.

Let $d \in \N$, then the rank $r$ holomorphic vector bundle $\E \otimes \L^d$ can be endowed with a real structure $c_d = c_\E \otimes c_\L^d$, compatible with $c_\X$, and a real Hermitian metric $h_d = h_\E \otimes h_\L^d$. If $x \in M$, then $c_d$ induces a $\C$-anti-linear involution of the fiber $(\E \otimes \L^d)_x$. We denote by $\R(\E \otimes \L^d)_x$ the fixed points set of this involution, which is a dimension $r$ real vector space.

Let $\Gamma(\E \otimes \L^d)$ denote the space of smooth sections of $\E \otimes \L^d$. We can define a Hermitian inner product on $\Gamma(\E \otimes \L^d)$ by:
\begin{equation}
\label{eq definition inner product}
\forall s_1, s_2 \in \Gamma(\E \otimes \L^d), \qquad \prsc{s_1}{s_2} = \int_\X h_d(s_1(x),s_2(x)) \dx V_\X.
\end{equation}
We say that a section $s \in \Gamma(\E \otimes \L^d)$ is real if it is equivariant for the real structures, that is: $c_d \circ s = s \circ c_\X$. Let $\R \Gamma(\E \otimes \L^d)$ denote the real vector space of real smooth sections of $\E \otimes \L^d$. The restriction of $\prsc{\cdot}{\cdot}$ to $\R \Gamma(\E \otimes \L^d)$ is a Euclidean inner product.

\begin{ntn}
\label{ntn bracket}
In this paper, $\prsc{\cdot}{\cdot}$ will always denote either the inner product on the concerned Euclidean (or Hermitian) space or the duality pairing between a space and its topological dual. Which one will be clear from the context.
\end{ntn}

Let $\H$ denote the space of global holomorphic sections of $\E \otimes \L^d$. This space has finite complex dimension $N_d$ by Hodge's theory (compare \cite[thm.~1.4.1]{MM2007}). We denote by $\R \H$ the space of global real holomorphic sections of $\E \otimes\L^d$:
\begin{equation}
\label{eq dfn RH0}
\R \H = \left\{ s \in \H \mvert c_d \circ s = s \circ c_\X \right\}.
\end{equation}
The restriction of the inner product \eqref{eq definition inner product} to $\R \H$ makes it into a Euclidean space of real dimension $N_d$.

\begin{rem}
\label{rem inner product}
Note that, even when we consider real sections restricted to $M$, the inner product is defined by integrating on the whole complex manifold $\X$.
\end{rem}

\subsection{Random submanifolds}
\label{subsec random submanifolds}

This section is concerned with the definition of the random submanifolds we consider and the related random variables.

Let $d \in \N$ and $s \in \R \H$, we denote the real zero set of $s$ by $Z_s=s^{-1}(0) \cap M$. If the restriction of $s$ to $M$ vanishes transversally, then $Z_s$ is a smooth submanifold of codimension $r$ of $M$. In this case, we denote by $\rmes{s}$ the Riemannian measure on $Z_s$ induced by $g$, seen as a Radon measure on $M$. Note that this includes the case where $Z_s$ is empty.

Recall the following facts, that we already discussed in \cite{Let2016}.
\begin{dfn}[see \cite{Nic2015}]
\label{dfn 0 ample}
We say that $\R \H$ is \emph{$0$-ample} if, for any $x \in M$, the evaluation map
\begin{equation}
\label{eq dfn 0 ample}
\begin{array}{rccc}
\ev_x^d:& \R \H & \longrightarrow &\R \left(\E \otimes \L^d\right)_x\\
& s & \longmapsto & s(x)
\end{array}
\end{equation}
is surjective.
\end{dfn}

\begin{lem}[see \cite{Let2016}, cor.~3.10]
\label{lem dfn d1}
There exists $d_1 \in \N$, depending only on $\X$, $\E$ and $\L$, such that for all $d \geq d_1$, $\R \H$ is $0$-ample.
\end{lem}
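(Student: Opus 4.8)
The assertion is \cite[cor.~3.10]{Let2016}; the plan below recalls how one proves it. The idea is to deduce $0$-ampleness from the surjectivity of the \emph{complexified} evaluation map $\ev_x^d : \H \to (\E\otimes\L^d)_x$, and then to obtain that surjectivity from the on-diagonal behaviour of the Bergman kernel recalled in Section~\ref{sec estimates for the bergman kernel}.

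\textbf{Step 1 (passing to the real structure).} First I would observe that, since $c_\X$ and $c_d$ are anti-holomorphic, the section $\bar s := c_d \circ s \circ c_\X$ is holomorphic whenever $s \in \H$ is; moreover, for $x \in M$ one has $\bar s(x) = c_d(s(x))$. Hence, given $v \in \R(\E\otimes\L^d)_x$ and a holomorphic section $s$ with $s(x) = v$, the averaged section $\tfrac12(s + \bar s)$ is real, and since $c_d$ fixes $v$ its value at $x$ is $\tfrac12(v + c_d(v)) = v$. So it is enough to find $d_1$, depending only on $\X$, $\E$, $\L$, such that for every $d \geq d_1$ and every $x \in \X$ the map $\ev_x^d : \H \to (\E\otimes\L^d)_x$ is onto, i.e.\ $\E\otimes\L^d$ is globally generated.

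\textbf{Step 2 (global generation via the Bergman kernel).} Let $B_d$ be the Bergman kernel, i.e.\ the Schwartz kernel of the orthogonal projection of $\Gamma(\E\otimes\L^d)$ onto $\H$ for the inner product \eqref{eq definition inner product}, and let $B_d(x,x) \in \End\!\left((\E\otimes\L^d)_x\right)$ be its diagonal value. Writing $B_d(x,x) = \sum_{i=1}^{N_d} s_i(x)\otimes s_i(x)^\ast$ for an orthonormal basis $(s_i)$ of $\H$, one identifies $B_d(x,x) = \ev_x^d \circ (\ev_x^d)^\ast$, a positive semi-definite Hermitian endomorphism of the fibre. Therefore $\ev_x^d$ is surjective exactly when $B_d(x,x)$ is invertible. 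Now the near-diagonal asymptotic expansion of $B_d$ (Section~\ref{sec estimates for the bergman kernel}; see \cite{MM2007}) shows that $B_d(x,x) = d^n b_0(x) + O(d^{n-1})$ with $b_0(x)$ a positive-definite endomorphism and with a remainder that is \emph{uniform in $x \in \X$} because $\X$ is compact. Hence there is $d_1 = d_1(\X,\E,\L)$ such that $B_d(x,x)$ is invertible for all $d \geq d_1$ and all $x \in \X$, which together with Step~1 yields the claim.

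\textbf{Alternative route and main difficulty.} Alternatively, one could argue cohomologically: surjectivity of $\ev_x^d$ is equivalent to the vanishing $H^1\!\left(\X, \mathscr{I}_x \otimes \E \otimes \L^d\right) = 0$ (via the exact sequence $0 \to \mathscr{I}_x \otimes \E \otimes \L^d \to \E \otimes \L^d \to (\E\otimes\L^d)_x \to 0$), and this holds for $d$ large by Serre vanishing, using ampleness of $\L$. Either way, the only real obstacle is the \emph{uniformity of $d_1$ in the point $x$}: one cannot allow $d_1$ to depend on $x$. With the Bergman-kernel argument this uniformity is automatic from the compactness of $\X$ and the uniform validity of the expansion there; with the cohomological argument one needs a uniform form of Serre vanishing. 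All remaining steps are formal.
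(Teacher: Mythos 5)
Your argument is correct and is essentially the proof behind the citation: the paper defers to \cite[cor.~3.10]{Let2016}, which obtains $0$-ampleness exactly as in your Step~2, from the identity $\ev_x^d\left(\ev_x^d\right)^* = E_d(x,x)$ together with the uniform diagonal expansion $E_d(x,x)=\left(\tfrac{d}{\pi}\right)^{n}\Id + O\!\left(d^{n-1}\right)$ (the same facts this paper records in eq.~\eqref{eq variance evx} and Corollary~\ref{cor diag estimates}), compactness giving a $d_1$ independent of $x$. Your Step~1 (complexify, then average with $c_d\circ s\circ c_\X$) is a harmless extra reduction; the cited proof works directly with $\R\H$ and the real fibres, where the same kernel identity holds.
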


\begin{lem}[see \cite{Let2016}, section~2.6]
\label{lem 0 ample implies nice}
If $\R \H$ is $0$-ample, then for almost every section $s \in \R \H$ (for the Lebesgue measure), the restriction of $s$ to $M$ vanishes transversally.
\end{lem}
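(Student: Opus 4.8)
The plan is to run the classical parametric transversality (Thom--Sard) argument, in which the $0$-ampleness hypothesis supplies exactly the fiberwise surjectivity needed to make the universal evaluation map submersive along the zero section. First I would set up the universal map: write $E \to M$ for the real vector bundle of rank $r$ whose fiber over $x \in M$ is $\R(\E \otimes \L^d)_x$, and let $0_E \subset E$ be its zero section. Since every $s \in \R\H$ restricts to a smooth section of $E$, the map
\[
\Psi : \R\H \times M \longrightarrow E, \qquad \Psi(s,x) = s(x) \in E_x ,
\]
is smooth and linear in $s$. By definition, $s_{/M}$ vanishes transversally if and only if the partial map $\Psi(s,\cdot) : M \to E$ is transverse to $0_E$, in which case $Z_s = \Psi(s,\cdot)^{-1}(0_E)$ is a smooth submanifold of $M$ of codimension $r$ (the empty set being allowed).

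The crucial step is to check that $\Psi \pitchfork 0_E$. Fix $(s,x)$ with $s(x)=0$ and identify $T_{\Psi(s,x)}E \cong T_x M \oplus E_x$, where the vertical summand $E_x$ is the tangent space to the fiber and is a complement of $T_x 0_E$. Differentiating $\Psi$ in the $\R\H$-directions with $x$ held fixed yields the linear map $\dot s \mapsto \dot s(x) \in E_x$, which is precisely $\ev_x^d$; by $0$-ampleness it is onto $E_x$. Hence $d\Psi_{(s,x)}$ already surjects onto the vertical subspace $E_x$, so $\operatorname{Im} d\Psi_{(s,x)} + T_{\Psi(s,x)} 0_E = T_{\Psi(s,x)} E$, i.e.\ $\Psi$ is transverse to $0_E$ at $(s,x)$; this holds at every point of $\Psi^{-1}(0_E)$.

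Finally I would invoke Sard's theorem. By the previous step $W := \Psi^{-1}(0_E)$ is a smooth submanifold of $\R\H \times M$ of dimension $N_d + n - r$, where $N_d = \dim_\R \R\H$; let $p : W \to \R\H$ be the restriction of the first projection. This is a smooth map between finite-dimensional second-countable manifolds, so by Sard's theorem its set of critical values has Lebesgue measure zero in $\R\H$. It is then a standard consequence of transversality that every regular value $s$ of $p$ satisfies $\Psi(s,\cdot) \pitchfork 0_E$: given $x \in Z_s$ one combines $\Psi \pitchfork 0_E$ at $(s,x)$ with surjectivity of $dp_{(s,x)}$ to see that the vertical part of $d(\Psi(s,\cdot))_x$ covers $E_x$ modulo $T_x 0_E$. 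Thus for every $s$ outside a set of measure zero the restriction $s_{/M}$ vanishes transversally, which is the assertion.

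The only delicate point, and the single place where the hypothesis enters, is the tangent-space bookkeeping along $0_E$ together with the identification of $0$-ampleness with surjectivity onto the vertical subspace of $T E$; everything else is the textbook Thom--Sard machinery (see e.g.\ Guillemin--Pollack, or \cite[section~2.6]{Let2016}). Note that finite-dimensionality of $\R\H$ is what allows Sard's theorem to be applied with no regularity scruples, and that compactness of $M$ is not actually needed for the transversality argument itself.
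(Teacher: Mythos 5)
Your argument is correct and is essentially the argument the paper relies on: the statement is only cited here from \cite{Let2016}, Section~2.6, where it is established by exactly this parametric transversality scheme (the evaluation map $\Psi$ is submersive along the zero section thanks to $0$-ampleness, the incidence set $W=\Psi^{-1}(0_E)$ is a manifold, and Sard's theorem applied to the projection $W\to\R\H$ gives transversal vanishing for almost every $s$, with empty zero sets counted as transversal). Your tangent-space bookkeeping and the use of finite-dimensionality of $\R\H$ for Sard are the right points to single out, so nothing is missing.
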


From now on, we only consider the case $d \geq d_1$, so that $\rmes{s}$ is a well-defined measure on $M$ for almost every $s \in \R \H$. Let $s_d$ be a standard Gaussian vector in $\R \H$, that is $s_d$ is a random vector whose distribution admits the density:
\begin{equation}
\label{eq Gaussian density}
s \mapsto \frac{1}{\sqrt{2\pi}^{N_d}} \exp\left(-\frac{1}{2}\Norm{s}^2\right)
\end{equation}
with respect to the normalized Lebesgue measure on $\R \H$. Here $\Norm{\cdot}$ is the norm associated with the Euclidean inner product \eqref{eq definition inner product}. Then $Z_{s_d}$ is almost surely a submanifold of codimension $r$ of $M$ and $\rmes{s_d}$ is almost surely a random positive Radon measure on $M$. To simplify notations, we set $Z_d = Z_{s_d}$ and $\rmes{d}=\rmes{s_d}$. For more details concerning Gaussian vectors, we refer to \cite[appendix~A]{Let2016} and the references therein.

Let $\phi \in \mathcal{C}^0(M)$, for every $s \in \R \H$ vanishing transversally, we set
\begin{equation}
\label{eq def linear stats}
\prsc{\rmes{s}}{\phi} = \int_{x \in Z_s} \phi(x) \rmes{s}.
\end{equation}
Such a $\phi$ will be referred to as a \emph{test-function}. Following \cite{SZ2010}, we call \emph{linear statistic} of degree $d$ associated with $\phi$ the real random variable $\prsc{\rmes{d}}{\phi}$.

\subsection{The correlation kernel}
\label{subsec correlation kernel}

Let $d \in \N$, then $(s_d(x))_{x \in \X}$ is a smooth centered Gaussian field on $\X$. As such, it is characterized by its correlation kernel. In this section, we recall that the correlation kernel of $s_d$ equals the Bergman kernel of $\E \otimes \L^d$. This is now a well-known fact (see \cite{BSZ2000a,GW2014b,SZ1999,SZ2010}) and was already used by the author in \cite{Let2016}.

Let us first recall some facts about random vectors (see for example \cite[appendix~A]{Let2016}). In this paper, we only consider centered random vectors (that is their expectation vanishes), so we give the following definitions in this restricted setting. Let $X_1$ and $X_2$ be centered random vectors taking values in Euclidean (or Hermitian) vector spaces $V_1$ and $V_2$ respectively, then we define their \emph{covariance operator} as:
\begin{equation}
\label{eq def covariance operator}
\cov{X_1}{X_2} : v \longmapsto \esp{X_1 \prsc{v}{X_2}}
\end{equation}
from $V_2$ to $V_1$. For every $v \in V_2$, we set $v^*=\prsc{\cdot}{v} \in V_2^*$. Then $\cov{X_1}{X_2} = \esp{X_1 \otimes X_2^*}$ is an element of $V_1 \otimes V_2^*$. The \emph{variance operator} of a centered random vector $X \in V$ is defined as $\var{X} = \cov{X}{X} = \esp{X \otimes X^*} \in V \otimes V^*$. We denote by $X \sim \mathcal{N}(\Lambda)$ the fact that $X$ is a centered Gaussian vector with variance operator $\Lambda$. Finally, we say that $X \in V$ is a \emph{standard} Gaussian vector if $X \sim \mathcal{N}(\Id)$, where $\Id$ is the identity operator on $V$. A standard Gaussian vector admits the density~\eqref{eq Gaussian density} with respect to the normalized Lebesgue measure on $V$.

Recall that $(\mathcal{E}\otimes \mathcal{L}^d) \boxtimes (\mathcal{E}\otimes \mathcal{L}^d)^*$ stands for the bundle $P_1^\star\left(\mathcal{E}\otimes \mathcal{L}^d\right) \otimes P_2^\star \left(\left(\mathcal{E}\otimes \mathcal{L}^d\right)^*\right)$ over $\mathcal{X}\times \mathcal{X}$, where $P_1$ (resp.~$P_2$) denotes the projection from $\X \times \X$ onto the first (resp.~second) factor. The covariance kernel of $(s_d(x))_{x \in \X}$ is the section of $(\mathcal{E}\otimes \mathcal{L}^d) \boxtimes (\mathcal{E}\otimes \mathcal{L}^d)^*$ defined by:
\begin{equation}
\label{eq def covariance kernel}
(x,y) \mapsto \cov{s_d(x)}{s_d(y)} = \esp{s_d(x) \otimes s_d(y)^*}.
\end{equation}

The orthogonal projection from $\R \Gamma(\E \otimes \L^d)$ onto $\R \H$ admits a Schwartz kernel (see \cite[thm.~B.2.7]{MM2007}). That is, there exists a unique section $E_d$ of $(\mathcal{E}\otimes \mathcal{L}^d) \boxtimes (\mathcal{E}\otimes \mathcal{L}^d)^*$ such that, for any $s \in \R \Gamma(\E \otimes \L^d)$, the projection of $s$ onto $\R \H$ is given by:
\begin{equation}
\label{eq def Schwartz kernel}
x \longmapsto \int_{y \in \X} E_d(x,y)\left(s(y)\right) \dx V_\X.
\end{equation}
This section is called the \emph{Bergman kernel} of $\E \otimes \L^d$. Note that $E_d$ is also the Schwartz kernel of the orthogonal projection from $\Gamma(\E\otimes\L^d)$ onto $\H$, for the Hermitian inner product \eqref{eq definition inner product}.
\begin{prop}
\label{prop covariance equals Bergman}
Let $d \in \N$ and let $s_d$ be a standard Gaussian vector in $\R \H$. Then, for all $x$ and $y \in \X$, we have:
\begin{equation}
\label{eq covariance equals Bergman}
\cov{s_d(x)}{s_d(y)} = \esp{s_d(x) \otimes s_d(y)^*} = E_d(x,y).
\end{equation}
\end{prop}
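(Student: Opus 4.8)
The plan is to diagonalise the Gaussian vector $s_d$ and then to recognise the classical expression of the Bergman kernel in terms of an orthonormal basis. First I would fix an orthonormal basis $(e_1,\dots,e_{N_d})$ of the Euclidean space $\left(\R\H,\prsc{\cdot}{\cdot}\right)$. Since every $e_i$, hence $s_d$, is a global holomorphic section, the field $(s_d(x))_{x\in\X}$ is defined on all of $\X$, and by definition of a standard Gaussian vector we may write $s_d = \sum_{i=1}^{N_d} a_i e_i$ with $a_1,\dots,a_{N_d}$ independent real standard Gaussian variables. Then, for all $x,y\in\X$,
\begin{equation*}
\cov{s_d(x)}{s_d(y)} = \esp{s_d(x)\otimes s_d(y)^*} = \sum_{i,j=1}^{N_d}\esp{a_i a_j}\, e_i(x)\otimes e_j(y)^* = \sum_{i=1}^{N_d} e_i(x)\otimes e_i(y)^*,
\end{equation*}
where I would use $\esp{a_i a_j}=\delta_{ij}$ together with $s_d(y)^* = \sum_{j} a_j\, e_j(y)^*$, the coefficients $a_j$ being real.

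It then remains to identify the section $(x,y)\mapsto \sum_i e_i(x)\otimes e_i(y)^*$ with the Bergman kernel $E_d$. This is the usual reproducing-kernel computation: for $s\in\R\Gamma(\E\otimes\L^d)$, the orthogonal projection of $s$ onto $\R\H$ is $\sum_i \prsc{s}{e_i}e_i$, and $\prsc{s}{e_i} = \int_{y\in\X} h_d\!\left(s(y),e_i(y)\right)\dx V_\X = \int_{y\in\X} e_i(y)^*\!\left(s(y)\right)\dx V_\X$. Substituting this into the defining identity~\eqref{eq def Schwartz kernel} and invoking the uniqueness of the Schwartz kernel gives $E_d(x,y) = \sum_i e_i(x)\otimes e_i(y)^*$, which together with the previous display establishes~\eqref{eq covariance equals Bergman}.

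The one point I would treat carefully, before running the computation above, is the compatibility of the real and Hermitian structures, since a priori the covariance of the $\R$-valued field is built from the real duals of the fibres $\R(\E\otimes\L^d)_x$, whereas $E_d$ is a section of $(\E\otimes\L^d)\boxtimes(\E\otimes\L^d)^*$ defined via the Hermitian metric. The reconciliation uses the facts, recorded in Section~\ref{subsec general setting}, that $c_d^\star h_d = \overline{h_d}$ and that $\dx V_\X$ is $c_\X$-invariant: the $\C$-antilinear involution $\sigma : s\mapsto c_d\circ s\circ c_\X$ of $\H$ then sends $\prsc{\cdot}{\cdot}$ to its complex conjugate and has fixed-point set $\R\H$, so that $\H = \R\H\oplus i\,\R\H$ and any $\R$-orthonormal basis of $\R\H$ is also a Hermitian orthonormal basis of $\H$; in particular $\prsc{e_i}{e_j}=\delta_{ij}$ holds in $\H$, and $E_d$ is at once the Schwartz kernel of the real projection $\R\Gamma(\E\otimes\L^d)\to\R\H$ and of the complex projection $\Gamma(\E\otimes\L^d)\to\H$. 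Apart from this identification there is no genuine obstacle; the argument is the standard one used, e.g., in \cite{BSZ2000a,Let2016,SZ1999,SZ2010}.
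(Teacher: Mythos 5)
Your proof is correct and follows essentially the same route as the paper: both arguments reduce to checking that the covariance kernel reproduces sections of $\R \H$ and annihilates their orthogonal complement, and then invoking the uniqueness of the Schwartz kernel of the orthogonal projection. The only difference is cosmetic: you expand $s_d$ in an orthonormal basis and pass through the formula $E_d(x,y)=\sum_i e_i(x)\otimes e_i(y)^*$ (the paper's Remark~\ref{rem Bergman kernel basis}), whereas the paper verifies the reproducing property of $\esp{s_d(x)\otimes s_d(y)^*}$ directly from $\var{s_d}=\Id$ without choosing a basis; your closing discussion of the real versus Hermitian structures matches the paper's observation that $E_d$ is simultaneously the kernel of the real and of the complex projection.
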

\begin{proof}
We will prove that $(x,y)\mapsto \esp{s_d(x)\otimes s_d(y)^*}$ is the kernel of the orthogonal projection onto $\R \H$, i.e.~satisfies \eqref{eq def Schwartz kernel}. Let $s \in \R \Gamma(\E \otimes \L^d)$, then
\begin{equation*}
\int_{y \in \X} \esp{s_d(x)\otimes s_d(y)^*}\left(s(y)\right) \dx V_\X = \esp{s_d(x) \int_{y\in \X} s_d(y)^*(s(y))\dx V_\X} = \esp{s_d(x)\prsc{s}{s_d}}.
\end{equation*}
If $s$ is orthogonal to $\R \H$ this quantity equals $0$. If $s \in \R \H$ then
\begin{equation*}
\esp{s_d(x) \prsc{s}{s_d}} = \esp{\ev_x^d(s_d) s_d^*(s)} = \ev_x^d \left(\esp{s_d \otimes s_d^*}(s)\right) = \ev_x^d (\var{s_d} s) = \ev_x^d(s) = s(x)
\end{equation*}
since $\var{s_d} = \Id$. Thus, for any $s \in \R \Gamma(\E \otimes \L^d)$, $\esp{s_d(x)\prsc{s}{s_d}}$ is the value at $x$ of the orthogonal projection of $s$ on $\R \H$. Finally, the correlation kernel of $(s_d(x))_{x \in \X}$ satisfies \eqref{eq def Schwartz kernel} and equals $E_d$.
\end{proof}

\begin{rem}
\label{rem Bergman kernel basis}
If $(s_{1,d},\dots,s_{N_d,d})$ is any orthonormal basis of $\R \H$, we have:
\begin{equation}
\label{eq Bergman in coordinates}
E_d : (x,y) \longmapsto \sum_{i=1}^{N_d} s_{i,d}(x) \otimes s_{i,d}(y)^*.
\end{equation}
\end{rem}

\begin{rem}
\label{rem Bergman kernel splitting}
If $\E$ is the trivial bundle $\X \times \C^r \to \X$ then $E_d$ splits as $E_d = \Id \otimes\, e_d$, where $\Id$ is the identity of $\C^r$ and $e_d$ is the Bergman kernel of $\L^d$. There is no such splitting in general.
\end{rem}

\begin{rem}
\label{rem Bergman kernel cov complex}
In a complex setting, $E_d$ is also the covariance kernel of the centered Gaussian field associated with a standard complex Gaussian vector in $\H$.
\end{rem}

The Bergman kernel also describes the distribution of the derivatives of $s_d$. Let $\nabla^d$ denote any connection on $\E \otimes \L^d\to \X$. Then $\nabla^d$ induces a connection $(\nabla^d)^*$ on $(\E \otimes \L^d)^*\to \X$, which is defined for all $\eta \in \Gamma\left(\left(\E \otimes \L^d\right)^*\right)$ by:
\begin{equation}
\label{eq def dual connection}
\forall s \in \Gamma\left(\E \otimes \L^d\right),\ \forall x \in \X, \qquad d_x\prsc{s}{\eta} = \prsc{\nabla^d_x s}{\eta(x)}+\prsc{s(x)}{(\nabla^d)_x^*\eta},
\end{equation}
where $\prsc{\cdot}{\cdot}$ is the duality pairing. Let $s \in \Gamma\left(\E \otimes \L^d\right)$, then $s^\diamond: x \mapsto s(x)^* = \prsc{\cdot}{s(x)}$ defines a smooth section of $\left(\E \otimes \L^d\right)^*$. Note that we use the notation $s^\diamond$ because $s^*$ already denotes $\prsc{\cdot}{s}$ which is a linear form on $\Gamma\left(\E \otimes \L^d\right)$. We want to understand the relation between $(\nabla^d)_x^*s^\diamond : T_x\X \to \left(\E \otimes \L^d\right)_x^*$ and $\left(\nabla^d_xs\right)^*$. Recall that $\left(\nabla^d_xs\right)^* = \prsc{\cdot}{\nabla^d_xs}$, where the inner product is the one on $\left(\E \otimes \L^d\right)_x \otimes T^*_x\X$ induced by $h_d$ and $g_\C$. That is, $\left(\nabla^d_xs\right)^*$ is the adjoint operator of $\nabla^d_xs : T_x \X \to \left(\E \otimes \L^d\right)_x$. In order to get a nice relation, we have to assume that $\nabla^d$ is a metric connection, i.e.~that:
\begin{equation}
\label{eq def metric connection}
\forall s,t \in \Gamma\left(\E \otimes \L^d\right),\ \forall x \in \X, \qquad d_x\prsc{s}{t} = \prsc{\nabla^d_xs}{t(x)} + \prsc{s(x)}{\nabla^d_xt}.
\end{equation}

\begin{lem}
\label{lem dual connection}
Let $\nabla^d$ be a metric connection on $\E \otimes \L^d$, let $s \in \Gamma\left(\E \otimes \L^d\right)$ and let $x \in \X$. Then for all $v \in T_x\X$,
\begin{equation}
\label{eq lem dual connection}
(\nabla^d)^*_x s^\diamond \cdot v = \left(\nabla_x^ds\cdot v\right)^* = v^* \circ \left(\nabla_x^ds\right)^*.
\end{equation}
\end{lem}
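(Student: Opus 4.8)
The plan is to unwind the two defining identities \eqref{eq def dual connection} and \eqref{eq def metric connection} and compare them term by term. First, the second equality in \eqref{eq lem dual connection} is pure linear algebra: if $L : V \to W$ is a linear map between Euclidean (or Hermitian) spaces and $v \in V$, then $(Lv)^* = v^* \circ L^*$ as elements of $W^*$, since for every $w \in W$ one has $v^*(L^*w) = \prsc{L^*w}{v} = \prsc{w}{Lv} = (Lv)^*(w)$. Applying this with $V = T_x\X$, $W = \left(\E \otimes \L^d\right)_x$ and $L = \nabla^d_x s$ gives $\left(\nabla_x^d s\cdot v\right)^* = v^* \circ \left(\nabla_x^d s\right)^*$, so it remains to prove the first equality in \eqref{eq lem dual connection}.

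For that I would first record two elementary observations, both built into the definition of $s^\diamond$ and using Notation~\ref{ntn bracket}. Let $t \in \Gamma\left(\E \otimes \L^d\right)$ be an arbitrary smooth section. For any $y \in \X$ the duality pairing of $t(y)$ with $s^\diamond(y) = s(y)^* \in \left(\E \otimes \L^d\right)_y^*$ is by definition $\prsc{t(y)}{s(y)}$, the fiberwise inner product; hence the functions $\prsc{t}{s^\diamond}$ and $\prsc{t}{s}$ on $\X$ coincide, and so $d_x\prsc{t}{s^\diamond} = d_x\prsc{t}{s}$ for every $x \in \X$. Similarly, for any $x \in \X$ and $v \in T_x\X$ the duality pairing $\bigl(s^\diamond(x)\bigr)\!\left(\nabla^d_x t \cdot v\right)$ equals $\prsc{\nabla^d_x t \cdot v}{s(x)}$, so $\prsc{\nabla^d_x t}{s^\diamond(x)} = \prsc{\nabla^d_x t}{s(x)}$ as elements of $T^*_x\X$ (the left-hand bracket being a duality pairing, the right-hand one the fiberwise inner product).

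Then I would write \eqref{eq def dual connection} with $\eta = s^\diamond$ and the section $t$ in place of $s$, and \eqref{eq def metric connection} for the pair $(t,s)$, both evaluated at $x$, and subtract. By the two observations the left-hand sides are equal and so are the first terms on the right, which leaves
\[
\prsc{t(x)}{(\nabla^d)^*_x s^\diamond \cdot v} = \prsc{t(x)}{\nabla^d_x s \cdot v} = \left(\nabla^d_x s\cdot v\right)^*\!\bigl(t(x)\bigr)
\]
for all $v \in T_x\X$, where the leftmost bracket is the duality pairing of $\left(\E \otimes \L^d\right)_x$ with its dual. Since every vector of the fiber $\left(\E \otimes \L^d\right)_x$ is of the form $t(x)$ for some smooth section $t$ (take a local frame times a bump function), this forces $(\nabla^d)^*_x s^\diamond \cdot v = \left(\nabla^d_x s\cdot v\right)^*$, which combined with the first paragraph yields \eqref{eq lem dual connection}. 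There is no genuine obstacle here; the only point requiring care is the systematic bookkeeping of Notation~\ref{ntn bracket} — distinguishing the duality pairing from the fiberwise inner product — together with the observation that it is precisely the identity $s(x)^*(w) = \prsc{w}{s(x)}$ which makes the ``$\prsc{\nabla^d_x t}{\cdot}$'' terms of the two defining relations agree.
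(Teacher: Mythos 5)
Your proof is correct and follows essentially the same route as the paper: both unwind the identity $\prsc{t}{s^\diamond}=\prsc{t}{s}$, apply the dual-connection definition and metric compatibility, cancel the common term $\prsc{\nabla^d_xt\cdot v}{s(x)}$, and conclude by arbitrariness of $t(x)$, with the second equality handled as the same linear-algebra fact about adjoints. The only difference is presentational: you make explicit the point (left implicit in the paper) that every fiber vector is $t(x)$ for some smooth section.
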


\begin{proof}
First, for all $s,t \in \Gamma\left(\E \otimes \L^d\right)$ and all $x \in \X$,
\begin{equation}
\label{eq truc1}
\prsc{t(x)}{s(x)} = \prsc{t(x)}{s(x)^*} = \prsc{t(x)}{s^\diamond(x)}.
\end{equation}
Then, by taking the derivative of~\eqref{eq truc1}, we get that for all $s,t \in \Gamma\left(\E \otimes \L^d\right)$, for all $x \in \X$ and $v \in T_x\X$:
\begin{equation*}
\prsc{t(x)}{\nabla^d_xs\cdot v}+\prsc{\nabla^d_xt\cdot v}{s(x)} = d_x\left(\prsc{t}{s}\right)\cdot v = \prsc{t(x)}{(\nabla^d)^*_xs^\diamond\cdot v}+ \prsc{\nabla^d_xt\cdot v}{s^\diamond(x)}.
\end{equation*}
The first equality comes from the fact that $\nabla^d$ is metric (see~\eqref{eq def metric connection}) and the second from the definition of the dual connection~\eqref{eq def dual connection}. Besides $\prsc{\nabla^d_xt\cdot v}{s^\diamond(x)} =\prsc{\nabla^d_xt\cdot v}{s(x)}$, hence for all $s \in \Gamma\left(\E \otimes \L^d\right)$ and all $x \in \X$ we have:
\begin{equation*}
\forall v \in T_x\X, \qquad (\nabla^d)^*_xs^\diamond \cdot v = \left(\nabla^d_xs\cdot v\right)^*.
\end{equation*}
Recall that $\left(\nabla_x^ds\right)^*$ is the adjoint of $\nabla_x^ds$. Hence for all $v \in T_x\X$ and all $\zeta \in \left(\E \otimes \L^d\right)_x$,
\begin{equation*}
\prsc{\zeta}{\nabla^d_xs\cdot v} = \prsc{\left(\nabla^d_xs\right)^*\zeta}{v} = v^* \circ \left(\nabla^d_xs\right)^* (\zeta),
\end{equation*}
which proves the second equality in~\eqref{eq lem dual connection}.
\end{proof}

\begin{rem}
\label{rem metric connection}
Conversely, one can show that a connection satisfying the first equality in eq.~\eqref{eq lem dual connection} for every $s,x$ and $v$ is metric.
\end{rem}

From now on, we assume that $\nabla^d$ is metric. Then $\nabla^d$ induces a natural connection $\nabla_1^d$ on $P_1^\star(\E \otimes \L^d) \to \X \times \X$ whose partial derivatives are: $\nabla^d$ with respect to the first variable, and the trivial connection with respect to the second. Similarly, $(\nabla^d)^*$ induces a connection $\nabla_2^d$ on $P_2^\star\left((\E \otimes \L^d)^*\right)$ and $\nabla_1^d \otimes \Id + \Id \otimes \nabla_2^d$ is a connection on $(\E \otimes \L^d) \boxtimes (\E \otimes \L^d)^*$. We denote by $\partial_x$ (resp. $\partial_y$) its partial derivative with respect to the first (resp. second) variable. By taking partial derivatives in \eqref{eq covariance equals Bergman}, we get the following.

\begin{cor}
\label{cor variance of the 1-jet}
Let $d \in \N$, let $\nabla^d$ be a metric connection on $\E \otimes \L^d$ and let $s_d$ be a standard Gaussian vector in $\R \H$. Then, for all $x$ and $y \in \X$, for all $(v,w) \in T_x\X \times T_y\X$, we have:
\begin{align}
\label{cov bergman 10}
\cov{\nabla^d_xs \cdot v}{s(y)} &= \esp{\left(\nabla^d_xs\cdot v\right) \otimes s(y)^*} = \partial_x E_d(x,y)\cdot v,\\
\label{cov bergman 01}
\cov{s(x)}{\nabla^d_ys\cdot w} &= \esp{s(x) \otimes \left(\nabla^d_ys\cdot w\right)^*} = \partial_y E_d(x,y)\cdot w,\\
\label{cov bergman 11}
\cov{\nabla^d_xs\cdot v}{\nabla^d_ys\cdot w} &= \esp{\left(\nabla^d_xs\cdot v\right) \otimes \left(\nabla^d_ys\cdot w\right)^*} = \partial_x\partial_y E_d(x,y)\cdot (v,w).
\end{align}
\end{cor}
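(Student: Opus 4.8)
The plan is to obtain the three formulas by covariantly differentiating the Bergman kernel identity~\eqref{eq covariance equals Bergman} --- with respect to $x$, to $y$, and to both --- and to carry out the differentiation in a fixed orthonormal basis of $\R\H$, so as to avoid any discussion of exchanging $\esp{\cdot}$ with covariant derivatives. The one genuinely delicate point will be that the adjoint operation $s\mapsto s^\diamond$ has to interact correctly with the dual connection $(\nabla^d)^*$, which is precisely the content of Lemma~\ref{lem dual connection}.

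Concretely, I would first fix an orthonormal basis $(s_{1,d},\dots,s_{N_d,d})$ of $\R\H$ and write $s_d=\sum_{i=1}^{N_d}\xi_i s_{i,d}$ with $(\xi_i)_{1\leq i\leq N_d}$ independent real standard Gaussian variables; this is just the coordinate form of a standard Gaussian vector in $\R\H$. Since $s\mapsto\nabla^d_xs\cdot v$ and $s\mapsto s(y)^\diamond$ are $\R$-linear, the $1$-jet of $s_d$ reads $\nabla^d_xs_d\cdot v=\sum_i\xi_i\,(\nabla^d_xs_{i,d}\cdot v)$ and $s_d(y)^\diamond=\sum_j\xi_j\,s_{j,d}(y)^\diamond$, and Lemma~\ref{lem dual connection} gives $(\nabla^d)^*_ys_d^\diamond\cdot w=\sum_j\xi_j\,(\nabla^d_ys_{j,d}\cdot w)^*$. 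Each of the random vectors $\nabla^d_xs_d\cdot v$, $s_d(x)$, $\nabla^d_ys_d\cdot w$ is a centered Gaussian, being a linear image of $s_d$, so their covariance operators are defined; and each of the expressions above is a finite combination of deterministic (co)vectors with square-integrable coefficients, so exchanging $\esp{\cdot}$ with the sums is harmless. Using $\esp{\xi_i\xi_j}=\delta_{ij}$ I would then get, for instance,
\begin{equation*}
\esp{(\nabla^d_xs_d\cdot v)\otimes s_d(y)^*}=\sum_{i=1}^{N_d}(\nabla^d_xs_{i,d}\cdot v)\otimes s_{i,d}(y)^*,
\end{equation*}
together with the analogous identities for $\esp{s_d(x)\otimes(\nabla^d_ys_d\cdot w)^*}$ and $\esp{(\nabla^d_xs_d\cdot v)\otimes(\nabla^d_ys_d\cdot w)^*}$.

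On the Bergman kernel side I would start from the basis expansion $E_d(x,y)=\sum_i s_{i,d}(x)\otimes s_{i,d}(y)^*$ of Remark~\ref{rem Bergman kernel basis}. Since $\nabla_1^d$ (resp.\ $\nabla_2^d$) is the trivial connection in the second (resp.\ first) variable, the Leibniz rule yields $\partial_xE_d(x,y)\cdot v=\sum_i(\nabla^d_xs_{i,d}\cdot v)\otimes s_{i,d}(y)^*$; applying Lemma~\ref{lem dual connection} to the second factor then gives $\partial_yE_d(x,y)\cdot w=\sum_i s_{i,d}(x)\otimes(\nabla^d_ys_{i,d}\cdot w)^*$ and $\partial_x\partial_yE_d(x,y)\cdot(v,w)=\sum_i(\nabla^d_xs_{i,d}\cdot v)\otimes(\nabla^d_ys_{i,d}\cdot w)^*$. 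Comparing these with the expectations computed above, and recalling that $\cov{X_1}{X_2}=\esp{X_1\otimes X_2^*}$ by~\eqref{eq def covariance operator}, would establish~\eqref{cov bergman 10}, \eqref{cov bergman 01} and~\eqref{cov bergman 11}.

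The computation is routine; the part I expect to be the actual crux is the bookkeeping of $^*$ versus $^\diamond$ and of the dual connection, that is the repeated use of Lemma~\ref{lem dual connection} --- without it the right-hand sides of~\eqref{cov bergman 01} and~\eqref{cov bergman 11} simply could not be written in terms of $\nabla^d_ys$. One could also differentiate~\eqref{eq covariance equals Bergman} directly, but that would require separately justifying the commutation of covariant differentiation with the expectation; passing through a basis sidesteps this, since all the sums involved are finite.
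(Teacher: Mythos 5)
Your argument is correct, and it reaches the same three identities with the same key ingredient as the paper, namely Lemma~\ref{lem dual connection} to rewrite $(\nabla^d)^*_y s^\diamond\cdot w$ as $(\nabla^d_y s\cdot w)^*$; where you differ is in the justification device. The paper simply applies $\partial_x$ and $\partial_y$ to the identity~\eqref{eq covariance equals Bergman} (rewritten as $E_d(x,y)=\esp{s(x)\otimes s^\diamond(y)}$ for the $y$-derivative), commuting covariant differentiation with $\esp{\cdot}$ without comment, and then invokes Lemma~\ref{lem dual connection}. You instead expand $s_d=\sum_i\xi_i s_{i,d}$ in an orthonormal basis, use $\esp{\xi_i\xi_j}=\delta_{ij}$, and differentiate the finite-sum expression~\eqref{eq Bergman in coordinates} of Remark~\ref{rem Bergman kernel basis} term by term, so the interchange of expectation and differentiation becomes a triviality about finite sums. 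This buys a fully explicit justification of the step the paper leaves implicit, at the modest cost of leaning on the basis formula for $E_d$ (stated without proof in Remark~\ref{rem Bergman kernel basis}, though it follows from~\eqref{eq covariance equals Bergman} or directly from~\eqref{eq def Schwartz kernel}) and of the small bookkeeping point, which you handle correctly, that pulling real coefficients $\xi_j$ out of $(\cdot)^*$ is legitimate even though $v\mapsto v^*$ is anti-linear. Both routes also use the same observation that the first equality on each line is just the definition~\eqref{eq def covariance operator} of the covariance operator, so nothing is missing.
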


\begin{proof}
The first equality of each line is simply the definition of the covariance operator. By applying $\partial_x$ to~\eqref{eq covariance equals Bergman} we get:
\begin{equation*}
\esp{\left(\nabla^d_xs\right) \otimes s(y)^*} = \partial_x E_d(x,y),
\end{equation*}
which proves~\eqref{cov bergman 10}. We can rewrite~\eqref{eq covariance equals Bergman} as: $\forall x,y \in \X$, $E_d(x,y) = \esp{s(x) \otimes s^\diamond(y)}$. By applying $\partial_y$ to this equality, we get:
\begin{equation*}
\esp{s(x) \otimes \left(\nabla^d\right)^*_y s^\diamond} = \partial_y E_d(x,y).
\end{equation*}
Then we apply this operator to $w \in T_y\X$, and we obtain~\eqref{cov bergman 01} by Lemma~\ref{lem dual connection}. The proof of~\eqref{cov bergman 11} is similar.
\end{proof}

We would like to write that $\partial_y E_d(x,y)$ is $\cov{s(x)}{\nabla^d_ys}=\esp{s(x) \otimes \left(\nabla^d_ys\right)^*}$. Unfortunately, this can not be true since
\begin{align*}
\partial_y E_d(x,y) &\in T^*_y\X \otimes \left(\E \otimes \L^d\right)_x \otimes \left(\E \otimes \L^d\right)_y^*\\
\intertext{while} \esp{s(x) \otimes \left(\nabla^d_ys\right)^*} &\in T_y\X \otimes \left(\E \otimes \L^d\right)_x \otimes \left(\E \otimes \L^d\right)_y^*.
\end{align*}
Let $\partial_y^\sharp E_d(x,y) \in T_y\X \otimes \left(\E \otimes \L^d\right)_x \otimes \left(\E \otimes \L^d\right)_y^*$ be defined by:
\begin{equation}
\label{eq def sharp}
\forall w \in T_y\X, \qquad \partial_y^\sharp E_d(x,y) \cdot w^* = \partial_y E_d(x,y) \cdot w.
\end{equation}
Similarly, let $\partial_x\partial_y^\sharp E_d(x,y) \in T^*_x\X \otimes T_y\X \otimes \left(\E \otimes \L^d\right)_x \otimes \left(\E \otimes \L^d\right)_y^*$ be defined by:
\begin{equation}
\label{eq def sharp 2}
\forall (v,w) \in T_x\X \times T_y\X, \qquad \partial_x\partial_y^\sharp E_d(x,y) \cdot (v,w^*) = \partial_x\partial_y E_d(x,y)\cdot (v,w).
\end{equation} 
Then by Lemma~\ref{lem dual connection} and Corollary~\ref{cor variance of the 1-jet}, we have the following.
\begin{cor}
\label{cor variance of the 1-jet sharp}
Let $d \in \N$, let $\nabla^d$ be a metric connection on $\E \otimes \L^d$ and let $s_d$ be a standard Gaussian vector in $\R \H$. Then, for all $x$ and $y \in \X$, we have:
\begin{align}
\label{cov bergman 10 sharp}
\cov{\nabla^d_xs}{s(y)} &= \esp{\nabla^d_xs \otimes s(y)^*} = \partial_x E_d(x,y),\\
\label{cov bergman 01 sharp}
\cov{s(x)}{\nabla^d_ys} &= \esp{s(x) \otimes \left(\nabla^d_ys\right)^*} = \partial_y^\sharp E_d(x,y),\\
\label{cov bergman 11 sharp}
\cov{\nabla^d_xs}{\nabla^d_ys} &= \esp{\nabla^d_xs \otimes \left(\nabla^d_ys\right)^*} = \partial_x\partial_y^\sharp E_d(x,y).
\end{align}
\end{cor}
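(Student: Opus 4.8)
The plan is to obtain all three identities from Corollary~\ref{cor variance of the 1-jet} together with Lemma~\ref{lem dual connection}, by unwinding the definition~\eqref{eq def covariance operator} of the covariance operator and the definitions~\eqref{eq def sharp}, \eqref{eq def sharp 2} of the sharp operators. The guiding observation is that the operator-valued statements of Corollary~\ref{cor variance of the 1-jet sharp} are the ``assembled'' forms of the scalar-argument ones of Corollary~\ref{cor variance of the 1-jet}: evaluating both sides at a tangent vector $v \in T_x\X$, respectively contracting against a cotangent vector $w^* \in T^*_y\X$, should return the already-established identities.

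For~\eqref{cov bergman 10 sharp}, both $\cov{\nabla^d_xs}{s(y)} = \esp{\nabla^d_xs \otimes s(y)^*}$ and $\partial_x E_d(x,y)$ lie in $\left(\E \otimes \L^d\right)_x \otimes T^*_x\X \otimes \left(\E \otimes \L^d\right)_y^*$, and here no sharp is needed because $\nabla^d_xs$ is genuinely $T^*_x\X$-valued and the covariance operator does not dualize a derivative. Evaluating at $v \in T_x\X$ turns the left-hand side into $\esp{(\nabla^d_xs\cdot v) \otimes s(y)^*} = \cov{\nabla^d_xs\cdot v}{s(y)}$ and the right-hand side into $\partial_x E_d(x,y)\cdot v$; these agree by~\eqref{cov bergman 10}, and since $v$ is arbitrary the two operators coincide.

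The genuinely new point occurs for~\eqref{cov bergman 01 sharp}. Since $\left(\nabla^d_ys\right)^*$ is the adjoint of $\nabla^d_ys:T_y\X \to \left(\E\otimes\L^d\right)_y$, it lies in $T_y\X \otimes \left(\E\otimes\L^d\right)_y^*$, so $\esp{s(x)\otimes(\nabla^d_ys)^*}$ and $\partial_y^\sharp E_d(x,y)$ both lie in $\left(\E\otimes\L^d\right)_x\otimes T_y\X\otimes\left(\E\otimes\L^d\right)_y^*$ --- whereas the honest derivative $\partial_y E_d(x,y)$ is $T^*_y\X$-valued, which is exactly why $\partial_y^\sharp$ was introduced. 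Contracting the $T_y\X$-factor against an arbitrary $w^* \in T^*_y\X$ and using $w^*\circ(\nabla^d_ys)^* = (\nabla^d_ys\cdot w)^*$ from Lemma~\ref{lem dual connection} (see~\eqref{eq lem dual connection}), the left-hand side becomes $\esp{s(x)\otimes(\nabla^d_ys\cdot w)^*} = \cov{s(x)}{\nabla^d_ys\cdot w}$, which by~\eqref{cov bergman 01} equals $\partial_y E_d(x,y)\cdot w = \partial_y^\sharp E_d(x,y)\cdot w^*$ by the very definition~\eqref{eq def sharp}. As $w^*$ is arbitrary, the two tensors agree. Finally, \eqref{cov bergman 11 sharp} follows in the same manner, evaluating at $v\in T_x\X$ and contracting against $w^*\in T^*_y\X$ simultaneously, and invoking~\eqref{cov bergman 11} together with~\eqref{eq def sharp 2}.

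I do not expect a real obstacle here: the content is entirely the bookkeeping of the various tensor spaces and of the musical isomorphism $v\mapsto v^*$ on $T_y\X$ induced by $g_\C$. The only point that needs care --- and the sole reason for the sharp operators --- is that whenever a derivative in the second variable is present one must contract against $T^*_y\X$ rather than $T_y\X$, so that ``derivative of the kernel in $y$'' and ``covariance involving a derivative of the section in $y$'' land in compatible spaces; Lemma~\ref{lem dual connection} is precisely what identifies the two.
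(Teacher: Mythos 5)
Your argument is correct and is exactly the paper's route: the paper states this corollary as an immediate consequence of Lemma~\ref{lem dual connection} and Corollary~\ref{cor variance of the 1-jet}, and your proof simply makes explicit the evaluation at $v\in T_x\X$, the contraction of the $T_y\X$-factor against $w^*\in T^*_y\X$ via $w^*\circ(\nabla^d_ys)^*=(\nabla^d_ys\cdot w)^*$, and the definitions~\eqref{eq def sharp}--\eqref{eq def sharp 2}. No gap; this is the intended bookkeeping.
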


\section{Estimates for the Bergman kernel}
\label{sec estimates for the bergman kernel}

The goal of this section is to recall the estimates we need for the Bergman kernel. Most of what follows can be found in \cite{MM2007}, with small additions from \cite{MM2013} and \cite{MM2015}. The first to use this kind of estimates in a random geometry context were Shiffman and Zelditch \cite{SZ1999}. They used the estimates from \cite{Zel1998} for the related Szegö kernel (see also \cite{BSZ2000a,SZ2008}). Catlin \cite{Cat1999} proved similar estimates for the Bergman kernel independently.

In order to state the near-diagonal estimates for the Bergman kernel, we first need to choose preferred charts on $\X$, $\E$ and $\L$ around any point in $M$. This is done in Section~\ref{subsec real normal trivialization}. Unlike our main reference \cite{MM2007}, we are only concerned with a neighborhood of the real locus of $\X$, but we need to check that these charts are well-behaved with respect to the real structures. Sections~\ref{subsec near diagonal estimates}, \ref{subsec diagonal estimates} and \ref{subsec far off diagonal estimates} state respectively near-diagonal, diagonal and far off-diagonal estimates for $E_d$.

\subsection{Real normal trivialization}
\label{subsec real normal trivialization}

In this section, we define preferred local trivializations for $\E$ and $\L$ around any point in $M$. We also prove that these trivializations are compatible with the real and metric structures.

Let $R >0$ be such that the injectivity radius of $\X$ is larger than $2 R$. Let $x_0 \in M$, then the exponential map $\exp_{x_0} : T_{x_0}\X \to \X$ at $x_0$ is a diffeomorphism from the ball $B_{T_{x_0}\X}(0,2R) \subset T_{x_0}\X$ to the geodesic ball $B_\X(x_0,2R) \subset \X$. Note that this diffeomorphism is not biholomorphic in general.

\begin{ntn}
\label{ntn open ball}
Here and in the sequel, we always denote by $B_A(a,R)$ the open ball of center $a$ and radius $R>0$ in the metric space $A$.
\end{ntn}

Since $c_\X$ is an isometry (see Sect.~\ref{subsec general setting}), we have that $c_\X \circ \exp_{x_0} = \exp_{x_0} \circ \, d_{x_0}c_\X$. Then $\exp_{x_0}$ sends $T_{x_0}M = \ker\left(d_{x_0}c_\X - \Id \right)$ to $M$ and agrees on $T_{x_0}M$ with the exponential map at $x_0$ in $(M,g)$. By restriction, we get a diffeomorphism from $B_{T_{x_0}M}(0,2R) \subset T_{x_0}M$ to the geodesic ball $B_M(x_0,2R) \subset M$. Moreover, on $B_{T_{x_0}\X}(0,2R)$ we have:
\begin{equation}
\label{eq real equivariance exp}
d_{x_0}c_\X = (\exp_{x_0})^{-1} \circ c_\X \circ \exp_{x_0}.
\end{equation}
We say that $\exp_{x_0}$ defines a \emph{real normal chart} about $x_0$.

Since $i\cdot T_{x_0}M = \ker\left(d_{x_0}c_\X + \Id \right)$, we have $T_{x_0}\X = T_{x_0}M \oplus i\cdot T_{x_0}M$. Note that $T_{x_0}M$ and $i\cdot T_{x_0}M$ are orthogonal for $g_{x_0}$, since these are distinct eigenspaces of an isometric involution. Moreover, we know from Sect.~\ref{subsec general setting} that $c_\X^\star g_\C = \overline{g_\C}$. This implies that $(g_\C)_{x_0}$ takes real values on $T_{x_0}M\times T_{x_0}M$, i.e.~the restrictions to $T_{x_0}M$ of $(g_\C)_{x_0}$ and $g_{x_0}$ are equal. Thus, $(g_\C)_{x_0}$ is the sesquilinear extension of $g_{x_0}$ restricted to $T_{x_0}M$. Let $\mathcal{I}$ be an isometry from $T_{x_0}M$ to $\R^n$ with its standard Euclidean structure, $\mathcal{I}$ extends as a $\C$-linear isometry $\mathcal{I}_\C : T_{x_0}\X \to \C^n$, such that $\mathcal{I}_\C \circ d_{x_0}c_\X \circ \mathcal{I}_\C^{-1}$ is the complex conjugation in $\C^n$. Thus, $\exp_{x_0} \circ\, \mathcal{I}_\C^{-1}:B_{\C^n}(0,2R) \to B_{\X}(x_0,2R)$ defines normal coordinates that induce normal coordinates $B_{\R^n}(0,2R) \to B_M(x_0,2R)$ and such that $\mathcal{I}_\C \circ (\exp_{x_0})^{-1}\circ c_\X \circ \exp_{x_0} \circ\, \mathcal{I}_\C^{-1}$ is the complex conjugation in $\C^n$. Such coordinates are called \emph{real normal coordinates} about $x_0$.

We can now trivialize $\E$ over $B_\X(x_0,2R)$. Let $\nabla^\E$ denote the Chern connection of $\E$. We identify the fiber at $\exp_{x_0}(z) \in B_{\X}(x_0,2R)$ with $\E_{x_0}$, by parallel transport with respect to $\nabla^\E$ along the geodesic from $x_0$ to $\exp_{x_0}(z)$, defined by $t \mapsto \exp_{x_0}(tz)$ from $[0,1]$ to $\X$ (cf.~\cite[sect.~1.6]{MM2007} and~\cite{MM2013}). This defines a bundle map $\varphi_{x_0} : B_{T_{x_0}\X}(0,2R) \times \E_{x_0} \to \E_{/ B_\X(x_0,2R)}$ that covers $\exp_{x_0}$. We say that $\varphi_{x_0}$ is the \emph{real normal trivialization} of $\E$ over $B_\X(x_0,2R)$.

Since $x_0 \in M$, $c_\E(\E_{x_0})=\E_{x_0}$ and we denote by $c_{\E,x_0}$ the restriction of $c_\E$ to $\E_{x_0}$. Then $(d_{x_0}c_\X,c_{\E,x_0})$ is a real structure on $B_{T_{x_0}\X}(0,2R) \times \E_{x_0}$ compatible with the real structure on $B_{T_{x_0}\X}(0,2R)$. We want to check that $\varphi_{x_0}$ is well-behaved with respect to the real structures, i.e.~that for all $z \in B_{T_{x_0}\X}(0,2R)$ and $\zeta^0 \in \E_{x_0}$,
\begin{equation}
\label{eq equivariance trivialization real structures}
c_\E(\varphi_{x_0}(z,\zeta^0))= \varphi_{x_0}\left(d_{x_0}c_\X \cdot z, c_{\E,x_0}(\zeta^0)\right).
\end{equation}
This will be a consequence of Lemma~\ref{lem Chern connection is real} below.

\begin{dfn}
\label{dfn real connection}
Let $\E \to \X$ be a holomorphic vector bundle equipped with compatible real structures $c_\E$ and $c_\X$ and let $\nabla$ be a connection on $\E$, we say that $\nabla$ is a \emph{real connection} if for every section $s \in \Gamma(\E)$ we have:
\begin{equation*}
\forall x \in \X, \qquad \nabla_x \left( c_\E \circ s \circ c_\X\right) = c_\E \circ \nabla_{c_\X(x)} s \circ d_xc_\X.
\end{equation*}
\end{dfn}

\begin{rem}
\label{rem real connection in a real direction}
Let $x \in M$, $v \in T_xM$ and $s \in \R \Gamma(\E)$. If $\nabla$ is a real connection on $\E$, then $\nabla_xs\cdot v \in \R \E_x$. Indeed,
\begin{equation*}
\nabla_xs\cdot v = \nabla_{c_\X(x)}s \circ d_xc_\X\cdot v = c_\E \left(\nabla_x \left( c_\E \circ s \circ c_\X\right)\cdot v\right)= c_\E\left(\nabla_xs\cdot v\right).
\end{equation*}
\end{rem}

\begin{lem}
\label{lem Chern connection is real}
Let $\E \to \X$ be a holomorphic vector bundle equipped with compatible real structures $c_\E$ and $c_\X$ and a real Hermitian metric $h_\E$. Then, the Chern connection $\nabla^\E$ of $\E$ is real.
\end{lem}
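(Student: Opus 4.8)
The plan is to characterize the Chern connection by its two defining properties—compatibility with the Hermitian metric and compatibility with the holomorphic structure (i.e.\ $\nabla^\E = \nabla^{1,0} + \debar$ on the holomorphic bundle)—and to check that the connection $\nabla'$ obtained by conjugating $\nabla^\E$ through the real structures, namely $\nabla'_x s = c_\E \circ \nabla^\E_{c_\X(x)}(c_\E \circ s \circ c_\X) \circ d_x c_\X$, again satisfies these two properties. By uniqueness of the Chern connection, this forces $\nabla' = \nabla^\E$, which is exactly the assertion that $\nabla^\E$ is real in the sense of Definition~\ref{dfn real connection}. First I would verify that $\nabla'$ is indeed a connection (Leibniz rule), which is immediate since $c_\X$ is a diffeomorphism and $c_\E$ is fiberwise conjugate-linear, so the conjugation sends the Leibniz rule for $\nabla^\E$ to the Leibniz rule for $\nabla'$ (the conjugate-linearity of $c_\E$ cancels against the conjugation of the scalar in $d(fs) = df \otimes s + f\,\nabla s$).

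Next I would check metric compatibility of $\nabla'$. Here one uses the hypothesis $c_\E^\star(h_\E) = \overline{h_\E}$, i.e.\ $h_\E(c_\E u, c_\E v) = \overline{h_\E(u,v)}$. For sections $s,t$ of $\E$, write $\tilde s = c_\E \circ s \circ c_\X$ and similarly for $\tilde t$; then $h_\E(\tilde s, \tilde t)(x) = \overline{h_\E(s,t)(c_\X(x))}$, and differentiating this identity (using that $c_\X$ is an anti-holomorphic involution, so $d c_\X$ intertwines $\partial$ and $\debar$ appropriately, and that $\overline{\,\cdot\,}$ turns $\partial$ into $\debar$) together with metric compatibility of $\nabla^\E$ applied at $c_\X(x)$ yields metric compatibility of $\nabla'$. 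The bookkeeping of complex conjugations is the one genuinely fiddly point, but it is forced: every conjugate-linearity or anti-holomorphicity appears an even number of times. Then I would check that $\nabla'$ respects the holomorphic structure: the $(0,1)$-part of $\nabla'$ should be $\debar$. This follows because $c_\E$ is anti-holomorphic over $c_\X$ (the real structure on a holomorphic bundle is by definition anti-holomorphic), so conjugating the operator $\debar$ on $\E$ through $(c_\E, c_\X)$ gives back $\debar$; since $\nabla^\E$ has $(0,1)$-part $\debar$, its conjugate $\nabla'$ has $(0,1)$-part equal to the conjugate of $\debar$, which is $\debar$ again. Having verified both characterizing properties, uniqueness of the Chern connection gives $\nabla' = \nabla^\E$, i.e.\ $\nabla^\E$ is real.

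The main obstacle I anticipate is purely notational rather than conceptual: keeping straight, in the metric-compatibility computation, which terms get conjugated and making sure the anti-holomorphic involution $c_\X$ is handled correctly when one differentiates $x \mapsto \overline{h_\E(s,t)(c_\X(x))}$ (one must expand $d_x$ into $\partial_x + \debar_x$ and track how $c_\X^\star$ and complex conjugation swap these). Once that identity is written out cleanly, everything else is formal. An alternative, perhaps cleaner, route is to work in a local real normal (or holomorphic) frame and write the connection matrix $\theta$ of $\nabla^\E$ explicitly via $\theta = h^{-1}\partial h$ in a holomorphic frame; one then checks directly that the connection matrix of $\nabla'$ in the conjugated frame equals $\overline{\theta}$ evaluated at $c_\X(x)$, and that the formula $\theta = h^{-1}\partial h$ is preserved under this operation because $c_\E^\star h_\E = \overline{h_\E}$. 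Either way the proof is short; I would present the uniqueness argument as the cleanest.
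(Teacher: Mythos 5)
Your proposal is correct and follows essentially the same route as the paper: define the conjugated connection $\tilde{\nabla}_x s = c_\E \circ \nabla^\E_{c_\X(x)}(c_\E \circ s \circ c_\X)\circ d_x c_\X$, verify the Leibniz rule, metric compatibility (using $c_\E^\star(h_\E)=\overline{h_\E}$), and that its $(0,1)$-part is $\debar^\E$, then invoke uniqueness of the Chern connection. The paper likewise checks the $\debar$-compatibility by a local holomorphic frame computation, which matches your sketch.
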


\begin{proof}
Since $c_\E$ and $c_\X$ are involutions and $(d_xc_\X)^{-1} = d_{c_\X(x)}c_\X$, we need to check that
\begin{equation}
\label{eq Chern connection is real}
\forall s \in \Gamma(\E), \forall x \in \X \qquad \nabla^\E_x s = c_\E \circ \nabla^\E_{c_\X(x)}\left( c_\E \circ s \circ c_\X \right) \circ d_xc_\X.
\end{equation}
Let $\tilde{\nabla}$ be defined by $\tilde{\nabla}_x s = c_\E \circ \nabla^\E_{c_\X(x)}\left( c_\E \circ s \circ c_\X \right) \circ d_xc_\X$, for all $s \in \Gamma(\E)$ and $x \in \X$. Then $\tilde{\nabla}$ is a connection on $\E$ and it is enough to check that it is compatible with both the metric and the complex structure. Indeed, in this  case $\tilde{\nabla} = \nabla^\E$ by unicity of the Chern connection, which proves \eqref{eq Chern connection is real}.

Let us check that $\tilde{\nabla}$ satisfies Leibniz' rule. Let $s \in \Gamma(\E)$ and $f:\X \to \C$. We have:
\begin{align*}
\tilde{\nabla}_x(fs)&= c_\E \circ \nabla^\E_{c_\X(x)} \left( (\overline{f\circ c_\X}) (c_\E \circ s \circ c_\X) \right) \circ d_xc_\X\\
&= c_\E \circ \left(\overline{f(x)}\nabla^\E_{c_\X(x)}\left( c_\E \circ s \circ c_\X \right) + d_{c_\X(x)}(\overline{f\circ c_\X})\otimes c_\E(s(x)) \right) \circ d_xc_\X\\
&= f(x) \tilde{\nabla}_xs + d_xf \otimes s(x).
\end{align*}
Since $\nabla^\E$ is the Chern connection, its anti-holomorphic part is $\debar^\E$. Then, $d_xc_\X$ and $c_\E$ being anti-linear (resp. fiberwise), the anti-linear part of $\tilde{\nabla}_xs$ equals $c_\E \circ \debar^\E_{c_\X(x)}\left(c_\E \circ s \circ c_\X \right) \circ d_xc_\X$. By computing in a local holomorphic frame, one can check that:
\begin{equation*}
\forall s \in \Gamma(\E), \forall x \in \X, \qquad c_\E \circ \debar^\E_{c_\X(x)}\left(c_\E \circ s \circ c_\X \right) \circ d_xc_\X=\debar^\E_xs.
\end{equation*}
Thus, $\tilde{\nabla}$ is compatible with the complex structure. Finally, we check the compatibility with the metric structure. Let $s,t \in \Gamma(\E)$ and $x \in \X$, since $h_\E=c_\E^\star(\overline{h_\E})$ we have:
\begin{align*}
d_x(h_\E(s,t)) &= d_x\left(\overline{h_\E}(c_\E \circ s, c_\E \circ t)\right) = d_{c_\X(x)}\left(\overline{h_\E}(c_\E \circ s \circ c_\X, c_\E \circ t \circ c_\X)\right) \circ d_x c_\X\\
&\begin{aligned}=\overline{h_\E}\left(\nabla^\E_{c_\X(x)}(c_\E \circ s \circ c_\X),c_\E(t(x))\right)&\circ d_xc_\X\\+&\overline{h_\E}\left(c_\E(s(x)),\nabla^\E_{c_\X(x)}(c_\E \circ t \circ c_\X)\right)\circ d_x c_\X\end{aligned}\\
&\begin{aligned}= h_\E\left(c_\E \circ \nabla^\E_{c_\X(x)}(c_\E \circ s \circ c_\X),t(x)\right) &\circ d_xc_\X\\ + &h_\E\left(s(x),c_\E \circ\nabla^\E_{c_\X(x)}(c_\E \circ s \circ c_\X)\right)\circ d_x c_\X\end{aligned}\\
&= h_\E\left(\tilde{\nabla}_xs,t(x)\right)+ h_\E\left(s(x),\tilde{\nabla}_xt\right).\qedhere
\end{align*}
\end{proof}

Let us now prove \eqref{eq equivariance trivialization real structures}. Let $z \in B_{T_{x_0}\X}(0,2R)$, let $\zeta^0 \in \E_{x_0}$ and let $\zeta : B_\X(x_0,2R) \to \E$ be the section defined by $\zeta : x\mapsto \varphi_{x_0}\left((\exp_{x_0})^{-1}(x),\zeta^0\right)$. We denote by $\gamma : [0,1] \mapsto \X$ the geodesic $t \mapsto \exp_{x_0}(tz)$. We have for all $t \in [0,1]$, $\zeta(\gamma(t))=\varphi_{x_0}(tz,\zeta^0)$ and, by the definition of $\varphi_{x_0}$, we have:
\begin{equation}
\label{eq parallel transport}
\forall t \in [0,1],\qquad \nabla^\E_{\gamma(t)}\zeta\cdot \gamma'(t)= 0.
\end{equation}
Let us denote $\tilde{\zeta}=c_\E \circ \zeta \circ c_\X$ and $\overline{\gamma}=c_\X \circ \gamma$. Since $\nabla^\E$ is real, \eqref{eq parallel transport} implies that for all $t \in [0,1]$,
\begin{equation}
\label{eq parallel transport 2}
\nabla^\E_{\overline{\gamma}(t)}\tilde{\zeta} \cdot \overline{\gamma}'(t) = \nabla^\E_{c_\X(\gamma(t))}\tilde{\zeta} \circ d_{\gamma(t)}(c_\X) \cdot \gamma'(t) = c_\E \circ \nabla^\E_{\gamma(t)} \zeta \cdot \gamma'(t)= 0.
\end{equation}
Since $c_\X$ is an isometry, $\overline{\gamma}$ is a geodesic. More precisely, $\overline{\gamma}:t \mapsto \exp_{x_0}(td_{x_0}c_\X \cdot z)$. Besides, $\tilde{\zeta}(x_0)=c_\E(\zeta(x_0))=c_{\E,x_0}(\zeta^0)$. Then by \eqref{eq parallel transport 2}, for all $t\in [0,1]$,
\begin{equation*}
\varphi_{x_0}\left(td_{x_0}c_\X \cdot z,c_{\E,x_0}(\zeta^0)\right)=\varphi_{x_0}\left(td_{x_0}c_\X \cdot z,\tilde{\zeta}(x_0)\right)=\tilde{\zeta}(\overline{\gamma}(t)).
\end{equation*}
Finally, we get \eqref{eq equivariance trivialization real structures} for $t=1$:
\begin{equation*}
\varphi_{x_0}\left(d_{x_0}c_\X \cdot z,c_{\E,x_0}(\zeta^0)\right)= \tilde{\zeta}(\overline{\gamma}(1))=c_\E(\zeta(\gamma(1)))=c_\E\left(\varphi_{x_0}(z,\zeta^0)\right).
\end{equation*}

Recall that $\R \E$ is the set of fixed points of $c_\E$. Then $\R \E$ is naturally a rank $r$ real vector bundle over $M$, as a subbundle of $\E_{/M}$. Let $\zeta^0 \in \R \E_{x_0}$, and $\zeta : x\mapsto \varphi_{x_0}\left((\exp_{x_0})^{-1}(x),\zeta^0\right)$ then, for all $x \in B_\X(x_0,2R)$,
\begin{align*}
c_\E \circ \zeta \circ c_\X (x) &= c_\E \circ \varphi_{x_0}\left((\exp_{x_0})^{-1}(c_\X(x)),\zeta^0\right)\\
&= c_\E \circ \varphi_{x_0}\left(d_{x_0}c_\X \circ(\exp_{x_0})^{-1}(x),\zeta^0\right)\\
&= \varphi_{x_0}\left((\exp_{x_0})^{-1}(x),c_{\E,x_0}(\zeta^0)\right)\\
&= \zeta(x). 
\end{align*}
Hence, $\zeta$ is a real local section of $\E$ and in particular, $\forall x \in M$, $\zeta(x) \in \R\E_x$. This shows that $\varphi_{x_0}$ induces, by restriction, a bundle map $B_{T_{x_0}M}(0,2R) \times \R \E_{x_0} \to \R\E_{/B_M(x_0,2R)}$ that covers the restriction of $\exp_{x_0}$ to $B_{T_{x_0}M}(0,2R)$.

Let $(\zeta_1^0,\dots,\zeta_r^0)$ be an orthonormal basis of $\R \E_{x_0}$. Since $\R \E_{x_0} = \ker\left(c_{\E,x_0}-\Id\right)$ and $c_{\E,x_0}$ is $\C$-anti-linear, we have $\E_{x_0} = \R \E_{x_0}\oplus i \cdot \R \E_{x_0}$. Moreover, since $h_{\E,x_0}$ and $c_{\E,x_0}$ are compatible, $(\zeta_1^0,\dots,\zeta_r^0)$ is also an orthonormal basis of $\E_{x_0}$. Let $i\in \left\{1,\dots,r\right\}$, we denote by $\zeta_i : B_\X(x_0,2R) \to \E$ the real local section defined by:
\begin{equation*}
\label{eq zeta i}
\zeta_i : x \mapsto \varphi_{x_0}\left((\exp_{x_0})^{-1}(x),\zeta_i^0\right).
\end{equation*}
Then, for every $x \in B_\X(0,2R)$, $(\zeta_1(x),\dots,\zeta_r(x))$ is an orthonormal basis of $\E_x$. Indeed, the sections $\zeta_i$ are obtained by parallel transport for $\nabla^\E$ along geodesics starting at $x_0$, and $\nabla^\E$ is compatible with $h_\E$. Hence, for all $i$ and $j\in \left\{1,\dots,r\right\}$, for all $z \in B_\X(x_0,2R)$,
\begin{multline*}
\label{eq constant along geodesic}
\frac{\dx}{\dx t}\left(h_\E(\zeta_i(\exp_{x_0}(tz)),\zeta_j(\exp_{x_0}(tz)))\right) = h_\E\left(\nabla^\E_{\exp_{x_0}(tz)}\zeta_i\circ d_{tz}\exp_{x_0}\cdot z,\zeta_j(\exp_{x_0}(tz)))\right)\\
+h_\E\left(\zeta_i(\exp_{x_0}(tz))),\nabla^\E_{\exp_{x_0}(tz)}\zeta_j\circ d_{tz}\exp_{x_0}\cdot z\right) = 0.
\end{multline*}
The function $x \mapsto h_\E(\zeta_i(x),\zeta_j(x))$ is then constant along geodesics starting at $x_0$, hence on $B_\X(x_0,2R)$. Since $\left(h_\E(\zeta_i(x),\zeta_j(x))\right)_{1\leq i,j\leq r}$ is the identity matrix of size $r$ at $x_0$, $(\zeta_1,\dots,\zeta_r)$ is a smooth unitary frame for $\E$ over $B_\X(0,2R)$. In particular, this shows that the real normal trivialization $\varphi_{x_0}$ is unitary. Since the $\zeta_i$ are real, $(\zeta_1(x),\dots,\zeta_r(x))$ is an orthonormal basis of $\R \E_x$ for all $x \in M$. Hence $(\zeta_1,\dots,\zeta_r)$ is also a smooth orthogonal frame for $\R \E$ over $B_M(0,2R)$. We say that $(\zeta_1,\dots,\zeta_r)$ is a local \emph{real unitary frame}.

Similarly, let $\varphi'_{x_0}$ denote the real normal trivialization of $\L$ over $B_\X(x_0,2R)$. Then any unit vector $\zeta^0_0\in\R \L_{x_0}$ defines a local real unitary frame $\zeta_0$ for $\L$:
\begin{equation*}
\zeta_0 : x \mapsto \varphi'_{x_0}\left((\exp_{x_0})^{-1}(x),\zeta^0_0\right).
\end{equation*}
Then, for any $d \in \N$, $\varphi_{x_0}$ and $\varphi'_{x_0}$ induce a trivialization $\varphi_{x_0} \otimes (\varphi'_{x_0})^d$ of $\E \otimes \L^d$. This trivialization is the real normal trivialization of $\E \otimes \L^d$ over $B_\X(x_0,2R)$, i.e.~it is obtained by parallel transport along geodesics starting at $x_0$ for the Chern connection of $\E \otimes \L^d$. Moreover, a local real unitary frame for $\E \otimes \L^d$ is given by $(\zeta_1 \otimes \zeta_0^d,\dots,\zeta_r \otimes \zeta_0^d)$.

\subsection{Near-diagonal estimates}
\label{subsec near diagonal estimates}

We can now state the near-diagonal estimates of Ma and Marinescu for the Bergman kernel. In the sequel, we fix some $R >0$ such that $2R$ is smaller than the injectivity radius of $\X$. Let $x \in M$, we have a natural real normal chart
\begin{equation*}
\label{eq normal chart 2}
\exp_x\times \exp_x : B_{T_x\X}(0,2R)\times B_{T_x\X}(0,2R) \to B_\X(x,2R) \times B_\X(x,2R).
\end{equation*}
Moreover, the real normal trivialization of $\E \otimes \L^d$ over $B_\X(x,2R)$ (see Section~\ref{subsec real normal trivialization}) induces a trivialization
\begin{equation*}
\label{eq normal trivialization 2}
B_{T_x\X}(0,2R)\times B_{T_x\X}(0,2R) \times \End\left(\left(\E \otimes \L^d\right)_x\right) \simeq \left(\E \otimes \L^d\right) \boxtimes \left(\E \otimes\L^d\right)^*_{/B_\X(x,2R) \times B_\X(x,2R)}
\end{equation*}
that covers $\exp_x\times \exp_x$. This trivialization coincides with the real normal trivialization of $\left(\E \otimes \L^d\right) \boxtimes \left(\E \otimes\L^d\right)^*$ over $B_\X(x,2R) \times B_\X(x,2R)$.

Recall that $\dx V_\X$ denotes the Riemannian measure on $\X$. When we read this measure in the real normal chart $\exp_x$, it admits a density $\kappa : B_{T_x\X}(0,2R)\to \R_+$ with respect to the normalized Lebesgue measure of $(T_x\X,g_x)$. More precisely, we have $\kappa(z) = \sqrt{\det(g_{ij}(z))}$ where $(g_{ij}(z))$ is the matrix of $\left((\exp_x)^\star g\right)_z$, read in any real orthonormal basis of $(T_x\X,g_x)$. Since we use normal coordinates and $\X$ is compact, we have 
\begin{equation}
\label{eq estimate kappa}
\kappa(z) = 1 + O\!\left(\Norm{z}^2\right)
\end{equation}
where $\Norm{\cdot}$ is induced by $g_x$ and the estimate $O\!\left(\Norm{z}^2\right)$ does not depend on $x$.

Similarly, on the real locus $(M,g)$, $\rmes{M}$ admits a density, in the real normal chart $\exp_x$, with respect to the normalized Lebesgue measure on $(T_xM,g_x)$. This density is:
\begin{equation}
\label{eq density real}
z \longmapsto \det \left(\left(\left(\exp_x^\star g\right)_z\right)_{/T_xM}\right)^\frac{1}{2},
\end{equation}
from $B_{T_xM}(0,2R)$ to $\R_+$. As we already explained in Sect.~\ref{subsec real normal trivialization}, on the real locus, $g_\C$ is the sesquilinear extension of the restriction of $g$ to $TM$. Hence, for all $z \in B_{T_xM}(0,2R)$ we have:
\begin{equation*}
\det \left(\left(\left(\exp_x^\star g\right)_z\right)_{/T_xM}\right)^2 = \det \left(\left(\exp_x^\star g\right)_z\right),
\end{equation*}
which means that the density of $\rmes{M}$ in the chart $\exp_x$ is $\sqrt{\kappa}:B_{T_xM}(0,2R) \to \R_+$.

The following result gives the asymptotic of the Bergman kernel $E_d$ (see Sect.~\ref{subsec correlation kernel}) and its derivatives, read in the real normal trivialization about $x$ of $\left(\E \otimes \L^d\right) \boxtimes \left(\E \otimes\L^d\right)^*$. It was first established by Dai, Liu and Ma in \cite[thm.~4.18']{DLM2006}.
\begin{thm}[Dai--Liu--Ma]
\label{thm Ma Marinescu}
There exists $C'>0$ such that, for any $p \in \N$, there exists $C_p$ such that $\forall k \in \{0,\dots,p\}$, $\forall d \in \N^*$, $\forall z,w \in B_{T_x\X}(0,R)$,
\begin{multline*}
\Norm{D^k_{(z,w)}\left(E_d(z,w) - \left(\frac{d}{\pi}\right)^{\hspace{-1mm}n}\frac{\exp\left(-\frac{d}{2}\left(\Norm{z}^2+\Norm{w}^2-2 \prsc{z}{w}\right)\right)}{\sqrt{\kappa(z)}\sqrt{\kappa(w)}}\Id_{(\E\otimes\L^d)_x}\right)}\\
\leq C_p d^{n+\frac{p}{2}-1}\left(1+\sqrt{d}(\Norm{z}+\Norm{w})\right)^{2n+6+p} \exp\left(-C'\sqrt{d}\Norm{z-w}\right)+O\!\left(d^{-\infty}\right),
\end{multline*}
where:
\begin{itemize}
\item $D^k_{(z,w)}$ is the $k$-th differential at $(z,w)$ for a map $T_x\X \times T_x\X \to\End\left(\left(\E \otimes \L^d\right)_x\right)$,
\item the Hermitian inner product $\prsc{\cdot}{\cdot}$ comes from the Hermitian metric $\left(g_\C\right)_x$,
\item the norm $\Norm{\cdot}$ on $T_x\X$ is induced by $g_x$ (or equivalently $\prsc{\cdot}{\cdot}$),
\item the norm $\Norm{\cdot}$ on $\left(T^*_x\X\right)^{\otimes q}\otimes \End\left(\left(\E \otimes \L^d\right)_x\right)$ is induced by $g_x$ and $(h_d)_x$.
\end{itemize}
Moreover, the constants $C_p$ and $C'$ do not depend on $x$. The notation $O\!\left(d^{-\infty}\right)$ means that, for any $l \in \N$, this term is $O\!\left(d^{-l}\right)$ with a constant that does not depend on $x$, $z$, $w$ or $d$.
\end{thm}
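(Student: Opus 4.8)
This is, after reformulation in our real setting, the off-diagonal near-diagonal expansion of the Bergman kernel proved by Dai, Liu and Ma in \cite[thm.~4.18']{DLM2006} (see also \cite{MM2007} for the method and the on-diagonal case). The plan is therefore to \emph{quote} the complex statement and check that it specializes to the form above; the genuinely new point is the compatibility with the real structures, which was prepared in Section~\ref{subsec real normal trivialization}.

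First I would recall the mechanism of the proof. The Bergman kernel $E_d$ is the Schwartz kernel of the orthogonal projection onto $\ker\Box_d$, where $\Box_d$ is the Kodaira Laplacian acting on sections of $\E\otimes\L^d$. Since $\L$ is positive, for $d$ large the spectrum of $\Box_d$ lies in $\{0\}\cup[Cd,+\infty)$ for some $C>0$; finite-propagation-speed arguments then localize $E_d$ near the diagonal at scale $d^{-\frac12}$, up to a term which is $O\!\left(d^{-\infty}\right)$ uniformly in $x_0\in M$ by compactness of $\X$. Fixing $x_0\in M$, reading $E_d$ in the real normal chart $\exp_{x_0}$ and the real normal trivialization $\varphi_{x_0}\otimes(\varphi'_{x_0})^d$ of $\left(\E\otimes\L^d\right)\boxtimes\left(\E\otimes\L^d\right)^*$, and rescaling $z\mapsto z/\sqrt d$, the rescaled operator $\frac1d\Box_d$ converges to a model harmonic oscillator on $\left(\C^n,(g_\C)_{x_0}\right)$ whose projection onto its kernel has Schwartz kernel $\mathcal P(Z,W)\otimes\Id_{(\E\otimes\L^d)_{x_0}}$, with
\begin{equation*}
\mathcal P(Z,W) = \frac{1}{\pi^n}\exp\!\left(-\frac12\left(\Norm{Z}^2+\Norm{W}^2-2\prsc{Z}{W}\right)\right)
\end{equation*}
the Bargmann--Fock reproducing kernel. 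The leading coefficient is scalar because the model operator does not see the curvature of $\E$, which only enters the subleading coefficients. Taylor-expanding $\Box_d$ in powers of $d^{-\frac12}$ then produces the full expansion, whose $r$-th term is $\mathcal P(\sqrt d z,\sqrt d w)$ times a polynomial in $(\sqrt d z,\sqrt d w)$ of degree linear in $r$, and \cite[thm.~4.18']{DLM2006} supplies the quantitative $\mathcal C^p$ remainder.

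Next I would read off the leading term and the normalization. The expansion of \cite{DLM2006} is stated relative to the Riemannian volume $\dx V_\X$, whose density in the chart $\exp_{x_0}$ is $\kappa$ (cf.~\eqref{eq estimate kappa}); matching normalizations at the two points introduces the factor $\kappa(z)^{-\frac12}\kappa(w)^{-\frac12}$. Substituting $Z=\sqrt d z$, $W=\sqrt d w$ into $d^n\mathcal P$ gives precisely the stated leading term $\left(\frac d\pi\right)^n\exp\!\left(-\frac d2\left(\Norm z^2+\Norm w^2-2\prsc z w\right)\right)\kappa(z)^{-\frac12}\kappa(w)^{-\frac12}\Id$. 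For the error: the first correction is of order $d^{n-1}$, taking $k\leq p$ derivatives in the unrescaled variables costs at most $d^{\frac p2}$, and summing the subleading coefficients contributes a polynomial weight with exponent linear in $n$, $p$ and the number of derivatives — altogether $C_p\,d^{n+\frac p2-1}\left(1+\sqrt d(\Norm z+\Norm w)\right)^{2n+6+p}$; one keeps only the weaker bound $\exp\!\left(-C'\sqrt d\Norm{z-w}\right)$ in place of the genuine Gaussian decay for a cleaner statement, and the localization error is absorbed into $O\!\left(d^{-\infty}\right)$. Uniformity of $C_p$ and $C'$ in $x_0$ again follows from compactness of $\X$.

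Finally, the point specific to this paper: \cite{MM2007,DLM2006} trivialize $\E$ and $\L$ by parallel transport for the Chern connection along radial geodesics from $x_0$, which is exactly our real normal trivialization (Section~\ref{subsec real normal trivialization}) restricted equivariantly to a neighborhood of $M$; and $E_d$, being the kernel of the orthogonal projection onto $\R\H$ for the real $L^2$-inner product, is equivariant for the real structures. So there is no discrepancy, and the complex estimate applies verbatim in the real normal trivialization. The only real obstacle is bookkeeping — tracking the conventions for $\omega$, $\dx V_\X$ and the Chern connection so that the constant emerges as $\left(\frac d\pi\right)^n$ with the $\kappa^{-\frac12}$ factors in the right place — since the deep analytic input is borrowed wholesale from \cite{DLM2006}.
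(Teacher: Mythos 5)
Your proposal takes essentially the same route as the paper: the statement is obtained by quoting the Dai--Liu--Ma/Ma--Marinescu near-diagonal expansion (\cite{DLM2006}, \cite[thm.~4.2.1]{MM2007}) in the real normal chart and trivialization, checking the normalization (the factor $\left(\frac{d}{\pi}\right)^n$ and the $\kappa^{-\frac{1}{2}}$ densities) and the compatibility with the real structures prepared in Section~\ref{subsec real normal trivialization}. The one point you assert without justification is that the first correction already occurs at order $d^{n-1}$, i.e.\ that the half-power coefficient $\mathcal{F}_1$ in the expansion vanishes — this is exactly what the error exponent $n+\frac{p}{2}-1$ requires, and the paper covers it by invoking \cite[Rem.~1.4.26]{MM2007} and \cite{MM2013}.
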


\begin{proof}
This is a weak version of \cite[thm.~4.2.1]{MM2007}, with $k=1$ and $m'=0$ in the notations of~\cite{MM2007}. We used the fact that $\mathcal{F}_0$ in \cite{MM2007} is given by:
\begin{equation*}
\mathcal{F}_0(z,w) = \frac{1}{\pi^n}\exp\left(-\frac{1}{2}\left(\Norm{z}^2+\Norm{w}^2-2 \prsc{z}{w}\right)\right)\Id_{(\E\otimes\L^d)_x},
\end{equation*}
(compare (4.1.84), (4.1.85) and (4.1.92) pp.~191--192 and (5.1.18) p.~46 in \cite{MM2007}) and $\mathcal{F}_1 = 0$. See~\cite[Rem.~1.4.26]{MM2007} and~\cite{MM2013}.
\end{proof}

\begin{rem}
\label{rem normalization Ma Marinescu}
Note that our formula differs from the one in \cite{MM2007,MM2013} by a factor $\pi$ in the exponential. This comes from different normalizations of the Kähler form $\omega$.
\end{rem}

We are only interested in the behavior of $E_d$ at points of the real locus, hence we restrict our focus to points in $M$ and derivatives in real directions. Similarly, for $x,y \in M$, $E_d(x,y)$ restricts to an element of $\R\!\left(\E \otimes \L^d\right)_x \otimes \R\!\left(\E \otimes \L^d\right)^*_y$, still denoted by $E_d(x,y)$. Note that we can recover the original $E_d(x,y) : \left(\E \otimes \L^d\right)_y \to \left(\E \otimes \L^d\right)_x$ from its restriction by $\C$-linear extension.

First, we need to know the behavior of $E_d$ and its derivatives up to order $1$ in each variable in a neighborhood of the diagonal in $M \times M$.
\begin{cor}
\label{cor near diag estimates}
There exist $C$ and $C' >0$, not depending on $x$, such that $\forall k \in \{0,1,2\}$, $\forall d \in \N^*$, $\forall z,w \in B_{T_xM}(0,R)$,
\begin{multline*}
\Norm{D^k_{(z,w)}\left(E_d(z,w) - \left(\frac{d}{\pi}\right)^n\frac{\exp\left(-\frac{d}{2}\Norm{z-w}^2\right)}{\sqrt{\kappa(z)}\sqrt{\kappa(w)}}\Id_{\R(\E\otimes\L^d)_x}\right)}\\
\leq C d^{n+\frac{k}{2}-1}\left(1+\sqrt{d}(\Norm{z}+\Norm{w})\right)^{2n+8} \exp\left(-C'\sqrt{d}\Norm{z-w}\right)+O\!\left(d^{-\infty}\right),
\end{multline*}
where $D^k$ is the $k$-th differential for a map from $T_xM \times T_xM$ to $\End\left(\R\left(\E \otimes \L^d\right)_x\right)$, the norm on $T_xM$ is induced by $g_x$ and the norm on $\left(T^*_xM\right)^{\otimes q}\otimes \End\left(\left(\E \otimes \L^d\right)_x\right)$ is induced by $g_x$ and $(h_d)_x$.
\end{cor}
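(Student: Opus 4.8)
The plan is to deduce Corollary~\ref{cor near diag estimates} from Theorem~\ref{thm Ma Marinescu} by restricting everything to the real locus. Recall that for $x \in M$ the real normal chart $\exp_x$ identifies a neighbourhood of $x$ in $M$ with $B_{T_xM}(0,2R) \subset T_xM$, and $T_xM$ sits inside $T_x\X$ as the $(+1)$-eigenspace of $d_{x_0}c_\X$. For $z, w \in T_xM$ the relevant specialisation is immediate: since $z$ and $w$ are real, $\Norm{z}^2 + \Norm{w}^2 - 2\prsc{z}{w} = \Norm{z-w}^2$ (the Hermitian inner product $\prsc{\cdot}{\cdot}$ coming from $(g_\C)_x$ restricts on $T_xM$ to the real inner product $g_x$, as was checked in Section~\ref{subsec real normal trivialization}), so the Gaussian model term in Theorem~\ref{thm Ma Marinescu} becomes exactly the model term appearing in the Corollary. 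Moreover, since the real normal trivialization of $\E \otimes \L^d$ is compatible with the real structures (equation~\eqref{eq equivariance trivialization real structures} and the ensuing discussion), the endomorphism $\Id_{(\E\otimes\L^d)_x}$ restricts to $\Id_{\R(\E\otimes\L^d)_x}$, and the whole estimate, read in a real unitary frame, restricts coefficientwise.

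The one genuine point to address is the compatibility of the \emph{differentials}. Theorem~\ref{thm Ma Marinescu} controls $D^k_{(z,w)}$ for a map $T_x\X \times T_x\X \to \End((\E\otimes\L^d)_x)$, whereas the Corollary wants $D^k_{(z,w)}$ for the restricted map $T_xM \times T_xM \to \End(\R(\E\otimes\L^d)_x)$. But the latter differential is just the restriction of the former to the subspace $(T_xM \oplus T_xM)^{\otimes k} \subset (T_x\X \oplus T_x\X)^{\otimes k}$, composed with the projection $\End((\E\otimes\L^d)_x) \to \End(\R(\E\otimes\L^d)_x)$; both operations are norm-nonincreasing for the induced metrics. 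Hence the left-hand side of the Corollary is bounded by the left-hand side of Theorem~\ref{thm Ma Marinescu} (with the same $z,w$), and it suffices to bound the right-hand side of the latter. For $k \in \{0,1,2\}$ we have $n + \tfrac{p}{2} - 1$ with $p=k$ giving the exponent $n + \tfrac{k}{2} - 1$, and the polynomial factor $(1+\sqrt{d}(\Norm{z}+\Norm{w}))^{2n+6+p} \leq (1+\sqrt{d}(\Norm{z}+\Norm{w}))^{2n+8}$ since $p \leq 2$ and $1 + \sqrt{d}(\Norm{z}+\Norm{w}) \geq 1$; the exponential factor and the $O(d^{-\infty})$ term carry over verbatim. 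Taking $C = C_2$ and keeping the same $C'$ from Theorem~\ref{thm Ma Marinescu} (both independent of $x$) finishes the argument.

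There is essentially no obstacle here — this is a bookkeeping corollary, and the only thing one must be slightly careful about is that all the norms match up: the norm on $T_xM$ is the restriction of the norm on $T_x\X$ (both from $g_x$), the norm on $(T^*_xM)^{\otimes q} \otimes \End(\cdots)$ is dominated by the corresponding norm on $(T^*_x\X)^{\otimes q} \otimes \End(\cdots)$, and the uniformity in $x$ is inherited directly since all constants in Theorem~\ref{thm Ma Marinescu} are uniform in $x$. I would write the proof in three or four lines: fix $p=2$ in Theorem~\ref{thm Ma Marinescu}, restrict the chart and trivialization to $M$, observe the identity $\Norm{z}^2+\Norm{w}^2-2\prsc{z}{w} = \Norm{z-w}^2$ for real $z,w$, note that restriction of multilinear maps does not increase norms, and absorb the exponents.
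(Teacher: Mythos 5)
Your proposal is correct and takes essentially the same route as the paper: apply Theorem~\ref{thm Ma Marinescu} with $p=k$ for each $k\in\{0,1,2\}$, restrict the chart and trivialization to the real locus (where $\Norm{z}^2+\Norm{w}^2-2\prsc{z}{w}=\Norm{z-w}^2$ and restriction of multilinear maps does not increase norms), and absorb the polynomial exponent $2n+6+k\leq 2n+8$. The only slip is at the level of constants: you should take $C=\max(C_0,C_1,C_2)$ rather than $C=C_2$ (and not ``fix $p=2$'' as in your closing summary, since that would only yield the power $d^{n}$ instead of $d^{n+\frac{k}{2}-1}$ for $k<2$), which is exactly what the paper does.
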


\begin{proof}
We apply Theorem.~\ref{thm Ma Marinescu} for $p=k \in \{0,1,2\}$ and set $C=\max(C_0,C_1,C_2)$. Then we restrict everything to the real locus.
\end{proof}

\subsection{Diagonal estimates}
\label{subsec diagonal estimates}

In this section, we deduce diagonal estimates for $E_d$ and its derivatives from Thm.~\ref{thm Ma Marinescu}. Let $x \in M$, then the usual differential for maps from $T_x\X$ to $(\E \otimes \L^d)_x$ defines a local trivial connection $\tilde{\nabla}^d$ on $(\E \otimes \L^d)_{/B_\X(0,2R)}$, via the real normal trivialization. Since this trivialization is well-behaved with respect to both the metric and the real structure (cf.~Sect.~\ref{subsec real normal trivialization}), $\tilde{\nabla}^d$ is metric and real. By a partition of unity argument, there exists a real metric connection $\nabla^d$ on $\E \otimes \L^d$ such that $\nabla^d$ agrees with $\tilde{\nabla}^d$ on $B_\X(0,R)$. In the remainder of this section, we use this connection~$\nabla^d$, and the induced connection on $(\E \otimes \L^d) \boxtimes (\E \otimes \L^d)^*$, so that the connection is trivial in the real normal trivialization about $x$.

Recall that $\partial_y^\sharp E_d$ and $\partial_x\partial_y^\sharp E_d$ are defined by~\eqref{eq def sharp} and~\eqref{eq def sharp 2} respectively.

\begin{cor}
\label{cor diag estimates}
Let $x \in M$, let $\nabla^d$ be a real metric connection that is trivial over $B_{T_x\X}(0,R)$ in the real normal trivialization about $x$. Let $\partial_y^\sharp$ and $\partial_x$ denote the associated partial derivatives for sections of $(\E \otimes \L^d) \boxtimes (\E \otimes \L^d)^*$, then we have the following estimates as $d \to +\infty$. 
\begin{align}
\label{eq value Bergman diag 00}
E_d(x,x) &= \frac{d^n}{\pi^n} \Id_{\R(\E\otimes\L^d)_x} + O\!\left(d^{n-1}\right),\\
\label{eq value Bergman diag 10}
\partial_xE_d(x,x) &= O\!\left(d^{n-\frac{1}{2}}\right),\\
\label{eq value Bergman diag 01}
\partial_y^\sharp E_d(x,x) &= O\!\left(d^{n-\frac{1}{2}}\right),\\
\label{eq value Bergman diag 11}
\partial_x\partial_y^\sharp E_d(x,x) &= \frac{d^{n+1}}{\pi^n} \Id_{\R(\E\otimes\L^d)_x} \otimes \Id_{T^*_xM} + O\!\left(d^n\right).
\end{align}
Moreover the error terms do not depend on $x$.
\end{cor}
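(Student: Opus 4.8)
The plan is to read off each estimate from Theorem~\ref{thm Ma Marinescu} (equivalently Corollary~\ref{cor near diag estimates}) by specializing to $z=w=0$, since in the real normal trivialization about $x$ the point $x$ corresponds to $0 \in T_x\X$, and by choosing the connection $\nabla^d$ that is trivial there, so that $\partial_x$ and $\partial_y^\sharp$ become the ordinary differentials $D_z$ and $D_w$ applied to the map $(z,w) \mapsto E_d(z,w)$. Write $\mathcal{K}_d(z,w) = \left(\frac{d}{\pi}\right)^n \frac{\exp\left(-\frac{d}{2}\Norm{z-w}^2\right)}{\sqrt{\kappa(z)}\sqrt{\kappa(w)}}\Id$ for the model kernel appearing in Corollary~\ref{cor near diag estimates}. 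The strategy is: (i) compute the first- and second-order Taylor data of $\mathcal{K}_d$ at $(0,0)$ explicitly; (ii) bound the corresponding derivatives of the error $E_d - \mathcal{K}_d$ at $(0,0)$ using Corollary~\ref{cor near diag estimates} with $z=w=0$; (iii) add the two.

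For step (i), note $\kappa(0)=1$ and $D_0\kappa = 0$ because $\kappa(z) = 1 + O(\Norm{z}^2)$ by~\eqref{eq estimate kappa}; hence $\frac{1}{\sqrt{\kappa(z)}\sqrt{\kappa(w)}} = 1 + O(\Norm{z}^2 + \Norm{w}^2)$ and both this factor and its first derivatives agree with those of the constant $1$ at the origin. The Gaussian factor $\exp\left(-\frac{d}{2}\Norm{z-w}^2\right)$ equals $1$ at $(0,0)$, has vanishing first differential there (the exponent vanishes to second order), and its Hessian at $(0,0)$ is $-d$ times the Hessian of $(z,w) \mapsto \Norm{z-w}^2$, whose mixed $\partial_z\partial_w$ part is $+\Id_{T_xM}$ (and the pure parts $-\Id$). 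Therefore $\mathcal{K}_d(0,0) = \frac{d^n}{\pi^n}\Id$, $D_z\mathcal{K}_d(0,0) = D_w\mathcal{K}_d(0,0) = 0$, and the mixed second derivative $\partial_z\partial_w\mathcal{K}_d(0,0) = \frac{d^{n+1}}{\pi^n}\Id_{\R(\E\otimes\L^d)_x}\otimes\Id_{T^*_xM}$. These are exactly the main terms claimed in~\eqref{eq value Bergman diag 00}--\eqref{eq value Bergman diag 11}, with the two cross terms having vanishing leading part.

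For step (ii), apply Corollary~\ref{cor near diag estimates} at $z=w=0$: the factor $\left(1+\sqrt{d}(\Norm{z}+\Norm{w})\right)^{2n+8}$ becomes $1$ and $\exp\left(-C'\sqrt{d}\Norm{z-w}\right)$ becomes $1$, so for $k=0$ the error of $E_d(0,0) - \mathcal{K}_d(0,0)$ is $O(d^{n-1})$, for $k=1$ it is $O(d^{n-\frac12})$, and for $k=2$ it is $O(d^n)$, all uniformly in $x$ and with the extra $O(d^{-\infty})$ absorbed. Combining with step (i): \eqref{eq value Bergman diag 00} follows from $k=0$; \eqref{eq value Bergman diag 10} and~\eqref{eq value Bergman diag 01} follow since the main term vanishes and the $k=1$ error is $O(d^{n-\frac12})$ (using Lemma~\ref{lem dual connection} to identify $\partial_y^\sharp E_d$ with the sharp of $D_w E_d$); and~\eqref{eq value Bergman diag 11} follows from the mixed-derivative computation plus the $k=2$ error $O(d^n)$. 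One should remark that a general second derivative $D^2$ controls the mixed block $\partial_z\partial_w$, so extracting the bound on that block from the $k=2$ estimate is immediate.

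The main obstacle is bookkeeping rather than any real difficulty: one must be careful that the norm on $\left(T_x^*M\right)^{\otimes q}\otimes\End(\cdot)$ used in Corollary~\ref{cor near diag estimates} is the one making the claimed identities dimensionally consistent, that the passage from the full complex $E_d$ to its restriction to $\R(\E\otimes\L^d)_x\otimes\R(\E\otimes\L^d)^*_y$ commutes with taking the relevant derivatives (it does, since the real normal trivialization is real, cf.~Section~\ref{subsec real normal trivialization}), and that differentiating $\mathcal{K}_d$ in $z$ and in $w$ separately at the origin really does kill the first-order terms and produce the stated Hessian — a short explicit computation with the quadratic form $\Norm{z-w}^2$. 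Uniformity in $x$ is inherited directly from the uniformity of the constants in Theorem~\ref{thm Ma Marinescu} and of the $O(\Norm{z}^2)$ in~\eqref{eq estimate kappa}.
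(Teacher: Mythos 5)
Your proposal is correct and follows essentially the same route as the paper: the paper also works in the real normal trivialization with the trivial connection, computes the partial derivatives of the model factor $\xi_d(z,w)=\exp\left(-\tfrac{d}{2}\Norm{z-w}^2\right)\kappa(z)^{-1/2}\kappa(w)^{-1/2}$ at the origin (using $\kappa(0)=1$ and the vanishing of $D_0\kappa$ from~\eqref{eq estimate kappa}), and then invokes Corollary~\ref{cor near diag estimates} at $z=w=0$ with $k=0,1,2$ to absorb the errors $O(d^{n-1})$, $O(d^{n-\frac12})$, $O(d^{n})$, uniformly in $x$. The only cosmetic difference is that the paper carries out the Hessian computation in explicit coordinates rather than invoking the vanishing of first-order data abstractly.
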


\begin{proof}
Let $x \in M$ and let us choose an orthonormal basis of $T_xM$. We denote the corresponding coordinates on $T_xM \times T_xM$ by $(z_1,\dots,z_n,w_1,\dots,w_n)$ and by $\partial_{z_i}$ and $\partial_{w_j}$ the associated partial derivatives. Let us compute the partial derivatives of $E_d$ read in the real normal trivialization of $\left(\E \otimes \L^d\right) \boxtimes \left(\E \otimes\L^d\right)^*$ about $(x,x)$. By Cor.~\ref{cor near diag estimates}, we only need to compute the partial derivatives at $(0,0)$ of
\begin{equation}
\label{eq def xi d}
\xi_d : (z,w) \mapsto \frac{\exp\left(-\frac{d}{2}\Norm{z-w}^2\right)}{\sqrt{\kappa(z)}\sqrt{\kappa(w)}}
\end{equation}
for any $d \in \N$. For all $i$ and $j \in \{1,\dots,n\}$ and all $(z,w) \in B_{T_xM}(0,R)$ we have:
\begin{align}
\label{eq derivative xi d z}
\partial_{z_i}\xi_d(z,w) &= \left(-d(z_i-w_i)-\frac{1}{2}\frac{\partial_{z_i}\kappa(z)}{\kappa(z)}\right)\frac{\exp\left(-\frac{d}{2}\Norm{z-w}^2\right)}{\sqrt{\kappa(z)}\sqrt{\kappa(w)}},\\
\label{eq derivative xi d w}
\partial_{w_j}\xi_d(z,w) &= \left(d(z_j-w_j)-\frac{1}{2}\frac{\partial_{w_j}\kappa(w)}{\kappa(w)}\right)\frac{\exp\left(-\frac{d}{2}\Norm{z-w}^2\right)}{\sqrt{\kappa(z)}\sqrt{\kappa(w)}}
\end{align}
and
\begin{multline}
\label{eq derivative xi d zw}
\partial_{z_i}\partial_{w_j}\xi_d(z,w) = \frac{\exp\left(-\frac{d}{2}\Norm{z-w}^2\right)}{\sqrt{\kappa(z)}\sqrt{\kappa(w)}} \times\\ \left(d\delta_{ij}-d^2(z_i-w_i)(z_j-w_j)-\frac{d(z_j-w_j)}{2}\frac{\partial_{z_i}\kappa(z)}{\kappa(z)}+\frac{d(z_i-w_i)}{2}\frac{\partial_{w_j}\kappa(w)}{\kappa(w)}\right),
\end{multline}
where $\delta_{ij}$ equals $1$ if $i=j$ and $0$ otherwise. Recall that, by \eqref{eq estimate kappa}, $\kappa(0)=1$ and the partial derivatives of $\kappa$ vanish at the origin. Then evaluating the above expressions at $(0,0)$ gives:
\begin{align*}
\xi_d(0,0) &= 1, & \partial_{z_i}\xi_d(0,0)&=0=\partial_{w_j}\xi_d(0,0) & &\text{and} & \partial_{z_i}\partial_{w_j}\xi_d(0,0)&=\delta_{ij}d.
\end{align*}
By Cor.~\ref{cor near diag estimates}, we have the following estimates for the partial derivatives of $E_d$ read in the real normal trivialization about $x$: for all $i,j \in \{1,\dots,n\}$,
\begin{equation}
\label{eq diag values in chart}
\begin{aligned}
E_d(0,0) &= \frac{d^n}{\pi^n} \Id_{\R(\E\otimes\L^d)_x} + O\!\left(d^{n-1}\right), & \partial_{w_j}E_d(0,0) &= O\!\left(d^{n-\frac{1}{2}}\right),\\
\partial_{z_i}\partial_{w_j}E_d(0,0) &= \delta_{ij}\frac{d^{n+1}}{\pi^n} \Id_{\R(\E\otimes\L^d)_x} + O\!\left(d^{n}\right), & \partial_{z_i}E_d(0,0) &= O\!\left(d^{n-\frac{1}{2}}\right).
\end{aligned}
\end{equation}
Moreover these estimates are uniform in $x \in M$. Equations~\eqref{eq value Bergman diag 00}, \eqref{eq value Bergman diag 10}, \eqref{eq value Bergman diag 01} and \eqref{eq value Bergman diag 11} are coordinate-free versions of these statements.
\end{proof}

\subsection{Far off-diagonal estimates}
\label{subsec far off diagonal estimates}

Finally, we will use the fact that the Bergman kernel and its derivatives decrease fast enough outside of the diagonal. In this section we recall the far off-diagonal estimates of \cite[thm.~5]{MM2015}, see also \cite[prop.~4.1.5]{MM2007}.

Let $d \in \N$ and let $S$ be a smooth section of $\R\left(\E \otimes \L^d\right) \boxtimes \R\left(\E \otimes \L^d\right)^*$. Let $x,y \in M$, we denote by $\Norm{S(x,y)}_{\mathcal{C}^k}$ the maximum of the norms of $S$ and its derivatives of order at most $k$ at the point $(x,y)$. The derivatives of $S$ are computed with respect to the connection induced by the Chern connection of $\E\otimes\L^d$ and the Levi--Civita connection on $M$. The norms of the derivatives are the ones induced by $h_d$ and $g$.

\begin{thm}[Ma--Marinescu]
\label{thm off diag estimates}
There exist $C'>0$ and $d_0 \in \N^*$ such that, for all $k \in \N$, there exists $C_k>0$ such that $\forall d \geq d_0$, $\forall x,y \in M$
\begin{equation*}
\Norm{E_d(x,y)}_{\mathcal{C}^k} \leq C_k d^{n+\frac{k}{2}} \exp \left(-C' \sqrt{d}\, \rho_g(x,y)\right),
\end{equation*}
where $\rho_g(\cdot,\cdot)$ denotes the geodesic distance in $(M,g)$.
\end{thm}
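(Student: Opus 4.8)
The plan is to obtain Theorem~\ref{thm off diag estimates} by restricting to $M \times M$ the corresponding far off-diagonal estimate on $\X \times \X$ established by Ma and Marinescu. First I would invoke \cite[thm.~5]{MM2015} (see also \cite[prop.~4.1.5]{MM2007}): there exist $C'>0$ and $d_0 \in \N^*$ such that for every $k \in \N$ there is $C_k>0$ with
\begin{equation*}
\Norm{E_d(x,y)}_{\mathcal{C}^k(\X \times \X)} \leq C_k d^{n+\frac{k}{2}} \exp\!\left(-C'\sqrt{d}\,\rho(x,y)\right)
\end{equation*}
for all $d \geq d_0$ and all $x,y \in \X$, where $\rho$ is the geodesic distance on $(\X,g)$ and the $\mathcal{C}^k$-norm is computed with the Chern connection of $\E \otimes \L^d$ and the Levi--Civita connection of $(\X,g)$. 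As in Theorem~\ref{thm Ma Marinescu}, the discrepancy in the normalization of $\omega$ recalled in Remark~\ref{rem normalization Ma Marinescu} only affects the values of the constants.

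It then remains to compare, on the one hand, the $\mathcal{C}^k$-data on $M \times M$ (derivatives along $M$ with respect to the Levi--Civita connection of $(M,g)$ and the restricted bundle connection) with the $\mathcal{C}^k$-data on $\X \times \X$, and, on the other hand, the distances $\rho_g$ on $(M,g)$ and $\rho$ on $(\X,g)$. The geometric input is that $M$, being the fixed-point set of the isometric involution $c_\X$ (Section~\ref{subsec general setting}), is a totally geodesic submanifold of $(\X,g)$; in particular its second fundamental form vanishes, the Levi--Civita connection of $(M,g)$ is the tangential part of that of $(\X,g)$, and $\exp_x$ restricted to $T_xM$ is the exponential map of $(M,g)$. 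Combined with the reality of the Chern connection (Lemma~\ref{lem Chern connection is real}) and the real-equivariant, unitary trivialization of Section~\ref{subsec real normal trivialization}, this gives that for $x,y \in M$ the covariant derivatives of $E_d$ along $M$-directions are exactly the restrictions of the covariant derivatives along $\X$; since moreover $\R(\E\otimes\L^d)_x$ carries the restricted metric, selecting only $M$-directions yields $\Norm{E_d(x,y)}_{\mathcal{C}^k(M \times M)} \leq \Norm{E_d(x,y)}_{\mathcal{C}^k(\X\times\X)}$. For the distances, $\rho_g \geq \rho$ on $M$ always (a path in $M$ is a path in $\X$), and $\rho_g = \rho$ whenever $\rho_g(x,y)$ is below the injectivity radius; by compactness of $M$ this forces $\rho(x,y) \geq c\,\rho_g(x,y)$ for some constant $c \in (0,1]$ and all $x,y \in M$. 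Plugging both facts into the displayed estimate and replacing $C'$ by $cC'$ gives the statement.

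I do not expect a genuine obstacle here: the argument is a restriction together with careful bookkeeping. The one point deserving attention is the identification of the $\mathcal{C}^k$-data of $E_d$ on $M \times M$ with the restriction of the $\mathcal{C}^k$-data on $\X \times \X$, i.e.\ that reading $E_d$ and its derivatives in the real normal trivialization about a point of $M$ and then keeping only the real directions really reproduces the covariant derivatives computed with the Levi--Civita connection of $(M,g)$ and the restricted connection on $\R(\E\otimes\L^d)$; this is precisely what the compatibility statements of Section~\ref{subsec real normal trivialization} are for, together with the vanishing of the second fundamental form of $M \subset \X$. The remaining verifications --- that $d_0$ and the exponents $n + \frac{k}{2}$ survive the passage to the $n$-dimensional locus $M$ --- are immediate, since one is merely discarding some of the derivatives already controlled on $\X \times \X$.
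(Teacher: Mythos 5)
Your proposal is correct and follows essentially the same route as the paper: the paper's proof simply cites the first part of \cite[thm.~5]{MM2015} and restricts to $M$, remarking that the Levi--Civita connection of $M$ is the restriction of that of $\X$, so that the $\mathcal{C}^k$-norm on $M\times M$ is dominated by the ambient one. Your additional step comparing $\rho_g$ with the ambient geodesic distance via total geodesy and compactness (replacing $C'$ by $cC'$) is a point the paper's one-line proof leaves implicit, and you handle it correctly.
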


\begin{proof}
This is the first part of \cite[thm.~5]{MM2015}, where we only considered the restriction of $E_d$ and its derivatives to $M$. Note that the Levi--Civita connection on $M$ is the restriction of the Levi--Civita connection on $\X$. Hence the norm $\Norm{\cdot}_{\mathcal{C}^k}$, such as we defined it, is smaller than the one used in \cite{MM2015}.
\end{proof}

\section{Proof of Theorem~\ref{thm asymptotics variance}}
\label{sec proof of the main theorem}

In this section, we prove Theorem~\ref{thm asymptotics variance}. Recall that $\X$ is a compact Kähler manifold of dimension $n\geq 2$ defined over the reals and that $M$ denotes its real locus, assumed to be non-empty. Let $\E\to \X$ be a rank $r \in\{1,\dots,n-1\}$ real Hermitian vector bundle and $\L\to \X$ be a real Hermitian line bundle whose curvature form is $\omega$, the Kähler form of~$\X$. We assume that $\E$ and $\L$ are endowed with compatible real structures. For all $d \in \N$, $E_d$ denotes the Bergman kernel of $\E \otimes \L^d$. Finally, $s_d$ denotes a standard Gaussian vector in $\R \H$, whose real zero set is denoted by $Z_d$, and $\rmes{d}$ is the measure of integration over $Z_d$.

\subsection{The Kac--Rice formula}
\label{subsec Kac--Rice formula}

The first step in our proof of Thm.~\ref{thm asymptotics variance} is to prove a version of the Kac--Rice formula adapted to our problem. This is the goal of this section. First, we recall the Kac--Rice formula we used in \cite{Let2016} to compute the expectation of $\vol{Z_d}$ (Thm.~\ref{thm Kac-Rice exp}). Then we prove a Kac--Rice formula adapted to the computation of the covariance (Thm.~\ref{thm Kac-Rice var}), compare \cite[thm.~6.3]{AW2009} and \cite[chap.~11.5]{TA2007}.

Let $L : V \to V'$ be a linear map between two Euclidean spaces, recall that we denote by $\odet{L}$ its Jacobian (cf.~Def.~\ref{def Jacobian}). Since $LL^*$ is a semi-positive symmetric endomorphism of $V'$, $\det(LL^*) \geq 0$ and $\odet{L}$ is well-defined. The range of $L^*$ is $\ker(L)^\perp$, hence $\ker(LL^*)=\ker(L^*)=L(V)^\perp$. Thus $\odet{L} >0$ if and only if $LL^*$ is injective, that is if and only if $L$ is surjective. In fact, if $L$ is surjective, let $A$ be the matrix of the restriction of $L$ to $\ker(L)^\perp$ in any orthonormal basis of $\ker(L)^\perp$ and $V'$, then we have:
\begin{equation*}
\odet{L}=\sqrt{\det\left(A A^{\text{t}}\right)}=\norm{\det(A)}.
\end{equation*}

\begin{thm}[Kac--Rice formula]
\label{thm Kac-Rice exp}
Let $d \geq d_1$, where $d_1$ is defined by Lem.~\ref{lem dfn d1} and let $\nabla^d$ be any real connection on $\E \otimes \L^d$. Let $s_d$ be a standard Gaussian vector in $\R \H$. Then for any Borel measurable function $\phi : M \to \R$ we have:
\begin{equation}
\label{eq thm Kac-Rice exp}
\esp{\int_{x \in Z_d} \phi(x) \rmes{d}}= (2\pi)^{-\frac{r}{2}} \int_{x \in M} \frac{\phi(x)}{\odet{\ev_x^d}}\espcond{\odet{\nabla^d_{x}s_d}}{s_d(x)=0} \rmes{M}
\end{equation}
whenever one of these integrals is well-defined.
\end{thm}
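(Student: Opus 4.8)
The plan is to derive this Kac--Rice formula from Federer's coarea formula applied to the random map $s_d$ restricted to $M$. First I would fix $d\geq d_1$, so that $\R\H$ is $0$-ample and, by Lemma~\ref{lem 0 ample implies nice}, for almost every $s\in\R\H$ the restriction $s_{/M}$ vanishes transversally; in particular $Z_s$ is a smooth codimension-$r$ submanifold of $M$ and $\rmes{s}$ is well-defined. The key point is that, since $\ev_x^d$ is surjective for every $x\in M$, the Gaussian vector $s_d(x)=\ev_x^d(s_d)$ is a non-degenerate centered Gaussian in $\R(\E\otimes\L^d)_x$; its variance operator is $\ev_x^d(\ev_x^d)^*$, whose determinant is $\odet{\ev_x^d}^2>0$. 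This is what allows the conditioning $\{s_d(x)=0\}$ to make sense and produces the density factor $(2\pi)^{-r/2}\odet{\ev_x^d}^{-1}$ of $s_d(x)$ evaluated at $0$.

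Second, I would set up the coarea/Kac--Rice computation: trivialize $\E\otimes\L^d$ locally over $M$ using a real unitary frame (Section~\ref{subsec real normal trivialization}), so that $s_{/M}$ becomes locally a map $M\supset\Omega\to\R^r$. For a fixed deterministic $s$ vanishing transversally, Federer's coarea formula gives
\begin{equation*}
\int_{x\in Z_s}\phi(x)\rmes{s} = \int_{x\in\Omega}\phi(x)\,\norm{\delta_0(s(x))}\,\odet{\nabla^d_x s}\rmes{M}
\end{equation*}
in the distributional sense; the rigorous route is to approximate $\delta_0$ by $\mathbf{1}_{B(0,\epsilon)}/\vol{B(0,\epsilon)}$, apply the coarea formula on the tube $s^{-1}(B(0,\epsilon))$, and let $\epsilon\to 0$, using transversality to control the limit. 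Then I would take expectation over $s=s_d$ and swap $\esp{\cdot}$ with the integral over $M$ by Tonelli (everything is non-negative once $\phi\geq 0$; the general case follows by splitting $\phi=\phi^+-\phi^-$ under the stated well-definedness hypothesis). For each fixed $x$, the expectation factorizes: writing $p_{s_d(x)}$ for the Gaussian density of $s_d(x)$,
\begin{equation*}
\esp{\delta_0(s_d(x))\,\odet{\nabla^d_x s_d}} = p_{s_d(x)}(0)\,\espcond{\odet{\nabla^d_x s_d}}{s_d(x)=0},
\end{equation*}
and $p_{s_d(x)}(0)=(2\pi)^{-r/2}\odet{\ev_x^d}^{-1}$ by the variance computation above. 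Patching the local trivializations with a partition of unity (the integrand is intrinsic, so the local choices do not matter) yields~\eqref{eq thm Kac-Rice exp}.

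A few technical points need care. One must justify that the connection $\nabla^d$ may be taken arbitrary among real connections: the quantity $\espcond{\odet{\nabla^d_x s_d}}{s_d(x)=0}$ is unchanged because on the conditioning event $s_d(x)=0$, and more relevantly on $Z_d$ itself where the coarea Jacobian is evaluated, $\nabla^d_x s_d$ restricted to $T_xZ_d$'s complement does not depend on the choice of connection (two connections differ by a term involving $s_d(x)$, which vanishes). I would record this as a preliminary remark. One also needs the integrability of $x\mapsto\odet{\ev_x^d}^{-1}\espcond{\odet{\nabla^d_x s_d}}{s_d(x)=0}$ against $\norm{\phi}\rmes{M}$, which follows from continuity in $x$ and compactness of $M$ once $0$-ampleness gives a uniform lower bound on $\odet{\ev_x^d}$. \textbf{The main obstacle} is the rigorous passage from the formal $\delta_0$ identity to an honest equality: one must show that the $\epsilon\to 0$ limit of the coarea formula on the shrinking tubes commutes with the expectation, which requires a dominated-convergence argument uniform in $\epsilon$ — this is where transversality (generic in $s$ by Lemma~\ref{lem 0 ample implies nice}) and the Gaussian tail estimates on $\nabla^d_x s_d$ enter. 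Given that the analogous statement was already used in~\cite{Let2016}, I would expect to cite the relevant lemmas there for the measure-theoretic core and concentrate the exposition on the real-structure bookkeeping and the identification of the constant $(2\pi)^{-r/2}\odet{\ev_x^d}^{-1}$.
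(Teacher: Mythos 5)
Your argument is sound and reaches the right formula, but it is not the paper's route. The paper does not prove Theorem~\ref{thm Kac-Rice exp} in the text at all: it is invoked as a consequence of \cite[thm.~5.3]{Let2016}, and the companion two-point formula (Theorem~\ref{thm Kac-Rice var}) is proved by the double fibration trick, namely Federer's coarea formula applied twice to the incidence manifold $\left\{ (s,x) \in \R\H \times M : s(x)=0 \right\}$, once for the projection to $\R\H$ (whose fibers are the $Z_s$) and once for the projection to $M$ (whose fibers are the subspaces $\ker(\ev_x^d)$). In that scheme the Gaussian measure is disintegrated directly over $\ker(\ev_x^d)$, so the factor $(2\pi)^{-\frac{r}{2}}\odet{\ev_x^d}^{-1}$ and the conditional expectation appear as an exact Gaussian integral over a codimension-$r$ subspace; no approximate identity, no $\epsilon \to 0$ limit and no interchange of limit and expectation are needed, the only inputs being $0$-ampleness (Lemma~\ref{lem dfn d1}, Lemma~\ref{lem 0 ample implies nice}), which makes the incidence set a smooth submanifold, and the smooth coarea formula. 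Your proposal instead follows the classical Azaïs--Wschebor Kac--Rice scheme (delta-approximation in the target, Tonelli, Gaussian regression, then $\epsilon \to 0$), which is perfectly viable and whose one real difficulty you correctly isolate: the domination needed to pass to the limit, i.e.\ the joint continuity in $(x,v)$ of $v \mapsto p_{s_d(x)}(v)\,\espcond{\odet{\nabla^d_x s_d}}{s_d(x)=v}$ together with a uniform bound near $v=0$; this is exactly what the incidence-manifold argument bypasses. Your identification of the density factor $(2\pi)^{-\frac{r}{2}}\odet{\ev_x^d}^{-1}$ is correct, and so is the connection-independence argument, with one bookkeeping caveat: inside the $\epsilon$-tube the Jacobian delivered by the coarea formula is that of the local representative of $s$ (the trivialization's differential), not of an arbitrary real $\nabla^d$; the freedom in $\nabla^d$ is only recovered after conditioning at $v=0$, where the two candidates differ by a zeroth-order term applied to $s_d(x)=0$ (compare Remark~\ref{rem choice of nablad}), so your preliminary remark should be phrased at the level of the conditional law at $0$ rather than on the tube.
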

The expectation on the right-hand side of~\eqref{eq thm Kac-Rice exp} is to be understood as the conditional expectation of $\odet{\nabla^d_{x}s_d}$ given that $s_d(x)=0$. This result is a consequence of~\cite[thm.~5.3]{Let2016}. See also Section~5.3 of \cite{Let2016}, where we applied this result with $\phi = \mathbf{1}$, in order to compute the expected volume of $Z_d$.

Let us denote by $\Delta = \{(x,y) \in M^2 \mid x = y\}$ the diagonal in $M^2$. Let $d \in \N$ and let $(x,y) \in M^2 \setminus \Delta$ we denote by $\ev^d_{x,y}$ the evaluation map:
\begin{equation}
\label{eq dfn ev map}
\begin{array}{rccc}
\ev_{x,y}^d:& \R \H & \longrightarrow & \R\!\left(\E \otimes \L^d\right)_x \oplus \R\!\left(\E \otimes \L^d\right)_y.\\
& s & \longmapsto & \left(s(x),s(y)\right)
\end{array}
\end{equation}
The following proposition is the equivalent of Lemma~\ref{lem dfn d1} for two points $(x,y) \notin \Delta$. One could prove this result using only the estimates of Section~\ref{sec estimates for the bergman kernel}. We give instead a less technical proof, using the Kodaira embedding theorem. See \cite[sect.~5.1]{MM2007} for a discussion of the relations between these approaches.

\begin{prop}
\label{prop amplitude 2 points}
There exists $d_2 \in \N$, depending only on $\X$, $\E$ and $\L$, such that for every $d \geq d_2$ and every $(x,y) \in M^2 \setminus \Delta$, the evaluation map $\ev^d_{x,y}$ is surjective.
\end{prop}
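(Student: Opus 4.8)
The plan is to deduce this from the Kodaira embedding theorem, mimicking the argument behind Lemma~\ref{lem dfn d1} but now working with the pair of points $(x,y)$ rather than a single point together with a tangent direction. The key point is that ampleness of $\L$ gives, for $d$ large enough, a very ample line bundle $\L^d$ (or $\E\otimes\L^d$) that separates points of $\X$; restricting to the real locus and to real sections is then a matter of checking equivariance.

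Concretely, I would proceed as follows. First, recall that since $\L$ is ample there exists $d_2'\in\N$ such that for all $d\geq d_2'$ the bundle $\E\otimes\L^d$ is globally generated and the evaluation map $H^0(\X,\E\otimes\L^d)\to (\E\otimes\L^d)_x\oplus(\E\otimes\L^d)_y$ is surjective for every pair of distinct points $x\neq y$ in $\X$; this is a standard consequence of Serre vanishing / the Kodaira embedding theorem (one can, e.g., tensor a very ample power of $\L$ with $\E$ and a further power of $\L$, and use that $\E\otimes\L^d$ separates points for $d\gg 0$). Indeed, $\dim H^0(\X,\E\otimes\L^d)=N_d$ grows and the map $x\mapsto [\text{sections of }\E\otimes\L^d]$ defines an embedding for $d$ large, so in particular sections separate any two distinct points with full rank at each. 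Second, I would transfer this to real sections: for $(x,y)\in M^2\setminus\Delta$, the real structure $c_\X$ fixes both $x$ and $y$, so $c_d$ restricts to $\C$-antilinear involutions of the fibers $(\E\otimes\L^d)_x$ and $(\E\otimes\L^d)_y$ with fixed-point sets $\R(\E\otimes\L^d)_x$ and $\R(\E\otimes\L^d)_y$. Given a target value $(\xi,\eta)\in\R(\E\otimes\L^d)_x\oplus\R(\E\otimes\L^d)_y$, pick any complex section $s\in\H$ with $s(x)=\xi$, $s(y)=\eta$ (possible by surjectivity over $\C$), and replace it by its real part $\tfrac12(s+c_d\circ s\circ c_\X)\in\R\H$; since $c_X(x)=x$, $c_X(y)=y$ and $c_d(\xi)=\xi$, $c_d(\eta)=\eta$, this real section still takes the values $\xi$ and $\eta$ at $x$ and $y$. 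Hence $\ev^d_{x,y}$ is surjective onto $\R(\E\otimes\L^d)_x\oplus\R(\E\otimes\L^d)_y$ for all $d\geq d_2'$, and we set $d_2=\max(d_2',d_1)$.

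The one subtlety worth addressing carefully is the uniformity of $d_2$ in $(x,y)$: we need a single threshold that works for \emph{all} pairs of distinct real points simultaneously. This follows from the standard fact that, for $d$ large, the Kodaira map associated to $\E\otimes\L^d$ (more precisely, for a line bundle; for the vector bundle one argues componentwise after twisting) is an embedding of all of $\X$, so the two-point separation property holds globally on $\X\times\X\setminus\Delta_\X$ for every $d$ beyond a fixed bound depending only on $\X$, $\E$, $\L$ — and a fortiori on $M^2\setminus\Delta$. There is no loss of uniformity near $\Delta$ here, in contrast to the genuinely delicate estimates of Section~\ref{sec estimates for the bergman kernel}, precisely because we are bounded away from the diagonal and are only asking for separation, not for quantitative control.

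The main obstacle — and it is a mild one — is packaging the classical algebro-geometric input in the form ``$\E\otimes\L^d$ separates two points'' when $\E$ has rank $r>1$: one cannot directly invoke very ampleness of a line bundle. The clean way around this is to fix $d_0$ with $\L^{d_0}$ very ample, write $\E\otimes\L^d=(\E\otimes\L^{d_0})\otimes\L^{d-d_0}$, and use that for a fixed globally generated bundle $\F=\E\otimes\L^{d_0}$ and an ample $\L$, the twists $\F\otimes\L^{k}$ are globally generated and separate points for $k\gg0$ by Serre vanishing applied to $\F\otimes\mathcal I_{x,y}$ (the ideal sheaf of the two points) — the relevant $H^1$ vanishes for $k$ large uniformly in $(x,y)$ since there are only finitely many relevant cohomology classes up to the fixed bound, or more robustly by a Castelnuovo--Mumford regularity argument. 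Alternatively, and most in keeping with the paper's style, one simply cites \cite[sect.~5.1]{MM2007} for exactly this two-point separation statement, as the excerpt already signals. Once that input is in hand, the equivariance argument above finishes the proof with no further work.
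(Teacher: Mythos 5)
Your overall architecture is sound and close in spirit to the paper's: reduce to a two-point surjectivity statement for the complex space $\H$ and then transfer to $\R\H$ using the real structures. Your transfer step is correct and is essentially equivalent to what the paper does: you average, replacing $s$ by $\tfrac12\left(s + c_d\circ s\circ c_\X\right)$, which indeed fixes the (real) values at $x,y\in M$; the paper instead remarks that $\tilde{\ev}^d_{x,y}$ is the complexification of $\ev^d_{x,y}$, so real and complex ranks agree. Either way this part is fine.

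The gap is in the complex step, which is the heart of the proposition: you never actually establish the surjectivity of $\H \to (\E\otimes\L^d)_x\oplus(\E\otimes\L^d)_y$ for a threshold $d_2$ \emph{uniform over all} $(x,y)\in M^2\setminus\Delta$, including pairs arbitrarily close to the diagonal. Serre vanishing applied to $(\E\otimes\L^{d_0})\otimes\L^k\otimes\mathcal{I}_{x,y}$ gives a bound on $k$ that a priori depends on the ideal sheaf, i.e.\ on the pair $(x,y)$; your remark that "there are only finitely many relevant cohomology classes up to the fixed bound" is not an argument, and the more robust routes you gesture at (uniform Castelnuovo--Mumford regularity for ideal sheaves of pairs, or a compactification by length-two subschemes) are exactly where the work would lie and are not carried out. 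Note also that injectivity of a Kodaira-type map for the rank-$r$ bundle does not immediately give surjectivity onto the direct sum of two fibers, which is why "the embedding separates points" needs unpacking in the vector bundle case. The paper avoids all of this with a short multiplicative trick that you come close to but do not state: fix $l_0$ with $\Psi_{l_0}$ an embedding, so for $x\neq y$ the hyperplanes $\Psi_{l_0}(x)\neq\Psi_{l_0}(y)$ furnish $\sigma_x,\sigma_y\in H^0(\X,\L^{l_0})$ with $\sigma_x(x)\neq 0=\sigma_x(y)$ and symmetrically; then, since $d-l_0\geq d_1$, Lemma~\ref{lem dfn d1} gives sections of $\E\otimes\L^{d-l_0}$ spanning the fiber at $x$ (resp.\ at $y$), and the products $\sigma_{k,x}\otimes\sigma_x$, $\sigma_{k,y}\otimes\sigma_y$ exhibit rank $2r$ directly. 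This yields the explicit uniform threshold $d_2=l_0+d_1$ with no vanishing theorem and no uniformity issue, since $l_0$ and $d_1$ are fixed once and for all. With that step supplied (or with a genuine uniform-regularity argument in its place), your proof closes.
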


\begin{proof}
Recall that there exists $d_1 \in \N$ such that, for all $d\geq d_1$, the map $\ev_x^d$ defined by~\eqref{eq dfn 0 ample} is surjective for any $x \in M$ (see Lem.~\ref{lem dfn d1}). Then, for all $d \geq d_1$ and all $x \in M$, the complexified map $\tilde{\ev}_x^d:\H \to \left(\E \otimes \L^d\right)_x$ defined by $\tilde{\ev}_x^d(s)=s(x)$ is also surjective.

For any $l \in \N$, we denote by $\Psi_l: \X \to \P\left(H^0(\X,\L^l)^*\right)$ the Kodaira map, defined by $\Psi_l(x) = \left\{s \in H^0(\X,\L^l) \mvert s(x) = 0\right\}$. By the Kodaira embedding theorem (see \cite[chap.~1.4]{GH1994}), there exists $l_0 \in \N$ such that $\Psi_{l_0}$ is well-defined and is an embedding.

We set $d_2 = l_0+d_1$. Let $d \geq d_2$ and let $(x,y) \in M^2 \setminus \Delta$. Since $\Psi_{l_0}(x)$ and $\Psi_{l_0}(y)$ are distinct hyperplanes in $H^0(\X,\L^{l_0})$, there exist $\sigma_x$ and $\sigma_y \in H^0(\X,\L^{l_0})$ such that:
\begin{equation*}
\label{eq Kodaira}
\left\{\begin{aligned}
\sigma_x(x) &\neq 0,\\
\sigma_x(y) & = 0
\end{aligned}\right.
\qquad \text{and} \qquad
\left\{\begin{aligned}
\sigma_y(x) & = 0,\\
\sigma_y(y) & \neq 0.
\end{aligned}\right.
\end{equation*}
Since $d-l_0 \geq d_1$, $\tilde{\ev}_x^d$ is onto and there exist $\sigma_{1,x},\dots,\sigma_{r,x} \in H^0(\X,\E \otimes \L^{d-l_0})$ such that $\left(\sigma_{1,x}(x),\dots,\sigma_{r,x}(x)\right)$ is a basis of $\left(\E \otimes \L^{d-l_0}\right)_x$. Similarly there exist $\sigma_{1,y},\dots,\sigma_{r,y}$ such that $\left(\sigma_{1,y}(y),\dots,\sigma_{r,y}(y)\right)$ is a basis of $\left(\E \otimes \L^{d-l_0}\right)_y$. We define global holomorphic sections of $\E \otimes \L^d$ by $s_{k,x} = \sigma_{k,x}\otimes \sigma_x$ and $s_{k,y} = \sigma_{k,y}\otimes \sigma_y$ for all $k \in \{1,\dots,r\}$. These sections are such that $(s_{k,x}(x))_{1\leq k \leq r}$ is a basis of $\left(\E \otimes \L^d\right)_x$, $(s_{k,y}(y))_{1\leq k \leq r}$ is a basis of $\left(\E \otimes \L^d\right)_y$ and for all $k \in \{1,\dots,r\}$, $s_{k,x}(y) = 0 = s_{k,y}(x)$. This proves that the map
\begin{equation*}
\label{eq dfn ev map complex}
\begin{array}{rccc}
\tilde{\ev}_{x,y}^d:& \H & \longrightarrow & \left(\E \otimes \L^d\right)_x \oplus \left(\E \otimes \L^d\right)_y.\\
& s & \longmapsto & \left(s(x),s(y)\right)
\end{array}
\end{equation*}
has rank at least $2r$ (as a $\C$-linear map). Since $\tilde{\ev}_{x,y}^d$ is the complexified map of $\ev_{x,y}^d$, the latter must have rank at least $2r$ (as a $\R$-linear map), hence it is onto.
\end{proof}

\begin{rem}
\label{rem non degeneracy and surjectivity}
In Prop.~\ref{prop amplitude 2 points}, $\ev_{x,y}^d$ is surjective if and only if $\odet{\ev_{x,y}^d}>0$, that is if and only if $\ev_{x,y}^d \left(\ev_{x,y}^d\right)^*$ is non-singular. Since the latter is the variance operator of $\ev_{x,y}^d(s_d)$, where $s_d \sim \mathcal{N}(\Id)$ in $\R \H$, we see that the surjectivity of $\ev_{x,y}^d$ is equivalent to the non-degeneracy of the distribution of $(s_d(x),s_d(y))$.
\end{rem}

We can now deduce a Kac--Rice type formula from Prop.~\ref{prop amplitude 2 points}. For any $d \in \N$, we define $F_d$ to be the following bundle map over $M^2$:
\begin{equation*}
\label{eq def bundle map Fd}
\begin{array}{rccc}
F_d:& \R \H \times M^2 & \longrightarrow & \displaystyle \R\!\left(\E \otimes \L^d\right)\times \R\!\left(\E \otimes \L^d\right).\\
& (s,x,y) & \longmapsto & \left(s(x),s(y)\right)
\end{array}
\end{equation*}
Let $\nabla^d$ be any real connection on $\E \otimes \L^d\to \X$ (see Def.~\ref{dfn real connection}). Then by Rem.~\ref{rem real connection in a real direction}, the restriction of $\nabla^d$ defines a connection on $\R (\E \otimes \L^d) \to M$. Let $\nabla^d F_d$ denote the vertical component of the differential of $F_d$. Then, for all $(s_0,x,y) \in \R\H \times M^2$, we have:
\begin{equation*}
\label{eq differential Fdp}
\begin{array}{rccc}
\nabla^d_{(s_0,x,y)}F_d:& \R \H \times T_xM \times T_yM & \longrightarrow & \R\!\left(\E \otimes \L^d\right)_x \oplus \R\!\left(\E \otimes \L^d\right)_y.\\
& (s,v,w) & \longmapsto & \left(s(x)+\nabla^d_xs_0\cdot v,s(y)+\nabla^d_ys_0\cdot w\right)
\end{array}
\end{equation*}
We denote by $\partial_1^dF_d$ the partial derivative of $F_d$ with respect to the first variable (meaning~$s$), and by $\partial_2^dF_d$ its partial derivative with respect to the second variable (meaning $(x,y)$). Then for all $(s_0,x,y) \in \R\H \times M^2$ we have:
\begin{align}
\label{eq partial derivatives Fdp}
\partial_1^dF_d(s_0,x,y) &= \ev_{x,y}^d & &\text{and} & \partial_2^dF_d(s_0,x,y): (v,w) \mapsto \left(\nabla_x^ds_0 \cdot v,\nabla_y^ds_0 \cdot w\right).
\end{align}
From now on, we assume that $d\geq d_2$, where $d_2$ is given by Prop.~\ref{prop amplitude 2 points}. We define an incidence manifold $\Sigma_d$ by:
\begin{equation*}
\label{eq def incidence manifold}
\Sigma_d = \left(F_d\right)^{-1}(0) \cap \left(\R \H \times \left(M^2 \setminus \Delta\right)\right).
\end{equation*}
By Prop.~\ref{prop amplitude 2 points} and eq.~\ref{eq partial derivatives Fdp}, for all $(s,x,y)\in \R \H \times \left(M^2 \setminus \Delta\right)$, $\partial_1^dF_{d,p}(s,x,y)$ is surjective. Thus, the restriction of $F_d$ to $\R \H \times \left(M^2 \setminus \Delta\right)$ is a submersion and $\Sigma_d$ is a submanifold of $\R\H \times M^2$ of codimension $2r$. Note that we are only concerned with the zero set of $F_d$, hence none of this depends on the choice of $\nabla^d$. We can now state the Kac--Rice formula in this context.

\begin{thm}[Kac--Rice formula]
\label{thm Kac-Rice var}
Let $d \geq d_2$, where $d_2$ is given by Prop.~\ref{prop amplitude 2 points}, and let $\nabla^d$ be any real connection on $\E \otimes \L^d$. Let $s_d$ be a standard Gaussian vector in $\R \H$. Then for any Borel measurable function $\Phi : \Sigma_d \to \R$ we have:
\begin{multline}
\label{eq thm Kac-Rice var}
\esp{\int_{(x,y) \in (Z_d)^2 \setminus \Delta} \Phi(s_d,x,y) \rmes{d}^2}= \frac{1}{(2\pi)^r} \int_{(x,y) \in M^2 \setminus \Delta} \frac{1}{\odet{\ev_{x,y}^d}}\times\\
\espcond{\Phi(s_d,x,y) \odet{\nabla^d_{x}s_d}\odet{\nabla^d_{y}s_d}}{s_d(x)=0=s_d(y)} \rmes{M}^2
\end{multline}
whenever one of these integrals is well-defined. Here, $\rmes{M}^2$ stands for the product measure on $M^2$ induced by $\rmes{M}$. Similarly, $\rmes{d}^2$ is the product measure on $(Z_d)^2$.
\end{thm}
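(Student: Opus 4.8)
\noindent The plan is to obtain~\eqref{eq thm Kac-Rice var} by applying the Kac--Rice formula of Thm.~\ref{thm Kac-Rice exp} (equivalently \cite[thm.~5.3]{Let2016}) to the random section $(x,y)\longmapsto F_d(s_d,x,y)=\left(s_d(x),s_d(y)\right)$ of the rank $2r$ real bundle $P_1^\star\R(\E\otimes\L^d)\oplus P_2^\star\R(\E\otimes\L^d)$ (the Whitney sum of the two pullbacks) over the $2n$-dimensional manifold $M^2\setminus\Delta$, whose underlying space of sections is still $\R\H$ with its $L^2$-inner product and whose evaluation at $(x,y)$ is $\ev_{x,y}^d$. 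Its real zero set is $(Z_d)^2\setminus\Delta$, and $\rmes{d}^2$ is the associated $(2n-2r)$-dimensional Riemannian measure for the product metric. Since Thm.~\ref{thm Kac-Rice exp} is stated for a compact base, one actually re-runs its (coarea-based) proof in this setting; the genuinely new ingredients are Prop.~\ref{prop amplitude 2 points}, a short identification of the two geometric factors, and an exhaustion of $M^2\setminus\Delta$ by compact pieces.

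First I would check the hypotheses. For $d\geq d_2$, Prop.~\ref{prop amplitude 2 points} gives that $\ev_{x,y}^d$ is onto at every $(x,y)\in M^2\setminus\Delta$; equivalently (Rem.~\ref{rem non degeneracy and surjectivity}) the Gaussian vector $\left(s_d(x),s_d(y)\right)$ is non-degenerate there, so the conditioning in~\eqref{eq thm Kac-Rice var} is a bona fide Gaussian regression. Moreover, for almost every $s_d$ the section $s_d$ vanishes transversally on $M$ (Lem.~\ref{lem 0 ample implies nice}); since by~\eqref{eq partial derivatives Fdp} the spatial differential $\partial_2^dF_d(s_d,x,y)$ is the block-diagonal map $(v,w)\mapsto\left(\nabla^d_xs_d\cdot v,\nabla^d_ys_d\cdot w\right)$, transversality on $M$ forces transversality of $F_d(s_d,\cdot,\cdot)$ on $M^2\setminus\Delta$, so $(Z_d)^2\setminus\Delta$ is almost surely a codimension $2r$ submanifold. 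Federer's coarea formula applied fiberwise to $F_d(s,\cdot,\cdot)$ together with Tonelli's theorem -- exactly as in the proof of Thm.~\ref{thm Kac-Rice exp} -- then yields, for every non-negative Borel $\Phi$,
\[
\esp{\int_{(x,y)\in(Z_d)^2\setminus\Delta}\Phi(s_d,x,y)\,\rmes{d}^2}=\int_{(x,y)\in M^2\setminus\Delta}p_{x,y}(0)\,\espcond{\Phi(s_d,x,y)\,\odet{\partial_2^dF_d(s_d,x,y)}}{s_d(x)=0=s_d(y)}\rmes{M}^2,
\]
where $p_{x,y}$ denotes the Gaussian density of $\left(s_d(x),s_d(y)\right)$ on $\R(\E\otimes\L^d)_x\oplus\R(\E\otimes\L^d)_y$.

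Next I would identify the two geometric factors. Because $\var{s_d}=\Id$, the variance operator of $\left(s_d(x),s_d(y)\right)=\ev_{x,y}^d(s_d)$ is $\ev_{x,y}^d\left(\ev_{x,y}^d\right)^*$, hence $p_{x,y}(0)=(2\pi)^{-r}\det\!\left(\ev_{x,y}^d(\ev_{x,y}^d)^*\right)^{-1/2}=(2\pi)^{-r}/\odet{\ev_{x,y}^d}$. For the Jacobian, the block-diagonal map $L\colon(v,w)\mapsto\left(\nabla^d_xs\cdot v,\nabla^d_ys\cdot w\right)$ satisfies $LL^*=\left(\nabla^d_xs\right)\left(\nabla^d_xs\right)^*\oplus\left(\nabla^d_ys\right)\left(\nabla^d_ys\right)^*$ for the orthogonal splittings of $T_xM\oplus T_yM$ and $\R(\E\otimes\L^d)_x\oplus\R(\E\otimes\L^d)_y$ induced by $g$ and $h_d$, so $\odet{\partial_2^dF_d(s,x,y)}=\odet{\nabla^d_xs}\,\odet{\nabla^d_ys}$. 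Substituting these two identities into the last display yields~\eqref{eq thm Kac-Rice var} for $\Phi\geq 0$, and then for arbitrary Borel $\Phi$ by writing $\Phi=\Phi^+-\Phi^-$, whenever one of the two integrals in~\eqref{eq thm Kac-Rice var} is well defined.

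The main obstacle, and the only genuinely new point compared with Thm.~\ref{thm Kac-Rice exp}, is that $M^2\setminus\Delta$ is not compact and that, unlike in the one-point case, the conditioning degenerates as $(x,y)\to\Delta$, so the integrand need not be integrable there and neither side of~\eqref{eq thm Kac-Rice var} need be finite. I would deal with this by exhausting $M^2\setminus\Delta$ by the compact sets $\{(x,y)\in M^2\mid\rho_g(x,y)\geq 1/k\}$, on each of which $\left(s_d(x),s_d(y)\right)$ is uniformly non-degenerate and the coarea argument above applies verbatim, and then letting $k\to+\infty$ by the monotone convergence theorem when $\Phi\geq 0$; this is exactly what the clause ``whenever one of these integrals is well-defined'' refers to.
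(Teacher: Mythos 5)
Your proposal is correct and follows essentially the same route as the paper: the paper also runs the coarea-based (double fibration) argument for the two-point evaluation map $F_d$ over $\R\H\times(M^2\setminus\Delta)$, computes $\odet{\partial_1^dF_d}=\odet{\ev_{x,y}^d}$ and $\odet{\partial_2^dF_d}=\odet{\nabla^d_xs}\odet{\nabla^d_ys}$ exactly as you do, and reinterprets the Gaussian integral over $\ker(\ev_{x,y}^d)$ as the conditional expectation with the $(2\pi)^{-r}$ density factor. The only differences are packaging: the paper invokes the double-fibration proposition of \cite{Let2016} directly on the incidence manifold $\Sigma_d$ instead of re-running Thm.~\ref{thm Kac-Rice exp}, and it does not need your compact exhaustion since the coarea/Tonelli argument applies over the non-compact $M^2\setminus\Delta$ as is.
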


The expectation on the right-hand side of~\eqref{eq thm Kac-Rice var} is to be understood as the conditional expectation of $\Phi(s_d,x,y) \odet{\nabla^d_{x}s_d}\odet{\nabla^d_{y}s_d}$ given that $s_d(x)=0=s_d(y)$.

\begin{proof}
The proof of Thm.~\ref{thm Kac-Rice var} uses the double fibration trick, that is apply Federer's coarea formula twice. See for example \cite[App.~C]{Let2016} and the reference therein.

The Euclidean inner product on $\R \H$ defined by eq.~\eqref{eq definition inner product} and the Riemannian metric $g$ induce a Riemannian metric on $\R \H \times M^2$, and on $\Sigma_d$ by restriction. Let $\pi_1:\Sigma_d\to \R \H$ and $\pi_2:\Sigma_d\to M^2 \setminus \Delta$ denote the projections from $\Sigma_d$ to the first and second factors, respectively. For all $s \in \R \H$, $\pi_1^{-1}(s)$ is isometric to $Z_s$ and we identify these spaces. Similarly, for all $(x,y) \in M^2 \setminus \Delta$ we identify $\pi_2^{-1}(x,y)$ with the isometric space $\ker(\ev_{x,y}^d)$.

We denote by $\dx s$ the Lebesgue measure on $\R \H$ or any of its subspaces, normalized so that a unit cube has volume $1$. Let $\Phi:\Sigma_d \to \R$ be a Borel measurable function. Then
\begin{equation*}
\esp{\int_{(Z_d)^2 \setminus \Delta} \Phi \rmes{d}^2}= \int_{s \in \R \H} \left(\int_{(x,y) \in \pi_1^{-1}(s)} \Phi(s,x,y)\frac{e^{-\frac{1}{2}\Norm{s}^2}}{(2\pi)^{\frac{N_d}{2}}} \rmes{d}^2\right) \dx s,
\end{equation*}
where $N_d$ is the dimension of $\R \H$. Then, by the double fibration trick \cite[Prop.~C.3]{Let2016} this quantity equals:
\begin{equation}
\label{eq double fibration}
\int_{(x,y)\in M^2 \setminus \Delta}\left( \int_{s \in \ker(\ev_{x,y}^d)} \Phi(s,x,y)\frac{e^{-\frac{1}{2}\Norm{s}^2}}{(2\pi)^{\frac{N_d}{2}}} \frac{\odet{\partial_2^dF_d(s,x,y)}}{\odet{\partial_1^dF_d(s,x,y)}} \dx s\right) \rmes{M}^2.
\end{equation}
Then eq.~\eqref{eq partial derivatives Fdp} shows that $\partial_2^dF_{d,p}(s,x,y)= \nabla^d_xs \oplus \nabla^d_ys$. Moreover, by definition of the metrics, $T_xM$ is orthogonal to $T_yM$ and $\R(\E\otimes\L^d)_x$ is orthogonal to $\R(\E\otimes\L^d)_y$. Thus
\begin{align*}
\odet{\partial_2^dF_d(s,x,y)} &= \det\left(\partial_2^dF_d(s,x,y)\left(\partial_2^dF_d(s,x,y)\right)^*\right)^\frac{1}{2}\\
&= \det \left(\begin{pmatrix}
\nabla_x^ds & 0 \\ 0 & \nabla^d_ys
\end{pmatrix}
\begin{pmatrix}
(\nabla^d_xs)^* & 0 \\ 0 & (\nabla^d_ys)^*
\end{pmatrix}\right)^\frac{1}{2}\\
&= \det\begin{pmatrix}
\nabla_x^ds (\nabla_x^ds)^* & 0 \\ 0 & \nabla_y^ds (\nabla_y^ds)^*
\end{pmatrix}^\frac{1}{2}\\
&= \odet{\nabla^d_xs}\odet{\nabla^d_ys}.
\end{align*}
Besides, eq.\eqref{eq partial derivatives Fdp} also shows that $\odet{\partial_1^dF_d(s,x,y)}= \odet{\ev_{x,y}^d}$, which does not depend on $s$, so that \eqref{eq double fibration} equals:
\begin{equation*}
\int_{(x,y) \in M^2 \setminus \Delta} \frac{1}{\odet{\ev_{x,y}^d}}\left(\int_{s \in \ker\left(\ev_{x,y}^d\right)}\Phi \odet{\nabla^d_xs}\odet{\nabla^d_ys}\frac{e^{-\frac{1}{2}\Norm{s}^2}}{(2\pi)^{\frac{N_d}{2}}} \dx s \right) \rmes{M}^2.
\end{equation*}
Finally, by Prop.~\ref{prop amplitude 2 points}, $\ker(\ev_x^d)$ is a subspace of codimension $2r$ of $\R \H$. Hence, the inner integral in~\eqref{eq double fibration} can be expressed as a conditional expectation given that $\ev_{x,y}^d(s_d)=0$, up to a factor $(2\pi)^r$. This concludes the proof of Thm.~\ref{thm Kac-Rice var}.
\end{proof}

\subsection{An integral formula for the variance}
\label{subsec an integral formula for the variance}

In this section, we fix some $d \geq \max(d_0,d_1,d_2)$ where $d_0$, $d_1$ and $d_2$ are defined by Thm.~\ref{thm off diag estimates}, Lem.~\ref{lem dfn d1} and Prop.~\ref{prop amplitude 2 points} respectively. We denote by $\nabla^d$ a real connection on $\E \otimes \L^d$. Let $\phi_1, \phi_2 \in \mathcal{C}^0(M)$, we want to compute:
\begin{equation}
\label{eq variance}
\begin{aligned}
\var{\rmes{d}}\left(\phi_1,\phi_2\right) &= \cov{\prsc{\rmes{d}}{\phi_1}}{\prsc{\rmes{d}}{\phi_2}}\\
&= \esp{\prsc{\rmes{d}}{\phi_1}\prsc{\rmes{d}}{\phi_2}} - \esp{\prsc{\rmes{d}}{\phi_1}}\esp{\prsc{\rmes{d}}{\phi_2}}.
\end{aligned}
\end{equation}

First, by Thm.~\ref{thm Kac-Rice exp}, we have:
\begin{multline}
\label{eq exp squared}
\esp{\prsc{\rmes{d}}{\phi_1}}\esp{\prsc{\rmes{d}}{\phi_2}}=\frac{1}{(2\pi)^r} \times\\
\int_{M^2}\phi_1(x)\phi_2(y)\frac{\espcond{\odet{\nabla^d_{x}s_d}}{s_d(x)=0}}{\odet{\ev_x^d}}\frac{\espcond{\odet{\nabla^d_{y}s_d}}{s_d(y)=0}}{\odet{\ev_y^d}}\rmes{M}^2.
\end{multline}
On the other hand,
\begin{align*}
\esp{\prsc{\rmes{d}}{\phi_1}\prsc{\rmes{d}}{\phi_2}} &=\esp{\left(\int_{x \in Z_d}\phi_1(x) \rmes{d}\right)\left(\int_{y \in Z_d}\phi_2(y) \rmes{d}\right)}\\
&= \esp{\int_{(x,y) \in (Z_d)^2\setminus \Delta}\phi_1(x)\phi_2(y) \rmes{d}^2}.
\end{align*}
Indeed, $Z_d$ is almost surely of dimension $n-r > 0$, so that $(Z_d)^2 \cap \Delta$ (that is the diagonal in $(Z_d)^2$) has measure~$0$ for $\rmes{d}^2$. We compute this integral by Thm.~\ref{thm Kac-Rice var}:
\begin{multline}
\label{eq case n>r}
\esp{\int_{(x,y) \in (Z_d)^2\setminus \Delta}\phi_1(x)\phi_2(y) \rmes{d}^2} = \frac{1}{(2\pi)^r}\int_{(x,y) \in M^2 \setminus \Delta} \frac{\phi_1(x)\phi_2(y)}{\odet{\ev_{x,y}^d}}\times\\
\espcond{\odet{\nabla^d_{x}s_d}\odet{\nabla^d_{y}s_d}}{s_d(x)=0=s_d(y)} \rmes{M}^2.
\end{multline}
Let $\D_d$ be the function defined by: $\forall (x,y)\in M^2 \setminus \Delta$,
\begin{multline}
\label{eq def density}
\D_d(x,y) = \left(\frac{\espcond{\odet{\nabla^d_{x}s_d}\!\odet{\nabla^d_{y}s_d}}{\ev_{x,y}^d(s_d)=0}}{\odet{\ev_{x,y}^d}}\right.\\
\left.-\frac{\espcond{\odet{\nabla^d_{x}s_d}}{s_d(x)=0}\espcond{\odet{\nabla^d_{y}s_d}}{s_d(y)=0}}{\odet{\ev_x^d}\odet{\ev_y^d}}\right)
\end{multline}
Since $\dim M =n >0$, $\Delta$ has measure $0$ in $M^2$ for $\rmes{M}^2$. Thus, by~\eqref{eq variance}, \eqref{eq exp squared}, \eqref{eq case n>r} and~\eqref{eq def density}, we have:
\begin{equation}
\label{eq variance2}
\var{\rmes{d}}\left(\phi_1,\phi_2\right) = \frac{1}{(2\pi)^r} \int_{M^2}\phi_1(x)\phi_2(y) \D_d(x,y) \rmes{M}^2.
\end{equation}

\begin{rem}
\label{rem choice of nablad}
At this stage, it is worth noticing that the values of the conditional expectations appearing in the definition of $\D_d$ (see eq.~\eqref{eq def density}) do not depend on the choice of $\nabla^d$. In fact, the whole conditional distribution of $\nabla^d_xs_d$ given that $s_d(x)=0$ (resp.~of $\nabla^d_ys_d$ given that $s_d(y)=0$, resp.~of $(\nabla^d_xs_d,\nabla^d_ys_d)$ given that $s_d(x)=0=s_d(y)$) is independent of the choice of $\nabla^d$. Indeed, if $s_d(x)=0$ then $\nabla^d_x s_d$ does not depend on $\nabla^d$, and we conditioned on the vanishing of $s_d(x)$ (resp.~$s_d(y)$, resp.~$s_d(x)$ and $s_d(y)$). Thus, in the sequel, we can use any real connection we like, even one that depends on $(x,y) \in M^2 \setminus \Delta$.
\end{rem}

\subsection{Asymptotic for the variance}
\label{subsec asymptotic for the variance}

In this section we compute the asymptotic of the integral in eq.~\eqref{eq variance2}. The main point is to write $M^2$ as the disjoint union of a neighborhood of $\Delta$, of size about $\frac{\ln d}{\sqrt{d}}$, and its complement. In~\eqref{eq variance2}, the set of points that are far from the diagonal will contribute a term of smaller order than the neighborhood of $\Delta$. This is a consequence of the fast decrease of the Bergman kernel outside of the diagonal. The values of $s_d$ at $x$ and $y$ are not correlated, up to some small error, outside of a neighborhood of $\Delta$.

We still assume that $d \geq \max(d_0,d_1,d_2)$ and we denote by $s_d$ a standard Gaussian vector in $\R \H$.

\subsubsection{Asymptotics for the uncorrelated terms}
\label{subsubsec uncorrelated terms}

Let us first compute asymptotics for the terms in the expression of $\D_d$ (see eq.~\eqref{eq def density}) that only depend on one point, say $x \in M$. For all $x \in M$, $\ev_x^d$ is linear. Hence $s_d(x)=\ev_x^d(s_d)$ is a centered Gaussian vector in $\R \left(\E \otimes \L^d\right)_x$ with variance operator:
\begin{equation}
\label{eq variance evx}
\ev_x^d \left(\ev_x^d\right)^* = \esp{s_d(x) \otimes \left(s_d(x)\right)^*} = E_d(x,x),
\end{equation}
where $E_d$ is the Bergman kernel of $\E\otimes \L^d$ and the last equality is given by Prop.~\ref{prop covariance equals Bergman}.

\begin{lem}
\label{lem estimates odet evx}
For every $x \in M$, we have:
\begin{equation*}
\left(\frac{\pi}{d}\right)^\frac{nr}{2}\odet{\ev_x^d} = 1 + O\!\left(d^{-1}\right),
\end{equation*}
where the error term $O\!\left(d^{-1}\right)$ does not depend on $x$.
\end{lem}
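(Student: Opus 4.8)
The starting point is the identity \eqref{eq variance evx}: since $\ev_x^d$ is a linear map between Euclidean spaces of real dimensions $N_d$ and $r$, we have $\odet{\ev_x^d}^2 = \det\!\left(\ev_x^d (\ev_x^d)^*\right) = \det\!\left(E_d(x,x)\right)$, where the determinant is taken on the $r$-dimensional space $\R(\E\otimes\L^d)_x$. So the whole statement reduces to estimating this $r\times r$ determinant. The plan is to apply the diagonal estimate \eqref{eq value Bergman diag 00} from Corollary \ref{cor diag estimates}, which reads $E_d(x,x) = \frac{d^n}{\pi^n}\Id_{\R(\E\otimes\L^d)_x} + O\!\left(d^{n-1}\right)$ with an error uniform in $x$. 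Factoring out $\frac{d^n}{\pi^n}$, we get $\left(\frac{\pi}{d}\right)^n E_d(x,x) = \Id_{\R(\E\otimes\L^d)_x} + O\!\left(d^{-1}\right)$, again uniformly in $x$.

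Taking determinants of both sides on the $r$-dimensional fiber, $\det\!\left(\left(\frac{\pi}{d}\right)^n E_d(x,x)\right) = \left(\frac{\pi}{d}\right)^{nr}\det\!\left(E_d(x,x)\right) = \left(\frac{\pi}{d}\right)^{nr}\odet{\ev_x^d}^2$. On the other hand, $\det\!\left(\Id + A\right) = 1 + O\!\left(\Norm{A}\right)$ for a bounded operator $A$ of bounded rank $r$ (the implied constant depending only on $r$, via the elementary expansion of the determinant in terms of traces of exterior powers); applying this with $A = O\!\left(d^{-1}\right)$ gives $\left(\frac{\pi}{d}\right)^{nr}\odet{\ev_x^d}^2 = 1 + O\!\left(d^{-1}\right)$ uniformly in $x$. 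Taking the (positive) square root, and using $\sqrt{1+O(d^{-1})} = 1 + O(d^{-1})$, yields $\left(\frac{\pi}{d}\right)^{\frac{nr}{2}}\odet{\ev_x^d} = 1 + O\!\left(d^{-1}\right)$, with the error term independent of $x$. Note that for $d$ large the right-hand side is bounded away from $0$, so $\ev_x^d$ is indeed surjective (consistent with Lemma \ref{lem dfn d1}) and the square root is legitimate.

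There is essentially no obstacle here: the only mild points to check are that the determinant comparison $\det(\Id+A) = 1+O(\Norm{A})$ has a constant depending only on $r$ (true, since $r$ is fixed) and that the $O(d^{-1})$ in Corollary \ref{cor diag estimates} is genuinely uniform in $x\in M$ (which is part of that corollary's statement, inherited from the uniformity in Theorem \ref{thm Ma Marinescu}). The main care is simply bookkeeping: tracking that all error terms remain uniform in $x$ through the operations of factoring, taking a determinant of a fixed-size matrix, and taking a square root.
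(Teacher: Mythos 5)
Your proposal is correct and follows the same route as the paper: it uses the identity $\odet{\ev_x^d}^2 = \det E_d(x,x)$ from \eqref{eq variance evx}, applies the uniform diagonal estimate \eqref{eq value Bergman diag 00}, takes the determinant on the rank-$r$ fiber, and concludes by taking the square root. The extra care you take about the determinant expansion and the legitimacy of the square root is fine but matches what the paper does implicitly.
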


\begin{proof}
Let $x \in M$, then $\odet{\ev_x^d}^2 = \det E_d(x,x)$ by~\eqref{eq variance evx}. By~\eqref{eq value Bergman diag 00},  we have:
\begin{equation*}
\left(\frac{\pi}{d}\right)^{nr}\odet{\ev_x^d}^2 = \det\left(\Id_{\R\left(\E\otimes \L^d\right)_x}+O\!\left(d^{-1}\right)\right) = 1 + O\!\left(d^{-1}\right).
\end{equation*}
The error term in~\eqref{eq value Bergman diag 00} is independent of $x$, therefore the same is true here.
\end{proof}

Let $\nabla^d$ be a real connection on $\E \otimes \L^d$. We assume that $\nabla^d$ is a metric connection, so that Lem.~\ref{lem dual connection} and Cor.~\ref{cor variance of the 1-jet} are valid in this context. Recall that the Chern connection is an example of real metric connection.

For all $x \in M$, let $j_x^d : s \mapsto \left(s(x),\nabla_x^ds\right)$ denote the evaluation of the $1$-jet at $x$, from $\R \H$ to $\R\!\left(\E \otimes \L^d\right)_x \otimes \left(\R \oplus T^*_xM\right)$. Since $j_x^d$ is linear, $\left(s_d(x),\nabla_x^ds_d\right)$ is a centered Gaussian vector with variance operator $j_x^d \left(j_x^d\right)^*$. This operator splits according to the direct sum $\R\!\left(\E \otimes \L^d\right)_x\oplus \R\!\left(\E \otimes \L^d\right)_x \otimes T^*_xM$:
\begin{equation}
\label{eq variance jx}
\begin{aligned}
j_x^d \left(j_x^d\right)^* &= \esp{j_x^d(s_d) \otimes \left(j_x^d(s_d)\right)^*}\\
&= \begin{pmatrix}
\esp{s_d(x)\otimes s_d(x)^*} & \esp{s_d(x)\otimes (\nabla^d_xs_d)^*}\\
\esp{\nabla^d_xs_d\otimes s_d(x)^*} & \esp{\nabla^d_xs_d\otimes (\nabla^d_xs_d)^*}
\end{pmatrix}\\
&= \begin{pmatrix}
E_d(x,x) & \partial_y^\sharp E_d(x,x)\\
\partial_xE_d(x,x) & \partial_x\partial_y^\sharp E_d(x,x)
\end{pmatrix},
\end{aligned}
\end{equation}
where the last equality comes from Cor.~\ref{cor variance of the 1-jet sharp}. We chose $d \geq d_1$, so that $\ev_x^d$ is surjective (see Lem.~\ref{lem dfn d1}), i.e.~$\det\left(\ev_x^d\left(\ev_x^d\right)^*\right)>0$. Hence, the distribution of $s_d(x)$ is non-degenerate. Then (see~\cite[prop.~1.2]{AW2009}), the distribution of $\nabla^d_xs_d$ given that $s_d(x)=0$ is a centered Gaussian in $\R\!\left(\E \otimes \L^d\right)_x \otimes T^*_xM$ with variance operator:
\begin{equation}
\label{eq conditional exp}
\partial_x\partial_y^\sharp E_d(x,x)- \partial_xE_d(x,x) \left(E_d(x,x)\right)^{-1}\partial_y^\sharp E_d(x,x).
\end{equation}

\begin{lem}
\label{lem estimates cond exp x}
For every $x \in M$, we have:
\begin{equation*}
\left(\frac{\pi^n}{d^{n+1}}\right)^\frac{r}{2} \espcond{\odet{\nabla^d_xs_d}}{s_d(x)=0} = (2\pi)^\frac{r}{2} \frac{\vol{\S^{n-r}}}{\vol{\S^n}} \left(1 + O\!\left(d^{-1}\right)\right),
\end{equation*}
where the error term is independent of $x$.
\end{lem}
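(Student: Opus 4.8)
The plan is to reduce the conditional expectation to the expected Jacobian of a Gaussian matrix that is, after rescaling, a small perturbation of the standard one. Fix $x\in M$ and use the real unitary frame of $\E\otimes\L^d$ near $x$ constructed in Section~\ref{subsec real normal trivialization} together with an orthonormal basis of $(T_xM,g_x)$ to identify $\R(\E\otimes\L^d)_x\otimes T^*_xM$ isometrically with $\mathcal{M}_{rn}(\R)$; under this identification $\nabla^d_xs_d$ becomes a random matrix $A_d(x)\in\mathcal{M}_{rn}(\R)$ and $\odet{\nabla^d_xs_d}$ becomes $\odet{A_d(x)}$. Since $d\geq d_1$, the law of $s_d(x)$ is non-degenerate (Lem.~\ref{lem dfn d1}), so the conditional law of $\nabla^d_xs_d$ given $s_d(x)=0$ is centred Gaussian with variance operator $\Lambda_d(x)$ given by~\eqref{eq conditional exp}. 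It then remains to estimate $\Lambda_d(x)$ and to understand how $\esp{\odet{\,\cdot\,}}$ depends on the variance operator of a centred Gaussian matrix.

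For the first point I would substitute the diagonal estimates of Cor.~\ref{cor diag estimates} into~\eqref{eq conditional exp}. By~\eqref{eq value Bergman diag 00}, $E_d(x,x)$ is invertible for $d$ large with $(E_d(x,x))^{-1}=\frac{\pi^n}{d^n}\Id+O(d^{-n-1})$, while $\partial_xE_d(x,x)=O(d^{n-\frac12})$ and $\partial_y^\sharp E_d(x,x)=O(d^{n-\frac12})$; hence the cross term $\partial_xE_d(x,x)(E_d(x,x))^{-1}\partial_y^\sharp E_d(x,x)$ is $O(d^{n-1})$, which is absorbed into the $O(d^n)$ error of $\partial_x\partial_y^\sharp E_d(x,x)=\frac{d^{n+1}}{\pi^n}\Id\otimes\Id+O(d^n)$. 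Thus $\Lambda_d(x)=\frac{d^{n+1}}{\pi^n}\Id_{\mathcal{M}_{rn}(\R)}+O(d^n)$, i.e. $\frac{\pi^n}{d^{n+1}}\Lambda_d(x)=\Id+O(d^{-1})$, all error terms being uniform in $x$ because those of Cor.~\ref{cor diag estimates} are.

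For the dependence on the variance, set $\mathfrak{e}(\Lambda)=\esp{\odet{G}}$ for $G$ a centred Gaussian vector in $\mathcal{M}_{rn}(\R)$ with variance operator $\Lambda$. Writing $G=\Lambda^{1/2}A$ with $\Lambda^{1/2}$ viewed as an endomorphism of $\mathcal{M}_{rn}(\R)$ and $A$ having i.i.d.\ standard Gaussian entries, homogeneity of degree $r$ of $\odet{\,\cdot\,}$ gives $\mathfrak{e}(c\Lambda)=c^{r/2}\mathfrak{e}(\Lambda)$ for $c>0$; moreover $\mathfrak{e}$ is $C^1$ near $\Id$, since $A$ is a.s.\ of full rank, so $\odet{\Lambda^{1/2}A}>0$ and is smooth in $\Lambda$ near $\Id$ with derivative polynomially bounded in $A$, whence differentiation under the Gaussian integral is legitimate. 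Combining with the previous step,
\[
\espcond{\odet{\nabla^d_xs_d}}{s_d(x)=0}=\mathfrak{e}\!\left(\Lambda_d(x)\right)=\left(\frac{d^{n+1}}{\pi^n}\right)^{\!\frac r2}\mathfrak{e}\!\left(\Id+O(d^{-1})\right)=\left(\frac{d^{n+1}}{\pi^n}\right)^{\!\frac r2}\mathfrak{e}(\Id)\left(1+O(d^{-1})\right),
\]
uniformly in $x$, so only $\mathfrak{e}(\Id)=\esp{\odet{A}}$ remains to be computed.

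To finish I would evaluate $\esp{\odet{A}}$ for a standard Gaussian $A\in\mathcal{M}_{rn}(\R)$ directly: $\odet{A}$ is the $r$-dimensional volume of the parallelepiped spanned by the rows of $A$ (which are independent $N(0,I_n)$), and factoring this volume into successive heights via Gram--Schmidt, rotational invariance shows $\odet{A}=\prod_{k=0}^{r-1}\chi^{(k)}$ with the $\chi^{(k)}$ independent, $\chi^{(k)}$ being $\chi$-distributed with $n-k$ degrees of freedom. Since $\esp{\chi_m}=\sqrt2\,\Gamma(\tfrac{m+1}{2})/\Gamma(\tfrac m2)$, the product telescopes to $\esp{\odet{A}}=2^{r/2}\Gamma(\tfrac{n+1}{2})/\Gamma(\tfrac{n-r+1}{2})$, which equals $(2\pi)^{r/2}\vol{\S^{n-r}}/\vol{\S^n}$ via $\vol{\S^m}=2\pi^{(m+1)/2}/\Gamma(\tfrac{m+1}{2})$; substituting into the display above yields the lemma. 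The only step requiring genuine care rather than routine bookkeeping is the quantitative perturbation, i.e.\ upgrading ``$\Lambda_d(x)$ close to $\frac{d^{n+1}}{\pi^n}\Id$'' to a multiplicative error $1+O(d^{-1})$, which is exactly what the $C^1$-regularity of $\mathfrak{e}$ at $\Id$ supplies. (Alternatively, this pointwise estimate can be recovered from the computations behind~\eqref{eq reminder expectation volume} in~\cite{Let2016} together with Lem.~\ref{lem estimates odet evx} and the Kac--Rice formula of Thm.~\ref{thm Kac-Rice exp}.)
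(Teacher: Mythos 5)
Your proposal is correct, and its skeleton is the paper's: the Gaussian regression formula identifies the conditional law of $\nabla^d_x s_d$ given $s_d(x)=0$ as a centered Gaussian with variance~\eqref{eq conditional exp}, the diagonal estimates of Cor.~\ref{cor diag estimates} show that after rescaling by $\frac{\pi^n}{d^{n+1}}$ this variance is $\Id + O\!\left(d^{-1}\right)$ uniformly in $x$, and one then compares with the standard Gaussian matrix.

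The two sub-steps you implement differently are both fine and worth contrasting with the paper. For the perturbation step, the paper compares densities directly: it writes both Gaussian expectations as integrals, applies the mean value inequality to $\exp\left(-\tfrac12\prsc{(\Lambda_d(x)^{-1}-\Id)L}{L}\right)-1$, and bounds the resulting integral; you instead argue that $\Lambda \mapsto \esp{\odet{\Lambda^{1/2}A}}$ is $C^1$ (in fact locally Lipschitz suffices) near $\Id$, using that $\odet{\cdot}$ is the product of singular values and hence locally Lipschitz with a polynomially growing constant, so that the derivative in $\Lambda$ is dominated by an integrable function of $A$. Your route is cleaner to state, but be explicit about the Lipschitz bound $\norm{\odet{G}-\odet{G'}} \leq r \max\left(\Norm{G},\Norm{G'}\right)^{r-1}\Norm{G-G'}$ (or an equivalent derivative bound at full-rank matrices), since a naive differentiation of $\sqrt{\det(GG^{\mathrm{t}})}$ looks singular near rank-deficient $G$; the paper's explicit comparison avoids this point and, more importantly, is the computation it reuses almost verbatim in the genuinely two-point situations (Lem.~\ref{lem off diagonal cond exp}, Lem.~\ref{lem near diagonal cond exp}), where no convenient square-root reparametrization by a fixed limit operator is available. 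For the constant, the paper simply quotes $\esp{\odet{L^0_d(x)}} = (2\pi)^{\frac r2}\vol{\S^{n-r}}/\vol{\S^n}$ from \cite[lem.~A.14]{Let2016}, whereas you rederive it via the Gram--Schmidt factorization of $\odet{A}$ into independent $\chi_{n},\dots,\chi_{n-r+1}$ factors and the Gamma-function telescoping; this is a correct, self-contained alternative that buys independence from the citation at the cost of a short classical computation.
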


\begin{proof}
Let $x \in M$, and let $L_d(x)$ be a centered Gaussian vector in $\R\!\left(\E \otimes \L^d\right)_x \otimes T^*_xM$ with variance operator:
\begin{equation}
\label{eq def Lambda d x}
\Lambda_d(x) = \frac{\pi^n}{d^{n+1}}\left(\partial_x\partial_y^\sharp E_d(x,x)- \partial_xE_d(x,x) \left(E_d(x,x)\right)^{-1}\partial_y^\sharp E_d(x,x)\right).
\end{equation}
By~\eqref{eq conditional exp} and the above discussion, the distribution of $\nabla^d_xs_d$ given that $s_d(x)=0$ equals that of $\left(\frac{d^{n+1}}{\pi^n}\right)^\frac{1}{2}L_d(x)$. Then,
\begin{equation}
\label{eq cond exp equals}
\espcond{\odet{\nabla^d_xs_d}}{s_d(x)=0} = \esp{\odet{\hspace{-1mm}\left(\frac{d^{n+1}}{\pi^n}\right)^{\hspace{-1mm}\frac{1}{2}} \hspace{-1mm} L_d(x)}}=\hspace{-1mm} \left(\frac{d^{n+1}}{\pi^n}\right)^{\hspace{-1mm}\frac{r}{2}} \esp{\odet{L_d(x)}}.
\end{equation}

Recall that the distribution of $\nabla^d_xs_d$ given that $s_d(x)=0$ does not depend on the choice of $\nabla^d$ (Rem.~\ref{rem choice of nablad}). Hence $\Lambda_d(x)$ does not depend on the choice of $\nabla^d$. For the following computation, we choose $\nabla^d$ to be trivial over $B_{T_xM}(0,R)$ in the real normal trivialization about $x$. Then we can use the diagonal estimates of Cor.~\ref{cor diag estimates} for the Bergman kernel and its derivatives. We have: $\Lambda_d(x) = \Id_{\R(\E\otimes\L^d)_x} \otimes \Id_{T^*_xM} + O\!\left(d^{-1}\right)$, where the error does not depend on $x$. Hence, 
\begin{equation}
\label{eq det Lambda dx}
\det\left(\Lambda_d(x)\right) = 1 + O\!\left(d^{-1}\right).
\end{equation}
Besides, there exists some $K>0$ such that $\Norm{\Lambda_d(x)^{-1}-\Id} \leq Kd^{-1}$ for all $d$ large enough. Then, by the mean value inequality, for all $L \in \R(\E\otimes\L^d)_x \otimes T^*_xM$
\begin{equation*}
\norm{\exp\left(-\frac{1}{2}\prsc{\left(\Lambda_d(x)^{-1}- \Id\right) L}{L}\right)-1} \leq \frac{K}{2d}\Norm{L}^2\exp\left(\frac{K}{2d}\Norm{L}^2\right).
\end{equation*}

Let $L^0_d(x) \sim \mathcal{N}(\Id)$ in $\R(\E\otimes\L^d)_x \otimes T^*_xM$ and let $\dx L$ denote the normalized Lebesgue measure on $\R(\E\otimes\L^d)_x \otimes T^*_xM$. Then we have:
\begin{multline*}
(2\pi)^{\frac{nr}{2}}\norm{\det\left(\Lambda_d(x)\right)^\frac{1}{2}\esp{\odet{L_d(x)}}-\esp{\odet{L^0_d(x)}}}\\
\begin{aligned}
&\leq\int \odet{L} e^{-\frac{1}{2}\Norm{L}^2}\norm{\exp\left(-\frac{1}{2}\prsc{\left(\Lambda_d(x)^{-1}- \Id\right) L}{L}\right)-1} \dx L\\
&\leq \frac{K}{2d} \int \odet{L} \exp\left(-\frac{1}{2}\left(1-\frac{K}{d}\right)\Norm{L}^2\right) \dx L.
\end{aligned}
\end{multline*}
The integral on the last line converges to some finite limit as $d \to +\infty$. Thus, by~\eqref{eq det Lambda dx},
\begin{equation}
\label{eq last lemma}
\begin{aligned}
\esp{\odet{L_d(x)}}&= \det\left(\Lambda_d(x)\right)^{-\frac{1}{2}} \left(\esp{\odet{L^0_d(x)}}+ O\!\left(d^{-1}\right)\right)\\
&=\esp{\odet{L^0_d(x)}}+ O\!\left(d^{-1}\right),
\end{aligned}
\end{equation}
uniformly in $x \in M$. Lemma~\ref{lem estimates cond exp x} follows from~\eqref{eq  cond exp equals}, \eqref{eq last lemma} and the following equality, that was proved in~\cite[lem.~A.14]{Let2016}:
\begin{equation}
\label{eq value exp odet lim}
\esp{\odet{L^0_d(x)}} = (2\pi)^\frac{r}{2}\frac{\vol{\S^{n-r}}}{\vol{\S^n}}.\qedhere
\end{equation}
\end{proof}

\subsubsection{Far off-diagonal asymptotics for the correlated terms}
\label{subsubsec far off diagonal correlated}

We can now focus on computing terms in the expression of $\D_d$ that depend on both $x$ and~$y$. For all $(x,y)\in M^2 \setminus \Delta$, $\ev^d_{x,y}(s_d)=\left(s_d(x),s_d(y)\right)$ is a centered Gaussian vector with variance operator:
\begin{equation}
\label{eq variance evxy}
\begin{aligned}
\ev^d_{x,y}(\ev^d_{x,y})^* &= \esp{\ev^d_{x,y}(s_d) \otimes \ev^d_{x,y}(s_d)^*}\\
&= \begin{pmatrix}
\esp{s_d(x) \otimes s_d(x)^*} & \esp{s_d(x) \otimes s_d(y)^*}\\ \esp{s_d(y) \otimes s_d(x)^*} & \esp{s_d(y) \otimes s_d(y)^*}
\end{pmatrix}\\
&= \begin{pmatrix}
E_d(x,x) & E_d(x,y)\\ E_d(y,x) & E_d(y,y)
\end{pmatrix},
\end{aligned}
\end{equation}
where we decomposed this operator according to the direct sum $\R\!\left(\E \otimes \L^d\right)_x \oplus \R\!\left(\E \otimes \L^d\right)_y$. Since we assumed $d \geq d_2$, $\odet{\ev_{x,y}^d}>0$ (see Prop.~\ref{prop amplitude 2 points}) and the distribution of $\left(s_d(x),s_d(y)\right)$ is non-degenerate.

We denote by $j_{x,y}^d:s \mapsto \left(s(x),s(y),\nabla^d_xs,\nabla^d_ys\right)$ the evaluation of the $1$-jet at $(x,y)$. Then $j_{x,y}^d$ is a linear map from $\R \H$ to
\begin{equation}
\label{eq target space 1jet}
\R\!\left(\E \otimes \L^d\right)_x \oplus \R\!\left(\E \otimes \L^d\right)_y \oplus \left(\R\!\left(\E \otimes \L^d\right)_x \otimes T^*_xM\right) \oplus \left(\R\!\left(\E \otimes \L^d\right)_y \otimes T^*_yM\right),
\end{equation}
and $j_{x,y}^d(s_d)$ is a centered Gaussian vector, with variance operator $j_{x,y}^d \left(j_{x,y}^d\right)^*$. We can split this variance operator according to the direct sum~\eqref{eq target space 1jet}. Then by Cor.~\ref{cor variance of the 1-jet sharp}, we have:
\begin{equation}
\label{eq variance jxy}
\begin{aligned}
&j_{x,y}^d \left(j_{x,y}^d\right)^* = \esp{j_{x,y}^d(s_d) \otimes \left(j_{x,y}^d(s_d)\right)^*}\\
&=\begin{pmatrix}
\esp{s_d(x)\otimes s_d(x)^*} & \esp{s_d(x)\otimes s_d(y)^*} & \esp{s_d(x)\otimes (\nabla^d_xs_d)^*} & \esp{s_d(x)\otimes (\nabla^d_ys_d)^*}\\
\esp{s_d(y)\otimes s_d(x)^*} & \esp{s_d(y)\otimes s_d(y)^*} & \esp{s_d(y)\otimes (\nabla^d_xs_d)^*} & \esp{s_d(y)\otimes (\nabla^d_ys_d)^*}\\
\esp{\nabla^d_xs_d\otimes s_d(x)^*} & \esp{\nabla^d_xs_d\otimes s_d(y)^*} & \esp{\nabla^d_xs_d\otimes (\nabla^d_xs_d)^*} & \esp{\nabla^d_xs_d\otimes (\nabla^d_ys_d)^*}\\
\esp{\nabla^d_ys_d\otimes s_d(x)^*} & \esp{\nabla^d_ys_d\otimes s_d(y)^*} & \esp{\nabla^d_ys_d\otimes (\nabla^d_xs_d)^*} & \esp{\nabla^d_ys_d\otimes (\nabla^d_ys_d)^*}
\end{pmatrix}\\
&= \begin{pmatrix}
E_d(x,x) & E_d(x,y) & \partial_y^\sharp E_d(x,x) & \partial_y^\sharp E_d(x,y)\\
E_d(y,x) & E_d(y,y) & \partial_y^\sharp E_d(y,x) & \partial_y^\sharp E_d(y,y)\\
\partial_xE_d(x,x) & \partial_xE_d(x,y) & \partial_x\partial_y^\sharp E_d(x,x) & \partial_x\partial_y^\sharp E_d(x,y)\\
\partial_xE_d(y,x) & \partial_xE_d(y,y) & \partial_x\partial_y^\sharp E_d(y,x) & \partial_x\partial_y^\sharp E_d(y,y)
\end{pmatrix}.
\end{aligned}
\end{equation}
Since the distribution of $\left(s_d(x),s_d(y)\right)$ is non-degenerate, the distribution of $\left(\nabla_x^ds,\nabla_y^ds\right)$ given that $\ev_{x,y}(s_d)=0$ is a centered Gaussian with variance operator:
\begin{equation}
\label{eq conditional variance full}
\begin{aligned}
\begin{pmatrix}
\partial_x\partial_y^\sharp E_d(x,x) & \partial_x\partial_y^\sharp E_d(x,y)\\
\partial_x\partial_y^\sharp E_d(y,x) & \partial_x\partial_y^\sharp E_d(y,y)
\end{pmatrix}&-\\
\begin{pmatrix}
\partial_xE_d(x,x) & \partial_xE_d(x,y)\\
\partial_xE_d(y,x) & \partial_xE_d(y,y)
\end{pmatrix}&
\begin{pmatrix}
E_d(x,x) & E_d(x,y)\\
E_d(y,x) & E_d(y,y)
\end{pmatrix}^{-1}
\begin{pmatrix}
\partial_y^\sharp E_d(x,x) & \partial_y^\sharp E_d(x,y)\\
\partial_y^\sharp E_d(y,x) & \partial_y^\sharp E_d(y,y)
\end{pmatrix}.
\end{aligned}
\end{equation}

\begin{dfn}
\label{def Lambda dxy}
For every $(x,y) \in M^2 \setminus \Delta$ and every $d$ large enough, we define $\Lambda_d(x,y)$ to be the operator such that $\frac{d^{n+1}}{\pi^n}\Lambda_d(x,y)$ equals~\eqref{eq conditional variance full}. That is, $\Lambda_d(x,y)$ is the conditional variance of $\left(\frac{\pi^n}{d^{n+1}}\right)^\frac{1}{2}\left(\nabla_x^ds,\nabla_y^ds\right)$ given that $\ev_{x,y}(s_d)=0$.
\end{dfn}

Let $C'>0$ be the constant appearing in the exponential in Thm.~\ref{thm off diag estimates}. We denote 
\begin{align}
\label{eq def bn}
&b_n = \frac{1}{C'}\left(\frac{n}{2}+1\right)\\
\intertext{and}
\label{eq def Delta d}
\Delta_d = &\left\{(x,y) \in M^2 \mvert \rho_g(x,y) < b_n\frac{\ln d}{\sqrt{d}} \right\},
\end{align}
where, as before, $\rho_g$ is the geodesic distance in $(M,g)$.

\begin{lem}
\label{lem off diagonal evxy}
For every $(x,y) \in M^2 \setminus \Delta_d$, we have:
\begin{equation*}
\label{eq lem off diagonal evxy}
\odet{\ev^d_{x,y}} = \odet{\ev_x^d}\odet{\ev_y^d} \left(1 + O\!\left(d^{-\frac{n}{2}-1}\right)\right),
\end{equation*}
where the error term is uniform in $(x,y)\in M^2 \setminus \Delta_d$
\end{lem}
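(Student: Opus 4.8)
The plan is to reduce the statement to an estimate on a Schur complement of the variance operator of $\bigl(s_d(x),s_d(y)\bigr)$, and then to control its off-diagonal blocks using the far off-diagonal decay of the Bergman kernel.

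First I would use that $\odet{\ev^d_{x,y}}^2 = \det\bigl(\ev^d_{x,y}(\ev^d_{x,y})^*\bigr)$ and that, by~\eqref{eq variance evxy}, this is the determinant of the block operator with diagonal blocks $E_d(x,x)$, $E_d(y,y)$ and off-diagonal blocks $E_d(x,y)$, $E_d(y,x)$. Since $d \geq d_1$, the block $E_d(x,x)=\ev_x^d(\ev_x^d)^*$ is positive definite (Lem.~\ref{lem dfn d1} and~\eqref{eq variance evx}), hence invertible, so the block-triangular factorization gives
$$\det\bigl(\ev^d_{x,y}(\ev^d_{x,y})^*\bigr) = \det E_d(x,x)\,\det\!\Bigl(E_d(y,y) - E_d(y,x)E_d(x,x)^{-1}E_d(x,y)\Bigr),$$
with $\det E_d(x,x) = \odet{\ev_x^d}^2$ and $\det E_d(y,y)=\odet{\ev_y^d}^2$. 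Thus it suffices to show that the operator $N_d(x,y) := E_d(y,y)^{-1}E_d(y,x)E_d(x,x)^{-1}E_d(x,y)$, an endomorphism of the $r$-dimensional space $\R(\E\otimes\L^d)_y$, satisfies $\det\bigl(\Id - N_d(x,y)\bigr) = 1 + O\!\left(d^{-\frac{n}{2}-1}\right)$ uniformly on $M^2\setminus\Delta_d$.

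Next I would estimate $\Norm{N_d(x,y)}$. The diagonal estimate~\eqref{eq value Bergman diag 00} gives, for $d$ large enough, $\Norm{E_d(x,x)^{-1}} = O(d^{-n})$ and $\Norm{E_d(y,y)^{-1}} = O(d^{-n})$, uniformly. For the off-diagonal blocks, the definition~\eqref{eq def Delta d} of $\Delta_d$ together with the choice~\eqref{eq def bn} of $b_n$ ensures that on $M^2\setminus\Delta_d$ one has $\rho_g(x,y)\geq b_n\frac{\ln d}{\sqrt d}$ with $C'b_n = \frac n2 + 1$, so Theorem~\ref{thm off diag estimates} (with $k=0$) yields
$$\Norm{E_d(x,y)} \leq C_0\, d^n \exp\!\bigl(-C'\sqrt d\,\rho_g(x,y)\bigr) \leq C_0\, d^{\frac n2 - 1},$$
and likewise for $E_d(y,x)$ by symmetry. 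Multiplying the four bounds gives $\Norm{N_d(x,y)} = O\!\left(d^{-n-2}\right)$, uniformly on $M^2\setminus\Delta_d$. Since $r$ is fixed, $N\mapsto\det(\Id - N)$ is a polynomial of bounded degree in the entries of $N$, so $\det\bigl(\Id - N_d(x,y)\bigr) = 1 + O\!\left(d^{-n-2}\right)$. Feeding this back and taking square roots (all three Jacobians being positive for $d\geq d_2$ by Prop.~\ref{prop amplitude 2 points} and Lem.~\ref{lem dfn d1}) gives $\odet{\ev^d_{x,y}} = \odet{\ev_x^d}\odet{\ev_y^d}\bigl(1 + O(d^{-n-2})\bigr)$, which is stronger than the claim since $d^{-n-2}\leq d^{-\frac n2 - 1}$.

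There is no serious obstacle here: the argument is a routine combination of the block-determinant identity, the near-diagonal estimates of Corollary~\ref{cor diag estimates}, and the far off-diagonal decay of Theorem~\ref{thm off diag estimates}. The only point that needs a little care is uniformity: each error term above must be uniform in $(x,y)\in M^2\setminus\Delta_d$, which holds precisely because the constants in Theorem~\ref{thm off diag estimates} and Corollary~\ref{cor diag estimates} do not depend on the base point, and because the bound $\Norm{E_d(x,y)}=O(d^{\frac n2-1})$ is valid simultaneously for every $(x,y)$ outside $\Delta_d$.
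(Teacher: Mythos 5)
Your proof is correct and rests on exactly the same ingredients as the paper's: the far off-diagonal decay of Theorem~\ref{thm off diag estimates} on $M^2\setminus\Delta_d$ giving $\Norm{E_d(x,y)}=O\!\left(d^{\frac{n}{2}-1}\right)$, and the diagonal estimate~\eqref{eq value Bergman diag 00} giving $\Norm{E_d(x,x)^{-1}}=O\!\left(d^{-n}\right)$, applied to the block decomposition~\eqref{eq variance evxy}. The paper factors out the block-diagonal part and writes the variance operator as $\operatorname{diag}\bigl(E_d(x,x),E_d(y,y)\bigr)\bigl(\Id+O(d^{-\frac{n}{2}-1})\bigr)$ rather than using the Schur-complement identity, but this is only a bookkeeping difference; your variant even yields the slightly sharper error $O\!\left(d^{-n-2}\right)$, which of course implies the stated bound.
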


\begin{proof}
For all $(x,y) \in M^2 \setminus \Delta_d$, we have $\rho_g(x,y) \geq b_n \frac{\ln d}{\sqrt{d}}$. Then, by Thm.~\ref{thm off diag estimates},
\begin{equation*}
\Norm{E_d(x,y)} \leq C_0 d^n \exp\left(-C' b_n \ln d \right) \leq C_0 d^{\frac{n}{2}-1}.
\end{equation*}
Then, by~\eqref{eq variance evxy} we have:
\begin{align*}
\ev^d_{x,y}\left(\ev_{x,y}^d\right)^* &= \begin{pmatrix}
E_d(x,x) & E_d(x,y) \\ E_d(y,x) & E_d(y,y)
\end{pmatrix}\\
&= \begin{pmatrix}
E_d(x,x) & 0 \\ 0 & E_d(y,y)
\end{pmatrix} + O\!\left(d^{\frac{n}{2}-1}\right).
\end{align*}
Besides, by~\eqref{eq value Bergman diag 00},
\begin{equation}
\label{eq estimate condition inverse}
\begin{pmatrix}
E_d(x,x) & 0 \\ 0 & E_d(y,y)
\end{pmatrix}^{-1} = \left(\frac{\pi}{d}\right)^n \left(\Id + O\!\left(d^{-1}\right)\right) = O\!\left(d^{-n}\right),
\end{equation}
so that
\begin{equation}
\label{eq easier condition inverse}
\begin{pmatrix}
E_d(x,x) & E_d(x,y) \\ E_d(y,x) & E_d(y,y)
\end{pmatrix} = \begin{pmatrix}
E_d(x,x) & 0 \\ 0 & E_d(y,y)
\end{pmatrix} \left(\Id + O\!\left(d^{-\frac{n}{2}-1}\right)\right).
\end{equation}
We conclude the proof by taking the square root of the determinant of this last equality (recall~\eqref{eq variance evx}).
\end{proof}

\begin{lem}
\label{lem off diagonal cond exp}
For every $(x,y) \in M^2 \setminus \Delta_d$, we have:
\begin{multline*}
\label{eq lem off cond exp}
\espcond{\odet{\nabla^d_{x}s_d}\odet{\nabla^d_{y}s_d}}{\ev_{x,y}^d(s_d)=0} =\\ \espcond{\odet{\nabla^d_{x}s_d}}{s_d(x)=0}\espcond{\odet{\nabla^d_{y}s_d}}{s_d(y)=0}\left(1 + O\!\left(d^{-\frac{n}{2}-1}\right)\right),
\end{multline*}
where the error term is uniform in $(x,y)\in M^2 \setminus \Delta_d$
\end{lem}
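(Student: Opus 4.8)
The plan is to reduce the statement to a comparison of two Gaussian expectations, exploiting the fact that, far from the diagonal, the conditional variance $\Lambda_d(x,y)$ of Definition~\ref{def Lambda dxy} is block-diagonal up to a negligible error, its two diagonal blocks being the one-point conditional variances $\Lambda_d(x)$ and $\Lambda_d(y)$ of Lemma~\ref{lem estimates cond exp x}. By \eqref{eq conditional variance full} and Definition~\ref{def Lambda dxy}, conditionally on $\ev^d_{x,y}(s_d)=0$ the vector $(\nabla^d_xs_d,\nabla^d_ys_d)$ is distributed as $\bigl(\tfrac{d^{n+1}}{\pi^n}\bigr)^{\frac12}(A_x,A_y)$, where $(A_x,A_y)$ is a centered Gaussian vector with variance operator $\Lambda_d(x,y)$; since $\odet{\lambda L}=\lambda^r\odet{L}$ for any linear map $L$ with $r$-dimensional target, this gives
\[
\espcond{\odet{\nabla^d_xs_d}\odet{\nabla^d_ys_d}}{\ev^d_{x,y}(s_d)=0}=\Bigl(\tfrac{d^{n+1}}{\pi^n}\Bigr)^{\!r}\esp{\odet{A_x}\odet{A_y}},
\]
and likewise, with $A^0_x$ and $A^0_y$ \emph{independent} centered Gaussian vectors of respective variance operators $\Lambda_d(x)$ and $\Lambda_d(y)$,
\[
\espcond{\odet{\nabla^d_xs_d}}{s_d(x)=0}\espcond{\odet{\nabla^d_ys_d}}{s_d(y)=0}=\Bigl(\tfrac{d^{n+1}}{\pi^n}\Bigr)^{\!r}\esp{\odet{A^0_x}}\esp{\odet{A^0_y}}.
\]
It therefore suffices to prove $\esp{\odet{A_x}\odet{A_y}}=\esp{\odet{A^0_x}}\esp{\odet{A^0_y}}\bigl(1+O(d^{-\frac n2-1})\bigr)$, uniformly on $M^2\setminus\Delta_d$.

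The key step, and the one I expect to be the main obstacle, is the operator-norm estimate
\[
\Lambda_d(x,y)=\begin{pmatrix}\Lambda_d(x)&0\\ 0&\Lambda_d(y)\end{pmatrix}+O\!\left(d^{-\frac n2-1}\right)\qquad\text{uniformly on }M^2\setminus\Delta_d,
\]
which I would prove by expanding \eqref{eq conditional variance full} block by block. On $M^2\setminus\Delta_d$ one has $\rho_g(x,y)\geq b_n\tfrac{\ln d}{\sqrt d}$ with $C'b_n=\tfrac n2+1$, so Theorem~\ref{thm off diag estimates} gives $E_d(x,y)=O(d^{\frac n2-1})$, $\partial_xE_d(x,y)=\partial_y^\sharp E_d(x,y)=O(d^{\frac n2-\frac12})$ and $\partial_x\partial_y^\sharp E_d(x,y)=O(d^{\frac n2})$; combined with Corollary~\ref{cor diag estimates} this shows, as in \eqref{eq easier condition inverse}, that the $2\times2$ variance matrix of $(s_d(x),s_d(y))$ is block-diagonal up to a relative error $O(d^{-\frac n2-1})$, hence so is its inverse. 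The $x$-$x$ block of \eqref{eq conditional variance full} involves only derivatives taken at $x$; since by Remark~\ref{rem choice of nablad} each block of $\Lambda_d(x,y)$ is independent of the choice of connection, I may evaluate this one using a real metric connection trivial in the real normal trivialization about $x$, and then \eqref{eq value Bergman diag 00}--\eqref{eq value Bergman diag 11} show it equals the expression~\eqref{eq conditional exp} defining $\tfrac{d^{n+1}}{\pi^n}\Lambda_d(x)$ up to a term $O(d^{-1})$, all additional contributions carrying a factor $E_d(x,y)$, $\partial E_d(x,y)$, or the off-diagonal part of the inverse, hence of strictly lower order. The $y$-$y$ block is handled symmetrically, while the $x$-$y$ (and $y$-$x$) block, built from $\partial_x\partial_y^\sharp E_d(x,y)=O(d^{\frac n2})$ and products of the small quantities above, is $O(d^{\frac n2})$. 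Dividing by $\tfrac{d^{n+1}}{\pi^n}$ gives the claim; the delicate point is the bookkeeping of the orders of the blocks, and the value of $b_n$ is chosen precisely so that the relative error comes out as $O(d^{-\frac n2-1})$.

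Finally I would conclude as in the proof of Lemma~\ref{lem estimates cond exp x}. By Corollary~\ref{cor diag estimates}, $\Lambda_d(x)=\Id+O(d^{-1})$ and $\Lambda_d(y)=\Id+O(d^{-1})$; writing $\Lambda=\Lambda_d(x,y)$ and $\Lambda^0=\bigl(\begin{smallmatrix}\Lambda_d(x)&0\\0&\Lambda_d(y)\end{smallmatrix}\bigr)$, the previous step makes $\Lambda$ invertible for $d$ large, with $\Norm{\Lambda^{-1}-(\Lambda^0)^{-1}}=O(d^{-\frac n2-1})$ and $\det\Lambda=\det\Lambda^0\bigl(1+O(d^{-\frac n2-1})\bigr)$. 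Expressing $\esp{\odet{A_x}\odet{A_y}}$ and $\esp{\odet{A^0_x}\odet{A^0_y}}$ as integrals of the polynomially growing map $(L_x,L_y)\mapsto\odet{L_x}\odet{L_y}$ against the respective Gaussian densities, factoring out the determinants and bounding
\[
\Bigl\lvert\exp\!\Bigl(-\tfrac12\prsc{\bigl(\Lambda^{-1}-(\Lambda^0)^{-1}\bigr)L}{L}\Bigr)-1\Bigr\rvert\leq\frac{C}{d^{\frac n2+1}}\Norm{L}^2\exp\!\Bigl(\frac{C}{d^{\frac n2+1}}\Norm{L}^2\Bigr)
\]
via the mean value inequality (the remaining integral converging to a finite limit), one gets $\esp{\odet{A_x}\odet{A_y}}=\esp{\odet{A^0_x}\odet{A^0_y}}\bigl(1+O(d^{-\frac n2-1})\bigr)$, all error terms uniform in $(x,y)\in M^2\setminus\Delta_d$; here one uses that $\esp{\odet{A^0_x}}$ is bounded away from $0$ thanks to \eqref{eq value exp odet lim}. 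Since $A^0_x$ and $A^0_y$ are independent, $\esp{\odet{A^0_x}\odet{A^0_y}}=\esp{\odet{A^0_x}}\esp{\odet{A^0_y}}$, and the lemma follows.
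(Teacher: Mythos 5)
Your proposal is correct and follows essentially the same route as the paper: reduce to comparing the Gaussian expectations with variance operators $\Lambda_d(x,y)$ versus the block-diagonal $\bigl(\begin{smallmatrix}\Lambda_d(x)&0\\0&\Lambda_d(y)\end{smallmatrix}\bigr)$, prove the operator estimate off $\Delta_d$ (the paper's Lemma~\ref{lem almost uncorrelated}) from the far off-diagonal and diagonal Bergman kernel estimates, and then compare the two Gaussian integrals via determinant/inverse estimates and the mean value inequality, using that the limiting expectation is a positive constant to turn the additive error into a relative one. Your per-block use of connection-independence (Rem.~\ref{rem choice of nablad}) is a legitimate variant of the paper's use of the Chern connection and changes nothing of substance.
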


This lemma is a consequence of the following technical result.
\begin{lem}
\label{lem almost uncorrelated}
For every $(x,y) \in M^2 \setminus \Delta_d$, we have:
\begin{equation*}
\Lambda_d(x,y) = \begin{pmatrix}
\Lambda_d(x) & 0 \\ 0 & \Lambda_d(y)
\end{pmatrix} \left(\Id+ O\!\left(d^{-\frac{n}{2}-1}\right)\right),
\end{equation*}
uniformly in $(x,y) \in M^2 \setminus \Delta_d$.
\end{lem}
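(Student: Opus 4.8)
The plan is to expand the Schur-complement expression \eqref{eq conditional variance full} for $\frac{d^{n+1}}{\pi^n}\Lambda_d(x,y)$ along the block decomposition $\R\!\left(\E\otimes\L^d\right)_x\otimes T^*_xM \oplus \R\!\left(\E\otimes\L^d\right)_y\otimes T^*_yM$ and to compare it, block by block, with $\begin{pmatrix}\frac{d^{n+1}}{\pi^n}\Lambda_d(x) & 0 \\ 0 & \frac{d^{n+1}}{\pi^n}\Lambda_d(y)\end{pmatrix}$, each one-point entry being given by \eqref{eq def Lambda d x}. Denote by $A$, $B$ and $G$ the three $2\times 2$ block matrices occurring in \eqref{eq conditional variance full}, so that $\frac{d^{n+1}}{\pi^n}\Lambda_d(x,y)$ is the array of the $\partial_x\partial_y^\sharp E_d$'s minus $AG^{-1}B$, while $\frac{d^{n+1}}{\pi^n}\Lambda_d(x)=\partial_x\partial_y^\sharp E_d(x,x)-\partial_xE_d(x,x)E_d(x,x)^{-1}\partial_y^\sharp E_d(x,x)$ and similarly at $y$. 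By Remark~\ref{rem choice of nablad} none of these conditional variances depends on the chosen real metric connection, so, exactly as in the proof of Lemma~\ref{lem estimates cond exp x}, for the quantities attached to the single point $x$ (resp.~$y$) we may work with a connection trivial over $B_{T_x\X}(0,R)$ (resp.~$B_{T_y\X}(0,R)$) in the corresponding real normal trivialisation, so that Corollary~\ref{cor diag estimates} applies.

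First I would collect the two families of estimates. On $M^2\setminus\Delta_d$ one has $\rho_g(x,y)\geq b_n\frac{\ln d}{\sqrt d}$, and by the choice \eqref{eq def bn} of $b_n$ this forces $\exp\!\left(-C'\sqrt d\,\rho_g(x,y)\right)\leq d^{-\frac n2-1}$; hence Theorem~\ref{thm off diag estimates} with $k\in\{0,1,2\}$ gives, uniformly in $(x,y)\in M^2\setminus\Delta_d$,
\begin{equation*}
E_d(x,y)=O\!\left(d^{\frac n2-1}\right),\qquad \partial_xE_d(x,y)=\partial_y^\sharp E_d(x,y)=O\!\left(d^{\frac n2-\frac12}\right),\qquad \partial_x\partial_y^\sharp E_d(x,y)=O\!\left(d^{\frac n2}\right),
\end{equation*}
and likewise for the entries evaluated at $(y,x)$. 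On the diagonal, Corollary~\ref{cor diag estimates} gives $E_d(x,x)=\frac{d^n}{\pi^n}\Id+O\!\left(d^{n-1}\right)$, hence $E_d(x,x)^{-1}=O(d^{-n})$, together with $\partial_xE_d(x,x)=\partial_y^\sharp E_d(x,x)=O\!\left(d^{n-\frac12}\right)$, uniformly in $x$; moreover \eqref{eq easier condition inverse}, established in the proof of Lemma~\ref{lem off diagonal evxy}, already records that $G$ and $G^{-1}$ are block-diagonal up to a factor $\Id+O\!\left(d^{-\frac n2-1}\right)$.

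It then remains to substitute and count exponents. Replacing $G^{-1}$ via \eqref{eq easier condition inverse} by $\begin{pmatrix}E_d(x,x)^{-1}&0\\0&E_d(y,y)^{-1}\end{pmatrix}\bigl(\Id+O(d^{-\frac n2-1})\bigr)$ and splitting $A$ and $B$ into their diagonal and anti-diagonal parts, the $(x,x)$-block of $AG^{-1}B$ is $\partial_xE_d(x,x)E_d(x,x)^{-1}\partial_y^\sharp E_d(x,x)+\partial_xE_d(x,y)E_d(y,y)^{-1}\partial_y^\sharp E_d(y,x)$ plus negligible corrections, so that, using \eqref{eq def Lambda d x} (the term $\partial_x\partial_y^\sharp E_d(x,x)$ cancelling identically and no explicit value for it being needed), the $(x,x)$-block of \eqref{eq conditional variance full} equals $\frac{d^{n+1}}{\pi^n}\Lambda_d(x)-\partial_xE_d(x,y)E_d(y,y)^{-1}\partial_y^\sharp E_d(y,x)=\frac{d^{n+1}}{\pi^n}\Lambda_d(x)+O(d^{-1})$, and symmetrically for the $(y,y)$-block. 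The anti-diagonal blocks, which vanish in the block-diagonal model, equal $\partial_x\partial_y^\sharp E_d(x,y)$ minus the corresponding block of $AG^{-1}B$; the latter is only $O\!\left(d^{\frac n2-\frac12}\cdot d^{-n}\cdot d^{n-\frac12}\right)=O\!\left(d^{\frac n2-1}\right)$, so these blocks are $O\!\left(d^{\frac n2}\right)$, dominated by $\partial_x\partial_y^\sharp E_d(x,y)$. Dividing everything by $\frac{d^{n+1}}{\pi^n}$ yields $\Lambda_d(x,y)=\begin{pmatrix}\Lambda_d(x)&0\\0&\Lambda_d(y)\end{pmatrix}+O\!\left(d^{-\frac n2-1}\right)$, uniformly. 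Finally, since $\Lambda_d(x)=\Id_{\R(\E\otimes\L^d)_x}\otimes\Id_{T^*_xM}+O(d^{-1})$ uniformly in $x$ (shown in the proof of Lemma~\ref{lem estimates cond exp x}), the operator $\begin{pmatrix}\Lambda_d(x)&0\\0&\Lambda_d(y)\end{pmatrix}$ is invertible for $d$ large with uniformly bounded inverse, and factoring it out on the left turns the additive error into the multiplicative $\Id+O\!\left(d^{-\frac n2-1}\right)$ claimed.

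The work is essentially bookkeeping rather than conceptual, and the point to watch is that several cross terms naively suggest only a relative error of order $d^{-n/2}$, whereas a careful count shows the diagonal blocks are matched with a far smaller relative error ($O(d^{-1})$ absolute, hence $O(d^{-n-2})$ after rescaling) and the genuine bottleneck is the single cross term $\partial_x\partial_y^\sharp E_d(x,y)$ sitting in the anti-diagonal blocks, of relative order exactly $d^{-\frac n2-1}$; one must also keep every $O(\cdot)$ uniform in $(x,y)\in M^2\setminus\Delta_d$ and, via Remark~\ref{rem choice of nablad}, make sure that the connection-dependent intermediate quantities ($\partial_x E_d$, $\partial_x\partial_y^\sharp E_d$) are only ever used either in combinations that are connection-invariant or in ones controlled by the off-diagonal estimates regardless of the connection.
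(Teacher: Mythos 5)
Your proposal is correct and takes essentially the same route as the paper's proof: expand the Schur complement \eqref{eq conditional variance full} blockwise, control the cross terms via Theorem~\ref{thm off diag estimates} together with the choice of $b_n$ (giving the exponents $d^{\frac{n}{2}-1}$, $d^{\frac{n}{2}-\frac{1}{2}}$, $d^{\frac{n}{2}}$), control the one-point terms via Corollary~\ref{cor diag estimates} and \eqref{eq easier condition inverse}, observe that the bottleneck is the anti-diagonal block $\partial_x\partial_y^\sharp E_d(x,y)=O\!\left(d^{\frac n2}\right)$, and finally convert the additive error into a multiplicative one using $\Lambda_d(x)=\Id+O\!\left(d^{-1}\right)$. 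The only loose phrasing — using point-adapted connections for the $x$- and $y$-attached quantities inside a single instance of \eqref{eq conditional variance full} — is harmless: one simply takes the Chern connection throughout (as the paper does), whose derivatives at the centre point coincide with those of the connection trivial in the real normal trivialization since the connection form vanishes at the origin of that gauge.
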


\begin{proof}[Proof of Lemma~\ref{lem off diagonal cond exp}]
Let $\left(L_d(x),L_d(y)\right)$ and $\left(L'_d(x),L'_d(y)\right)$ be centered Gaussian vectors in
\begin{equation*}
\left(\R\!\left(\E \otimes \L^d\right)_x \otimes T^*_xM\right) \oplus \left(\R\!\left(\E \otimes \L^d\right)_y \otimes T^*_yM\right)
\end{equation*}
such that: the variance of $\left(L'_d(x),L'_d(y)\right)$ is $\Lambda_d(x,y)$ (recall Def.~\ref{def Lambda dxy}), and $L_d(x)$ and $L_d(y)$ are independent with variances $\Lambda_d(x)$ and $\Lambda_d(y)$ respectively (see~\eqref{eq def Lambda d x}). Then, the distribution of $\left(L_d(x),L_d(y)\right)$ is a centered Gaussian with variance $\left(\begin{smallmatrix} \Lambda_d(x) & 0 \\ 0 & \Lambda_d(y) \end{smallmatrix}\right)$. From the definitions of $\Lambda_d(x)$, $\Lambda_d(y)$ and $\Lambda_d(x,y)$, we have:
\begin{align*}
\espcond{\odet{\nabla^d_{x}s_d}}{s_d(x)=0} &= \left(\frac{d^{n+1}}{\pi^n}\right)^{\hspace{-1mm}\frac{r}{2}}\esp{\odet{L_d(x)}},\\
\espcond{\odet{\nabla^d_{y}s_d}}{s_d(y)=0} &= \left(\frac{d^{n+1}}{\pi^n}\right)^{\hspace{-1mm}\frac{r}{2}}\esp{\odet{L_d(y)}},\\
\espcond{\odet{\nabla^d_{x}s_d}\!\odet{\nabla^d_{y}s_d}}{\ev_{x,y}^d(s_d)=0} &= \left(\frac{d^{n+1}}{\pi^n}\right)^{\hspace{-1mm}r} \esp{\odet{L'_d(x)}\!\odet{L'_d(y)}}.
\end{align*}
Since $L_d(x)$ and $L_d(y)$ are independent, we only need to prove that:
\begin{equation}
\label{eq easier}
\esp{\odet{L'_d(x)}\odet{L'_d(y)}} =\esp{\odet{L_d(x)}\odet{L_d(y)}}\left(1 + O\!\left(d^{-\frac{n}{2}-1}\right)\right).
\end{equation}

By Lemma~\ref{lem almost uncorrelated},
\begin{equation}
\label{eq estimates det Lambda dxy}
\begin{aligned}
\det\left(\Lambda_d(x,y)\right) &= \det\left(\begin{pmatrix}
\Lambda_d(x) & 0 \\ 0 & \Lambda_d(y)
\end{pmatrix}\left(\Id+ O\!\left(d^{-\frac{n}{2}-1}\right)\right) \right)\\
&= \det\left(\Lambda_d(x)\right)\det\left(\Lambda_d(x)\right) \left(1+ O\!\left(d^{-\frac{n}{2}-1}\right)\right).
\end{aligned}
\end{equation}
Besides Lem.~\ref{lem almost uncorrelated} shows that:
\begin{equation*}
\label{eq inverse Lambda dxy}
\Lambda_d(x,y)^{-1} = \begin{pmatrix}
\Lambda_d(x) & 0 \\ 0 & \Lambda_d(y)
\end{pmatrix}^{-1} \left(\Id+ O\!\left(d^{-\frac{n}{2}-1}\right)\right).
\end{equation*}
By Cor.~\ref{cor diag estimates} and eq.~\eqref{eq def Lambda d x}, we have: $\left(\begin{smallmatrix}\Lambda_d(x) & 0 \\ 0 & \Lambda_d(y)
\end{smallmatrix}\right) = \Id + O\!\left(d^{-1}\right)$. Hence,
\begin{equation}
\label{eq Lambda d x Lambda d y -1}
\begin{pmatrix}\Lambda_d(x) & 0 \\ 0 & \Lambda_d(y)
\end{pmatrix}^{-1} = \Id + O\!\left(d^{-1}\right)
\end{equation}
uniformly in $(x,y)$, and 
\begin{equation*}
\Lambda_d(x,y)^{-1} - \begin{pmatrix}
\Lambda_d(x) & 0 \\ 0 & \Lambda_d(y)
\end{pmatrix}^{-1} =  O\!\left(d^{-\frac{n}{2}-1}\right).
\end{equation*}
Thus there exists $K>0$ such that, for all $d$ large enough,
\begin{equation*}
\Norm{\Lambda_d(x,y)^{-1} - \begin{pmatrix}
\Lambda_d(x) & 0 \\ 0 & \Lambda_d(y)
\end{pmatrix}^{-1}} \leq \frac{K}{d^{\frac{n}{2}+1}}.
\end{equation*}
Then, for every $L=(L_1,L_2) \in \left(\R\!\left(\E \otimes \L^d\right)_x \otimes T^*_xM\right) \oplus \left(\R\!\left(\E \otimes \L^d\right)_y \otimes T^*_yM\right)$, we have:
\begin{equation*}
\norm{\exp\left(-\frac{1}{2}\prsc{\left(\Lambda_d(x,y)^{-1}-\left(\begin{smallmatrix}
\Lambda_d(x) & 0 \\ 0 & \Lambda_d(y)
\end{smallmatrix}\right)^{-1}\right)L}{L}\right)-1} \leq \frac{K\Norm{L}^2}{2d^{\frac{n}{2}+1}} \exp\left(\frac{K\Norm{L}^2}{2d^{\frac{n}{2}+1}}\right).
\end{equation*}
Let $\dx L$ denote the normalized Lebesgue measure on this vector space. We have:
\begin{align*}
&\begin{aligned}
(2\pi)^{nr} \left\lvert \det\left(\Lambda_d(x,y)\right)^\frac{1}{2}\right. &\esp{\odet{L'_d(x)}\odet{L'_d(y)}}\\
&-\left. \det\left(\Lambda_d(x)\right)^\frac{1}{2}\det\left(\Lambda_d(y)\right)^\frac{1}{2} \esp{\odet{L_d(x)}\odet{L_d(y)}}\right\rvert
\end{aligned}\\
&\begin{aligned}
\leq \int \odet{L_1} & \odet{L_2} \exp\left(-\frac{1}{2}\prsc{\left(\begin{smallmatrix}
\Lambda_d(x) & 0 \\ 0 & \Lambda_d(y)
\end{smallmatrix}\right)^{-1}L}{L}\right)\times\\
&\norm{\exp\left(-\frac{1}{2}\prsc{\left(\Lambda_d(x,y)^{-1}-\left(\begin{smallmatrix}
\Lambda_d(x) & 0 \\ 0 & \Lambda_d(y)
\end{smallmatrix}\right)^{-1}\right)L}{L}\right)-1} \dx L
\end{aligned}\\
&\begin{aligned}
\leq \frac{K}{2d^{\frac{n}{2}+1}}\int \odet{L_1}& \odet{L_2} \Norm{L}^2\times\\
&\exp\left(-\frac{1}{2}\prsc{\left(\left(\begin{smallmatrix}
\Lambda_d(x) & 0 \\ 0 & \Lambda_d(y)
\end{smallmatrix}\right)^{-1}-\frac{K}{2d^{\frac{n}{2}+1}}\Id\right)L}{L}\right)\dx L
\end{aligned}\\
& = O\!\left(d^{-\frac{n}{2}-1}\right).
\end{align*}

Let us prove the last equality. By eq.~\eqref{eq Lambda d x Lambda d y -1}, for every $d$ large enough (uniform in $(x,y)$),
\begin{equation*}
\Norm{\begin{pmatrix}
\Lambda_d(x) & 0 \\ 0 & \Lambda_d(y)
\end{pmatrix}^{-1} -\left(1+\frac{K}{2d^{\frac{n}{2}+1}}\right)\Id} \leq \frac{1}{2},
\end{equation*}
so that:
\begin{multline*}
\int \odet{L_1} \odet{L_2} \Norm{L}^2 \exp\left(-\frac{1}{2}\prsc{\left(\left(\begin{smallmatrix}
\Lambda_d(x) & 0 \\ 0 & \Lambda_d(y)
\end{smallmatrix}\right)^{-1}-\frac{K}{2d^{\frac{n}{2}+1}}\Id\right)L}{L}\right)\dx L\\
\leq \int \odet{L_1} \odet{L_2} \Norm{L}^2 \exp\left(-\frac{1}{4}\Norm{L}^2\right) \dx L.
\end{multline*}
And the last integral is finite since $\odet{L_1} \odet{L_2} \Norm{L}^2$ is the norm of a polynomial in $L$.

Eq.~\eqref{eq det Lambda dx} and~\eqref{eq estimates det Lambda dxy} show that $\det(\Lambda_d(x,y)) = 1 +O\!\left(d^{-1}\right)$. Then, by the previous computations and eq.~\eqref{eq estimates det Lambda dxy}, we have:
\begin{multline*}
\esp{\odet{L'_d(x)}\odet{L'_d(y)}}\\
\begin{aligned}
&= \left(\frac{\det\left(\Lambda_d(x)\right)\det\left(\Lambda_d(y)\right)}{\det\left(\Lambda_d(x,y)\right)}\right)^\frac{1}{2} \esp{\odet{L_d(x)}\odet{L_d(y)}} +  O\!\left(d^{-\frac{n}{2}-1}\right)\\
&= \esp{\odet{L_d(x)}\odet{L_d(y)}}\left(1 + O\!\left(d^{-\frac{n}{2}-1}\right)\right)+  O\!\left(d^{-\frac{n}{2}-1}\right).
\end{aligned}
\end{multline*}
Equations~\eqref{eq last lemma} and \eqref{eq value exp odet lim} prove that
\begin{equation*}
\esp{\odet{L_d(x)}\odet{L_d(y)}} = \esp{\odet{L_d(x)}}\esp{\odet{L_d(y)}}
\end{equation*}
converges to some positive constant. This proves~\eqref{eq easier} and establishes Lemma~\ref{lem off diagonal cond exp}.
\end{proof}

\begin{proof}[Proof of Lemma~\ref{lem almost uncorrelated}]
First, recall that $\Lambda_d(x,y)$, $\Lambda_d(x)$ and $\Lambda_d(y)$ do not depend on the choice of $\nabla^d$ (see Rem.~\ref{rem choice of nablad}). In this proof, we use the Chern connection which is both real and metric. Let $(x,y)\in M^2 \setminus \Delta_d$, then $\rho_g(x,y) \geq b_n\frac{\ln d}{\sqrt{d}}$. By Thm.~\ref{thm off diag estimates}, we have:
\begin{equation*}
\Norm{\partial_xE_d(x,y)} \leq C_1 d^{n+\frac{1}{2}} \exp\left(-C' b_n \ln d \right) \leq C_1 d^{\frac{n}{2}-\frac{1}{2}}.
\end{equation*}
Similarly, $\Norm{\partial_xE_d(y,x)}$, $\Norm{\partial_y^\sharp E_d(x,y)}$ and $\Norm{\partial_y^\sharp E_d(y,x)}$ are smaller than $C_1 d^\frac{n-1}{2}$. Then
\begin{align}
\begin{pmatrix}
\partial_xE_d(x,x) & \partial_xE_d(x,y)\\
\partial_xE_d(y,x) & \partial_xE_d(y,y)
\end{pmatrix} &= \begin{pmatrix}
\partial_xE_d(x,x) & 0\\
0 & \partial_xE_d(y,y)
\end{pmatrix} + O\!\left(d^\frac{n-1}{2}\right)\\
\begin{pmatrix}
\partial_y^\sharp E_d(x,x) & \partial_y^\sharp E_d(x,y)\\
\partial_y^\sharp E_d(y,x) & \partial_y^\sharp E_d(y,y)
\end{pmatrix} &= \begin{pmatrix}
\partial_y^\sharp E_d(x,x) & 0\\
0 & \partial_y^\sharp E_d(y,y)
\end{pmatrix}+ O\!\left(d^\frac{n-1}{2}\right)
\intertext{and, by eq.~\eqref{eq easier condition inverse},}
\begin{pmatrix}
E_d(x,x) & E_d(x,y) \\ E_d(y,x) & E_d(y,y)
\end{pmatrix}^{-1} &= \begin{pmatrix}
E_d(x,x) & 0 \\ 0 & E_d(y,y)
\end{pmatrix}^{-1} \left(\Id + O\!\left(d^{-\frac{n}{2}-1}\right)\right).
\end{align}
Using eq.~\eqref{eq value Bergman diag 10}, \eqref{eq value Bergman diag 01} and~\eqref{eq estimate condition inverse}, we get:
\begin{multline}
\label{eq conditioning easier}
\begin{pmatrix}
\partial_xE_d(x,x) & \partial_xE_d(x,y)\\
\partial_xE_d(y,x) & \partial_xE_d(y,y)
\end{pmatrix}
\begin{pmatrix}
E_d(x,x) & E_d(x,y) \\ E_d(y,x) & E_d(y,y)
\end{pmatrix}^{-1}
\begin{pmatrix}
\partial_y^\sharp E_d(x,x) & \partial_y^\sharp E_d(x,y)\\
\partial_y^\sharp E_d(y,x) & \partial_y^\sharp E_d(y,y)
\end{pmatrix}\\
=\begin{pmatrix}
\partial_xE_d(x,x) & 0\\
0 & \partial_xE_d(y,y)
\end{pmatrix}
\begin{pmatrix}
E_d(x,x) & 0 \\ 0 & E_d(y,y)
\end{pmatrix}^{-1}
\begin{pmatrix}
\partial_y^\sharp E_d(x,x) & 0\\
0 & \partial_y^\sharp E_d(y,y)
\end{pmatrix}\\ +O\!\left(d^{\frac{n}{2}-1}\right)
\end{multline}

Using Thm.~\ref{thm off diag estimates} once more, we know that $\Norm{\partial_x\partial_y^\sharp E_d(x,y)}$ and $\Norm{\partial_x\partial_y^\sharp E_d(y,x)}$ are smaller than $C_2 d^\frac{n}{2}$. Then we have:
\begin{equation}
\label{eq conditioning easier 2}
\begin{pmatrix}
\partial_x\partial_y^\sharp E_d(x,x) & \partial_x\partial_y^\sharp E_d(x,y)\\
\partial_x\partial_y^\sharp E_d(y,x) & \partial_x\partial_y^\sharp E_d(y,y)
\end{pmatrix} = \begin{pmatrix}
\partial_x\partial_y^\sharp E_d(x,x) & 0\\
0 & \partial_x\partial_y^\sharp E_d(y,y)
\end{pmatrix} + O\!\left(d^\frac{n}{2}\right).
\end{equation}
We substract eq.~\eqref{eq conditioning easier} to eq.~\eqref{eq conditioning easier 2} and divide by $\frac{d^{n+1}}{\pi^n}$. By definition of $\Lambda_d(x,y)$, $\Lambda_d(x)$ and $\Lambda_d(y)$ (see Def.~\ref{def Lambda dxy} and eq.~\eqref{eq def Lambda d x}),
\begin{equation*}
\Lambda_d(x,y) = \begin{pmatrix}
\Lambda_d(x) & 0\\ 0 & \Lambda_d(y)
\end{pmatrix} + O\!\left(d^{-\frac{n}{2}-1}\right) = \begin{pmatrix}
\Lambda_d(x) & 0\\ 0 & \Lambda_d(y)
\end{pmatrix} \left(\Id + O\!\left(d^{-\frac{n}{2}-1}\right)\right),
\end{equation*}
where we used the fact that $\Lambda_d(x) = \Id + O\!\left(d^{-1}\right) = \Lambda_d(y)$ to obtain the last equality.
\end{proof}

\begin{prop}
\label{prop off diagonal is small}
Let $\phi_1,\phi_2 \in \mathcal{C}^0(M)$, then we have the following as $d \to +\infty$:
\begin{equation*}
\int_{M^2 \setminus \Delta_d} \phi_1(x)\phi_2(y) \D_d(x,y) \rmes{M}^2 = \Norm{\phi_1}_\infty\Norm{\phi_2}_\infty O\!\left(d^{r-\frac{n}{2}-1}\right),
\end{equation*}
where the error term is independent of $(\phi_1,\phi_2)$.
\end{prop}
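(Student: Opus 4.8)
The plan is to use the pointwise estimate on $\D_d(x,y)$ established in Lemmas~\ref{lem off diagonal evxy} and~\ref{lem off diagonal cond exp}, together with the bound on the one-point quantities from Lemmas~\ref{lem estimates odet evx} and~\ref{lem estimates cond exp x}, and then simply integrate over $M^2 \setminus \Delta_d$.

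First I would recall that, by Lemma~\ref{lem estimates odet evx}, $\odet{\ev_x^d}^{-1} = \left(\frac{\pi}{d}\right)^{\frac{nr}{2}}(1+O(d^{-1}))$, and by Lemma~\ref{lem estimates cond exp x}, $\espcond{\odet{\nabla^d_xs_d}}{s_d(x)=0} = \left(\frac{d^{n+1}}{\pi^n}\right)^{\frac{r}{2}}(2\pi)^{\frac{r}{2}}\frac{\vol{\S^{n-r}}}{\vol{\S^n}}(1+O(d^{-1}))$, both uniformly in $x$. Multiplying these, the quantity
\begin{equation*}
A_d(x) := \frac{\espcond{\odet{\nabla^d_xs_d}}{s_d(x)=0}}{\odet{\ev_x^d}}
\end{equation*}
satisfies $A_d(x) = d^{\frac{r}{2}}\,(2\pi)^{\frac{r}{2}}\frac{\vol{\S^{n-r}}}{\vol{\S^n}}(1+O(d^{-1}))$, uniformly in $x\in M$; in particular $A_d(x) = O(d^{\frac{r}{2}})$ and $A_d(y) = O(d^{\frac{r}{2}})$. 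Now for $(x,y)\in M^2\setminus\Delta_d$, the first term in the definition~\eqref{eq def density} of $\D_d(x,y)$ is
\begin{equation*}
\frac{\espcond{\odet{\nabla^d_xs_d}\odet{\nabla^d_ys_d}}{\ev_{x,y}^d(s_d)=0}}{\odet{\ev_{x,y}^d}} = A_d(x)A_d(y)\bigl(1+O(d^{-\frac{n}{2}-1})\bigr),
\end{equation*}
where this identity comes from combining Lemma~\ref{lem off diagonal evxy} (for the denominator) and Lemma~\ref{lem off diagonal cond exp} (for the numerator), both uniform on $M^2\setminus\Delta_d$; here one uses that $(1+O(d^{-\frac{n}{2}-1}))/(1+O(d^{-\frac{n}{2}-1})) = 1+O(d^{-\frac{n}{2}-1})$. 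The second term of $\D_d(x,y)$ is exactly $A_d(x)A_d(y)$. Subtracting, $\D_d(x,y) = A_d(x)A_d(y)\,O(d^{-\frac{n}{2}-1}) = O(d^r)\,O(d^{-\frac{n}{2}-1}) = O(d^{r-\frac{n}{2}-1})$, uniformly in $(x,y)\in M^2\setminus\Delta_d$.

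It then remains to integrate. Since $M$ is compact, $\vol{M}<+\infty$, hence $\rmes{M}^2(M^2\setminus\Delta_d)\leq \vol{M}^2 < +\infty$, and
\begin{equation*}
\left\lvert\int_{M^2\setminus\Delta_d}\phi_1(x)\phi_2(y)\D_d(x,y)\rmes{M}^2\right\rvert \leq \Norm{\phi_1}_\infty\Norm{\phi_2}_\infty\,\vol{M}^2\,\sup_{M^2\setminus\Delta_d}\norm{\D_d} = \Norm{\phi_1}_\infty\Norm{\phi_2}_\infty\,O(d^{r-\frac{n}{2}-1}),
\end{equation*}
with the error term independent of $(\phi_1,\phi_2)$ since the uniform bound on $\D_d$ over $M^2\setminus\Delta_d$ is. This is exactly the claimed estimate. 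The only genuine content here has already been packaged into the preceding lemmas; the main point to be careful about is bookkeeping the error terms — specifically, checking that the $O(d^{-\frac{n}{2}-1})$ relative errors in the numerator and denominator of the two-point term combine correctly and survive the subtraction of the product term at the right order $d^{r-\frac{n}{2}-1}$ rather than the naive $d^{r}$. I expect no real obstacle beyond this bookkeeping.
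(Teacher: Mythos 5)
Your proposal is correct and follows essentially the same route as the paper: combine Lemmas~\ref{lem off diagonal evxy} and~\ref{lem off diagonal cond exp} to write the two-point ratio as the product of one-point ratios up to a relative error $O\!\left(d^{-\frac{n}{2}-1}\right)$, bound each one-point ratio by $O\!\left(d^{\frac{r}{2}}\right)$ via Lemmas~\ref{lem estimates odet evx} and~\ref{lem estimates cond exp x}, and integrate the resulting uniform bound on $\D_d$ over $M^2\setminus\Delta_d$. No gaps.
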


\begin{proof}
We combine Lemmas~\ref{lem off diagonal evxy} and~\ref{lem off diagonal cond exp}, which gives:
\begin{multline*}
\frac{\espcond{\odet{\nabla^d_{x}s_d}\odet{\nabla^d_{y}s_d}}{\ev_{x,y}^d(s_d)=0}}{\odet{\ev_{x,y}^d}}=\\
\frac{\espcond{\odet{\nabla^d_{x}s_d}}{s_d(x)=0}\espcond{\odet{\nabla^d_{y}s_d}}{s_d(y)=0}}{\odet{\ev_x^d}\odet{\ev_y^d}} \left(1+ O\!\left(d^{-\frac{n}{2}-1}\right)\right)
\end{multline*}
for all $(x,y) \in M^2 \setminus \Delta_d$. Besides, by Lemmas~\ref{lem estimates odet evx} and~\ref{lem estimates cond exp x},
\begin{equation*}
\frac{\espcond{\odet{\nabla^d_{x}s_d}}{s_d(x)=0}}{\odet{\ev_x^d}} = O\!\left(d^\frac{r}{2}\right) = \frac{\espcond{\odet{\nabla^d_{y}s_d}}{s_d(y)=0}}{\odet{\ev_y^d}}.
\end{equation*}
Recalling the definition of $\D_d$ (eq.~\eqref{eq def density}), we obtain that:
\begin{equation*}
\forall (x,y) \in M^2 \setminus \Delta_d, \qquad \D_d(x,y) = O\!\left(d^{r-\frac{n}{2}-1}\right),
\end{equation*}
uniformly in $(x,y) \notin \Delta_d$. Then, for any continuous $\phi_1$ and $\phi_2 \in \mathcal{C}^0(M)$, we have:
\begin{align*}
\norm{\int_{M^2 \setminus \Delta_d} \phi_1(x)\phi_2(y) \D_d(x,y) \rmes{M}^2} &\leq \Norm{\phi_1}_\infty\Norm{\phi_2}_\infty \vol{M^2} \left(\sup_{M^2 \setminus \Delta_d} \norm{\D_d}\right)\\
&= \Norm{\phi_1}_\infty\Norm{\phi_2}_\infty O\!\left(d^{r-\frac{n}{2}-1}\right),
\end{align*}
and the error term does not depend on $(\phi_1,\phi_2)$.
\end{proof}

\subsubsection{Properties of the limit distribution}
\label{subsubsec properties of the limit distribution}

Before we tackle the computation of the dominant term in~\eqref{eq variance2}, that is the integral over $\Delta_d$, we introduce the random variables that will turn out to be the scaling limits of $\left(\nabla^d_x s_d, \nabla^d_y s_d\right)$ given that $\ev_{x,y}^d(s_d) =0$. We also establish some of their properties.

\begin{ntn}
\label{ntn z* tens z}
Let $x \in M$ and $z \in T_xM$, we denote by $z^* \otimes z \in T^*_xM \otimes T_xM$ the linear map:
\begin{equation*}
\begin{array}{rccc}
z^* \otimes z : & T^*_xM & \longrightarrow & T^*_xM.\\
 & \eta & \longmapsto & \eta(z)z^*
\end{array}
\end{equation*}
Let $\left(\deron{}{x_1},\dots,\deron{}{x_n}\right)$ be an orthonormal basis of $T_xM$ and let $\left( dx_1,\dots,dx_n\right)$ denote its dual basis. If $z = \sum z_i \deron{}{x_i}$ then $z^* \otimes z = \sum z_i z_j dx_i \otimes \deron{}{x_j}$, i.e.~the matrix of $z^* \otimes z$ in $\left(dx_1,\dots,dx_n\right)$ is $\left(z_i z_j\right)_{1\leq i,j \leq n}$.
\end{ntn}

\begin{dfn}
\label{def Lambda z inf}
For all $x \in M$ and $z \in T_xM \setminus\{0\}$, we define
\begin{equation*}
\Lambda_x(z) \in \End\left(\R\left(\E \otimes \L^d \right)_x \otimes T^*_xM \otimes \R^2\right)
\end{equation*}
by:
\begin{align*}
\Lambda_x(z) = \begin{pmatrix}
\Id_{T^*_xM} - \frac{e^{-\Norm{z}^2}}{1-e^{-\Norm{z}^2}} z^* \otimes z & e^{-\frac{1}{2}\Norm{z}^2}\left(\Id_{T^*_xM} - \frac{z^*\otimes z}{1-e^{-\Norm{z}^2}}\right)\\
e^{-\frac{1}{2}\Norm{z}^2}\left(\Id_{T^*_xM} - \frac{z^*\otimes z}{1-e^{-\Norm{z}^2}}\right) & \Id_{T^*_xM} - \frac{e^{-\Norm{z}^2}}{1-e^{-\Norm{z}^2}} z^* \otimes z
\end{pmatrix}\otimes \Id_{\R\left(\E \otimes \L^d \right)_x}.
\end{align*}
\end{dfn}

We need information about $\Lambda_x(z)$, especially concerning the vanishing of its eigenvalues. This will be useful in the estimates involving $\Lambda_x(z)$ below.

\begin{lem}
\label{lem eigenvalues Lambda z}
For all $x \in M$ and $z \in T_xM \setminus\{0\}$, the eigenvalues of $\Lambda_x(z)$ are:
\begin{itemize}
\item $1 - e^{-\frac{1}{2}\Norm{z}^2}$ and $1 + e^{-\frac{1}{2}\Norm{z}^2}$, each with multiplicity $(n-1)r$,
\item $\dfrac{1-e^{-\Norm{z}^2}+\Norm{z}^2e^{-\frac{1}{2}\Norm{z}^2}}{1+e^{-\frac{1}{2}\Norm{z}^2}}$ and $\dfrac{1-e^{-\Norm{z}^2}-\Norm{z}^2e^{-\frac{1}{2}\Norm{z}^2}}{1-e^{-\frac{1}{2}\Norm{z}^2}}$, each with multiplicity $r$.
\end{itemize}
\end{lem}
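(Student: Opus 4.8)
The plan is to diagonalise $\Lambda_x(z)$ by recognising that it is assembled from three mutually commuting pieces. Writing $t=\Norm{z}^2>0$ and $J=\left(\begin{smallmatrix}0&1\\1&0\end{smallmatrix}\right)$, one first reads off from Definition~\ref{def Lambda z inf} that
\begin{equation*}
\Lambda_x(z) = \left(\Id_{T^*_xM}\otimes A(t) + (z^*\otimes z)\otimes B(t)\right)\otimes \Id_{\R(\E\otimes\L^d)_x},
\end{equation*}
where $A(t)=I_2+e^{-t/2}J$ and $B(t)=-\tfrac{1}{1-e^{-t}}\bigl(e^{-t}I_2+e^{-t/2}J\bigr)$ act on the $\R^2$ factor. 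Since $A(t)$ and $B(t)$ are polynomials in $J$ they commute, and the trailing $\otimes\Id_{\R(\E\otimes\L^d)_x}$ merely rescales multiplicities by $r$.

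First I would diagonalise $z^*\otimes z$: identifying $T^*_xM$ with $T_xM$ via the metric as in Notation~\ref{ntn z* tens z}, it is the symmetric operator with eigenvalue $\Norm{z}^2=t$ on the line $\R z$ and eigenvalue $0$ on the orthogonal complement, of dimension $n-1$. Simultaneously, $J$ has eigenvalues $\epsilon=\pm 1$ on $\R(1,1)^{\mathrm t}$ and $\R(1,-1)^{\mathrm t}$, on which $A(t)$ takes the values $1+\epsilon e^{-t/2}$ and $B(t)$ the values $-\tfrac{e^{-t}+\epsilon e^{-t/2}}{1-e^{-t}}$, which simplify — using $1-e^{-t}=(1-e^{-t/2})(1+e^{-t/2})$ — to $-\tfrac{e^{-t/2}}{1-e^{-t/2}}$ for $\epsilon=+1$ and $+\tfrac{e^{-t/2}}{1+e^{-t/2}}$ for $\epsilon=-1$. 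Hence on the joint eigenspace indexed by $\lambda\in\{0,t\}$ (eigenvalue of $z^*\otimes z$) and $\epsilon\in\{\pm1\}$ (eigenvalue of $J$), the operator $\Id_{T^*_xM}\otimes A(t)+(z^*\otimes z)\otimes B(t)$ acts as the scalar $(1+\epsilon e^{-t/2})+\lambda\,b_\epsilon(t)$, and this eigenspace has dimension $n-1$ when $\lambda=0$ and dimension $1$ when $\lambda=t$; after tensoring with $\Id_{\R(\E\otimes\L^d)_x}$ these become $(n-1)r$ and $r$.

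Finally I would read off the four resulting eigenvalues. For $\lambda=0$ one gets $1+e^{-t/2}$ and $1-e^{-t/2}$, each with multiplicity $(n-1)r$. For $\lambda=t$, simplifying the two scalars $1+e^{-t/2}-\tfrac{t e^{-t/2}}{1-e^{-t/2}}$ and $1-e^{-t/2}+\tfrac{t e^{-t/2}}{1+e^{-t/2}}$ over a common denominator — once more via $1-e^{-t}=(1-e^{-t/2})(1+e^{-t/2})$ — yields $\tfrac{1-e^{-t}-t e^{-t/2}}{1-e^{-t/2}}$ and $\tfrac{1-e^{-t}+t e^{-t/2}}{1+e^{-t/2}}$, each with multiplicity $r$. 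Substituting $t=\Norm{z}^2$ recovers exactly the list in the statement. There is no real obstacle here; the only care needed is in the algebraic simplification of the four eigenvalues into the claimed closed forms and in correctly tracking the three independent sources of multiplicity (the $n-1$ versus $1$ from $z^*\otimes z$, the sign $\epsilon$ from $J$, and the factor $r$ from $\R(\E\otimes\L^d)_x$).
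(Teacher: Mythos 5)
Your proposal is correct and is essentially the paper's argument in coordinate-free clothing: the paper chooses an orthonormal basis with $z=\Norm{z}\,\partial/\partial x_1$, which diagonalises $z^*\otimes z$, and then reads off the eigenvalues of the resulting $2\times 2$ blocks via the eigenvectors $(1,\pm 1)$ — exactly your simultaneous diagonalisation of $z^*\otimes z$ and $J$, with the same algebraic simplification through $1-e^{-t}=(1-e^{-t/2})(1+e^{-t/2})$ and the same multiplicity bookkeeping ($n-1$ versus $1$, the sign $\epsilon$, and the factor $r$).
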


\begin{proof}
Let  $x \in M$ and $z \in B_{T_xM}(0,b_n \ln d)\setminus\{0\}$. By definition of $\Lambda_x(z)$, its eigenvalues are the same as that of
\begin{equation}
\label{eq big operator}
\begin{pmatrix}
\Id_{T^*_xM} - \frac{e^{-\Norm{z}^2}}{1-e^{-\Norm{z}^2}} z^* \otimes z & e^{-\frac{1}{2}\Norm{z}^2}\left(\Id_{T^*_xM} - \frac{z^*\otimes z}{1-e^{-\Norm{z}^2}}\right)\\
e^{-\frac{1}{2}\Norm{z}^2}\left(\Id_{T^*_xM} - \frac{z^*\otimes z}{1-e^{-\Norm{z}^2}}\right) & \Id_{T^*_xM} - \frac{e^{-\Norm{z}^2}}{1-e^{-\Norm{z}^2}} z^* \otimes z
\end{pmatrix},
\end{equation}
with multiplicities multiplied by $r$. Hence, it is enough to compute the eigenvalues of the operator~\eqref{eq big operator}.

Let us choose an orthonormal basis $\left(\deron{}{x_1},\dots, \deron{}{x_n}\right)$ of $T_xM$ such that $z = \Norm{z}\deron{}{x_1}$, and let us denote by $\left(dx_1,\dots,dx_n\right)$ the dual basis. Then, $z^* \otimes z = \Norm{z}^2 dx_1 \otimes \deron{}{x_1}$. Let $(e_1,e_2)$ denote the canonical basis of $\R^2$, then the matrix of the operator \eqref{eq big operator} in the orthonormal basis $\left(e_1 \otimes dx_1,e_2 \otimes dx_1,\dots,e_1 \otimes dx_n,\dots,e_2 \otimes dx_n \right)$ is:
\begin{equation}
\label{eq big matrix}
\left(\begin{array}{cc|c}
1 - \frac{\Norm{z}^2 e^{-\Norm{z}^2}}{1-e^{-\Norm{z}^2}} & e^{-\frac{1}{2}\Norm{z}^2}\left(1 - \frac{\Norm{z}^2}{1-e^{-\Norm{z}^2}}\right) & 0 \\
e^{-\frac{1}{2}\Norm{z}^2}\left(1 - \frac{\Norm{z}^2}{1-e^{-\Norm{z}^2}}\right) & 1 - \frac{\Norm{z}^2 e^{-\Norm{z}^2}}{1-e^{-\Norm{z}^2}} & 0 \\
\hline
0 & 0 & \begin{pmatrix}
1 & e^{-\frac{1}{2}\Norm{z}^2} \\ e^{-\frac{1}{2}\Norm{z}^2} & 1
\end{pmatrix} \otimes
I_{n-1}\\
\end{array}\right),
\end{equation}
where $I_{n-1}$ stands for the identity matrix of size $n-1$.

The bottom-right block has eigenvalues $1-e^{-\frac{1}{2}\Norm{z}^2}$ and $1+e^{-\frac{1}{2}\Norm{z}^2}$, each with multiplicity $n-1$. To conclude the proof of Lemma~\ref{lem eigenvalues Lambda z}, we only need to observe that, for all $t>0$, the eigenvalues of
\begin{equation*}
\begin{pmatrix}
1 - \frac{t e^{-t}}{1-e^{-t}} & e^{-\frac{1}{2}t}\left(1 - \frac{t}{1-e^{-t}}\right)\\
e^{-\frac{1}{2}t}\left(1 - \frac{t}{1-e^{-t}}\right) & 1 - \frac{t e^{-t}}{1-e^{-t}}
\end{pmatrix}
\end{equation*}
are:
\begin{align*}
1 - \frac{t e^{-t}}{1-e^{-t}} + e^{-\frac{1}{2}t}\left(1 - \frac{t}{1-e^{-t}}\right) &= \frac{1-e^{-t}-te^{-\frac{1}{2}t}}{1-e^{-\frac{1}{2}t}}\\
\intertext{and}
1 - \frac{t e^{-t}}{1-e^{-t}} - e^{-\frac{1}{2}t}\left(1 - \frac{t}{1-e^{-t}}\right) &= \frac{1-e^{-t}+te^{-\frac{1}{2}t}}{1+e^{-\frac{1}{2}t}}.
\end{align*}
Note that the latter one is the largest.
\end{proof}

\begin{dfn}
\label{def f}
We define the function $f : (0,+\infty) \to \R$ by:
\begin{equation*}
\forall t >0, \qquad f(t) = \frac{1-e^{-\frac{1}{2}t}}{1-e^{-t}-te^{-\frac{1}{2}t}}.
\end{equation*}
\end{dfn}

\begin{cor}
\label{cor Lambda z det and inverse}
Let  $x \in M$ and $z \in T_xM\setminus\{0\}$, then we have:
\begin{multline}
\label{eq Lambda z det}
\det\left(\Lambda_x(z)\right) =\\
\left(1-e^{-\Norm{z}^2}\right)^{r(n-2)}\left(1-e^{-\Norm{z}^2}+\Norm{z}^2 e^{-\frac{1}{2}\Norm{z}^2} \right)^r\left(1-e^{-\Norm{z}^2}-\Norm{z}^2 e^{-\frac{1}{2}\Norm{z}^2}\right)^r>0.
\end{multline}
Moreover,
\begin{align}
\label{eq Lambda z inverse}
\Norm{\Lambda_x(z)} &< 2 & &\text{and} & \Norm{\Lambda_x(z)^{-1}} &= f\left(\Norm{z}^2\right),
\end{align}
where $\Norm{\cdot}$ denote the operator norm on $\End\left(\R^2 \otimes \R\left(\E \otimes \L^d \right)_x \otimes T^*_xM\right)$.
\end{cor}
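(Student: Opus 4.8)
The plan is to read off everything from the list of eigenvalues supplied by Lemma~\ref{lem eigenvalues Lambda z}, using that $\Lambda_x(z)$ is self-adjoint: it is manifestly symmetric as written in Definition~\ref{def Lambda z inf} (the off-diagonal blocks are equal and $z^*\otimes z$ is symmetric), and in any case it is a conditional variance operator. Hence $\det\left(\Lambda_x(z)\right)$ is the product of its eigenvalues counted with multiplicity, $\Norm{\Lambda_x(z)}$ is the largest of their absolute values, and, once we know they are all positive, $\Norm{\Lambda_x(z)^{-1}}$ is the reciprocal of the smallest one. Throughout I write $t=\Norm{z}^2>0$.

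First I would establish~\eqref{eq Lambda z det}. Multiplying the four eigenvalues of Lemma~\ref{lem eigenvalues Lambda z} with their multiplicities and using $(1-e^{-t/2})(1+e^{-t/2})=1-e^{-t}$, the factors $(1-e^{-t/2})^{(n-1)r}(1+e^{-t/2})^{(n-1)r}$ collapse to $(1-e^{-t})^{(n-1)r}$, while the two radial eigenvalues contribute $\left(\frac{(1-e^{-t}+te^{-t/2})(1-e^{-t}-te^{-t/2})}{(1+e^{-t/2})(1-e^{-t/2})}\right)^r=\left(\frac{(1-e^{-t}+te^{-t/2})(1-e^{-t}-te^{-t/2})}{1-e^{-t}}\right)^r$; multiplying these together yields exactly~\eqref{eq Lambda z det} (with $t=\Norm{z}^2$). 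For the positivity claim it suffices to check that each of the four eigenvalues is strictly positive for $t>0$: this is immediate for $1\pm e^{-t/2}$ and for $\frac{1-e^{-t}+te^{-t/2}}{1+e^{-t/2}}$, and for the last one I would set $g(t)=1-e^{-t}-te^{-t/2}$, note $g(0)=0$ and $g'(t)=e^{-t/2}\bigl(e^{-t/2}-1+\tfrac{t}{2}\bigr)>0$ for $t>0$ (since $e^{-u}\ge 1-u$), hence $g(t)>0$.

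For the two norm estimates in~\eqref{eq Lambda z inverse} I would pin down the extremal eigenvalues. All four eigenvalues lie in $(0,2)$: indeed $1-e^{-t/2}<1+e^{-t/2}<2$; moreover $\frac{1-e^{-t}-te^{-t/2}}{1-e^{-t/2}}\le\frac{1-e^{-t}}{1-e^{-t/2}}=1+e^{-t/2}<2$; and $\frac{1-e^{-t}+te^{-t/2}}{1+e^{-t/2}}<1+te^{-t/2}\le 1+\tfrac{2}{e}<2$, bounding $\frac{1-e^{-t}}{1+e^{-t/2}}<1$ and $te^{-t/2}\le 2/e$. This gives $\Norm{\Lambda_x(z)}<2$. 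Finally, Lemma~\ref{lem eigenvalues Lambda z} already records that $\frac{1-e^{-t}-te^{-t/2}}{1-e^{-t/2}}$ is the smaller of the two radial eigenvalues, so to see that it is the global minimum it remains only to compare it with the smaller tangential eigenvalue $1-e^{-t/2}$: the inequality $\frac{1-e^{-t}-te^{-t/2}}{1-e^{-t/2}}\le 1-e^{-t/2}$ is equivalent, after clearing the positive denominator, to $1-\tfrac{t}{2}\le e^{-t/2}$, once again the convexity estimate for the exponential. Therefore the smallest eigenvalue of $\Lambda_x(z)$ is $\frac{1-e^{-t}-te^{-t/2}}{1-e^{-t/2}}$, and $\Norm{\Lambda_x(z)^{-1}}=\frac{1-e^{-t/2}}{1-e^{-t}-te^{-t/2}}=f(t)=f\left(\Norm{z}^2\right)$. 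The only point requiring care is the bookkeeping of which eigenvalue is extremal among the four families (in particular $1+e^{-t/2}$ is \emph{not} always the largest, which is why the statement only asks for the bound $2$); everything else reduces to the elementary inequality $e^{-u}\ge 1-u$.
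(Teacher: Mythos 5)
Your proposal is correct and follows essentially the same route as the paper: both read the determinant, the bound $\Norm{\Lambda_x(z)}<2$ and the identity $\Norm{\Lambda_x(z)^{-1}}=f\!\left(\Norm{z}^2\right)$ off the eigenvalue list of Lemma~\ref{lem eigenvalues Lambda z}, check positivity of the two radial eigenvalues, and pin down the smallest eigenvalue by the comparison with $1-e^{-\frac{1}{2}\Norm{z}^2}$, which reduces to $1-\tfrac{t}{2}\leq e^{-\frac{t}{2}}$. The only differences are cosmetic elementary inequalities (the paper writes the positivity as $2\sinh(t/2)>t$ and bounds the radial eigenvalues by noting their sum is $<2$, while you bound each one directly), so the argument is sound as written.
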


\begin{proof}
First, the formula for $\det\left(\Lambda_x(z)\right)$ is a direct consequence of Lem.~\ref{lem eigenvalues Lambda z}, and we only need to check that the eigenvalues of $\Lambda_x(z)$ are positive. Clearly, $1\pm e^{-\frac{1}{2}t} >0$ when $t>0$.

Then, for all positive $t$, we have:
\begin{equation*}
\frac{1-e^{-t}-te^{-\frac{1}{2}t}}{1-e^{-\frac{1}{2}t}} = \frac{e^{-\frac{1}{2}t}}{1-e^{-\frac{1}{2}t}}\left(e^{\frac{1}{2}t}-e^{-\frac{1}{2}t}-t\right) = \frac{e^{-\frac{1}{2}t}}{1-e^{-\frac{1}{2}t}}\left( 2 \sinh\left(\frac{t}{2}\right) - t\right),
\end{equation*}
and $2 \sinh\left(\frac{t}{2}\right) > t$. Besides,
\begin{equation*}
\frac{1-e^{-t}+te^{-\frac{1}{2}t}}{1+e^{-\frac{1}{2}t}} = \frac{e^{-\frac{1}{2}t}}{1+e^{-\frac{1}{2}t}}\left( 2 \sinh\left(\frac{t}{2}\right) + t\right) > 0.
\end{equation*}

Recall that $\Norm{\Lambda_x(z)}$ is the larger eigenvalue of $\Lambda_x(z)$, and $\Norm{\Lambda_x(z)^{-1}}$ is the inverse of the smallest eigenvalue of $\Lambda_x(z)$. For all $t>0$ we have
\begin{equation*}
0 < 1-e^{-\frac{t}{2}} < 1 + e^{-\frac{t}{2}} < 2.
\end{equation*}
Besides,
\begin{equation*}
\frac{1-e^{-t}-te^{-\frac{1}{2}t}}{1-e^{-\frac{1}{2}t}} + \frac{1-e^{-t}+te^{-\frac{1}{2}t}}{1+e^{-\frac{1}{2}t}} = 2\left(1 - \frac{t e^{-t}}{1-e^{-t}}\right) < 2,
\end{equation*}
and we just proved that both these terms are positive. Hence, each of them is smaller than~$2$. Thus, all the eigenvalues of $\Lambda_x(z)$ are smaller than $2$ and $\Norm{\Lambda_x(z)}< 2$.

For all $t>0$,
\begin{equation*}
\frac{1-e^{-t}-te^{-\frac{1}{2}t}}{1-e^{-\frac{1}{2}t}} < 1-e^{-\frac{t}{2}} \iff 1-e^{-t}-te^{-\frac{1}{2}t} < 1 - 2e^{-\frac{t}{2}} + e^{-t} \iff 1 - \frac{t}{2} < e^{-\frac{t}{2}},
\end{equation*}
and this is always true by convexity of the exponential. Thus, the smallest eigenvalue of $\Lambda_x(z)$ is $\dfrac{1-e^{-\Norm{z}^2}-\Norm{z}^2e^{-\frac{1}{2}\Norm{z}^2}}{1-e^{-\frac{1}{2}\Norm{z}^2}}=\dfrac{1}{f(\Norm{z}^2)}$, which proves our last claim.
\end{proof}

\begin{rem}
\label{rem norm Lambda z inverse}
To better understand the estimate~\eqref{eq Lambda z inverse}, note that $f$ is a decreasing function on $(0,+\infty)$. Moreover,
\begin{align*}
f(t) & \xrightarrow[t \to + \infty]{} 1, & &\text{and} & f(t) & \sim \frac{12}{t^2}
\end{align*}
when $t$ goes to $0$.
\end{rem}

\begin{dfn}
\label{def Lx 0 z}
For every $x \in M$, and $z \in T_xM \setminus \{0\}$, let $\left(L_x(0),L_x(z)\right)$ be a centered Gaussian vector in $\R^2 \otimes \R\left(\E \otimes \L^d \right)_x \otimes T^*_xM$ with variance operator $\Lambda_x(z)$.
\end{dfn}

Recall that we defined the random vector $\left(X(t),Y(t)\right)$ for all $t>0$ in the introduction (see Def.~\ref{def XYt}). Then $\left(X(t),Y(t)\right)$ and $\left(L_x(0),L_x(z)\right)$ are related as follows.

\begin{lem}
\label{lem same distribution}
Let $x \in M$ and $z \in T_xM \setminus \{0\}$, then there exists an orthonormal basis of $T_xM$ such that, for every orthonormal basis of $\R\left(\E \otimes \L^d\right)_x$, the couple of $r\times n$ matrices associated with $\left(L_x(0),L_x(z)\right)$ in these bases is distributed as $(X(\Norm{z}^2),Y(\Norm{z}^2))$.
\end{lem}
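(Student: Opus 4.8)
The plan is to read $\left(L_x(0),L_x(z)\right)$ in suitably chosen orthonormal bases and to check that the resulting covariance structure is, entry by entry, the one prescribed in Definition~\ref{def XYt}; since a centered Gaussian vector is determined by its variance operator, this is enough.

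First I would fix $z \in T_xM \setminus \{0\}$ and take the orthonormal basis $\left(\deron{}{x_1},\dots,\deron{}{x_n}\right)$ of $T_xM$ with $\deron{}{x_1}=z/\Norm{z}$, i.e.~the very basis already used in the proof of Lemma~\ref{lem eigenvalues Lambda z}. Write $\left(dx_1,\dots,dx_n\right)$ for its dual basis of $T^*_xM$, fix an arbitrary orthonormal basis $\left(\epsilon_1,\dots,\epsilon_r\right)$ of $\R\left(\E\otimes\L^d\right)_x$, and let $(e_1,e_2)$ be the canonical basis of $\R^2$. Then $\left(e_a \otimes \epsilon_i \otimes dx_j\right)_{a,i,j}$ is an orthonormal basis of $\R^2 \otimes \R\left(\E\otimes\L^d\right)_x \otimes T^*_xM$; expanding $L_x(0)=\sum_{i,j}A_{ij}\,\epsilon_i\otimes dx_j$ and $L_x(z)=\sum_{i,j}B_{ij}\,\epsilon_i\otimes dx_j$ identifies the couple of $r\times n$ matrices associated with $\left(L_x(0),L_x(z)\right)$ with $(A,B)$, and the matrix of $\Lambda_x(z)$ in this basis is precisely the covariance matrix of $(A,B)$.

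The one genuine computation is then the following. In the chosen basis of $T_xM$ we have $z^*\otimes z = \Norm{z}^2\, dx_1\otimes\deron{}{x_1}$ (Notation~\ref{ntn z* tens z}), so, viewed as an endomorphism of $T^*_xM$, the operator $z^*\otimes z$ is diagonal in $\left(dx_1,\dots,dx_n\right)$, multiplying $dx_1$ by $t:=\Norm{z}^2$ and annihilating $dx_2,\dots,dx_n$. Consequently each of the four operator-blocks in Definition~\ref{def Lambda z inf} is diagonal in $\left(dx_1,\dots,dx_n\right)$, and since $\Lambda_x(z)$ acts as the identity on the $\R\left(\E\otimes\L^d\right)_x$ factor, reading off its entries yields, for all $i,i',j,j'$,
\begin{align*}
\cov{A_{ij}}{A_{i'j'}} = \cov{B_{ij}}{B_{i'j'}} &= \delta_{ii'}\delta_{jj'}\left(1-\delta_{j1}\tfrac{te^{-t}}{1-e^{-t}}\right),\\
\cov{A_{ij}}{B_{i'j'}} &= \delta_{ii'}\delta_{jj'}\,e^{-\frac{t}{2}}\left(1-\delta_{j1}\tfrac{t}{1-e^{-t}}\right).
\end{align*}

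Finally I would compare this with Definition~\ref{def XYt}, where the couples $\left(X_{ij}(t),Y_{ij}(t)\right)$ are independent over $(i,j)$ and the covariance of $\left(X_{ij}(t),Y_{ij}(t)\right)$ is the special $2\times 2$ matrix for $j=1$ and the standard one for $j\geq 2$. Unwinding those data gives exactly the covariances displayed above, once $e^{-t/2}\left(1-\tfrac{t}{1-e^{-t}}\right)$ is recognised as $e^{-t/2}-\tfrac{te^{-t/2}}{1-e^{-t}}$. Hence $(A,B)$ and $\left(X(\Norm{z}^2),Y(\Norm{z}^2)\right)$ are centered Gaussian vectors in $\mathcal{M}_{rn}(\R)\times\mathcal{M}_{rn}(\R)$ with identical variance, so they have the same distribution; and the statement ``for every orthonormal basis of $\R\left(\E\otimes\L^d\right)_x$'' is automatic because $\Lambda_x(z)$ is the identity on that tensor factor. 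The only point needing care is to keep the indexing and tensor-factor conventions of Definitions~\ref{def Lambda z inf} and~\ref{def XYt} aligned --- this is bookkeeping rather than a real obstacle.
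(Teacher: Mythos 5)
Your proposal is correct and follows essentially the same route as the paper: choose the orthonormal basis of $T_xM$ with first vector $z/\Norm{z}$ (as in Lemma~\ref{lem eigenvalues Lambda z}), observe that $z^*\otimes z$ is then diagonal so the matrix of $\Lambda_x(z)$ in the induced basis is exactly the variance matrix of $(X(\Norm{z}^2),Y(\Norm{z}^2))$ from Definition~\ref{def XYt}, and conclude by equality of variance operators for centered Gaussians, the arbitrariness of the basis of $\R(\E\otimes\L^d)_x$ being handled by the $\Id_{\R(\E\otimes\L^d)_x}$ tensor factor. The entrywise covariances you display are the correct reading of Definition~\ref{def Lambda z inf}, so nothing is missing.
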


\begin{proof}
As in the proof of Lem.~\ref{lem eigenvalues Lambda z}, let us choose an orthonormal basis $\left(\deron{}{x_1},\dots,\deron{}{x_n}\right)$ of $T_xM$ such that $z = \Norm{z}\deron{}{x_1}$. Let $\left(dx_1,\dots,dx_n\right)$ denote its dual basis. Let $\left(\zeta_1,\dots,\zeta_r\right)$ be any orthonormal basis of $\R \left(\E \otimes \L^d \right)_x$, and let $(e_1,e_2)$ denote the canonical basis of $\R^2$.

Then $z^* \otimes z = \Norm{z}^2 dx_1 \otimes \deron{}{x_1}$ and the matrix of the operator~\eqref{eq big operator} in the orthonormal basis $\left(e_1\otimes dx_1,\dots, e_1 \otimes dx_n,e_2 \otimes dx_1,\dots, e_2 \otimes dx_n\right)$ is:
\begin{equation}
\label{eq big matrix 0}
\left(\begin{array}{cc|cc}
1 - \frac{\Norm{z}^2 e^{-\Norm{z}^2}}{1-e^{-\Norm{z}^2}} & 0 & e^{-\frac{1}{2}\Norm{z}^2}\left(1 - \frac{\Norm{z}^2}{1-e^{-\Norm{z}^2}}\right) & 0 \\
0 & I_{n-1} & 0 & e^{-\frac{1}{2}\Norm{z}^2} I_{n-1}\\
\hline
e^{-\frac{1}{2}\Norm{z}^2}\left(1 - \frac{\Norm{z}^2}{1-e^{-\Norm{z}^2}}\right) & 0 & 1 - \frac{\Norm{z}^2 e^{-\Norm{z}^2}}{1-e^{-\Norm{z}^2}} & 0 \\
0 & e^{-\frac{1}{2}\Norm{z}^2} I_{n-1} & 0 & I_{n-1}
\end{array}\right),
\end{equation}
where $I_{n-1}$ stands for the identity matrix of size $n-1$. Since $\Lambda_x(z)$ equals this operator tensored by $\Id_{\R \left(\E \otimes \L^d \right)_x}$, the matrix of $\Lambda_x(z)$ in the orthonormal basis:
\begin{multline*}
\left(e_1\otimes dx_1\otimes \zeta_1,\dots, e_1 \otimes dx_n\otimes \zeta_1,e_2 \otimes dx_1\otimes \zeta_1,\dots, e_2 \otimes dx_n\otimes \zeta_1,\right. \\
\left. e_1\otimes dx_1\otimes \zeta_2,\dots,e_2\otimes dx_n\otimes \zeta_2, \dots e_1\otimes dx_1\otimes \zeta_r,\dots,e_2\otimes dx_n\otimes \zeta_r\right)
\end{multline*}
is exactly the variance matrix of $(X(\Norm{z}^2),Y(\Norm{z}^2))$ (cf.~Def.~\ref{def XYt}).

Let us denote by $M_x(0)$ and $M_x(z)$ the matrices of $L_x(0)$ and $L_x(d)$ in the bases $\left(\deron{}{x_1},\dots,\deron{}{x_n}\right)$ and $\left(\zeta_1,\dots,\zeta_r\right)$. Then $\left(M_x(0),M_x(z)\right)$ is a centered Gaussian vector in $\mathcal{M}_{rn}(\R)^2$. Moreover, we have just seen that the variance matrix of this random vector is the same as that of $(X(\Norm{z}^2),Y(\Norm{z}^2))$. This concludes the proof.
\end{proof}

\begin{cor}
\label{cor same distribution}
Let $x \in M$ and $z \in T_xM \setminus \{0\}$, then we have:
\begin{equation*}
\esp{\odet{L_x(0)}\odet{L_x(z)}} = \esp{\odet{X(\Norm{z}^2)}\odet{Y(\Norm{z}^2)}}.
\end{equation*}
\end{cor}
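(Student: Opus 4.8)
The plan is to deduce Corollary~\ref{cor same distribution} almost immediately from Lemma~\ref{lem same distribution}, the only extra ingredient being the observation that the Jacobian $\odet{\cdot}$ of Definition~\ref{def Jacobian} is insensitive to isometric identifications. First I would record the following elementary fact: for a linear map $L : V \to V'$ between Euclidean spaces of dimensions $n$ and $r$, if $A \in \mathcal{M}_{rn}(\R)$ denotes the matrix of $L$ in any pair of orthonormal bases of $V$ and of $V'$, then $\odet{L} = \sqrt{\det(LL^*)} = \sqrt{\det(AA^{\text{t}})} = \odet{A}$, since the coordinate maps attached to orthonormal bases are isometries, so that $AA^{\text{t}}$ represents $LL^*$ in an orthonormal basis of $V'$. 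In particular, $\odet{L_x(0)}$ and $\odet{L_x(z)}$ are unchanged if we replace $L_x(0)$ and $L_x(z)$ by their matrices $M_x(0)$ and $M_x(z)$ in any orthonormal bases of $T_xM$ and $\R(\E\otimes\L^d)_x$, viewing the latter as the maps $\R^n\to\R^r$ they represent in the canonical bases.

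Next I would invoke Lemma~\ref{lem same distribution} to choose orthonormal bases of $T_xM$ and of $\R(\E\otimes\L^d)_x$ in which the pair $(M_x(0),M_x(z)) \in \mathcal{M}_{rn}(\R)^2$ is distributed exactly as $(X(\Norm{z}^2),Y(\Norm{z}^2))$. Since $(A,B)\mapsto \odet{A}\odet{B}$ is a continuous, hence Borel, function on $\mathcal{M}_{rn}(\R)^2$, equality in distribution of the two pairs transfers to equality of expectations, giving
\begin{equation*}
\esp{\odet{L_x(0)}\odet{L_x(z)}} = \esp{\odet{M_x(0)}\odet{M_x(z)}} = \esp{\odet{X(\Norm{z}^2)}\odet{Y(\Norm{z}^2)}},
\end{equation*}
which is the claim.

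There is essentially no obstacle here: the probabilistic content has already been packaged into Lemma~\ref{lem same distribution}, and the corollary is just a transfer statement. The only point requiring a little care is the bookkeeping of which Euclidean structures enter — that the bases furnished by the lemma are genuinely orthonormal for the metrics making $L_x(0)$, $L_x(z)$ into maps between the relevant Euclidean spaces — so that the change-of-basis maps really are isometries and the Jacobian is manifestly preserved; once this is checked, the conclusion is immediate.
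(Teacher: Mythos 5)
Your proposal is correct and follows essentially the same route as the paper: invoke Lemma~\ref{lem same distribution} to identify the distribution of the matrix pair $\left(M_x(0),M_x(z)\right)$ with that of $(X(\Norm{z}^2),Y(\Norm{z}^2))$, and note that $\odet{L_x(0)}=\odet{M_x(0)}$ and $\odet{L_x(z)}=\odet{M_x(z)}$ because the Jacobian is computed in orthonormal bases. Your explicit check that the Jacobian is preserved under such isometric identifications is exactly the (implicit) content of the paper's one-line argument.
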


\begin{proof}
With the same notations as in the proof of Lemma~\ref{lem same distribution} above, we have:
\begin{equation*}
\esp{\odet{M_x(0)}\odet{M_x(z)}} = \esp{\odet{X(\Norm{z}^2)}\odet{Y(\Norm{z}^2)}},
\end{equation*}
since $\left(M_x(0),M_x(z)\right)$ and $(X(\Norm{z}^2),Y(\Norm{z}^2))$ have the same distribution. Besides, 
\begin{align*}
\odet{L_x(0)}& =\odet{M_x(0)} & &\text{and} & \odet{L_x(z)}&=\odet{M_x(0)}.\qedhere
\end{align*}
\end{proof}

Let us now establish some facts about the distribution of $\left(X(t),Y(t)\right)$ for $t>0$.

\begin{lem}
\label{lem cond exp bounded}
For all $t>0$, we have:
\begin{equation*}
\esp{\odet{X(t)}\odet{Y(t)}} \leq n^r.
\end{equation*}
\end{lem}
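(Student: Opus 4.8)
The plan is to bound $\odet{X(t)}\odet{Y(t)}$ pointwise by a product of row-norms, and then to use the independence built into Definition~\ref{def XYt} to turn the expectation into a product of $r$ factors, one per row. For the first step I would use the following linear-algebraic fact: for any matrix $A \in \mathcal{M}_{rn}(\R)$ with rows $A_1,\dots,A_r \in \R^n$, Hadamard's inequality applied to the symmetric positive semi-definite matrix $AA^{\mathrm t}$, whose diagonal entries are the $\Norm{A_i}^2$, gives $\det\!\left(AA^{\mathrm t}\right) \leq \prod_{i=1}^r \Norm{A_i}^2$, hence $\odet{A} \leq \prod_{i=1}^r \Norm{A_i}$. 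Applying this to both $X(t)$ and $Y(t)$ reduces the claim to showing $\esp{\prod_{i=1}^r \Norm{X_i(t)}\,\Norm{Y_i(t)}} \leq n^r$, where $X_i(t) = (X_{i1}(t),\dots,X_{in}(t))$ and $Y_i(t)=(Y_{i1}(t),\dots,Y_{in}(t))$ are the $i$-th rows.

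Next I would invoke the structure of Definition~\ref{def XYt}: the couples $(X_{ij}(t),Y_{ij}(t))$ being mutually independent over all pairs $(i,j)$, the couples of rows $(X_i(t),Y_i(t))$ for $1\leq i \leq r$ are independent random vectors in $\R^n\times\R^n$, so the expectation factors as $\prod_{i=1}^r \esp{\Norm{X_i(t)}\,\Norm{Y_i(t)}}$. One application of the Cauchy--Schwarz inequality bounds each factor by $\sqrt{\esp{\Norm{X_i(t)}^2}}\,\sqrt{\esp{\Norm{Y_i(t)}^2}}$, and it only remains to read off from the variance matrix in Definition~\ref{def XYt} that $\esp{\Norm{X_i(t)}^2} = \sum_{j=1}^n \esp{X_{ij}(t)^2} = \left(1-\tfrac{te^{-t}}{1-e^{-t}}\right) + (n-1) \leq n$, using that $te^{-t}>0$ and $1-e^{-t}>0$ for $t>0$, and likewise for $Y_i(t)$. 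Multiplying the $r$ factors $\sqrt{n}\cdot\sqrt{n}=n$ then gives $\esp{\odet{X(t)}\odet{Y(t)}} \leq n^r$.

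I do not expect any real obstacle here; the only point that needs a little attention is that one must peel off whole \emph{rows} at the Hadamard step rather than individual entries, since the entries within a fixed row of $X(t)$ are correlated with the corresponding entries of $Y(t)$ — the off-diagonal $n\times n$ block of the variance matrix is nonzero — whereas distinct rows are genuinely independent, which is precisely what legitimizes the factorization of the expectation.
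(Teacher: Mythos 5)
Your proof is correct and uses the same ingredients as the paper's: the Hadamard/Gram bound $\det(AA^{\mathrm t})\leq\prod_i\Norm{A_i}^2$, the Cauchy--Schwarz inequality, and the independence structure of Definition~\ref{def XYt}, merely applied in a different order (the paper applies Cauchy--Schwarz once globally to reduce to $\esp{\odet{X(t)}^2}$ and then uses the i.i.d.\ rows of $X(t)$, while you bound row by row first and apply Cauchy--Schwarz per row). Both routes yield the same bound, so this is essentially the paper's argument.
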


\begin{proof}
First, by the Cauchy--Schwarz inequality, we have:
\begin{equation*}
\esp{\odet{X(t)}\odet{Y(t)}} \leq \esp{\odet{X(t)}^2}^\frac{1}{2} \esp{\odet{Y(t)}^2}^\frac{1}{2}.
\end{equation*}
Then, the definition of $\left(X(t),Y(t)\right)$ (Def.~\ref{def XYt}) shows that both $X(t)$ and $Y(t)$ are centered Gaussian vectors in $\mathcal{M}_{rn}(\R)$ with variance matrix:
\begin{equation}
\label{eq variance L}
\begin{pmatrix}
1 - \frac{t e^{-t}}{1-e^{-t}} & 0 \\ 0 & I_{n-1}
\end{pmatrix}\otimes I_r.
\end{equation}
in the canonical bases of $\R^n$ and $\R^r$. Here $I_r$ and $I_{n-1}$ stand for the identity matrices of size $r$ and $n-1$ respectively. Hence,
\begin{equation*}
\esp{\odet{X(t)}\odet{Y(t)}} \leq \esp{\odet{X(t)}^2} = \esp{\det\left(X(t)X(t)^\text{t}\right)}.
\end{equation*}

We denote by $X_1(t),\dots,X_r(t)$ the rows of $X(t)$. Then
\begin{equation*}
X(t)X(t)^\text{t} = \left(\prsc{X_i(t)}{X_j(t)}\right)_{1\leq i,j\leq r},
\end{equation*}
where we see $X_i(t)$ as an element of $\R^n$ and $\prsc{\cdot}{\cdot}$ is the usual inner product on $\R^n$. Hence, $\det\left(X(t)X(t)^\text{t}\right)$ is the Gram determinant of the family $\left(X_1(t),\dots,X_r(t)\right)$, which is known to be the square of the $r$-dimensional volume of the parallelepiped spanned by these vectors. In particular,
\begin{equation*}
\det\left(X(t)X(t)^\text{t}\right) \leq \Norm{X_1(t)}^2 \cdots \Norm{X_r(t)}^2.
\end{equation*}
By~\eqref{eq variance L}, the $X_i(t)$ are independent identically distributed centered Gaussian vectors with variance matrix:
\begin{equation*}
\begin{pmatrix}
1 - \frac{t e^{-t}}{1-e^{-t}} & 0 \\ 0 & I_{n-1}
\end{pmatrix},
\end{equation*}
so that:
\begin{equation*}
\det\left(X(t)X(t)^\text{t}\right)\leq \esp{\Norm{X_1(t)}^2 \cdots \Norm{X_r(t)}^2} \leq \esp{\Norm{X_1(t)}^2}^r = \left(n - \frac{t e^{-t}}{1-e^{-t}}\right)^r\leq n^r.\qedhere
\end{equation*}
\end{proof}

\begin{lem}
\label{lem integrability D infinity}
We have the following estimate as $t \to + \infty$:
\begin{equation*}
\esp{\odet{X\left(t\right)}\odet{Y\left(t\right)}} = (2\pi)^r \left(\frac{\vol{\S^{n-r}}}{\vol{\S^n}}\right)^2 + O\!\left(te^{-\frac{t}{2}}\right).
\end{equation*}
\end{lem}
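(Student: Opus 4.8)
The plan is to note that as $t\to+\infty$ the variance matrix of $\left(X(t),Y(t)\right)$ converges to the identity, so that $\left(X(t),Y(t)\right)$ converges in distribution to a pair $\left(X_\infty,Y_\infty\right)$ of \emph{independent} standard Gaussian matrices in $\mathcal{M}_{rn}(\R)$, and then to make this quantitative by the same Gaussian density-comparison technique already used in the proofs of Lemmas~\ref{lem estimates cond exp x} and~\ref{lem off diagonal cond exp}. For the limit itself, recall from~\eqref{eq value exp odet lim} that if $G\in\mathcal{M}_{rn}(\R)$ has independent standard Gaussian entries, then $\esp{\odet{G}} = (2\pi)^{\frac{r}{2}}\frac{\vol{\S^{n-r}}}{\vol{\S^n}}$; since $X_\infty$ and $Y_\infty$ are independent this gives $\esp{\odet{X_\infty}\odet{Y_\infty}} = (2\pi)^r\left(\frac{\vol{\S^{n-r}}}{\vol{\S^n}}\right)^2$, the announced main term.

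Next I would record the needed estimates on the variance matrix $\Sigma(t)$ of $\left(X(t),Y(t)\right)$ (Definition~\ref{def XYt}), seen as an operator on $W := \mathcal{M}_{rn}(\R)^2$. Reading off its entries, $\Sigma(t) = \Id + E(t)$ with $\Norm{E(t)} = O\!\left(te^{-\frac{t}{2}}\right)$, the dominant entries being the off-diagonal ones $e^{-\frac{t}{2}}\bigl(1-\tfrac{t}{1-e^{-t}}\bigr)$. By Corollary~\ref{cor same distribution} and Corollary~\ref{cor Lambda z det and inverse} applied with $\Norm{z}^2=t$, we have $\det\left(\Sigma(t)\right) = \left(1-e^{-t}\right)^{r(n-2)}\left(1-e^{-t}+te^{-\frac{t}{2}}\right)^r\left(1-e^{-t}-te^{-\frac{t}{2}}\right)^r = 1 + O\!\left(te^{-\frac{t}{2}}\right)$, while $\Norm{\Sigma(t)} < 2$ and $\Norm{\Sigma(t)^{-1}} = f(t)$, which is bounded for $t$ large by Remark~\ref{rem norm Lambda z inverse}. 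Hence $\Norm{\Sigma(t)^{-1}-\Id} \leq \Norm{\Sigma(t)^{-1}}\,\Norm{\Id - \Sigma(t)} = O\!\left(te^{-\frac{t}{2}}\right)$, so there exist $K>0$ and $t_0$ with $\Norm{\Sigma(t)^{-1}-\Id} \leq Kte^{-\frac{t}{2}}$ for $t\geq t_0$.

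Finally I would compare the two expectations by the density argument. Let $\left(X_\infty,Y_\infty\right)\sim\mathcal{N}(\Id)$ on $W$ and let $\dx v$ be the normalized Lebesgue measure on $W$; writing $v = (v_1,v_2)$ with $v_1,v_2\in\mathcal{M}_{rn}(\R)$,
\begin{align*}
(2\pi)^{rn}\Bigl|\det\left(\Sigma(t)\right)^{1/2}\esp{\odet{X(t)}\odet{Y(t)}} &- \esp{\odet{X_\infty}\odet{Y_\infty}}\Bigr|\\
&\leq \int_W \odet{v_1}\odet{v_2}\,e^{-\frac12\Norm{v}^2}\Bigl|e^{-\frac12\prsc{(\Sigma(t)^{-1}-\Id)v}{v}} - 1\Bigr|\dx v.
\end{align*}
By the mean value inequality the last integrand is, for $t\geq t_0$, at most $\tfrac{K}{2}te^{-\frac{t}{2}}\odet{v_1}\odet{v_2}\Norm{v}^2 e^{-\frac12\Norm{v}^2}e^{\frac{K}{2}te^{-t/2}\Norm{v}^2}$, which for $t$ large enough that $Kte^{-t/2}\leq\tfrac12$ is bounded by $\tfrac{K}{2}te^{-\frac{t}{2}}\odet{v_1}\odet{v_2}\Norm{v}^2 e^{-\frac14\Norm{v}^2}$; the latter has finite integral over $W$ since, up to the factor $\tfrac{K}{2}te^{-\frac{t}{2}}$, it is the modulus of a polynomial times a Gaussian. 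So the right-hand side is $O\!\left(te^{-\frac{t}{2}}\right)$, and combining this with $\det\left(\Sigma(t)\right)^{1/2} = 1 + O\!\left(te^{-\frac{t}{2}}\right)$ and the uniform bound $\esp{\odet{X(t)}\odet{Y(t)}}\leq n^r$ of Lemma~\ref{lem cond exp bounded} yields $\esp{\odet{X(t)}\odet{Y(t)}} = (2\pi)^r\left(\frac{\vol{\S^{n-r}}}{\vol{\S^n}}\right)^2 + O\!\left(te^{-\frac{t}{2}}\right)$, as desired. There is no deep difficulty here: the only point requiring care — exactly as in Lemmas~\ref{lem estimates cond exp x} and~\ref{lem off diagonal cond exp} — is this last bookkeeping, namely checking that the correction factor $e^{\frac{K}{2}te^{-t/2}\Norm{v}^2}$ is absorbed by the Gaussian weight once $t$ is large, so that a single convergent integral dominates the error uniformly.
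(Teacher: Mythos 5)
Your proposal is correct and follows essentially the same route as the paper: identify the limit via the independent standard Gaussian pair and \eqref{eq value exp odet lim}, note that the variance operator is $\Id + O\!\left(te^{-\frac{t}{2}}\right)$, and compare the two Gaussian densities with the mean value inequality exactly as in Lemmas~\ref{lem estimates cond exp x} and~\ref{lem off diagonal cond exp}. The only cosmetic difference is that you extract the determinant and inverse-norm bounds from Corollaries~\ref{cor same distribution} and~\ref{cor Lambda z det and inverse} (the spectrum of $\Lambda_x(z)$ with $\Norm{z}^2=t$), whereas the paper reads them off directly from Definition~\ref{def XYt}; both are valid and the bookkeeping is the same.
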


\begin{proof}
Let $\left(X(\infty),Y(\infty)\right)$ be a standard Gaussian vector in $\mathcal{M}_{rn}(\R)^2 \simeq \R^{2nr}$, i.e.~$X(\infty)$ and $Y(\infty)$ are independent standard Gaussian vectors in $\mathcal{M}_{rn}(\R)$. Then,
\begin{align*}
\esp{\odet{X(\infty)}\odet{Y(\infty)}} &= \esp{\odet{X(\infty)}}\esp{\odet{Y(\infty)}}\\
&=\esp{\odet{X(\infty)}}^2\\
&= (2\pi)^r \left(\frac{\vol{\S^{n-r}}}{\vol{\S^n}}\right)^2,
\end{align*}
where we used~\eqref{eq value exp odet lim} to get the last equality.

Then the proof is basically the same as that of Lemma~\ref{lem estimates cond exp x}. From Definition~\ref{def XYt}, we see that the variance operator $\Lambda(t)$ of $\left(X(t),Y(t)\right)$ equals $\Id + O\!\left(te^{-\frac{t}{2}}\right)$ as $t\to +\infty$. Hence:
\begin{align*}
\det\left( \Lambda(t)\right) &= 1 + O\!\left(te^{-\frac{t}{2}}\right) & &\text{and} & \Lambda(t)^{-1} =& \Id + O\!\left(te^{-\frac{t}{2}}\right).
\end{align*}
Let $C>0$ be such that $\Norm{\Lambda(t)^{-1}- \Id} \leq C te^{-\frac{t}{2}}$. We denote by $L=\left(L_1,L_2\right)$ a generic element of $\mathcal{M}_{rn}(\R)^2$ and by $\dx L$ the normalized Lebesgue measure on this space. Then,
\begin{multline*}
\left(2\pi\right)^{rn} \norm{\det\left(\Lambda(t)\right)^\frac{1}{2} \esp{\odet{X(t)}\odet{Y(t)}} - \esp{\odet{X(\infty)}\odet{Y(\infty)}}}\\
\begin{aligned}
&\leq \int \odet{L_1}\odet{L_2} \norm{\exp\left(-\frac{1}{2}\prsc{\left(\Lambda(t)^{-1}-\Id\right)L}{L}\right)-1}e^{-\frac{1}{2}\Norm{L}^2} \dx L\\
&\leq \frac{C}{2}te^{-\frac{t}{2}} \int \odet{L_1}\odet{L_2}\Norm{L}^2\exp\left(-\frac{1}{2}\left(1-\frac{C}{2}te^{-\frac{t}{2}}\right)\Norm{L}^2\right) \dx L\\
&=  O\!\left(te^{-\frac{t}{2}}\right).
\end{aligned}
\end{multline*}
Thus
\begin{align*}
\esp{\odet{X(t)}\odet{Y(t)}}&\\
=\det&\left( \Lambda(t)\right)^{-\frac{1}{2}}\left(\esp{\odet{X(\infty)}\odet{Y(\infty)}}+ O\!\left(te^{-\frac{t}{2}}\right)\right)\\
= (2\pi&)^r \left(\frac{\vol{\S^{n-r}}}{\vol{\S^n}}\right)^2 + O\!\left(te^{-\frac{t}{2}}\right).\qedhere
\end{align*}
\end{proof}

\begin{dfn}
\label{def D}
Let $D_{n,r} : (0,+\infty) \to \R$ be the function defined by:
\begin{equation*}
\forall t \in (0,+\infty), \qquad D_{n,r}(t) = \frac{\esp{\odet{X(t)}\odet{Y(t)}}}{\left(1-e^{-t}\right)^\frac{r}{2}} - (2\pi)^r \left(\frac{\vol{\S^{n-r}}}{\vol{\S^n}}\right)^2.
\end{equation*}
\end{dfn}

\begin{lem}
\label{lem integrability Dnr}
We have:
\begin{equation*}
\int_0^{+\infty} \norm{D_{n,r}(t)}t^{\frac{n-2}{2}} \dx t < +\infty .
\end{equation*}
\end{lem}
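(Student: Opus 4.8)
The strategy is to split the integral $\int_0^{+\infty} \norm{D_{n,r}(t)} t^{\frac{n-2}{2}}\dx t$ at some fixed $t_0 > 0$ and to control the behaviors near $0$ and near $+\infty$ separately, since these require genuinely different arguments. For the tail $t \in [t_0,+\infty)$, I would simply invoke Lemma~\ref{lem integrability D infinity}, which gives $\esp{\odet{X(t)}\odet{Y(t)}} = (2\pi)^r \left(\tfrac{\vol{\S^{n-r}}}{\vol{\S^n}}\right)^2 + O\!\left(te^{-\frac{t}{2}}\right)$; combined with $\left(1-e^{-t}\right)^{-\frac{r}{2}} = 1 + O\!\left(e^{-t}\right)$, this yields $D_{n,r}(t) = O\!\left(te^{-\frac{t}{2}}\right)$, so $\norm{D_{n,r}(t)}t^{\frac{n-2}{2}} = O\!\left(t^{\frac{n}{2}}e^{-\frac{t}{2}}\right)$, which is integrable at infinity for every $n$.

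The delicate part is the integrability near $t = 0$, which is why the hypothesis $r \leq n-1$ enters. As $t \to 0$, the second term in $D_{n,r}$ is a fixed constant, and $t^{\frac{n-2}{2}}$ is integrable at $0$ for every $n \geq 1$; so the only issue is the first term $\esp{\odet{X(t)}\odet{Y(t)}}\left(1-e^{-t}\right)^{-\frac{r}{2}}$, where $\left(1-e^{-t}\right)^{-\frac{r}{2}} \sim t^{-\frac{r}{2}}$ blows up. The plan is therefore to show that $\esp{\odet{X(t)}\odet{Y(t)}}$ vanishes fast enough as $t \to 0$ to absorb this. Intuitively, as $t \to 0$ the vector $\left(X(t),Y(t)\right)$ degenerates: inspecting the variance matrix in Definition~\ref{def XYt}, the $j=1$ block has variance matrix tending to $\left(\begin{smallmatrix} 0 & 0 \\ 0 & 0\end{smallmatrix}\right)$ as $t \to 0$ (using $1 - \frac{te^{-t}}{1-e^{-t}} \to 0$ and $e^{-t/2}\left(1 - \frac{t}{1-e^{-t}}\right)\to 0$), so the first columns of both $X(t)$ and $Y(t)$ converge to $0$ in probability. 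Consequently, $\odet{X(t)}$, which is the $r$-dimensional volume of the parallelepiped spanned by the rows of $X(t)$ (equivalently, controlled by the volume spanned by the columns), is of order (the size of that degenerating first column)$^?$. More precisely, writing $\odet{X(t)}$ via a Gram/volume argument as in Lemma~\ref{lem cond exp bounded}, and pulling out the contribution of the first coordinate axis, one expects $\esp{\odet{X(t)}\odet{Y(t)}} = O\!\left(t^{\frac{r}{2}}\right)$ — precisely enough to cancel the $t^{-\frac{r}{2}}$ singularity, leaving $\norm{D_{n,r}(t)}t^{\frac{n-2}{2}} = O\!\left(t^{\frac{n-2}{2}}\right)$, which is integrable at $0$. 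This is where the main obstacle lies: making the estimate $\esp{\odet{X(t)}\odet{Y(t)}} = O\!\left(t^{r/2}\right)$ rigorous. I would do this by conditioning on, or factoring out, the first columns of $X(t)$ and $Y(t)$: these are Gaussian vectors in $\R^r$ with variance $O(t)\cdot I_r$, hence of typical size $O(\sqrt{t})$ with all moments controlled, while the remaining columns have variance bounded independently of $t \in (0,t_0]$. Then $\odet{X(t)} \leq \Norm{\text{(first column)}} \cdot \prod_{\text{other columns}}\Norm{\cdot}$ up to the usual Hadamard-type bound, giving a factor $\Norm{\text{first column}}$ whose expectation (jointly with the analogous factor from $Y(t)$, via Cauchy--Schwarz) is $O(\sqrt{t})\cdot O(\sqrt{t}) = O(t)$; together with the boundedness of the rest this gives $O(t^{r/2})$ once we account for all $r$ rows carrying a factor of the first column — actually the cleanest route is: each of the $r$ rows of $X(t)$ contributes one entry from the first column, so expanding the Gram determinant the overall scaling is $t^{r/2}$ for $\odet{X(t)}$ and likewise for $\odet{Y(t)}$; I only need one of these, i.e.\ $\esp{\odet{X(t)}\odet{Y(t)}} \leq \esp{\odet{X(t)}^2}^{1/2}\esp{\odet{Y(t)}^2}^{1/2} = O(t^{r/2})$.

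Assembling the two pieces: on $(0,t_0]$ the integrand is $O\!\left(t^{\frac{n-2}{2}}\right)$, integrable since $\frac{n-2}{2} > -1$; on $[t_0,+\infty)$ it is $O\!\left(t^{\frac{n}{2}}e^{-\frac{t}{2}}\right)$, integrable; hence $\int_0^{+\infty}\norm{D_{n,r}(t)}t^{\frac{n-2}{2}}\dx t < +\infty$. Note this simultaneously shows $\mathcal{I}_{n,r}$ in~\eqref{eq def Inr} is finite, as claimed in the remarks. (One should double-check whether the boundary case $r = n$ would fail here: the tail estimate is unaffected, but the condition $r \leq n-1$ is used in the broader proof of Theorem~\ref{thm asymptotics variance} for the near-diagonal analysis rather than in this integrability lemma per se; for this lemma the small-$t$ estimate $t^{(n-2)/2-?}$ with the $t^{r/2}$ gain is what matters, and one verifies $\frac{n-2}{2} > -1$ holds for all $n \geq 1$, so the restriction is not strictly needed just for Lemma~\ref{lem integrability Dnr}.)
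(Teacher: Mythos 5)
Your treatment of the tail is exactly the paper's: Lemma~\ref{lem integrability D infinity} gives $\norm{D_{n,r}(t)}t^{\frac{n-2}{2}} = O\!\left(t^{\frac{n}{2}}e^{-\frac{t}{2}}\right)$ at infinity, and that part is fine. The problem is your small-$t$ estimate $\esp{\odet{X(t)}\odet{Y(t)}} = O\!\left(t^{r/2}\right)$, which is false in the range $r \leq n-1$ considered here. As $t \to 0$ only the \emph{first column} of $X(t)$ degenerates (its entries have variance $1-\frac{te^{-t}}{1-e^{-t}} \sim \frac{t}{2}$); the other $n-1 \geq r$ columns stay standard Gaussian. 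By Cauchy--Binet, $\det\!\left(X(t)X(t)^{\mathrm{t}}\right)$ is a sum of squared $r\times r$ minors over column subsets, and the subsets avoiding the first column contribute terms that do not vanish as $t\to 0$; in fact, in the natural coupling $X(t)$ and $Y(t)$ both converge to the same matrix with zero first column and standard remaining columns, whose Jacobian is a.s.\ positive when $r\leq n-1$, so $\esp{\odet{X(t)}\odet{Y(t)}}$ converges to a \emph{positive} constant as $t\to 0^+$, not to $0$. Your count ``each of the $r$ rows contributes one entry from the first column'' conflates rows and columns: the Gram determinant is over the rows, which are vectors in $\R^n$ with $n-1$ non-degenerate coordinates, and a Hadamard bound by row norms produces no factor of $\sqrt{t}$ at all. (Such a gain does occur only when $r=n$, where every $r\times r$ minor must use the degenerate column, and even then it is of order $t$, coming from one column, not $t^{n/2}$.)

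Consequently your claimed bound $O\!\left(t^{\frac{n-2}{2}}\right)$ near $0$ is wrong, and so is the closing remark that the hypothesis $r\leq n-1$ is not needed for this lemma. The correct — and simpler — route is the paper's: use the uniform bound $\esp{\odet{X(t)}\odet{Y(t)}} \leq n^r$ of Lemma~\ref{lem cond exp bounded} together with $\left(1-e^{-t}\right)^{-\frac{r}{2}} = O\!\left(t^{-\frac{r}{2}}\right)$ to get
\begin{equation*}
\norm{D_{n,r}(t)}\,t^{\frac{n-2}{2}} = O\!\left(t^{\frac{n-2-r}{2}}\right) \qquad \text{as } t \to 0,
\end{equation*}
which is integrable at $0$ precisely because $n-r \geq 1$, i.e.\ $\frac{n-2-r}{2} \geq -\frac{1}{2} > -1$. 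So the restriction $r\leq n-1$ is exactly what makes the singularity at $t=0$ integrable without any cancellation; your overall conclusion is true, but the mechanism you propose to reach it does not hold.
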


\begin{proof}
We first check the integrability of $\norm{D_{n,r}(t)}t^{\frac{n-2}{2}}$ at $t=0$. By Lemma~\ref{lem cond exp bounded}, about $t=0$ we have:
\begin{align*}
\norm{D_{n,r}(t)} t^{\frac{n-2}{2}} &\leq t^{\frac{n-2}{2}}\frac{\esp{\odet{X(t)}\odet{Y(t)}}}{\left(1-e^{-t}\right)^\frac{r}{2}} + t^{\frac{n-2}{2}}(2\pi)^r \left(\frac{\vol{\S^{n-r}}}{\vol{\S^n}}\right)^2\\
&\leq t^{\frac{n-2}{2}}\frac{n^r}{\left(1-e^{-t}\right)^\frac{r}{2}} + O(t^\frac{n-2}{2}) = O\!\left(t^\frac{n-2-r}{2}\right).
\end{align*}
And this is integrable at $t=0$ since $n-r \geq 1$.

Then, by Lemma~\ref{lem integrability D infinity}, we have: $\norm{D_{n,r}(t)}t^{\frac{n-2}{2}} = O\!\left(t^\frac{n}{2}e^{-\frac{t}{2}}\right)$ when $t$ goes to infinity. This proves the integrability at infinity.
\end{proof}

\subsubsection{Near-diagonal asymptotics for the correlated terms}
\label{subsub near diag correlated}

The next step of the proof is to compute the contribution of the integral~\eqref{eq variance2} on $\Delta_d$.  Let $R >0$ be such that $2R$ is smaller than the injectivity radius of $\X$, as in Section~\ref{sec estimates for the bergman kernel}. Let $d_3 \in \N$ be such that $\forall d \geq d_3$, $b_n\frac{\ln d}{\sqrt{d}}\leq R$. In the sequel we consider $d \geq \max(d_0,d_1,d_2,d_3)$.

Since we chose $d$ large enough that $b_n \frac{\ln d}{\sqrt{d}} \leq R$ we can compute everything in the exponential chart about $x$. Let $\phi_1,\phi_2 \in \mathcal{C}^0(M)$, we have:
\begin{multline}
\label{eq double integral}
\int_{\Delta_d} \phi_1(x)\phi_2(y) \D_d(x,y) \rmes{M}^2\\
\begin{aligned}
&= \int_{x \in M} \left(\int_{y \in B_M\left(x,b_n \frac{\ln d}{\sqrt{d}}\right)} \phi_1(x)\phi_2(y) \D_d(x,y) \rmes{M}\right) \rmes{M}\\
&= \int_{x \in M} \left(\int_{z \in B_{T_xM}\left(0,b_n \frac{\ln d}{\sqrt{d}}\right)} \phi_1(x)\phi_2(\exp_x(z)) \D_d(x,\exp_x(z)) \sqrt{\kappa(z)}\dx z \right) \rmes{M},
\end{aligned}
\end{multline}
where $\sqrt{\kappa}$ is the density of $(\exp_x)^\star\rmes{M}$ with respect to the normalized Lebesgue measure on $T_xM$ (see Sect.~\ref{subsec near diagonal estimates}). Let $x \in M$, for all $z \in B_{T_xM}\left(0,b_n \ln d\right)$ we define
\begin{equation}
\label{def Dd xz}
D_d(x,z) = \D_d\left(x, \exp_x\left(\frac{z}{\sqrt{d}}\right)\right),
\end{equation}
where $\D_d$ is defined by~\eqref{eq def density}. Then, by a change of variable in~\eqref{eq double integral},
\begin{multline}
\label{eq double int rescaled}
\int_{\Delta_d} \phi_1(x)\phi_2(y) \D_d(x,y) \rmes{M}^2 =\\
d^{-\frac{n}{2}}\int_{x \in M} \left(\int_{z \in B_{T_xM}(0,b_n \ln d)} \phi_1(x)\phi_2\left(\exp_x\left(\frac{z}{\sqrt{d}}\right)\right) D_d(x,z) \left(\kappa\left(\frac{z}{\sqrt{d}}\right)\right)^\frac{1}{2}\dx z \right) \rmes{M},
\end{multline}
and we need to compute the asymptotic of $D_d(x,z)$ as $d$ goes to infinity. We start by computing $\odet{\ev_{x,y}^d}$ when $(x,y) \in \Delta_d$.

\begin{prop}
\label{prop ultim estimate evxy}
Let $\alpha \in \left(0,\frac{1}{2r+1}\right)$, let $x \in M$ and $z \in B_{T_xM}(0,b_n\ln d)$. We denote $y=\exp_x\left(\frac{z}{\sqrt{d}}\right)$. Then we have:
\begin{equation}
\label{eq prop ultim estimate evxy}
\left(\frac{\pi}{d}\right)^{2nr} \det\left(\ev^d_{x,y}\left(\ev^d_{x,y}\right)^*\right)  = \left(1-e^{-\Norm{z}^2}\right)^r \left(1+ O\!\left(d^{-\alpha}\right)\right),
\end{equation}
where the error term does not depend on $(x,z)$.
\end{prop}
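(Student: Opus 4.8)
The plan is to compute the variance matrix $\ev^d_{x,y}\left(\ev^d_{x,y}\right)^*$, which by \eqref{eq variance evxy} is the block matrix with entries $E_d(x,x)$, $E_d(x,y)$, $E_d(y,x)$, $E_d(y,y)$, and then take the determinant. First I would read everything in the real normal chart about $x$ and in the real normal trivialization of $\left(\E \otimes \L^d\right) \boxtimes \left(\E \otimes\L^d\right)^*$ about $(x,x)$. In this chart $x$ corresponds to $z=0$ and $y=\exp_x\!\left(\frac{z}{\sqrt d}\right)$ corresponds to the point $\frac{z}{\sqrt d}$. Applying Corollary~\ref{cor near diag estimates} with $k=0$ at the pairs $(0,0)$, $(0,\frac{z}{\sqrt d})$, $(\frac{z}{\sqrt d},0)$ and $(\frac{z}{\sqrt d},\frac{z}{\sqrt d})$, and using $\kappa(w)=1+O(\Norm{w}^2)$ from \eqref{eq estimate kappa} together with $\Norm{\frac{z}{\sqrt d}} = \frac{\Norm z}{\sqrt d} = O\!\left(\frac{\ln d}{\sqrt d}\right)$, one gets
\begin{equation*}
\left(\frac{\pi}{d}\right)^{n}\ev^d_{x,y}\left(\ev^d_{x,y}\right)^* = \begin{pmatrix} \Id & e^{-\frac12\Norm z^2}\Id \\ e^{-\frac12\Norm z^2}\Id & \Id \end{pmatrix} + \mathcal{R}_d,
\end{equation*}
where the error $\mathcal{R}_d$ must be estimated carefully: the near-diagonal bound at distance $\frac{\Norm z}{\sqrt d}$ produces a factor $\left(1+\Norm z\right)^{2n+8}\le (1+b_n\ln d)^{2n+8}$ times $d^{-1}$, plus the contribution $O\!\left(\frac{\Norm z^2}{d}\right)$ from $\kappa$ and the mismatch between $e^{-\frac{d}{2}\Norm{z/\sqrt d}^2}=e^{-\frac12\Norm z^2}$ and the true exponential. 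Since $\Norm z^k(\ln d)^{m}=O(d^\epsilon)$ for every $\epsilon>0$, one obtains $\mathcal{R}_d = O\!\left(d^{-1+\epsilon}\right)$, and choosing $\alpha$ slightly less than $\frac{1}{2r+1}$ absorbs it; the off-diagonal entries also pick up the far-off-diagonal exponential $e^{-C'\sqrt d\,\Norm{z/\sqrt d}}=e^{-C'\Norm z}$ but that only helps.

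Next I would factor: writing $a=e^{-\frac12\Norm z^2}\in(0,1)$, the leading matrix is $\left(\begin{smallmatrix} 1 & a \\ a & 1\end{smallmatrix}\right)\otimes\Id_{\R(\E\otimes\L^d)_x}$, which is invertible with $\det\left(\begin{smallmatrix} 1 & a \\ a & 1\end{smallmatrix}\right) = 1-a^2 = 1-e^{-\Norm z^2} > 0$ for $z\ne 0$, and its inverse has operator norm $\frac{1}{1-a}\le\frac{1}{1-e^{-\Norm z^2}}$, which however is \emph{not} uniformly bounded as $\Norm z\to 0$. This is the main obstacle: near $z=0$ the leading matrix degenerates, so one cannot simply write $\det(\text{leading}+\mathcal{R}_d) = \det(\text{leading})\left(1+O(d^{-\alpha})\right)$ with a uniform constant. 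The way around it is to observe that $1-e^{-\Norm z^2}\geq c\Norm z^2$ on the relevant range and to control the relative error through the smallest singular value: $\det\!\left(\text{leading}+\mathcal R_d\right) = \det(\text{leading})\cdot\det\!\left(\Id + (\text{leading})^{-1}\mathcal R_d\right)$, and $\Norm{(\text{leading})^{-1}\mathcal R_d} \le \frac{1}{1-e^{-\Norm z^2}}\,O(d^{-1+\epsilon})$. Here I would split into the region $\Norm z^2 \ge d^{-\alpha}$, where $\frac{1}{1-e^{-\Norm z^2}} = O(d^{\alpha})$ so the product is $O(d^{-1+\alpha+\epsilon}) = O(d^{-\alpha})$ provided $1-\alpha-\epsilon > \alpha$, i.e.\ $\alpha < \frac12$ (which is implied by $\alpha<\frac{1}{2r+1}$), and the region $\Norm z^2 < d^{-\alpha}$, where $1-e^{-\Norm z^2}$ is itself $O(d^{-\alpha})$ and one argues directly that both sides of \eqref{eq prop ultim estimate evxy} are $O(d^{-\alpha r})=O(d^{-\alpha})$ after raising to the $r$-th power — using Corollary~\ref{cor near diag estimates} to see that the true determinant near the diagonal is also small because the two rows nearly coincide (this is exactly where the constraint $\alpha < \frac{1}{2r+1}$ comes in: one needs $r\alpha$ plus the $O(d^{-1})$ error to still be $o(d^{-\alpha})$, equivalently $(r+1)\alpha < 1$, and a little room is kept for the polynomial-in-$\ln d$ factors).

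Finally, raising the resulting $\det\!\left(\begin{smallmatrix}1&a\\a&1\end{smallmatrix}\right)^{r}$ (the block structure over $\R(\E\otimes\L^d)_x$, which has real dimension $r$, multiplies the exponent by $r$) and reassembling the prefactor $\left(\frac{\pi}{d}\right)^{2nr}$ gives exactly \eqref{eq prop ultim estimate evxy}. Throughout, every estimate from Corollary~\ref{cor near diag estimates} and \eqref{eq estimate kappa} is uniform in $x$, and the bound $\Norm z \le b_n\ln d$ makes the dependence on $z$ enter only through $(\ln d)$-powers and through the explicit factor $1-e^{-\Norm z^2}$, so the error term is uniform in $(x,z)$ as claimed. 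I expect the bookkeeping in the degenerate region $\Norm z^2 \lesssim d^{-\alpha}$ to be the delicate part, and the choice of the threshold $d^{-\alpha}$ with $\alpha<\frac{1}{2r+1}$ is dictated precisely by making both regions' errors of size $O(d^{-\alpha})$.
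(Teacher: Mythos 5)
Your handling of the non-degenerate region ($\Norm{z}$ bounded below by a negative power of $d$) is essentially sound and parallels the paper: apply Corollary~\ref{cor near diag estimates} with $k=0$, identify the leading matrix $\left(\begin{smallmatrix}1 & a\\ a & 1\end{smallmatrix}\right)\otimes\Id$ with $a=e^{-\frac{1}{2}\Norm{z}^2}$, and trade the additive error against a lower bound on $1-e^{-\Norm{z}^2}$. But there is a genuine gap in the degenerate region $\Norm{z}^2<d^{-\alpha}$. The statement to be proved is a \emph{relative} error bound, $\left(\frac{\pi}{d}\right)^{2nr}\det\left(\ev^d_{x,y}\left(\ev^d_{x,y}\right)^*\right)=\left(1-e^{-\Norm{z}^2}\right)^r\left(1+O\!\left(d^{-\alpha}\right)\right)$, and your argument there --- ``both sides are $O\!\left(d^{-r\alpha}\right)$, hence done'' --- does not imply it: two quantities can both be small while differing by a factor of $2$. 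Worse, $\Norm{z}$ can be arbitrarily close to $0$, so $\left(1-e^{-\Norm{z}^2}\right)^r\sim\Norm{z}^{2r}$ can be far smaller than any fixed power of $d$, and the additive error $O\!\left(d^{-1+\epsilon}\right)$ coming from the $k=0$ estimate completely swamps it; no choice of threshold or of $\alpha$ fixes this, which is also why your derivation of the constraint (you get $\alpha<\frac12$ in one region and ``$(r+1)\alpha<1$'' in the other) does not reproduce the actual $\alpha<\frac{1}{2r+1}$.

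What is missing is the content of the paper's Lemma~\ref{lem near diag estimate evxy integrable}: for $z$ in a shrinking ball around $0$ one must show that the true determinant itself vanishes to order $\Norm{z}^{2r}$ with relative error $O\!\left(d^{\beta-1}\right)$. The paper does this by elementary row and column operations that replace the blocks by finite differences such as $\frac{1}{\Norm{z}}\left(E_d\left(0,\frac{z}{\sqrt{d}}\right)-E_d(0,0)\right)$ and the second difference divided by $\Norm{z}^2$, thereby factoring $\Norm{z}^{2r}$ out of the determinant, and then controls these differences by Taylor's formula --- which requires uniform bounds on the \emph{second and third} derivatives of the rescaled Bergman kernel (via Theorem~\ref{thm off diag estimates} and the comparison~\eqref{eq def Kk}), not just the $k=0$ near-diagonal estimate; the diagonal estimates~\eqref{eq diag values in chart} then show the rescaled matrix is $\Id+O\!\left(\Norm{z}\right)+O\!\left(d^{-1}\right)$. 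The constraint $\alpha<\frac{1}{2r+1}$ then arises in the other region, with the paper's threshold $\Norm{z}\geq d^{-\alpha}$: the additive error $O\!\left(d^{\beta-1}\right)$ with $\beta=1-(2r+1)\alpha$ must beat $\left(1-e^{-\Norm{z}^2}\right)^{-r}=O\!\left(d^{2r\alpha}\right)$ by a margin $d^{-\alpha}$. Without an argument of this kind (or some substitute giving a relative bound uniformly as $z\to0$), the proof is incomplete.
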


We will deduce Proposition~\ref{prop ultim estimate evxy} from the following two lemmas.

\begin{lem}
\label{lem near diag estimate evxy}
Let $\beta \in (0,1)$ and $d \geq d_3$, then for every $x \in M$ and $z \in B_{T_xM}(0,b_n\ln d)$, we have:
\begin{equation*}
\left(\frac{\pi}{d}\right)^{2nr} \det\left(\ev^d_{x,y}\left(\ev^d_{x,y}\right)\right) = \left(1-e^{-\Norm{z}^2}\right)^r + O\!\left(d^{\beta-1}\right),
\end{equation*}
where $y$ stands for $\exp_x\left(\frac{z}{\sqrt{d}}\right)$. Moreover the error term depends on $\beta$ but not on $(x,z)$.
\end{lem}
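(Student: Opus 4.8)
\emph{Proof plan.} The plan is to start from~\eqref{eq variance evxy}, which identifies $\ev^d_{x,y}\left(\ev^d_{x,y}\right)^*$ with the block matrix $\left(\begin{smallmatrix} E_d(x,x) & E_d(x,y) \\ E_d(y,x) & E_d(y,y)\end{smallmatrix}\right)$ acting on $\R\!\left(\E\otimes\L^d\right)_x\oplus\R\!\left(\E\otimes\L^d\right)_y$; the quantity to estimate is thus the determinant of a $2r\times 2r$ matrix built from Bergman kernel values. First I would read every block in the real normal trivialization about $x$ (Section~\ref{subsec real normal trivialization}), in which $y$ is represented by $\frac{z}{\sqrt d}\in B_{T_xM}(0,R)$ --- admissible because $d\geq d_3$ forces $b_n\frac{\ln d}{\sqrt d}\leq R$ --- and then invoke Corollary~\ref{cor near diag estimates} with $k=0$ at the three pairs $(0,0)$, $\left(\frac{z}{\sqrt d},\frac{z}{\sqrt d}\right)$ and $\left(0,\frac{z}{\sqrt d}\right)$, using $E_d(y,x)=E_d(x,y)^*$ for the remaining block.

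Next I would normalize each block by $\left(\frac{\pi}{d}\right)^n$. With $\kappa(w)=1+O(\Norm{w}^2)$ from~\eqref{eq estimate kappa} and the hypothesis $\Norm{z}\leq b_n\ln d$, this turns the diagonal blocks into $\Id+O(d^{-1})$ and $\Id+O(d^{\beta-1})$ respectively, and each off-diagonal block into $e^{-\frac12\Norm{z}^2}\Id+O(d^{\beta-1})$. The one point requiring care is that, after the substitution $w\mapsto\frac{z}{\sqrt d}$, the error term of Corollary~\ref{cor near diag estimates} carries a factor $(1+\Norm{z})^{2n+8}$ (off the diagonal this is multiplied by the harmless $e^{-C'\Norm{z}}$); since $\Norm{z}\leq b_n\ln d$, this polylogarithmic factor times $d^{-1}$ is $o(d^{\beta-1})$ for any fixed $\beta\in(0,1)$, and the density corrections $\kappa\!\left(\frac{z}{\sqrt d}\right)^{\pm 1/2}=1+O\!\left((\ln d)^2/d\right)$ are absorbed in the same way. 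Everything here is uniform in $x\in M$ and in $z$ with $\Norm{z}\leq b_n\ln d$.

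It then remains to evaluate the determinant of the limiting matrix $\left(\begin{smallmatrix}\Id_r & e^{-\frac12\Norm{z}^2}\Id_r\\ e^{-\frac12\Norm{z}^2}\Id_r & \Id_r\end{smallmatrix}\right)$. Since its four blocks are scalar multiples of $\Id_r$, hence commute, the block-determinant (Silvester) formula gives $\det\!\left(\left(1-e^{-\Norm{z}^2}\right)\Id_r\right)=\left(1-e^{-\Norm{z}^2}\right)^r$. Finally, using $\det\!\left(\left(\frac{d}{\pi}\right)^nA\right)=\left(\frac{d}{\pi}\right)^{2nr}\det A$ for $2r\times 2r$ matrices, together with the fact that the limiting block matrix has operator norm at most $2$ while the perturbation is $O(d^{\beta-1})\to 0$ (so the perturbed matrix stays in a fixed bounded region on which $\det$ is Lipschitz with a dimensional constant), one concludes $\left(\frac{\pi}{d}\right)^{2nr}\det\!\left(\ev^d_{x,y}\left(\ev^d_{x,y}\right)^*\right)=\left(1-e^{-\Norm{z}^2}\right)^r+O(d^{\beta-1})$, uniformly in $(x,z)$.

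\emph{Main obstacle.} There is no analytic difficulty --- away from the diagonal the exponential $e^{-C'\sqrt d\Norm{z-w}}$ in the kernel estimates keeps all blocks controlled. The real work, and the source of the weaker error exponent $\beta-1$ rather than $-1$, is the bookkeeping: tracking the $\ln d$ powers produced by the $\left(1+\sqrt d(\Norm{z}+\Norm{w})\right)^{2n+8}$ factor in Corollary~\ref{cor near diag estimates} after rescaling, and verifying that these together with the Jacobian density corrections are genuinely absorbed into $O(d^{\beta-1})$, uniformly over $x$ and over $\Norm{z}\leq b_n\ln d$.
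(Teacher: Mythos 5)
Your proposal is correct and follows essentially the same route as the paper: write $\ev^d_{x,y}\left(\ev^d_{x,y}\right)^*$ in the real normal trivialization about $x$, apply Corollary~\ref{cor near diag estimates} with $k=0$ at the rescaled points, absorb the $(\ln d)^{2n+8}$ factor and the $\kappa\!\left(\frac{z}{\sqrt d}\right)^{\pm 1/2}=1+O\!\left((\ln d)^2/d\right)$ corrections into $O\!\left(d^{\beta-1}\right)$, and take the determinant of $\left(\begin{smallmatrix}1 & e^{-\frac12\Norm{z}^2}\\ e^{-\frac12\Norm{z}^2} & 1\end{smallmatrix}\right)\otimes\Id$ plus a uniformly small perturbation, using boundedness of the dominant term. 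The only cosmetic additions on your side (the explicit commuting-block determinant formula and the Lipschitz argument for $\det$ on a bounded set) are exactly what the paper leaves implicit in its final sentence.
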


\begin{lem}
\label{lem near diag estimate evxy integrable}
There exists $\tilde{C}>0$ such that, for all $\beta \in [0,1)$, there exists $d_\beta \in \N$ such that: $\forall d \geq d_\beta$, $\forall x \in M$, $\forall z \in B_{T_xM}\left(0,d^{\beta-1}\right)\setminus \{0\}$,
\begin{equation*}
\norm{\left(\frac{\pi}{d}\right)^{2nr} \det\left(\ev^d_{x,y}\left(\ev^d_{x,y}\right)^*\right) \left(1-e^{-\Norm{z}^2}\right)^{-r} - 1} \leq \tilde{C} d^{\beta-1},
\end{equation*}
where $y$ stands for $\exp_x\left(\frac{z}{\sqrt{d}}\right)$.
\end{lem}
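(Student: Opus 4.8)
Here is a plan for proving Lemma~\ref{lem near diag estimate evxy integrable}.

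\emph{Strategy and reduction.} The plan is to exploit the Schur-complement (conditional-variance) structure of $\ev^d_{x,y}\bigl(\ev^d_{x,y}\bigr)^*$ and to Taylor-expand to second order very near the diagonal; the key structural fact is that the conditional variance of $s_d(y)$ given $s_d(x)=0$ vanishes \emph{to second order} as $y\to x$, together with its first differential. Fix $x\in M$ and $z\in B_{T_xM}(0,d^{\beta-1})\setminus\{0\}$, and set $u=z/\sqrt d\in T_xM$, so that $\Norm{u}=\rho_g(x,y)\le d^{\beta-\frac32}$ and, for $d$ large enough (uniformly in $x,z$), $y=\exp_x(u)\in B_\X(x,R)$, so Corollary~\ref{cor near diag estimates} applies; everything is computed in the real normal chart about $x$. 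Since $d\ge d_2$, the operator in~\eqref{eq variance evxy} is positive definite (Prop.~\ref{prop amplitude 2 points}), and the block-determinant identity gives $\det\bigl(\ev^d_{x,y}(\ev^d_{x,y})^*\bigr)=\det\bigl(E_d(x,x)\bigr)\det\bigl(G_d(x,u)\bigr)$, where $G_d(x,u)=E_d(y,y)-E_d(y,x)E_d(x,x)^{-1}E_d(x,y)$ is the conditional variance of $s_d(y)$ given $s_d(x)=0$. By~\eqref{eq value Bergman diag 00} one has $\det(E_d(x,x))=(d/\pi)^{nr}(1+O(d^{-1}))$ uniformly in $x$, and $\Norm{z}^2(1-e^{-\Norm{z}^2})^{-1}=1+O(\Norm{z}^2)=1+O(d^{\beta-1})$ on our range, so it suffices to prove
\[
\det\bigl(G_d(x,u)\bigr)=\Bigl(\tfrac d\pi\Bigr)^{nr}\Norm{z}^{2r}\bigl(1+O(d^{\beta-1})\bigr)
\]
uniformly in $(x,z)$.

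\emph{Structural identities and the model.} Next I would record two facts about $u\mapsto G_d(x,u)$ that make the second-order expansion available. First, $G_d(x,0)=0$ and the first differential $d_u G_d(x,\cdot)$ vanishes at $u=0$: the former is immediate from $E_d(x,x)E_d(x,x)^{-1}E_d(x,x)=E_d(x,x)$, and the latter follows by the product rule, the contributions of the first and third factors of $E_d(y,x)E_d(x,x)^{-1}E_d(x,y)$ cancelling against $d_uE_d(y,y)$ at $u=0$. Second, introduce the model $G_d^M(x,u):=M_d(y,y)-M_d(y,x)M_d(x,x)^{-1}M_d(x,y)$, where $M_d(z,w)=(d/\pi)^n\xi_d(z,w)\,\Id$ is the leading term in Corollary~\ref{cor near diag estimates}, with $\xi_d(z,w)=\exp(-\tfrac d2\Norm{z-w}^2)\kappa(z)^{-1/2}\kappa(w)^{-1/2}$; since $M_d(x,x)=(d/\pi)^n\Id$ exactly ($\kappa(0)=1$), a direct computation gives the closed form
\[
G_d^M(x,u)=\Bigl(\tfrac d\pi\Bigr)^{n}\frac{1-e^{-\Norm{z}^2}}{\kappa(z/\sqrt d)}\,\Id=\Bigl(\tfrac d\pi\Bigr)^{n}\Norm{z}^2\bigl(1+O(d^{\beta-1})\bigr)\Id ,
\]
using $\Norm{z}^2=d\Norm{u}^2\le d^{2\beta-2}\le d^{\beta-1}$ and $\kappa(z/\sqrt d)=1+O(\Norm{z}^2/d)$. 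In particular $G_d^M(x,0)=0$ and its first differential vanishes at $u=0$, hence so does $\Psi_d(x,u):=G_d(x,u)-G_d^M(x,u)$.

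\emph{Taylor estimate of the remainder and conclusion.} Then I would apply Taylor's formula with integral remainder, $\Psi_d(x,u)=\int_0^1(1-t)\,d^2_u[\Psi_d(x,\cdot)]_{tu}(u,u)\,dt$. The Hessian $d^2_u[\Psi_d(x,\cdot)]_w$, for $\Norm{w}\le\Norm{u}$, is a fixed universal combination of derivatives $D^kR_d$ and $D^kM_d$ ($k\le 2$, where $R_d=E_d-M_d$) and the operator $E_d(x,x)^{-1}$. On the relevant range $\sqrt d\,\Norm{w}=O(1)$, so Corollary~\ref{cor near diag estimates} gives $D^kR_d=O(d^{n+\frac k2-1})$ while the explicit form of $\xi_d$ (cf.~\eqref{eq derivative xi d zw}) gives $D^kM_d=O(d^{n+\frac k2})$, and~\eqref{eq value Bergman diag 00} gives $E_d(x,x)^{-1}=(\pi/d)^n\Id+O(d^{-n-1})$. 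Tracking the powers of $d$ — the decisive cancellations being those of $(\pi/d)^n$ against $(d/\pi)^n$ — every monomial of the combination is $O(d^n)$, so $d^2_u[\Psi_d(x,\cdot)]_w=O(d^n)$ uniformly in $w,x$, hence $\Psi_d(x,u)=O(d^n\Norm{u}^2)=O(d^{n-1}\Norm{z}^2)$. Combining with the model computation, $G_d(x,u)=(d/\pi)^n\Norm{z}^2(1+O(d^{\beta-1}))\Id+O(d^{n-1}\Norm{z}^2)=(d/\pi)^n\Norm{z}^2(1+O(d^{\beta-1}))\Id$, whence $\det(G_d(x,u))=(d/\pi)^{nr}\Norm{z}^{2r}(1+O(d^{\beta-1}))$, which closes the argument upon taking $\tilde C$ to be a universal multiple of the implied constants (all independent of $\beta$) and $d_\beta$ large enough for the above expansions to be valid when $\beta$ is near $1$. (For $z$ bounded away from $0$ inside $B_{T_xM}(0,b_n\ln d)$ one uses Lemma~\ref{lem near diag estimate evxy} instead; the two lemmas together feed into Proposition~\ref{prop ultim estimate evxy}.)

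\medskip\noindent\textbf{Main obstacle.} The delicate point is the uniform bound $d^2_u[\Psi_d(x,\cdot)]_w=O(d^n)$ — one power of $d$ below the leading size $d^{n+1}$ of the Hessian of $G_d^M$: it requires marrying the Schur-complement algebra to the near-diagonal derivative estimates while carefully accounting for the exact cancellation of the powers of $d$ produced by $E_d(x,x)^{-1}=(\pi/d)^n\Id+O(d^{-n-1})$, and checking that every implied constant is uniform in $x$ and independent of $\beta$. The preliminary algebraic identities $G_d(x,0)=0$ and $d_uG_d(x,\cdot)|_0=0$ are elementary but indispensable: without them one would be left with a spurious first-order term of size $O(d^{n+\frac12}\Norm{u})$, which for every $\beta<1$ dominates, as $d\to\infty$, the genuine leading term of order $d^{n+1}\Norm{u}^2$.
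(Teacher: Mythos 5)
Your proposal is correct in substance, but it follows a genuinely different route from the paper. The paper manipulates the block determinant directly: it subtracts rows and columns to replace the off-diagonal and corner blocks by first and second divided differences of $E_d$, Taylor-expands those differences (which requires $C^3$ bounds on $E_d$, obtained from the off-diagonal estimates of Thm~\ref{thm off diag estimates} via~\eqref{eq def Kk}), and then identifies the limiting matrix as $\Id+O(\Norm{z})+O(d^{-1})$ using the diagonal values~\eqref{eq diag values in chart}. You instead factor $\det\bigl(\ev^d_{x,y}(\ev^d_{x,y})^*\bigr)=\det E_d(x,x)\cdot\det G_d$ through the Schur complement $G_d$, compute the model Schur complement $G_d^M$ in closed form from the Gaussian leading term of Cor.~\ref{cor near diag estimates}, and control $\Psi_d=G_d-G_d^M$ by a second-order Taylor expansion at $u=0$, using the exact first-order cancellations $G_d(x,0)=0=d_uG_d(x,\cdot)|_0$. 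The trade-off: your route needs only the $k\le 2$ near-diagonal estimates (the polynomial factor in Cor.~\ref{cor near diag estimates} being bounded since $\sqrt{d}\,\Norm{u}\le d^{\beta-1}\le 1$) and avoids third derivatives altogether, at the price of the bookkeeping in the Hessian bound $d^2_u\Psi_d=O(d^n)$. That bound does hold, and it would be worth making the mechanism explicit when you write it up: decompose $\Psi_d=R_d(u,u)-\bigl\{R_d(u,0)E_d(0,0)^{-1}E_d(0,u)+M_d(u,0)E_d(0,0)^{-1}R_d(0,u)+M_d(u,0)\bigl[E_d(0,0)^{-1}-M_d(0,0)^{-1}\bigr]M_d(0,u)\bigr\}$, so that every term carries either an $R_d$-factor (gaining $d^{-1}$ by Cor.~\ref{cor near diag estimates}) or the difference of inverses, which is $O(d^{-n-1})$ since $M_d(0,0)^{-1}=(\pi/d)^n\Id$ exactly; then each monomial in the Hessian is $O(d^n)$ as you claim. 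Two further small points to record: use $1-e^{-\Norm{z}^2}\ge\tfrac12\Norm{z}^2$ on the range $\Norm{z}\le 1$ to turn the additive error $O(d^{n-1}\Norm{z}^2)$ into the relative error $O(d^{-1})$, and note that all implied constants come from Cor.~\ref{cor near diag estimates}, the diagonal estimates and $\kappa$, hence are uniform in $x$ and independent of $\beta$, with $d_\beta$ only chosen so that $d^{\beta-1}$ is small enough to multiply out the expansions — exactly the role $d_\beta$ plays in the paper's argument as well.
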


Let us assume Lemmas~\ref{lem near diag estimate evxy} and~\ref{lem near diag estimate evxy integrable} for now, and prove Prop.~\ref{prop ultim estimate evxy}.

\begin{proof}[Proof of Proposition~\ref{prop ultim estimate evxy}]
First, note that if~\eqref{eq prop ultim estimate evxy} holds for $z \in B_{T_xM}(0,b_n\ln d)\setminus \{0\}$, then the same estimate holds for $z\in B_{T_xM}(0,b_n\ln d)$ since both sides of the equality vanish when $z=0$. In the sequel we assume that $z \neq 0$.

Let $\alpha \in \left(0,\frac{1}{2r+1}\right)$, let $d \geq d_3$ and let $x\in M$. Then for any $z \in T_xM$ such that $\Norm{z} \geq d^{-\alpha}$, we have:
\begin{equation}
\label{eq norm inverse}
\left(1-e^{-\Norm{z}^2}\right)^{-r} \leq \left(1-\exp\left(-d^{-2\alpha}\right)\right)^{-r}.
\end{equation}
Since $1-e^{-t} = t\left(1-\frac{t}{2}+ O(t^2)\right)$ as $t \to 0$, there exists $\tilde{C}_0$ such that for all $t \in (0,1)$,
\begin{equation}
\label{eq def C0 tilde}
\norm{\left(1-e^{-t}\right)^{-r} - t^{-r}} \leq \tilde{C}_0 t^{1-r}.
\end{equation}
Hence, by~\eqref{eq norm inverse}, for any $d \geq d_3$, for any $x \in M$ and any $z \in T_xM$ such that $\Norm{z} \geq d^{-\alpha}$, we have:
\begin{equation}
\label{eq norm inverse 2}
\left(1-e^{-\Norm{z}^2}\right)^{-r} \leq \left(d^{2r\alpha}+\tilde{C}_0d^{(2r-2)\alpha}\right) \leq d^{2r \alpha} \left(1+\tilde{C}_0\right).
\end{equation}

Let $\beta = 1 -(2r+1)\alpha$ and $\beta' = 1 - \alpha$, then $\beta$ and $\beta' \in (0,1)$. By Lemma~\ref{lem near diag estimate evxy}, there exists $\tilde{K}_\beta >0$ such that: for all $d \geq d_3$, $\forall x \in M$, $\forall z \in B_{T_xM}(0,b_n \ln d)$,
\begin{equation*}
\norm{\left(\frac{\pi}{d}\right)^{2nr} \det\left(\ev^d_{x,y}\left(\ev^d_{x,y}\right)\right) - \left(1-e^{-\Norm{z}^2}\right)^r} \leq \tilde{K}_\beta d^{\beta-1} = \tilde{K}_\beta d^{-(2r+1)\alpha},
\end{equation*}
where $y = \exp_x\left(\frac{z}{\sqrt{d}}\right)$. Then, by~\eqref{eq norm inverse 2}, we have: $\forall d \geq d_3$, $\forall x \in M$, $\forall z \in B_{T_xM}(0,b_n \ln d)$ such that $\Norm{z}\geq d^{-\alpha} = d^{\beta'-1}$,
\begin{equation*}
\norm{\left(\frac{\pi}{d}\right)^{2nr} \det\left(\ev^d_{x,y}\left(\ev^d_{x,y}\right)\right)\left(1-e^{-\Norm{z}^2}\right)^{-r}-1} \leq \tilde{K}_\beta d^{-\alpha}\left(1+\tilde{C}_0\right),
\end{equation*}

Besides, let $d \geq d_{\beta'}$ and $x \in M$, then for all $z \in B_{T_xM}\left(0,d^{-\alpha}\right)\setminus \{0\}$ we have:
\begin{equation*}
\norm{\left(\frac{\pi}{d}\right)^{2nr} \det\left(\ev^d_{x,y}\left(\ev^d_{x,y}\right)\right)\left(1-e^{-\Norm{z}^2}\right)^{-r}-1} \leq \tilde{C}d^{-\alpha},
\end{equation*}
by Lemma~\ref{lem near diag estimate evxy integrable}. Finally, for all $d \geq \max(d_{\beta'},d_3)$, $\forall x \in M$, $\forall z \in B_{T_xM}\left(0,b_n \ln d\right)\setminus \{0\}$, we have:
\begin{equation*}
\norm{\left(\frac{\pi}{d}\right)^{2nr} \det\left(\ev^d_{x,y}\left(\ev^d_{x,y}\right)\right)\left(1-e^{-\Norm{z}^2}\right)^{-r}-1} \leq d^{-\alpha} \max\left(\tilde{C},2\tilde{K}_\beta \left(1+\tilde{C}_0\right)\right).\qedhere
\end{equation*}
\end{proof}

\begin{proof}[Proof of Lemma~\ref{lem near diag estimate evxy}]
Let $d \geq d_3$, let $x \in M$ and let $z \in B_{T_xM}(0,b_n \ln d)$. We denote $y=\exp_x\left(\frac{z}{\sqrt{d}}\right)$. Since  $\frac{\Norm{z}}{\sqrt{d}}< R$, let us write eq.~\eqref{eq variance evxy} in the real normal trivialization of $\E \otimes \L^d$ about~$x$ (see Sect.~\ref{subsec real normal trivialization}). We have:
\begin{equation*}
\left(\frac{\pi}{d}\right)^n \ev_{x,y}^d\left(\ev_{x,y}^d\right)^* = \left(\frac{\pi}{d}\right)^n\begin{pmatrix}
E_d(0,0) & E_d\left(0,\frac{z}{\sqrt{d}}\right) \\ E_d\left(\frac{z}{\sqrt{d}},0\right) & E_d\left(\frac{z}{\sqrt{d}},\frac{z}{\sqrt{d}}\right)
\end{pmatrix}.
\end{equation*}
Then, by the near-diagonal estimates of Cor.~\ref{cor near diag estimates}, we have:
\begin{multline*}
\left(\frac{\pi}{d}\right)^n \begin{pmatrix}
E_d(0,0) & E_d\left(0,\frac{z}{\sqrt{d}}\right) \\ E_d\left(\frac{z}{\sqrt{d}},0\right) & E_d\left(\frac{z}{\sqrt{d}},\frac{z}{\sqrt{d}}\right)
\end{pmatrix} =\\
\begin{pmatrix}
\Id_{(\E\otimes\L^d)_x} & e^{-\frac{1}{2}\Norm{z}^2}\left(\kappa\left(\frac{z}{\sqrt{d}}\right)\right)^{-\frac{1}{2}}\Id_{(\E\otimes\L^d)_x} \\
e^{-\frac{1}{2}\Norm{z}^2}\left(\kappa\left(\frac{z}{\sqrt{d}}\right)\right)^{-\frac{1}{2}}\Id_{(\E\otimes\L^d)_x} & \left(\kappa\left(\frac{z}{\sqrt{d}}\right)\right)^{-1}\Id_{(\E\otimes\L^d)_x}
\end{pmatrix} + O\left(\frac{(\ln d)^{2n+8}}{d}\right),
\end{multline*}
where the error term does not depend on $(x,z)$. Recall that $\kappa$ satisfies~\eqref{eq estimate kappa}. Hence for all $z \in B(0,b_n \ln d)$,
\begin{equation*}
\kappa\left(\frac{z}{\sqrt{d}}\right) = 1 + O\left(\frac{(\ln d)^2}{d}\right),
\end{equation*}
uniformly in $x$ and $z$. Let $\beta \in (0,1)$, then we have:
\begin{equation}
\label{eq odet near diag}
\left(\frac{\pi}{d}\right)^n \ev_{x,y}^d\left(\ev_{x,y}^d\right)^* = \begin{pmatrix}
1 & e^{-\frac{1}{2}\Norm{z}^2} \\ e^{-\frac{1}{2}\Norm{z}^2} & 1
\end{pmatrix} \otimes \Id_{(\E\otimes\L^d)_x} + O\left(d^{\beta-1}\right),
\end{equation}
and the constant in the term $O\!\left(d^{\beta-1}\right)$ does not depend on $(x,z)$. Since the dominant term on the right-hand side of~\eqref{eq odet near diag} has bounded coefficients, we get the result by taking the determinant of~\eqref{eq odet near diag}.
\end{proof}

\begin{proof}[Proof of Lemma~\ref{lem near diag estimate evxy integrable}]
Let $d\geq \max(d_0,d_3)$ and let $x \in M$. Recall that $D^k_{(z,w)}$ denotes the $k$-th differential at $(z,w)$ of a map from $T_x\X \times T_x\X$ to $\End\left(\left(\E \otimes \L^d\right)_x\right)$.

The Chern connection reads $D + \mu^d_x$ in the real normal trivialization about $x$, where $\mu^d_x$ is a $1$-form on $B_{T_x\X}(0,2R)$. By definition of the real normal trivialization, $\mu^d_x(0)=0$. Besides $\mu^d_x(z)$ is a smooth function of $(x,z)$ and grows at most linearly in $d$. By compactness of $M$, there exist $A$ and $B>0$ such that $\Norm{\mu^d_x\left(\frac{z}{\sqrt{d}}\right)}\leq A + B\sqrt{d}$ for all $x \in M$ and all $z \in B_{T_x\X}(0,R)$. Hence, there exists $K_1 >0$ independent of $x$ such that, for any  smooth section $S$ of $\R\left(\E \otimes \L^d\right) \boxtimes \R\left(\E \otimes \L^d\right)^*$ over $B_{T_xM}(0,R)\times B_{T_xM}(0,R)$, we have:
\begin{equation*}
\Norm{D_{(z,w)}\left(S\left(\frac{z}{\sqrt{d}},\frac{w}{\sqrt{d}}\right)\right)} \leq K_1 \Norm{S\left(\exp_x\left(\frac{z}{\sqrt{d}}\right),\exp_x\left(\frac{w}{\sqrt{d}}\right)\right)}_{\mathcal{C}^1},
\end{equation*}
where $\Norm{\cdot}_{\mathcal{C}^1}$ was defined in Section~\ref{subsec far off diagonal estimates}.  Since we use the exponential chart, we can argue similarly for the Levi--Civita connection. This gives a similar result for the higher derivatives of $S$. For all $k \in \N$, there exists $K_k>0$ independent of $x$ such that, for any smooth section $S$ of $\R\left(\E \otimes \L^d\right) \boxtimes \R\left(\E \otimes \L^d\right)^*$ over $B_{T_xM}(0,R)\times B_{T_xM}(0,R)$, we have:
\begin{equation}
\label{eq def Kk}
\Norm{D^k_{(z,w)}\left(S\left(\frac{z}{\sqrt{d}},\frac{w}{\sqrt{d}}\right)\right)} \leq K_k \Norm{S\left(\exp_x\left(\frac{z}{\sqrt{d}}\right),\exp_x\left(\frac{w}{\sqrt{d}}\right)\right)}_{\mathcal{C}^k}.
\end{equation}
Since $d \geq d_0$, by eq.~\eqref{eq def Kk} and Thm.~\ref{thm off diag estimates} we have: $
\forall z,w \in B_{T_xM}(0,R)$,
\begin{equation}
\label{eq estimate D2Ed}
\Norm{D^2_{(z,w)} \left(E_d\left(\frac{z}{\sqrt{d}},\frac{w}{\sqrt{d}}\right)\right)} \leq K_2 \Norm{E_d\left(\exp_x\left(\frac{z}{\sqrt{d}}\right),\exp_x\left(\frac{w}{\sqrt{d}}\right)\right)}_{\mathcal{C}^2} \leq C_2 K_2 d^{n+1}.
\end{equation}

Let $x \in M$ and $z \in B_{T_xM}(0,b_n \ln d)\setminus\{0\}$. We denote $y=\exp_x\left(\frac{z}{\sqrt{d}}\right)$. Let us write eq.~\eqref{eq variance evxy}, in the real normal trivialization of $\E \otimes \L^d$ about~$x$, as in the proof of Lem.~\ref{lem near diag estimate evxy}. We have:
\begin{equation*}
\ev_{x,y}^d\left(\ev_{x,y}^d\right)^* = \begin{pmatrix}
E_d(0,0) & E_d\left(0,\frac{z}{\sqrt{d}}\right) \\ E_d\left(\frac{z}{\sqrt{d}},0\right) & E_d\left(\frac{z}{\sqrt{d}},\frac{z}{\sqrt{d}}\right)
\end{pmatrix}.
\end{equation*}
Then, by elementary operations on rows and columns,
\begin{multline}
\label{eq elementery operations}
\frac{1}{\Norm{z}^{2r}}\det\left(\ev^d_{x,y}\left(\ev^d_{x,y}\right)^*\right) = \frac{1}{\Norm{z}^{2r}}\det \begin{pmatrix}
E_d(0,0) & E_d\left(0,\frac{z}{\sqrt{d}}\right) \\ E_d\left(\frac{z}{\sqrt{d}},0\right) & E_d\left(\frac{z}{\sqrt{d}},\frac{z}{\sqrt{d}}\right)
\end{pmatrix}\\
=\det \begin{pmatrix}
E_d(0,0) & \frac{1}{\Norm{z}}\left(
E_d\left(0,\frac{z}{\sqrt{d}}\right)-E_d(0,0)\right) \\ \frac{1}{\Norm{z}}\left(
E_d\left(\frac{z}{\sqrt{d}},0\right)-E_d(0,0)\right) & \frac{1}{\Norm{z}^2}\left(\begin{aligned}
E_d\left(\frac{z}{\sqrt{d}},\frac{z}{\sqrt{d}}\right)&-E_d\left(\frac{z}{\sqrt{d}},0\right)\\-E_d&\left(0,\frac{z}{\sqrt{d}}\right)+E_d(0,0)
\end{aligned}\right)
\end{pmatrix}.
\end{multline}

By Taylor's formula, for all $z \in B_{T_xM}(0,b_n \ln d)\setminus \{0\}$ we have:
\begin{multline}
\label{eq Taylor 1}
\Norm{E_d\left(0,\frac{z}{\sqrt{d}}\right)-E_d(0,0)-D_{(0,0)}E_d\cdot \left(0,\frac{z}{\sqrt{d}}\right)}\leq\\
 \frac{\Norm{z}^2}{2d} \left(\sup_{w \in [0,z]}\Norm{D^2_{(0,w)} E_d\left(\exp_x\left(\frac{\cdot}{\sqrt{d}}\right),\exp_x\left(\frac{\cdot}{\sqrt{d}}\right)\right)}\right).
\end{multline}
Then, by~\eqref{eq estimate D2Ed}, we have:
\begin{equation}
\label{eq Taylor 2}
\left(\frac{\pi}{d}\right)^n \frac{1}{\Norm{z}}\Norm{E_d\left(0,\frac{z}{\sqrt{d}}\right)-E_d(0,0)-D_{(0,0)}E_d\cdot \left(0,\frac{z}{\sqrt{d}}\right)} \leq \Norm{z} C_2 K_2 \pi^n.
\end{equation}
Similarly, for all $z \in B_{T_xM}(0,b_n \ln d)\setminus \{0\}$ we have:
\begin{equation}
\label{eq Taylor 3}
\left(\frac{\pi}{d}\right)^n \frac{1}{\Norm{z}}\Norm{E_d\left(\frac{z}{\sqrt{d}},0\right)-E_d(0,0)-D_{(0,0)}E_d\cdot \left(\frac{z}{\sqrt{d}},0\right)} \leq \Norm{z} C_2 K_2 \pi^n.
\end{equation}
A second order Taylor's formula gives:
\begin{multline*}
\Norm{\left(E_d(\frac{z}{\sqrt{d}},\frac{z}{\sqrt{d}})-E_d(\frac{z}{\sqrt{d}},0)-E_d(0,\frac{z}{\sqrt{d}})+E_d(0,0)\right) - D^2_{(0,0)}E_d \left((0,\frac{z}{\sqrt{d}}),(\frac{z}{\sqrt{d}},0)\right)}\\
\leq \left(\frac{\Norm{z}}{\sqrt{d}}\right)^3 \left(\sup_{w \in [0,z]}\Norm{D^3_{(0,w)} E_d\left(\exp_x\left(\frac{\cdot}{\sqrt{d}}\right),\exp_x\left(\frac{\cdot}{\sqrt{d}}\right)\right)}\right),
\end{multline*}
and since $d \geq d_0$, by Thm.~\ref{thm off diag estimates} and eq.~\eqref{eq def Kk} we have:
\begin{multline}
\label{eq Taylor 4}
\left(\frac{\pi}{d}\right)^n \frac{1}{\Norm{z}^2}\Norm{\left(E_d\left(\frac{z}{\sqrt{d}},\frac{z}{\sqrt{d}}\right)-E_d\left(\frac{z}{\sqrt{d}},0\right)-E_d\left(0,\frac{z}{\sqrt{d}}\right)+E_d(0,0)\right) -\right.\\
\left.D^2_{(0,0)}E_d \left(\left(0,\frac{z}{\sqrt{d}}\right)\left(\frac{z}{\sqrt{d}},0\right)\right)}\leq \Norm{z} C_3 K_3 \pi^n.
\end{multline}
Finally, by Equations~\eqref{eq Taylor 2}, \eqref{eq Taylor 3} and~\eqref{eq Taylor 4},
\begin{equation}
\label{matrix distance 1}
\begin{aligned}
\left(\frac{\pi}{d}\right)^n & \begin{pmatrix}
E_d(0,0) & \frac{1}{\Norm{z}}\left(
E_d\left(0,\frac{z}{\sqrt{d}}\right)-E_d(0,0)\right) \\ \frac{1}{\Norm{z}}\left(
E_d\left(\frac{z}{\sqrt{d}},0\right)-E_d(0,0)\right) & \frac{1}{\Norm{z}^2}\left(\begin{aligned}
E_d\left(\frac{z}{\sqrt{d}},\frac{z}{\sqrt{d}}\right)&-E_d\left(\frac{z}{\sqrt{d}},0\right)\\-E_d&\left(0,\frac{z}{\sqrt{d}}\right)+E_d(0,0)
\end{aligned}\right)
\end{pmatrix}\\
&=\left(\frac{\pi}{d}\right)^n \begin{pmatrix}
E_d(0,0) & \frac{1}{\Norm{z}}D_{(0,0)}E_d\cdot \left(0,\frac{z}{\sqrt{d}}\right) \\ \frac{1}{\Norm{z}}D_{(0,0)}E_d\cdot \left(\frac{z}{\sqrt{d}},0\right) & \frac{1}{\Norm{z}^2} D^2_{(0,0)}E_d \left(\left(0,\frac{z}{\sqrt{d}}\right)\left(\frac{z}{\sqrt{d}},0\right)\right)
\end{pmatrix} + O\!\left(\Norm{z}\right),
\end{aligned}
\end{equation}
where the error term is uniform in $x$ and $d$.

On the other hand, for every $x \in M$ and every $z \in T_xM\setminus \{0\}$, the diagonal estimates of Sect.~\ref{subsec diagonal estimates} give (see~\eqref{eq diag values in chart}):
\begin{align*}
\left(\frac{\pi}{d}\right)^n \frac{1}{\Norm{z}^2} D^2_{(0,0)}E_d \left(\left(0,\frac{z}{\sqrt{d}}\right)\left(\frac{z}{\sqrt{d}},0\right)\right) &= \frac{\pi^n}{d^{n+1}} D^2_{(0,0)}E_d \left(\left(0,\frac{z}{\Norm{z}}\right)\left(\frac{z}{\Norm{z}},0\right)\right)\\
&= \Id_{\R(\E\otimes\L^d)_x} + O(d^{-1}),
\end{align*}
where the error term is independent of $x$ and $z$. Similarly,
\begin{align*}
\left(\frac{\pi}{d}\right)^n \frac{1}{\Norm{z}}D_{(0,0)}E_d\cdot \left(0,\frac{z}{\sqrt{d}}\right) &=\left(\frac{\pi}{d}\right)^n \frac{1}{\sqrt{d}}D_{(0,0)}E_d\cdot \left(0,\frac{z}{\Norm{z}}\right) = O(d^{-1}),\\
\left(\frac{\pi}{d}\right)^n \frac{1}{\Norm{z}}D_{(0,0)}E_d\cdot \left(\frac{z}{\sqrt{d}},0\right) &=\left(\frac{\pi}{d}\right)^n \frac{1}{\sqrt{d}}D_{(0,0)}E_d\cdot \left(\frac{z}{\Norm{z}},0\right) = O(d^{-1}),\\
\intertext{and}
\left(\frac{\pi}{d}\right)^n E_d(0,0) &= \Id_{\R(\E\otimes\L^d)_x} + O(d^{-1}).
\end{align*}
Thus
\begin{equation}
\label{matrix distance 2}
\left(\frac{\pi}{d}\right)^n \begin{pmatrix}
E_d(0,0) & \frac{1}{\Norm{z}}D_{(0,0)}E_d\cdot \left(0,\frac{z}{\sqrt{d}}\right) \\ \frac{1}{\Norm{z}}D_{(0,0)}E_d\cdot \left(\frac{z}{\sqrt{d}},0\right) & \frac{1}{\Norm{z}^2} D^2_{(0,0)}E_d \left(\left(0,\frac{z}{\sqrt{d}}\right)\left(\frac{z}{\sqrt{d}},0\right)\right)
\end{pmatrix} = \Id + O(d^{-1}),
\end{equation}
where the error term is uniform in $(x,z)$. By~\eqref{matrix distance 1} and~\eqref{matrix distance 2}, there exist $\tilde{C}_1$ and $\tilde{C}_2 >0$ such that we have: $\forall d \geq \max(d_0,d_3)$, $\forall x \in M$, $\forall z \in B_{T_xM}(0,b_n \ln d) \setminus \{0\}$,
\begin{multline}
\label{matrix distance 3}
\Norm{\left(\frac{\pi}{d}\right)^n \begin{pmatrix}
E_d(0,0) & \frac{1}{\Norm{z}}\left(
E_d\left(0,\frac{z}{\sqrt{d}}\right)-E_d(0,0)\right) \\ \frac{1}{\Norm{z}}\left(
E_d\left(\frac{z}{\sqrt{d}},0\right)-E_d(0,0)\right) & \frac{1}{\Norm{z}^2}\left(\begin{aligned}
E_d\left(\frac{z}{\sqrt{d}},\frac{z}{\sqrt{d}}\right)-E_d\left(\frac{z}{\sqrt{d}},0\right)&\\-E_d\left(0,\frac{z}{\sqrt{d}}\right)+E_d(0,0)&
\end{aligned}\right)
\end{pmatrix}-\Id\ }\\
\leq \tilde{C}_1 \Norm{z} + \tilde{C}_2 \frac{1}{d}.
\end{multline}

Let $\beta \in [0,1)$, then for all $d \geq \max(d_0,d_3)$, for all $x\in M$ and all $z \in B_{T_xM}\left(0,d^{\beta-1}\right)$, we have: $\tilde{C}_1 \Norm{z} + \tilde{C}_2 d^{-1} \leq d^{\beta-1}\left(\tilde{C}_1 + \tilde{C}_2\right)$. Let $d_\beta \in \N$ be such that $(d_\beta)^{\beta-1}\left(\tilde{C}_1 + \tilde{C}_2\right) \leq \frac{1}{2}$. Since the determinant is a smooth function, there exists $\tilde{C}_3>0$ such that, for every operator $\Lambda$, if $\Norm{\Lambda} \leq \frac{1}{2}$, then $\norm{\det\left(\Id + \Lambda\right) - 1} \leq \tilde{C}_3 \Norm{\Lambda}$. Hence, by eq.~\eqref{eq elementery operations} and~\eqref{matrix distance 3}, we have: for all $d\geq d_\beta$ , for all $x \in M$, for all $z \in B_{T_xM}\left(0,d^{\beta-1}\right)\setminus\{0\}$,
\begin{equation}
\norm{\frac{1}{\Norm{z}^{2r}}\left(\frac{\pi}{d}\right)^{2rn}\det\left(\ev^d_{x,y}\left(\ev^d_{x,y}\right)^*\right) - 1} \leq \left(\tilde{C}_1+\tilde{C}_2\right) \tilde{C}_3 d^{\beta-1}.
\end{equation}

Recall that $\tilde{C}_0$ was defined in the proof of Prop.~\ref{prop ultim estimate evxy} (see eq.~\eqref{eq def C0 tilde}) and that, for all $x \in M$, for all $z \in B_{T_xM}(0,1)\setminus \{0\}$, we have:
\begin{equation*}
\norm{\frac{\Norm{z}^{2r}}{\left(1-e^{-\Norm{z}^2}\right)^{r}} -1} \leq \tilde{C}_0 \Norm{z}^2.
\end{equation*}
Then we have: $\forall d \geq d_\beta$, $\forall x \in M$, $\forall z \in B_{T_xM}\left(0,d^{\beta-1}\right)\setminus\{0\}$,
\begin{multline*}
\norm{\left(\frac{\pi}{d}\right)^{2nr} \det\left(\ev^d_{x,y}\left(\ev^d_{x,y}\right)^*\right) \left(1-e^{-\Norm{z}^2}\right)^{-r} - 1}\\
\begin{aligned}
&= \norm{\left(\frac{\pi}{d}\right)^{2nr} \frac{1}{\Norm{z}^{2r}}\det\left(\ev^d_{x,y}\left(\ev^d_{x,y}\right)^*\right) \frac{\Norm{z}^{2r}}{\left(1-e^{-\Norm{z}^2}\right)^{r}} - 1}\\
&\leq \frac{\Norm{z}^{2r}}{\left(1-e^{-\Norm{z}^2}\right)^{r}} \norm{\frac{1}{\Norm{z}^{2r}}\left(\frac{\pi}{d}\right)^{2rn}\det\left(\ev^d_{x,y}\left(\ev^d_{x,y}\right)^*\right) - 1} + \norm{\frac{\Norm{z}^{2r}}{\left(1-e^{-\Norm{z}^2}\right)^{r}} -1}\\
&\leq \left(1 + \tilde{C}_0 d^{2\beta-2}\right)\left(\tilde{C}_1+\tilde{C}_2\right) \tilde{C}_3 d^{\beta-1} + \tilde{C}_0 d^{2\beta-2}\\
&\leq d^{\beta-1} \left(\left(\tilde{C}_1+\tilde{C}_2\right) \tilde{C}_3\left(1+\tilde{C}_0\right) + \tilde{C}_0\right) = d^{\beta-1} \tilde{C},
\end{aligned}
\end{multline*}
where we define $\tilde{C}>0$ by the equality on the last line.
\end{proof}

We now want to compute the limit of the conditional distribution of $\frac{\pi^n}{d^{n+1}}\left(\nabla^d_x s_d, \nabla^d_y s_d\right)$ given that $s_d(x) = 0 = s_d(y)$ for $(x,y) \in \Delta_d$. It is enough to compute the limit of $\Lambda_d(x,y)$ as $d \to +\infty$. Recall that $\Lambda_d$ is defined by Def.~\ref{def Lambda dxy}. Since we work near the diagonal, we can write everything in the real normal trivialization centered at $x$ (see Sect.~\ref{subsec real normal trivialization}).

\begin{lem}
\label{lem near diagonal Lambda dxy 1}
Let $x \in M$ and let $\nabla^d$ be a real metric connection which is trivial over $B_{T_xM}(0,R)$ in the real normal trivialization about $x$. Let $\beta \in (0,1)$, then, in the real normal trivialization about $x$, we have: $\forall z \in B_{T_xM}(0,b_n \ln d)$,
\begin{multline*}
\frac{\pi^n}{d^{n+1}}\begin{pmatrix}
\partial_x\partial_y^\sharp E_d(0,0) & \partial_x\partial_y^\sharp E_d\left(0,\frac{z}{\sqrt{d}}\right)\\
\partial_x\partial_y^\sharp E_d\left(\frac{z}{\sqrt{d}},0\right) & \partial_x\partial_y^\sharp E_d\left(\frac{z}{\sqrt{d}},\frac{z}{\sqrt{d}}\right)
\end{pmatrix} =\\
\begin{pmatrix}
\Id_{T^*_xM} & e^{-\frac{1}{2}\Norm{z}^2}\left(\Id_{T^*_xM}-z^* \otimes z\right) \\ e^{-\frac{1}{2}\Norm{z}^2}\left(\Id_{T^*_xM}-z^* \otimes z\right) & \Id_{T^*_xM}
\end{pmatrix} \otimes \Id_{\R\left(\E \otimes \L^d \right)_x} + O\!\left(d^{\beta-1}\right),
\end{multline*}
where the error term does not depend on $(x,z)$.
\end{lem}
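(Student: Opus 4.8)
Proof proposal for Lemma~\ref{lem near diagonal Lambda dxy 1}.

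The plan is to read everything in the real normal trivialization of $\E\otimes\L^d$ about $x$ and reduce the statement to a computation with the Gaussian model kernel. Since $\nabla^d$ is trivial over $B_{T_xM}(0,R)$ in this trivialization, the operators $\partial_x$ and $\partial_y$ attached to $\nabla^d$ are simply the ordinary partial derivatives with respect to the first and second variables of the map $(u,v)\mapsto E_d(\exp_x u,\exp_x v)$ read in the trivialization, and the sharp $\partial_y^\sharp$ (cf.~\eqref{eq def sharp 2}) is just the identification of $T_yM$ with $T^*_yM$, which in an orthonormal basis of $T_xM$ leaves the coordinate matrix unchanged. Fixing such a basis $\left(\deron{}{x_1},\dots,\deron{}{x_n}\right)$ with dual basis $\left(dx_1,\dots,dx_n\right)$, I want to compute the four $\End\!\left(\R(\E\otimes\L^d)_x\right)\otimes T^*_xM$-valued quantities $\frac{\pi^n}{d^{n+1}}\partial_{u_i}\partial_{v_j}E_d$ evaluated at the pairs $(0,0)$, $\left(0,\frac{z}{\sqrt d}\right)$, $\left(\frac{z}{\sqrt d},0\right)$, $\left(\frac{z}{\sqrt d},\frac{z}{\sqrt d}\right)$, for $z\in B_{T_xM}(0,b_n\ln d)$ and $i,j\in\{1,\dots,n\}$; by the above the $(i,j)$ entry of $\partial_x\partial_y^\sharp E_d$ at such a pair is exactly $\partial_{u_i}\partial_{v_j}E_d$ times $\Id_{\R(\E\otimes\L^d)_x}$.

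First I would invoke Corollary~\ref{cor near diag estimates} with $k=2$ to replace $E_d$ by the model $\left(\frac d\pi\right)^n\xi_d\,\Id_{\R(\E\otimes\L^d)_x}$, where $\xi_d$ is the function from~\eqref{eq def xi d}. At each relevant pair of arguments one has $\Norm{u_\bullet}+\Norm{v_\bullet}\leq 2b_n\frac{\ln d}{\sqrt d}$, so the factor $\left(1+\sqrt d(\Norm{u_\bullet}+\Norm{v_\bullet})\right)^{2n+8}$ in that estimate is $O\!\left((\ln d)^{2n+8}\right)$ and the exponential factor is at most $1$; hence the difference between $\partial_{u_i}\partial_{v_j}E_d$ and $\left(\frac d\pi\right)^n\partial_{u_i}\partial_{v_j}\xi_d\,\Id$ at these points is $O\!\left(d^{n}(\ln d)^{2n+8}\right)+O(d^{-\infty})$, which after multiplication by $\frac{\pi^n}{d^{n+1}}$ is $O\!\left((\ln d)^{2n+8}d^{-1}\right)=O(d^{\beta-1})$ for every $\beta\in(0,1)$. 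All constants here are uniform in $x$ since Corollary~\ref{cor near diag estimates} is.

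It then remains to evaluate $\frac{\pi^n}{d^{n+1}}\left(\frac d\pi\right)^n\partial_{u_i}\partial_{v_j}\xi_d=\frac1d\,\partial_{u_i}\partial_{v_j}\xi_d$ at the four pairs, using the explicit formula~\eqref{eq derivative xi d zw} together with the estimate~\eqref{eq estimate kappa}, which gives $\kappa\!\left(\frac z{\sqrt d}\right)=1+O\!\left((\ln d)^2 d^{-1}\right)$ and, since $\kappa(u)=1+O(\Norm{u}^2)$, also $\partial_{u}\kappa\!\left(\frac z{\sqrt d}\right)=O\!\left(\ln d\,d^{-1/2}\right)$ on $B_{T_xM}(0,b_n\ln d)$. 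On the diagonal pairs $(0,0)$ and $\left(\frac z{\sqrt d},\frac z{\sqrt d}\right)$ the first-order difference $u_\bullet-v_\bullet$ vanishes, so every $\kappa$-correction term in~\eqref{eq derivative xi d zw} drops out, the Gaussian prefactor is $1$, and what remains is $\frac1d\cdot d\,\delta_{ij}\left(1+O((\ln d)^2 d^{-1})\right)=\delta_{ij}+O(d^{\beta-1})$, i.e.~$\Id_{T^*_xM}\otimes\Id$. On the off-diagonal pairs $\left(0,\frac z{\sqrt d}\right)$ and $\left(\frac z{\sqrt d},0\right)$ one has $\Norm{u_\bullet-v_\bullet}^2=\Norm{z}^2/d$, so the Gaussian prefactor is $e^{-\frac12\Norm{z}^2}$, the leading bracket in~\eqref{eq derivative xi d zw} equals $d\left(\delta_{ij}-z_iz_j\right)$ while the $\kappa$-correction terms are $O((\ln d)^2)$ (hence $O(d^{\beta-1})$ after dividing by $d$) and the prefactor $\kappa$-correction is $O((\ln d)^2 d^{-1})$; therefore $\frac1d\,\partial_{u_i}\partial_{v_j}\xi_d=e^{-\frac12\Norm{z}^2}(\delta_{ij}-z_iz_j)+O(d^{\beta-1})$, which by Notation~\ref{ntn z* tens z} is the $(i,j)$ entry of $e^{-\frac12\Norm{z}^2}\left(\Id_{T^*_xM}-z^*\otimes z\right)$. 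Collecting the four blocks and tensoring with $\Id_{\R(\E\otimes\L^d)_x}$ yields the asserted identity with an error $O(d^{\beta-1})$ uniform in $(x,z)$.

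There is no conceptual difficulty here — the statement is essentially bookkeeping — and the one point requiring care is precisely that bookkeeping: making sure that the polynomial-in-$(\ln d)$ factors produced both by the error term of Corollary~\ref{cor near diag estimates} (which forces the use of $k=2$ and the bound $\Norm{u_\bullet},\Norm{v_\bullet}\lesssim \ln d/\sqrt d$) and by the derivatives of $\kappa$ are absorbed uniformly into $O(d^{\beta-1})$, and that the triviality of $\nabla^d$ in the real normal trivialization together with the musical isomorphism $\sharp$ legitimately turn $\partial_x\partial_y^\sharp E_d$ into the coordinate Hessian $\left(\partial_{u_i}\partial_{v_j}E_d\right)_{ij}$ in the chosen orthonormal frame.
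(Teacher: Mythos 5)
Your proposal is correct and takes essentially the same route as the paper: read $\partial_x\partial_y^\sharp E_d$ as the coordinate Hessian $\left(\partial_{z_i}\partial_{w_j}E_d\right)_{i,j}$ via the trivial connection in the real normal trivialization, replace $E_d$ by $\left(\frac{d}{\pi}\right)^n\xi_d\,\Id$ using Corollary~\ref{cor near diag estimates} with $k=2$, and evaluate the explicit derivative~\eqref{eq derivative xi d zw} with the $\kappa$-estimates, absorbing all $(\ln d)$-factors into $O\!\left(d^{\beta-1}\right)$. The only cosmetic difference is that you evaluate directly at the four pairs of points while the paper estimates $\frac{\pi^n}{d^{n+1}}\partial_{z_i}\partial_{w_j}E_d\left(\frac{z}{\sqrt d},\frac{w}{\sqrt d}\right)$ for general $(z,w)$ and then specializes.
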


\begin{proof}
Let $x \in M$ and let us choose an orthonormal basis $\left(\deron{}{x_1},\dots,\deron{}{x_n}\right)$ of $T_xM$. We denote the corresponding coordinates on $T_xM \times T_xM$ by $(z_1,\dots,z_n,w_1,\dots,w_n)$ and by $\partial_{z_i}$ and $\partial_{w_j}$ the associated partial derivatives. Let $\left(dx_1,\dots,dx_n\right)$ denote the dual basis of $\left(\deron{}{x_1},\dots,\deron{}{x_n}\right)$. By definition of $\nabla^d$ and $\partial_x\partial_y^\sharp E_d$ (see eq.~\eqref{eq def sharp 2}), for all $z,w \in B_{T_xM}(0,R)$, the matrix of $\partial_x\partial_y^\sharp E_d(z,w)$ in the orthonormal basis $(dx_1,\dots,dx_n)$ is:
\begin{equation*}
\begin{pmatrix}
\partial_{z_i}\partial_{w_j}E_d(z,w)
\end{pmatrix}_{1\leq i,j \leq n}.
\end{equation*}
Note that this is a matrix with values in $\End\left(\R\left(\E \otimes \L^d \right)_x\right)$. Recall that we defined the function $\xi_d$ by~\eqref{eq def xi d}. Then, by Cor.~\ref{cor near diag estimates}, for all $z,w \in B_{T_xM}(0,b_n \ln d)$, we have:
\begin{equation*}
\partial_{z_i}\partial_{w_j}E_d\left(\frac{z}{\sqrt{d}},\frac{w}{\sqrt{d}}\right) = \left(\frac{d}{\pi}\right)^n \partial_{z_i}\partial_{w_j}\xi_d\left(\frac{z}{\sqrt{d}},\frac{w}{\sqrt{d}}\right) \Id_{\R\left(\E \otimes \L^d \right)_x} + O\!\left((\ln d)^{2n+8}\right).
\end{equation*}
Then, eq.~\eqref{eq derivative xi d zw} shows that:
\begin{align*}
\partial_{z_i}\partial_{w_j}&\xi_d\left(\frac{z}{\sqrt{d}},\frac{w}{\sqrt{d}}\right) = \exp\left(-\frac{1}{2}\Norm{z-w}^2\right)\kappa\left(\frac{z}{\sqrt{d}}\right)^{-\frac{1}{2}}\kappa\left(\frac{w}{\sqrt{d}}\right)^{-\frac{1}{2}} \times\\
&\left(d\delta_{ij}-d(z_i-w_i)(z_j-w_j)-\frac{\sqrt{d}(z_j-w_j)\partial_{z_i}\kappa\left(\frac{z}{\sqrt{d}}\right)}{2\kappa\left(\frac{z}{\sqrt{d}}\right)}+\frac{\sqrt{d}(z_i-w_i)\partial_{w_j}\kappa\left(\frac{w}{\sqrt{d}}\right)}{2\kappa\left(\frac{w}{\sqrt{d}}\right)}\right)\\
&= d\exp\left(-\frac{1}{2}\Norm{z-w}^2\right)\left(\delta_{ij}-(z_i-w_i)(z_j-w_j)\right)+ O\!\left((\ln d)^4\right),
\end{align*}
where we used the fact that, uniformly in $z \in B_{T_xM}(0,b_n \ln d)$, we have:
\begin{align*}
&\kappa\left(\frac{z}{\sqrt{d}}\right) = 1 + O\!\left(\frac{(\ln d)^2}{d}\right)\\
\text{and} \ \forall i \in \{1,\dots,n\}, \qquad &\partial_{z_i}\kappa\left(\frac{z}{\sqrt{d}}\right) = O\!\left(\frac{\ln d}{\sqrt{d}}\right).
\end{align*}
Hence, for all $z,w \in B_{T_xM}(0,b_n \ln d)$, we have:
\begin{multline*}
\frac{\pi^n}{d^{n+1}}\partial_{z_i}\partial_{w_j}E_d\left(\frac{z}{\sqrt{d}},\frac{w}{\sqrt{d}}\right) =\\
\exp\left(-\frac{1}{2}\Norm{z-w}^2\right)\left(\delta_{ij}-(z_i-w_i)(z_j-w_j)\right) \Id_{\R\left(\E \otimes \L^d \right)_x} + O\!\left(\frac{(\ln d)^{2n+8}}{d}\right),
\end{multline*}
where the error term is independent of $x,z$ and $w$. Furthermore, for any $\beta \in (0,1)$, the term $O\!\left(\frac{(\ln d)^{2n+8}}{d}\right)$ can be replaced by $O\left(d^{\beta-1}\right)$. Finally, for all $z,w \in B_{T_xM}(0,b_n \ln d)$, we have:
\begin{multline*}
\frac{\pi^n}{d^{n+1}}\partial_x\partial_y^\sharp E_d(z,w) =\\
\exp\left(-\frac{1}{2}\Norm{z-w}^2\right)\left(\Id_{T^*_xM} - (z-w)^* \otimes (z-w)\right)\otimes \Id_{\R\left(\E \otimes \L^d \right)_x}+ O\left(d^{\beta-1}\right),
\end{multline*}
which yields the result.
\end{proof}

A similar proof, using Cor.~\ref{cor near diag estimates} and the expressions~\eqref{eq derivative xi d z} and~\eqref{eq derivative xi d w} for the partial derivatives of $\xi_d$ yields the following.

\begin{lem}
\label{lem near diagonal Lambda dxy 2}
Let $x \in M$ and let $\nabla^d$ be a real metric connection which is trivial over $B_{T_xM}(0,R)$ in the real normal trivialization about $x$. Let $\beta \in (0,1)$, then, in the real normal trivialization about $x$, we have: $\forall z \in B_{T_xM}(0,b_n \ln d)$,
\begin{align*}
\frac{\pi^n}{d^{n+\frac{1}{2}}}\begin{pmatrix}
\partial_x E_d(0,0) & \partial_x E_d\left(0,\frac{z}{\sqrt{d}}\right)\\
\partial_x E_d\left(\frac{z}{\sqrt{d}},0\right) & \partial_x E_d\left(\frac{z}{\sqrt{d}},\frac{z}{\sqrt{d}}\right)
\end{pmatrix} &=
e^{-\frac{1}{2}\Norm{z}^2}\begin{pmatrix}
0 &  z^* \\ - z^* & 0
\end{pmatrix} \otimes \Id_{\R\left(\E \otimes \L^d \right)_x} + O\!\left(d^{\beta-1}\right)\\
\frac{\pi^n}{d^{n+\frac{1}{2}}}\begin{pmatrix}
\partial_y^\sharp E_d(0,0) & \partial_y^\sharp E_d\left(0,\frac{z}{\sqrt{d}}\right)\\
\partial_y^\sharp E_d\left(\frac{z}{\sqrt{d}},0\right) & \partial_y^\sharp E_d\left(\frac{z}{\sqrt{d}},\frac{z}{\sqrt{d}}\right)
\end{pmatrix} &=
e^{-\frac{1}{2}\Norm{z}^2}\begin{pmatrix}
0 &  -z \\ z & 0
\end{pmatrix} \otimes \Id_{\R\left(\E \otimes \L^d \right)_x} + O\!\left(d^{\beta-1}\right),
\end{align*}
where $z^* \in T^*_xM$ is to be understood as the constant map $t \mapsto z^*$ from $\R$ to $T^*_xM$ and $z \in T_xM$ is to be understood as the evaluation on $z$ from $T^*_xM$ to $\R$. Moreover, the error terms do not depend on $(x,z)$.
\end{lem}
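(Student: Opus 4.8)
The plan is to follow the proof of Lemma~\ref{lem near diagonal Lambda dxy 1} almost verbatim, replacing the mixed second derivative $\partial_{z_i}\partial_{w_j}E_d$ by the first derivatives $\partial_{z_i}E_d$ and $\partial_{w_j}E_d$ and tracking one fewer power of $\sqrt d$. Fix $x\in M$, choose an orthonormal basis $\left(\deron{}{x_1},\dots,\deron{}{x_n}\right)$ of $T_xM$ with dual basis $\left(dx_1,\dots,dx_n\right)$, and work in the real normal trivialization about $x$, in which $\nabla^d$ is trivial. Exactly as in the proof of Lemma~\ref{lem near diagonal Lambda dxy 1}, in this trivialization and these coordinates the matrix of $\partial_xE_d(z,w)$ in the basis $(dx_i)$ is the $n$-tuple $\left(\partial_{z_i}E_d(z,w)\right)_{1\le i\le n}$, with values in $\End\left(\R(\E\otimes\L^d)_x\right)$; since the metric is Euclidean in these coordinates, the sharp operation is merely a reindexing, so $\partial_y^\sharp E_d(z,w)$ is represented by $\left(\partial_{w_j}E_d(z,w)\right)_{1\le j\le n}$ read as a vector.

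First I would apply Corollary~\ref{cor near diag estimates} with $k=1$ at the point $\left(\frac{z}{\sqrt d},\frac{w}{\sqrt d}\right)$ for $z,w\in B_{T_xM}(0,b_n\ln d)$. Since $\sqrt d\left(\Norm{z/\sqrt d}+\Norm{w/\sqrt d}\right)=O(\ln d)$, the Gaussian factor $\exp\!\left(-C'\sqrt d\,\Norm{(z-w)/\sqrt d}\right)\le 1$, and for $k=1$ the prefactor $d^{n+\frac k2-1}$ equals $d^{n-\frac12}$, this yields
\begin{equation*}
\partial_{z_i}E_d\!\left(\tfrac{z}{\sqrt d},\tfrac{w}{\sqrt d}\right)=\left(\tfrac{d}{\pi}\right)^{n}\partial_{z_i}\xi_d\!\left(\tfrac{z}{\sqrt d},\tfrac{w}{\sqrt d}\right)\Id_{\R(\E\otimes\L^d)_x}+O\!\left(d^{n-\frac12}(\ln d)^{2n+8}\right),
\end{equation*}
uniformly in $x$, and the analogous identity for $\partial_{w_j}E_d$. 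Then I would insert the explicit expressions \eqref{eq derivative xi d z} and \eqref{eq derivative xi d w} for $\partial_{z_i}\xi_d$ and $\partial_{w_j}\xi_d$, using $\kappa\!\left(\tfrac{z}{\sqrt d}\right)=1+O\!\left(\tfrac{(\ln d)^2}{d}\right)$ and $\partial_{z_i}\kappa\!\left(\tfrac{z}{\sqrt d}\right)=O\!\left(\tfrac{\ln d}{\sqrt d}\right)$, both consequences of \eqref{eq estimate kappa} and the compactness of $M$, to obtain
\begin{align*}
\partial_{z_i}\xi_d\!\left(\tfrac{z}{\sqrt d},\tfrac{w}{\sqrt d}\right)&=-\sqrt d\,(z_i-w_i)\,e^{-\frac12\Norm{z-w}^2}+O\!\left(\tfrac{(\ln d)^3}{\sqrt d}\right),\\
\partial_{w_j}\xi_d\!\left(\tfrac{z}{\sqrt d},\tfrac{w}{\sqrt d}\right)&=\phantom{-}\sqrt d\,(z_j-w_j)\,e^{-\frac12\Norm{z-w}^2}+O\!\left(\tfrac{(\ln d)^3}{\sqrt d}\right),
\end{align*}
uniformly in $x$, $z$ and $w$.

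Multiplying by $\frac{\pi^n}{d^{n+\frac12}}$, both error contributions become $O\!\left(\tfrac{(\ln d)^{2n+8}}{d}\right)$, which for any $\beta\in(0,1)$ is $O\!\left(d^{\beta-1}\right)$ uniformly in $(x,z)$; hence
\begin{equation*}
\tfrac{\pi^n}{d^{n+\frac12}}\,\partial_xE_d(z,w)=-e^{-\frac12\Norm{z-w}^2}\,(z-w)^*\otimes\Id_{\R(\E\otimes\L^d)_x}+O\!\left(d^{\beta-1}\right),
\end{equation*}
and likewise $\tfrac{\pi^n}{d^{n+\frac12}}\,\partial_y^\sharp E_d(z,w)=e^{-\frac12\Norm{z-w}^2}\,(z-w)\otimes\Id_{\R(\E\otimes\L^d)_x}+O\!\left(d^{\beta-1}\right)$, where $(z-w)$ denotes evaluation at $z-w$. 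Finally I would specialize $(z,w)$ to $(0,0)$, $(0,z/\sqrt d)$, $(z/\sqrt d,0)$ and $(z/\sqrt d,z/\sqrt d)$, for which $z-w$ equals $0$, $-z$, $z$ and $0$ respectively, and read off the two $2\times 2$ block matrices of the statement.

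I do not expect a genuine obstacle: the argument is pure bookkeeping parallel to Lemma~\ref{lem near diagonal Lambda dxy 1}. The only points demanding care are the exact powers of $\sqrt d$ produced by the rescaling $z\mapsto z/\sqrt d$ --- in particular that $k=1$ in Corollary~\ref{cor near diag estimates} gives the error exponent $n-\frac12$ rather than $n+\frac12$, which is precisely what makes the $\tfrac{\pi^n}{d^{n+1/2}}$-normalized error tend to $0$ --- and checking that $\partial_y^\sharp E_d$ is, in these orthonormal coordinates, nothing but the reindexing of $\left(\partial_{w_j}E_d\right)_j$, so that the limiting object naturally carries a $T_xM$-factor rather than a $T^*_xM$-factor. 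Uniformity in $x$ is inherited throughout from the uniformity in Corollary~\ref{cor near diag estimates} and in \eqref{eq estimate kappa}.
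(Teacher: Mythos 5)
Your proposal is correct and is exactly the argument the paper intends: the paper gives no separate proof of this lemma, only the remark that it follows as in Lemma~\ref{lem near diagonal Lambda dxy 1} from Corollary~\ref{cor near diag estimates} and the formulas~\eqref{eq derivative xi d z}--\eqref{eq derivative xi d w}, which is precisely the computation you carry out, with the correct exponent bookkeeping ($k=1$ giving $d^{n-\frac{1}{2}}$) and the correct signs after specializing the rescaled points. The only point you gloss over (as does the paper) is that the sharp at $y=\exp_x(w/\sqrt d)$ uses $g_y$ rather than the flat metric of the chart, but the discrepancy is $O\!\left((\ln d)^2/d\right)$ relative and is absorbed in the $O\!\left(d^{\beta-1}\right)$ error.
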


We would like to get a similar asymptotic for the last term in the conditional variance operator~\eqref{eq conditional variance full}, namely:
\begin{equation*}
\begin{pmatrix}
E_d(0,0) & E_d\left(0,\frac{z}{\sqrt{d}}\right)\\
E_d\left(\frac{z}{\sqrt{d}},0\right) & E_d\left(\frac{z}{\sqrt{d}},\frac{z}{\sqrt{d}}\right)
\end{pmatrix}^{-1}.
\end{equation*}
Unfortunately, this term is singular on $\Delta$, and this kills all hope to get a uniform estimate on $B_{T_xM}(0,b_n\ln d)\setminus \{0\}$. Instead, we obtain a uniform estimate on $B_{T_xM}(0,b_n\ln d)\setminus B_{T_xM}(0,\rho)$ for some $\rho>0$. We need to carefully check how this estimate depends on $\rho$.

\begin{lem}
\label{lem near diag ev inverse}
Let $\beta \in (0,1)$ and $\rho \in (0,1)$. Let $x \in M$ and $z \in B_{T_xM}(0,b_n \ln d)$ such that $\Norm{z} \geq \rho$. Then, in the real normal trivialization about $x$, we have:
\begin{multline*}
\left(\frac{d}{\pi}\right)^{n} \begin{pmatrix}
E_d(0,0) & E_d\left(0,\frac{z}{\sqrt{d}}\right)\\
E_d\left(\frac{z}{\sqrt{d}},0\right) & E_d\left(\frac{z}{\sqrt{d}},\frac{z}{\sqrt{d}}\right)
\end{pmatrix}^{-1} =\\
\frac{1}{1-e^{-\Norm{z}^2}} \begin{pmatrix}
1 & -e^{-\frac{1}{2}\Norm{z}^2} \\ -e^{-\frac{1}{2}\Norm{z}^2} & 1
\end{pmatrix} \otimes \Id_{\R\left(\E \otimes \L^d \right)_x}\left(\Id + O\!\left(\frac{d^{\beta-1}}{1-e^{-\frac{1}{2}\rho^2}}\right)\right).
\end{multline*}
Here, the notation $O\!\left(\frac{d^{\beta-1}}{1-e^{-\frac{1}{2}\rho^2}}\right)$ means a quantity such that there exists $C>0$ and $\epsilon >0$, independent of $x,z,d$ and $\rho$, such that whenever $\frac{d^{\beta-1}}{1-e^{-\frac{1}{2}\rho^2}}\leq \epsilon$, the norm of this quantity is smaller than $C\frac{d^{\beta-1}}{1-e^{-\frac{1}{2}\rho^2}}$.
\end{lem}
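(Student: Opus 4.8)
The plan is to reduce the statement to inverting a $2\times 2$ block matrix that is a small perturbation of an explicit one, and to push the dependence on $\rho$ entirely through the operator norm of that explicit inverse. Write $A_d(x,z) := \left(\frac{\pi}{d}\right)^n \ev_{x,y}^d(\ev_{x,y}^d)^*$ for the rescaled block matrix of~\eqref{eq variance evxy}, read in the real normal trivialization about $x$, where $y = \exp_x(z/\sqrt d)$. The starting point is the near-diagonal estimate already obtained inside the proof of Lemma~\ref{lem near diag estimate evxy}: by Corollary~\ref{cor near diag estimates} together with $\kappa(z/\sqrt d) = 1 + O\!\left((\ln d)^2/d\right)$, equation~\eqref{eq odet near diag} gives, for any $\beta\in(0,1)$ and uniformly in $x$ and $z \in B_{T_xM}(0,b_n\ln d)$,
\begin{equation*}
A_d(x,z) = A_0(z) + O\!\left(d^{\beta-1}\right), \qquad A_0(z) := \begin{pmatrix} 1 & a \\ a & 1 \end{pmatrix}\otimes \Id_{\R(\E\otimes\L^d)_x}, \quad a := e^{-\frac12\Norm{z}^2}.
\end{equation*}

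Next I would record the exact inverse $A_0(z)^{-1} = \frac{1}{1-a^2}\left(\begin{smallmatrix} 1 & -a \\ -a & 1\end{smallmatrix}\right)\otimes \Id_{\R(\E\otimes\L^d)_x}$, which is precisely the claimed leading term since $1-a^2 = 1-e^{-\Norm{z}^2}$, and control its operator norm: tensoring with $\Id$ does not change it, and the symmetric $2\times 2$ block has eigenvalues $\frac{1}{1+a}$ and $\frac{1}{1-a}$, so $\Norm{A_0(z)^{-1}} = (1-a)^{-1}$. On the region $\Norm{z}\geq \rho$ one has $a \leq e^{-\frac12\rho^2}$, hence $\Norm{A_0(z)^{-1}} \leq \bigl(1-e^{-\frac12\rho^2}\bigr)^{-1}$.

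Then I would write $A_d(x,z) = \bigl(\Id + (A_d(x,z)-A_0(z))A_0(z)^{-1}\bigr)A_0(z)$, so that $A_d(x,z)^{-1} = A_0(z)^{-1}\bigl(\Id + (A_d(x,z)-A_0(z))A_0(z)^{-1}\bigr)^{-1}$, and estimate
\begin{equation*}
\Norm{\bigl(A_d(x,z)-A_0(z)\bigr)A_0(z)^{-1}} \leq \Norm{A_d(x,z)-A_0(z)}\,\Norm{A_0(z)^{-1}} \leq C\,\frac{d^{\beta-1}}{1-e^{-\frac12\rho^2}},
\end{equation*}
with $C$ independent of $x,z,d,\rho$ by Corollary~\ref{cor near diag estimates}. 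Whenever $C\,d^{\beta-1}\bigl(1-e^{-\frac12\rho^2}\bigr)^{-1} \leq \frac12$, a Neumann series gives $(\Id+B)^{-1} = \Id + O(\Norm{B})$, whence $A_d(x,z)^{-1} = A_0(z)^{-1}\bigl(\Id + O\!\left(d^{\beta-1}(1-e^{-\frac12\rho^2})^{-1}\right)\bigr)$. Since $A_d(x,z)^{-1} = \left(\frac{d}{\pi}\right)^n (\ev_{x,y}^d(\ev_{x,y}^d)^*)^{-1}$, this is exactly the asserted formula, with $\epsilon = \tfrac{1}{2C}$.

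The argument is essentially routine once~\eqref{eq odet near diag} is available; the only delicate point, and the reason the lemma is stated the way it is, is the bookkeeping of the dependence on $\rho$. One must route this dependence through $\Norm{A_0(z)^{-1}} \leq (1-e^{-\frac12\rho^2})^{-1}$ rather than through the pointwise prefactor $(1-e^{-\Norm{z}^2})^{-1}$ appearing in the final formula, and check that the constant $C$ and the threshold $\epsilon$ can be taken uniform in $x,z,d$ and $\rho$ — which holds because the perturbation bound $\Norm{A_d(x,z)-A_0(z)} \leq C d^{\beta-1}$ is itself uniform. This is the step I expect to require the most care.
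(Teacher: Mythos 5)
Your proposal is correct and follows essentially the same route as the paper: it starts from the uniform near-diagonal estimate~\eqref{eq odet near diag}, computes the explicit inverse~\eqref{eq inverse of the limit}, bounds its operator norm by $\bigl(1-e^{-\frac{1}{2}\rho^2}\bigr)^{-1}$ via the eigenvalues $1\pm e^{-\frac{1}{2}\Norm{z}^2}$, and then inverts the perturbed matrix with a uniform smallness threshold (the paper invokes the bounded differential of $\Lambda\mapsto\Lambda^{-1}$ near $\Id$ where you use a Neumann series, which is an immaterial difference).
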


\begin{proof}
By eq.~\eqref{eq variance evxy} and~\eqref{eq odet near diag}, we have:
\begin{equation*}
\left(\frac{\pi}{d}\right)^{n} \begin{pmatrix}
E_d(0,0) & E_d\left(0,\frac{z}{\sqrt{d}}\right)\\
E_d\left(\frac{z}{\sqrt{d}},0\right) & E_d\left(\frac{z}{\sqrt{d}},\frac{z}{\sqrt{d}}\right)
\end{pmatrix} = \begin{pmatrix}
1 & e^{-\frac{1}{2}\Norm{z}^2} \\ e^{-\frac{1}{2}\Norm{z}^2} & 1
\end{pmatrix} \otimes \Id_{\R\left(\E \otimes \L^d \right)_x} + O\!\left(d^{\beta-1}\right),
\end{equation*}
where the error term is independent of $(x,z)$. Besides,
\begin{equation}
\label{eq inverse of the limit}
\begin{aligned}
\left(\begin{pmatrix}
1 & e^{-\frac{1}{2}\Norm{z}^2} \\ e^{-\frac{1}{2}\Norm{z}^2} & 1
\end{pmatrix} \otimes \Id_{\R\left(\E \otimes \L^d \right)_x} \right)^{-1} =&\\
\frac{1}{1-e^{-\Norm{z}^2}} &\begin{pmatrix}
1 & -e^{-\frac{1}{2}\Norm{z}^2} \\ -e^{-\frac{1}{2}\Norm{z}^2} & 1
\end{pmatrix} \otimes \Id_{\R\left(\E \otimes \L^d \right)_x},
\end{aligned}
\end{equation}
and the eigenvalues of
\begin{equation*}
\begin{pmatrix}
1 & e^{-\frac{1}{2}\Norm{z}^2} \\ e^{-\frac{1}{2}\Norm{z}^2} & 1
\end{pmatrix} \otimes \Id_{\R\left(\E \otimes \L^d \right)_x}
\end{equation*}
are $1-e^{-\frac{1}{2}\Norm{z}^2}$ and $1+e^{-\frac{1}{2}\Norm{z}^2}$, which shows that:
\begin{equation*}
\Norm{\left(\begin{pmatrix}
1 & e^{-\frac{1}{2}\Norm{z}^2} \\ e^{-\frac{1}{2}\Norm{z}^2} & 1
\end{pmatrix} \otimes \Id_{\R\left(\E \otimes \L^d \right)_x} \right)^{-1}} \leq \frac{1}{1-e^{-\frac{1}{2}\Norm{z}^2}},
\end{equation*}
where $\Norm{\cdot}$ is the operator norm on $\End\left(\R^2 \otimes \R\left(\E \otimes \L^d \right)_x\right)$. Then, if $\Norm{z} \geq \rho$, we have:
\begin{equation*}
\frac{1}{1-e^{-\frac{1}{2}\Norm{z}^2}} \leq \frac{1}{1-e^{-\frac{1}{2}\rho^2}}.
\end{equation*}
Thus,
\begin{multline}
\label{eq interm 1}
\left(\frac{\pi}{d}\right)^{n} \begin{pmatrix}
E_d(0,0) & E_d\left(0,\frac{z}{\sqrt{d}}\right)\\
E_d\left(\frac{z}{\sqrt{d}},0\right) & E_d\left(\frac{z}{\sqrt{d}},\frac{z}{\sqrt{d}}\right)
\end{pmatrix} \\
=\begin{pmatrix}
1 & e^{-\frac{1}{2}\Norm{z}^2} \\ e^{-\frac{1}{2}\Norm{z}^2} & 1
\end{pmatrix} \otimes \Id_{\R\left(\E \otimes \L^d \right)_x} \left(\Id + O\!\left(\frac{d^{\beta-1}}{1-e^{-\frac{1}{2}\rho^2}}\right)\right).
\end{multline}
Taking the inverse of eq.~\eqref{eq interm 1}, we get:
\begin{multline*}
\left(\frac{d}{\pi}\right)^{n} \begin{pmatrix}
E_d(0,0) & E_d\left(0,\frac{z}{\sqrt{d}}\right)\\
E_d\left(\frac{z}{\sqrt{d}},0\right) & E_d\left(\frac{z}{\sqrt{d}},\frac{z}{\sqrt{d}}\right)
\end{pmatrix}^{-1}=\\
\left(\begin{pmatrix}
1 & e^{-\frac{1}{2}\Norm{z}^2} \\ e^{-\frac{1}{2}\Norm{z}^2} & 1
\end{pmatrix} \otimes \Id_{\R\left(\E \otimes \L^d \right)_x} \right)^{-1}\left(\Id + O\!\left(\frac{d^{\beta-1}}{1-e^{-\frac{1}{2}\rho^2}}\right)\right),
\end{multline*}
where we used the mean value inequality and the fact that the differential of $\Lambda \mapsto \Lambda^{-1}$ is bounded from above on the closed ball of center $\Id$ and radius $\frac{1}{2}$. Finally, eq.~\eqref{eq inverse of the limit} gives the result.
\end{proof}

Recall that $\Lambda_x(z)$ is defined for $x \in M$ and $z \in T_xM\setminus \{0\}$ by Def.~\ref{def Lambda z inf}. Recall also that $\Lambda_d\left(x,y\right)$ is defined by Def.~\ref{def Lambda dxy}.

\begin{lem}
\label{lem near diag asymptotic Lambda d xy}
Let $\beta \in (0,1)$ and $\rho \in (0,1)$. Let $x \in M$ and $z \in B_{T_xM}(0,b_n \ln d)$ such that $\Norm{z} \geq \rho$. We denote $y = \exp_x\left(\frac{z}{\sqrt{d}}\right)$. Let $\nabla^d$ be any real metric connection. Then, in the real normal trivialization about $x$, we have:
\begin{equation*}
\Lambda_d\left(x,y\right) = \Lambda_x(z) + O\!\left(\frac{d^{\beta-1}}{(1-e^{-\frac{1}{2}\rho^2})^2}\right),
\end{equation*}
where the constant in the error term does not depend on $(x,z)$, $d$ or $\rho$.
\end{lem}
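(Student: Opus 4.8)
The starting point is Definition~\ref{def Lambda dxy}: by definition $\frac{d^{n+1}}{\pi^n}\Lambda_d(x,y)$ is the conditional variance operator~\eqref{eq conditional variance full}. Writing $M_d$, $Q_d$, $R_d$ and $S_d$ for the four operator blocks appearing there (the $\partial_x\partial_y^\sharp$–block, the $\partial_x$–block, the $E_d$–block and the $\partial_y^\sharp$–block respectively), all read in the real normal trivialization about $x$, we have
\begin{equation*}
\Lambda_d(x,y) = \frac{\pi^n}{d^{n+1}}\left(M_d - Q_d R_d^{-1} S_d\right).
\end{equation*}
By Remark~\ref{rem choice of nablad}, $\Lambda_d(x,y)$ does not depend on the choice of $\nabla^d$, so I may assume $\nabla^d$ is real, metric, and trivial over $B_{T_xM}(0,R)$ in the real normal trivialization about $x$; then Lemmas~\ref{lem near diagonal Lambda dxy 1}, \ref{lem near diagonal Lambda dxy 2} and~\ref{lem near diag ev inverse} all apply. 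Setting $\widehat M_d = \frac{\pi^n}{d^{n+1}}M_d$, $\widehat Q_d = \frac{\pi^n}{d^{n+\frac12}}Q_d$, $\widehat S_d = \frac{\pi^n}{d^{n+\frac12}}S_d$ and $\widehat R_d = \left(\frac{\pi}{d}\right)^{n}R_d$, the powers of $d$ and $\pi$ cancel in $Q_d R_d^{-1} S_d$ and one gets $\Lambda_d(x,y) = \widehat M_d - \widehat Q_d\,\widehat R_d^{-1}\,\widehat S_d$.

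Now fix $\beta\in(0,1)$. Lemma~\ref{lem near diagonal Lambda dxy 1} gives $\widehat M_d = M_\infty + O(d^{\beta-1})$, Lemma~\ref{lem near diagonal Lambda dxy 2} gives $\widehat Q_d = Q_\infty + O(d^{\beta-1})$ and $\widehat S_d = S_\infty + O(d^{\beta-1})$, and Lemma~\ref{lem near diag ev inverse} gives $\widehat R_d^{-1} = R_\infty^{-1}\bigl(\Id + O(\tfrac{d^{\beta-1}}{1-e^{-\frac12\rho^2}})\bigr)$, where $M_\infty$, $Q_\infty$, $S_\infty$, $R_\infty^{-1}$ denote the explicit limit matrices in those statements. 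Two uniform bounds are then needed: for $z\in B_{T_xM}(0,b_n\ln d)$ the norms of $M_\infty$, $Q_\infty$, $S_\infty$ are bounded by an absolute constant, since $t\mapsto e^{-t^2/2}t$ and $t\mapsto e^{-t^2/2}(1+t^2)$ are bounded on $[0,+\infty)$ and $\Norm{z^*\otimes z}=\Norm{z}^2$; and $\Norm{R_\infty^{-1}}\leq (1-e^{-\frac12\Norm{z}^2})^{-1}\leq (1-e^{-\frac12\rho^2})^{-1}$, exactly as in the proof of Lemma~\ref{lem near diag ev inverse}. Expanding the triple product $\widehat Q_d\,\widehat R_d^{-1}\,\widehat S_d$ and collecting terms, the dominant error is $\Norm{Q_\infty}\,\Norm{R_\infty^{-1}}\,\Norm{S_\infty}\cdot O(\tfrac{d^{\beta-1}}{1-e^{-\frac12\rho^2}})=O(\tfrac{d^{\beta-1}}{(1-e^{-\frac12\rho^2})^2})$, the remaining terms carrying either only one power of $\Norm{R_\infty^{-1}}$ or an extra factor $d^{\beta-1}$, hence being of smaller order; adding the $O(d^{\beta-1})$ coming from $\widehat M_d$ we obtain
\begin{equation*}
\Lambda_d(x,y) = M_\infty - Q_\infty R_\infty^{-1} S_\infty + O\!\left(\frac{d^{\beta-1}}{(1-e^{-\frac12\rho^2})^2}\right),
\end{equation*}
with a constant independent of $(x,z)$, $d$ and $\rho$, valid for $d$ large enough in terms of $\rho$, as required by Lemma~\ref{lem near diag ev inverse}.

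It remains to identify the limit. With $a=e^{-\frac12\Norm{z}^2}$, a direct computation of the $2\times 2$ block product — in which the composite of the map $z\colon T^*_xM\to\R$ (evaluation on $z$) with the map $z^*\colon\R\to T^*_xM$ ($t\mapsto t z^*$) is the endomorphism $z^*\otimes z$ of $T^*_xM$ in the sense of Notation~\ref{ntn z* tens z} — gives
\begin{equation*}
Q_\infty R_\infty^{-1} S_\infty = \frac{a^2}{1-a^2}\begin{pmatrix} z^*\otimes z & a\,z^*\otimes z\\ a\,z^*\otimes z & z^*\otimes z\end{pmatrix}\otimes \Id_{\R\left(\E\otimes\L^d\right)_x}.
\end{equation*}
Subtracting this from the limit matrix $M_\infty$ of Lemma~\ref{lem near diagonal Lambda dxy 1} and simplifying the off-diagonal blocks via $a+\tfrac{a^3}{1-a^2}=\tfrac{a}{1-a^2}$ and the diagonal ones via $\tfrac{a^2}{1-a^2}=\tfrac{e^{-\Norm{z}^2}}{1-e^{-\Norm{z}^2}}$, one recovers exactly the operator $\Lambda_x(z)$ of Definition~\ref{def Lambda z inf}, which completes the proof. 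The main obstacle is the second paragraph: one has to organize the expansion so that the singular quantity $(1-e^{-\frac12\rho^2})^{-1}$ enters the final bound only squared — which is why $\widehat R_d^{-1}$ is written as $R_\infty^{-1}$ times a \emph{relative} error rather than an additive one — and one has to keep the $\Norm{z}$–polynomial factors, of size up to $b_n\ln d$, under control by the Gaussian weights $e^{-\frac12\Norm{z}^2}$ throughout.
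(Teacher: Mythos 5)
Your proposal is correct and follows essentially the same route as the paper: fix the connection to be trivial in the real normal trivialization (using Remark~\ref{rem choice of nablad}), feed Lemmas~\ref{lem near diagonal Lambda dxy 1}, \ref{lem near diagonal Lambda dxy 2} and~\ref{lem near diag ev inverse} into the block formula of Definition~\ref{def Lambda dxy}, let the factor $\Norm{R_\infty^{-1}}\leq (1-e^{-\frac{1}{2}\rho^2})^{-1}$ account for the squared singular factor, and identify the limit with $\Lambda_x(z)$. The only (harmless) difference is bookkeeping: the paper invokes the auxiliary lemmas with exponent $\beta/2$ and bounds the factor matrices by $O(\ln d)$, whereas you use $\beta$ directly and bound the limit matrices by absolute constants via the Gaussian damping — both yield the stated $O\!\left(d^{\beta-1}(1-e^{-\frac{1}{2}\rho^2})^{-2}\right)$.
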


\begin{proof}
We know that $\Lambda_d(x,y)$ does not depend on the choice of $\nabla^d$ (see Rem.\ref{rem choice of nablad}). Hence, we can compute $\Lambda_d\left(x,y\right)$ with $\nabla^d$ trivial over $B_{T_xM}(0,R)$ in the real normal trivialization of $\E \otimes \L^d$ about $x$. 

Let $\beta \in (0,1)$ and $\rho \in (0,1)$, we apply Lemmas~\ref{lem near diagonal Lambda dxy 2} and \ref{lem near diag ev inverse} for $\frac{\beta}{2}$. Then, in the real normal trivialization about $x$, we have:
\begin{multline}
\label{eq interm 2}
\frac{\pi^n}{d^{n+1}} \begin{pmatrix}
\partial_xE_d(x,x) & \partial_xE_d(x,y)\\
\partial_xE_d(y,x) & \partial_xE_d(y,y)
\end{pmatrix}
\begin{pmatrix}
E_d(x,x) & E_d(x,y)\\
E_d(y,x) & E_d(y,y)
\end{pmatrix}^{-1}
\begin{pmatrix}
\partial_y^\sharp E_d(x,x) & \partial_y^\sharp E_d(x,y)\\
\partial_y^\sharp E_d(y,x) & \partial_y^\sharp E_d(y,y)
\end{pmatrix}\\
= \left( \begin{pmatrix}
0 &  z^* \\ - z^* & 0
\end{pmatrix} \otimes \Id_{\R\left(\E \otimes \L^d \right)_x} + O\!\left(d^{\frac{\beta}{2}-1}\right)\right) \left(\begin{pmatrix}
1 & e^{-\frac{1}{2}\Norm{z}^2} \\ e^{-\frac{1}{2}\Norm{z}^2} & 1
\end{pmatrix}\otimes \Id_{\R\left(\E \otimes \L^d \right)_x}\right)^{-1}\times \\
\left(\Id + O\!\left(\frac{d^{\frac{\beta}{2}-1}}{1-e^{-\frac{1}{2}\rho^2}}\right)\right)\left(\begin{pmatrix}
0 &  -z \\ z & 0
\end{pmatrix} \otimes \Id_{\R\left(\E \otimes \L^d \right)_x} + O\!\left(d^{\frac{\beta}{2}-1}\right)\right).
\end{multline}
Since, $\rho \leq \norm{z} < b_n \ln d$, the norm of
\begin{equation*}
\left(\begin{pmatrix}
1 & e^{-\frac{1}{2}\Norm{z}^2} \\ e^{-\frac{1}{2}\Norm{z}^2} & 1
\end{pmatrix}\otimes \Id_{\R\left(\E \otimes \L^d \right)_x}\right)^{-1}
\end{equation*}
is smaller than $\left(1-e^{-\frac{1}{2}\Norm{z}^2}\right)^{-1} \leq \left(1-e^{-\frac{1}{2}\rho^2}\right)^{-1}$, and the norms of the other matrices appearing in~\eqref{eq interm 2} are $O\!\left(\ln d\right)$. Hence, the expression~\eqref{eq interm 2} equals:
\begin{equation}
\label{eq conditioning uniform}
\begin{aligned}
\frac{e^{-\Norm{z}^2}}{1-e^{-\Norm{z}^2}} \begin{pmatrix}
0 &  z^* \\ - z^* & 0
\end{pmatrix}\hspace{-1mm}\begin{pmatrix}
1 & -e^{-\frac{1}{2}\Norm{z}^2} \\ -e^{-\frac{1}{2}\Norm{z}^2} & 1
\end{pmatrix}\hspace{-1mm}
\begin{pmatrix}
0 &  -z \\ z & 0
\end{pmatrix}\! \otimes \Id_{\R\left(\E \otimes \L^d \right)_x}\! + O\!\left(\frac{d^{\beta-1}}{(1-e^{-\frac{1}{2}\rho^2})^2}\right)&\\
=\frac{e^{-\Norm{z}^2}}{1-e^{-\Norm{z}^2}}
\begin{pmatrix}
z^*\otimes z & e^{-\frac{1}{2}\Norm{z}^2}z^*\otimes z \\ e^{-\frac{1}{2}\Norm{z}^2}z^*\otimes z & z^*\otimes z
\end{pmatrix} \otimes \Id_{\R\left(\E \otimes \L^d \right)_x} + O\!\left(\frac{d^{\beta-1}}{(1-e^{-\frac{1}{2}\rho^2})^2}\right),&
\end{aligned}
\end{equation}
where the error term is independent of $(x,z)$. Finally, eq.~\eqref{eq conditioning uniform} and Lemma~\ref{lem near diagonal Lambda dxy 1} yield the result.
\end{proof}

\begin{lem}
\label{lem near diagonal cond exp}
Let $\beta \in (0,1)$ and $\rho \in (0,1)$. Let $x \in M$ and $z \in B_{T_xM}(0,b_n \ln d)$ such that $\Norm{z} \geq \rho$. We denote $y = \exp_x\left(\frac{z}{\sqrt{d}}\right)$. Let $\nabla^d$ be any real metric connection. Then,
\begin{multline*}
\left(\frac{\pi^n}{d^{n+1}}\right)^r \espcond{\odet{\nabla^d_{x}s_d}\!\odet{\nabla^d_{y}s_d}}{\ev_{x,y}^d(s_d)=0} =\\
\esp{\odet{X(\Norm{z}^2)}\odet{Y(\Norm{z}^2)}} + O\!\left(f(\rho^2)^{\frac{r(n+1)}{2}+4}d^{\beta-1}\right),
\end{multline*}
where the constant in the error term does not depend on $(x,z)$, $d$ or $\rho$.
\end{lem}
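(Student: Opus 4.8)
The plan is to run the same density-comparison argument that already powers the proofs of Lemmas~\ref{lem estimates cond exp x} and~\ref{lem off diagonal cond exp}, but now feeding it the near-diagonal control of the conditional variance from Lemma~\ref{lem near diag asymptotic Lambda d xy} instead of the diagonal one. First I would use Definition~\ref{def Lambda dxy}: the conditional law of $\left(\nabla^d_x s_d,\nabla^d_y s_d\right)$ given $\ev^d_{x,y}(s_d)=0$ is that of $\left(\frac{d^{n+1}}{\pi^n}\right)^{1/2}(L'_d(x),L'_d(y))$, where $(L'_d(x),L'_d(y))$ is centered Gaussian with variance operator $\Lambda_d(x,y)$, so by homogeneity of the Jacobian $\left(\frac{\pi^n}{d^{n+1}}\right)^{r}\espcond{\odet{\nabla^d_{x}s_d}\odet{\nabla^d_{y}s_d}}{\ev_{x,y}^d(s_d)=0}=\esp{\odet{L'_d(x)}\odet{L'_d(y)}}$. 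On the other side, Corollary~\ref{cor same distribution} rewrites the target as $\esp{\odet{X(\Norm{z}^2)}\odet{Y(\Norm{z}^2)}}=\esp{\odet{L_x(0)}\odet{L_x(z)}}$, where $(L_x(0),L_x(z))$ is centered Gaussian with variance $\Lambda_x(z)$ (Def.~\ref{def Lx 0 z}). So everything reduces to bounding the difference of the expectations of the polynomial map $L=(L_1,L_2)\mapsto\odet{L_1}\odet{L_2}$ under two centered Gaussians whose variance operators $\Lambda_d(x,y)$ and $\Lambda_x(z)$ are close by Lemma~\ref{lem near diag asymptotic Lambda d xy}.

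For that comparison I would use the density-ratio trick from Section~\ref{subsec asymptotic for the variance}: writing both expectations as Gaussian integrals against the common reference weight $e^{-\frac12\prsc{\Lambda_x(z)^{-1}L}{L}}$ yields
$$(2\pi)^{nr}\left\lvert\det(\Lambda_d(x,y))^{\frac12}\esp{\odet{L'_d(x)}\odet{L'_d(y)}}-\det(\Lambda_x(z))^{\frac12}\esp{\odet{L_x(0)}\odet{L_x(z)}}\right\rvert\leq\int\odet{L_1}\odet{L_2}\,e^{-\frac12\prsc{\Lambda_x(z)^{-1}L}{L}}\left\lvert e^{-\frac12\prsc{(\Lambda_d(x,y)^{-1}-\Lambda_x(z)^{-1})L}{L}}-1\right\rvert\dx L,$$
and the mean value inequality $\lvert e^{-u}-1\rvert\leq\lvert u\rvert e^{\lvert u\rvert}$ turns the right-hand side into $\Norm{\Lambda_d(x,y)^{-1}-\Lambda_x(z)^{-1}}$ times a Gaussian moment of $\odet{L_1}\odet{L_2}\Norm{L}^2$. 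The observation that keeps this moment a \emph{universal} constant, with no $\rho$-dependence, is that $\Norm{\Lambda_x(z)}<2$ by Corollary~\ref{cor Lambda z det and inverse}, hence $\prsc{\Lambda_x(z)^{-1}L}{L}\geq\frac12\Norm{L}^2$ and the reference weight is dominated by $e^{-\frac14\Norm{L}^2}$, against which $\odet{L_1}\odet{L_2}\Norm{L}^2 e^{\epsilon\Norm{L}^2}$ integrates to a finite constant once $\epsilon$ is small.

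It then remains to do the bookkeeping of the $\rho$-dependence. From Lemma~\ref{lem near diag asymptotic Lambda d xy}, $\Lambda_d(x,y)=\Lambda_x(z)+O\!\left(d^{\beta-1}(1-e^{-\frac12\rho^2})^{-2}\right)$, and since $(1-e^{-\frac12\rho^2})^{-2}=O(f(\rho^2))$ (both $\sim\rho^{-4}$ as $\rho\to0$) while $\Norm{\Lambda_x(z)^{-1}}=f(\Norm{z}^2)\leq f(\rho^2)$ by Corollary~\ref{cor Lambda z det and inverse} and monotonicity of $f$ (Rem.~\ref{rem norm Lambda z inverse}), the identity $\Lambda_x(z)^{-1}-\Lambda_d(x,y)^{-1}=\Lambda_x(z)^{-1}(\Lambda_d(x,y)-\Lambda_x(z))\Lambda_d(x,y)^{-1}$ gives, once $f(\rho^2)^2 d^{\beta-1}$ is below a fixed threshold so that $\Lambda_d(x,y)$ is invertible with $\Norm{\Lambda_d(x,y)^{-1}}=O(f(\rho^2))$, both $\Norm{\Lambda_d(x,y)^{-1}-\Lambda_x(z)^{-1}}=O(f(\rho^2)^3 d^{\beta-1})$ and $\det(\Lambda_d(x,y))=\det(\Lambda_x(z))(1+O(f(\rho^2)^2 d^{\beta-1}))$. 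Dividing the displayed inequality by $\det(\Lambda_d(x,y))^{1/2}$, one uses $\esp{\odet{L_x(0)}\odet{L_x(z)}}\leq n^r$ (Cor.~\ref{cor same distribution} and Lem.~\ref{lem cond exp bounded}) to handle the main term, and the lower bound $\det(\Lambda_x(z))\geq c\,f(\rho^2)^{-r(n+2)/2}$ — read off the explicit product in Corollary~\ref{cor Lambda z det and inverse} via $1-e^{-t}-te^{-t/2}=(1-e^{-t/2})/f(t)$ — to bound $\det(\Lambda_d(x,y))^{-1/2}$ in the error term, collecting a total error $O\!\left(f(\rho^2)^{r(n+2)/4+3}d^{\beta-1}\right)$, which is $O\!\left(f(\rho^2)^{r(n+1)/2+4}d^{\beta-1}\right)$ since $r(n+2)/4+3\leq r(n+1)/2+4$.

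The main obstacle is exactly this last step: unlike in Lemma~\ref{lem off diagonal cond exp}, where the relevant determinant was $1+O(d^{-1})$ and bounded away from $0$, the limiting conditional variance $\Lambda_x(z)$ degenerates as $\Norm{z}\to0$ (one block of eigenvalues collapses at rate $\Norm{z}^2$, cf.~Lem.~\ref{lem eigenvalues Lambda z}), so $\det(\Lambda_x(z))^{-1/2}$ and $\Norm{\Lambda_x(z)^{-1}}$ blow up; one must quantify this blow-up in powers of $f(\rho^2)$ precisely enough — but only crudely enough — to fit under the stated exponent $\tfrac{r(n+1)}{2}+4$, while checking that all big-$O$ constants stay independent of $(x,z)$, $d$ and $\rho$, the threshold on $f(\rho^2)^2 d^{\beta-1}$ being absorbed into the "$O$ means\dots" convention inherited from Lemma~\ref{lem near diag ev inverse}.
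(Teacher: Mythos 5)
Your proposal is correct and follows essentially the same route as the paper's proof: reduce both sides to Gaussian expectations of $\odet{L_1}\odet{L_2}$ with variance operators $\Lambda_d(x,y)$ and $\Lambda_x(z)$, compare them by the density-ratio/mean-value trick with the reference weight controlled through $\Norm{\Lambda_x(z)}<2$, and convert the $\rho$-dependence of Lemma~\ref{lem near diag asymptotic Lambda d xy}, of $\Norm{\Lambda_x(z)^{-1}}$ and of $\det(\Lambda_x(z))^{-1/2}$ into powers of $f(\rho^2)$, using $\esp{\odet{X}\odet{Y}}\leq n^r$ for the main term. Your slightly different bookkeeping (resolvent identity, $(1-e^{-\rho^2/2})^{-2}=O(f(\rho^2))$, and the determinant lower bound read off the explicit product) only changes the exponent of $f(\rho^2)$ to a smaller value that, since $f\geq 1$, still fits under the stated $\frac{r(n+1)}{2}+4$, so the argument is sound.
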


\begin{proof}
Let $x \in M$ and $z \in B_{T_xM}(0,b_n \ln d)\setminus\{0\}$, let $y = \exp_x\left(\frac{z}{\sqrt{d}}\right)$ then we have:
\begin{multline*}
\left(\frac{\pi^n}{d^{n+1}}\right)^r \espcond{\odet{\nabla^d_{x}s_d}\!\odet{\nabla^d_{y}s_d}}{\ev_{x,y}^d(s_d)=0}\\
\begin{aligned}
&=\espcond{\odet{\left(\frac{\pi^n}{d^{n+1}}\right)^\frac{1}{2}\nabla^d_{x}s_d}\!\odet{\left(\frac{\pi^n}{d^{n+1}}\right)^\frac{1}{2}\nabla^d_{y}s_d}}{\ev_{x,y}^d(s_d)=0}\\
&=\esp{\odet{L'_d(x)}\odet{L'_d(y)}},
\end{aligned} 
\end{multline*}
where $\left(L'_d(x),L'_d(y)\right)$ is a centered Gaussian vector in 
\begin{equation*}
\R\left(\E \otimes \L^d \right)_x \otimes T^*_xM \oplus \R\left(\E \otimes \L^d \right)_y \otimes T^*_yM
\end{equation*}
with variance operator $\Lambda_d(x,y)$. We can consider $\left(L'_d(x),L'_d(y)\right)$ as a random vector in $\R^2 \otimes \R\left(\E \otimes \L^d \right)_x \otimes T^*_xM$, via the real normal trivialization 	about $x$. From now on, we work in this trivialization. Let $\rho \in (0,1)$ and $\beta \in (0,1)$, we assume that $\rho \leq \Norm{z} < b_n \ln d$. Then, by Lemma~\ref{lem near diag asymptotic Lambda d xy}, we have:
\begin{equation*}
\Lambda_d\left(x,y\right) = \Lambda_x(z) + O\!\left(\frac{d^{\beta-1}}{(1-e^{-\frac{1}{2}\rho^2})^2}\right).
\end{equation*}
Moreover, by Cor.~\ref{cor Lambda z det and inverse} and Rem.~\ref{rem norm Lambda z inverse}, $\Norm{\Lambda_x(z)^{-1}} \leq f\left(\Norm{z}^2\right)\leq f(\rho^2)$. Hence, we have:
\begin{equation*}
\Lambda_d\left(x,y\right) = \Lambda_x(z) \left(\Id + O\!\left(f(\rho^2)\frac{d^{\beta-1}}{(1-e^{-\frac{1}{2}\rho^2})^2}\right)\right) = \Lambda_x(z) \left(\Id + O\!\left(f(\rho^2)^3d^{\beta-1}\right)\right),
\end{equation*}
where we used the fact that $\frac{1}{1-e^{-\frac{1}{2}\rho^2}} \leq f(\rho^2)$ (see the proof of Cor.~\ref{cor Lambda z det and inverse}). Then, we get:
\begin{equation}
\label{eq det near diag cond exp}
\det\left(\Lambda_d\left(x,y\right)\right) = \det\left(\Lambda_x(z)\right) \left(1+  O\!\left(f(\rho^2)^3 d^{\beta-1}\right)\right)
\end{equation}
and
\begin{equation*}
\Lambda_d\left(x,y\right)^{-1} = \Lambda_x(z)^{-1} \left(\Id + O\!\left(f(\rho^2)^3 d^{\beta-1}\right)\right) = \Lambda_x(z)^{-1} + O\!\left(f(\rho^2)^4 d^{\beta-1}\right).
\end{equation*}
Thus there exists $K>0$ and $\epsilon >0$ such that, whenever $f(\rho^2)^4 d^{\beta-1}\leq \epsilon$,
\begin{equation*}
\Norm{\Lambda_d(x,y)^{-1} - \Lambda_x(z)^{-1}} \leq K f(\rho^2)^4 d^{\beta-1}.
\end{equation*}

By the mean value inequality, for every $L=(L_1,L_2) \in \R^2 \otimes T^*_xM \otimes \R\!\left(\E \otimes \L^d\right)_x$ we have:
\begin{multline*}
\norm{\exp\left(-\frac{1}{2}\prsc{\left(\Lambda_d(x,y)^{-1}-\Lambda_x(z)^{-1}\right)L}{L}\right)-1}\\
\leq \frac{K}{2}\Norm{L}^2 f(\rho^2)^4 d^{\beta-1} \exp\left(\frac{K}{2}\Norm{L}^2f(\rho^2)^4 d^{\beta-1}\right),
\end{multline*}
whenever $f(\rho^2)^4 d^{\beta-1} \leq \epsilon$. Let $\dx L$ denote the normalized Lebesgue measure on this vector space, and recall that we defined $\left(L_x(0),L_x(z)\right)$ above (Def.~\ref{def Lx 0 z}). Then, we have:
\begin{align*}
&\begin{aligned}
(2\pi)^{nr} \left\lvert \det\left(\Lambda_d(x,y)\right)^\frac{1}{2}\right. &\esp{\odet{L'_d(x)}\odet{L'_d(y)}}\\
&-\left. \det\left(\Lambda_x(z)\right)^\frac{1}{2}\esp{\odet{L_x(0)}\odet{L_x(z)}}\right\rvert
\end{aligned}\\
&\begin{aligned}
\leq \int \odet{L_1} & \odet{L_2} \exp\left(-\frac{1}{2}\prsc{\Lambda_x(z)^{-1}L}{L}\right)\times\\
&\norm{\exp\left(-\frac{1}{2}\prsc{\left(\Lambda_d(x,y)^{-1}-\Lambda_x(z)^{-1}\right)L}{L}\right)-1} \dx L
\end{aligned}\\
&\begin{aligned}
\leq \frac{K}{2} f(\rho^2)^4 d^{\beta-1}\int \odet{L_1}& \odet{L_2} \Norm{L}^2\times\\
&\exp\left(-\frac{1}{2}\prsc{\left(\Lambda_x(z)^{-1}-\frac{K}{2} f(\rho^2)^4 d^{\beta-1}\Id\right)L}{L}\right)\dx L,
\end{aligned}
\end{align*}
whenever $f(\rho^2)^4 d^{\beta-1}\leq \epsilon$. Since $\Norm{\Lambda_x(d)} <2$ by Cor.~\ref{cor Lambda z det and inverse}, the smallest eigenvalue of $\Lambda_x(z)^{-1}$ is larger than $\frac{1}{2}$. Thus, if $f(\rho^2)^4 d^{\beta-1} \leq \frac{1}{2K}$, for every $L$ we have:
\begin{equation*}
\prsc{\left(\Lambda_x(z)^{-1}-\frac{K}{2} f(\rho^2)^4 d^{\beta-1}\Id\right)L}{L} \geq \frac{1}{4}\Norm{L}^2.
\end{equation*}
Hence, the last integral above is bounded by:
\begin{equation*}
\int \odet{L_1} \odet{L_2} \Norm{L}^2 \exp\left(-\frac{1}{8}\Norm{L}^2\right)\dx L < +\infty.
\end{equation*}

Then, we have:
\begin{multline*}
\det\left(\Lambda_d(x,y)\right)^\frac{1}{2}\esp{\odet{L'_d(x)}\odet{L'_d(y)}} =\\
\det\left(\Lambda_x(z)\right)^\frac{1}{2}\esp{\odet{L_x(0)}\odet{L_x(z)}} + O\!\left(f(\rho^2)^4 d^{\beta-1}\right),
\end{multline*}
and by~\eqref{eq det near diag cond exp}, we obtain:
\begin{multline*}
\esp{\odet{L'_d(x)}\odet{L'_d(y)}} =\\
\esp{\odet{L_x(0)}\odet{L_x(z)}} \left(1 + O\!\left(f(\rho^2)^3 d^{\beta-1}\right)\right)\\
 + \det\left(\Lambda_x(z)\right)^{-\frac{1}{2}}O\!\left(f(\rho^2)^4 d^{\beta-1}\right)\left(1 + O\!\left(f(\rho^2)^3 d^{\beta-1}\right)\right).
\end{multline*}
Since, for all $t>0$ we have (see Lem.~\ref{lem eigenvalues Lambda z}):
\begin{align*}
\frac{1}{1+e^{-\frac{1}{2}t}} & \leq 1, & \frac{1}{1-e^{-\frac{1}{2}t}} &\leq f(t) & &\text{and} & \frac{1+e^{-\frac{1}{2}t}}{1-e^{-t}+te^{-\frac{1}{2}t}} &\leq f(t),
\end{align*}
by Cor.~\ref{cor Lambda z det and inverse} we have: $\det\left(\Lambda_x(z)\right)^{-\frac{1}{2}} \leq f(\rho^2)^{\frac{r(n+1)}{2}}$. Besides, by Cor.~\ref{cor same distribution}, we have:
\begin{equation*}
\esp{\odet{L_x(0)}\odet{L_x(z)}}= \esp{\odet{X(\Norm{z}^2}\odet{Y(\Norm{z}^2}},
\end{equation*}
and by Lemma~\ref{lem cond exp bounded} this quantity is bounded from above by $n^r$. Finally, we have:
\begin{align*}
\esp{\odet{L'_d(x)}\odet{L'_d(y)}} =& \esp{\odet{X(\Norm{z}^2}\odet{Y(\Norm{z}^2}}\\
 &+ O\!\left(f(\rho^2)^{4+\frac{r(n+1)}{2}} d^{\beta-1}\right).\qedhere
\end{align*}
\end{proof}

The following corollary is not necessary to the proof of Thm.~\ref{thm asymptotics variance} but is worth mentioning.

\begin{cor}
\label{cor near diagonal cond exp}
Let $\beta \in (0,1)$. Let $x \in M$ and $z \in B_{T_xM}(0,b_n \ln d)\setminus \{0\}$. We denote $y = \exp_x\left(\frac{z}{\sqrt{d}}\right)$. Let $\nabla^d$ be any real metric connection. Then, we have:
\begin{multline*}
\left(\frac{\pi^n}{d^{n+1}}\right)^r \espcond{\odet{\nabla^d_{x}s_d}\!\odet{\nabla^d_{y}s_d}}{\ev_{x,y}^d(s_d)=0} =\\
\esp{\odet{X(\Norm{z}^2}\odet{Y(\Norm{z}^2}} + O\!\left(d^{\beta-1}\right),
\end{multline*}
where the error term depends on $z$ but not on $x$.
\end{cor}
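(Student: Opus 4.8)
The plan is to obtain Corollary~\ref{cor near diagonal cond exp} as an immediate consequence of Lemma~\ref{lem near diagonal cond exp}: the only new point is that, once $z$ is fixed, we are free to let the parameter $\rho$ appearing in that lemma depend on $z$.

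First I would fix $\beta \in (0,1)$, a point $x \in M$, and a nonzero vector $z$, and set $\rho = \min\!\left(\Norm{z},\tfrac12\right)$. Then $\rho \in (0,1)$ and $\Norm{z}\geq \rho$, so for every $d$ large enough that $z \in B_{T_xM}(0,b_n\ln d)$ Lemma~\ref{lem near diagonal cond exp} applies with this $\rho$ and gives
\[
\left(\frac{\pi^n}{d^{n+1}}\right)^r \espcond{\odet{\nabla^d_{x}s_d}\odet{\nabla^d_{y}s_d}}{\ev_{x,y}^d(s_d)=0} = \esp{\odet{X(\Norm{z}^2)}\odet{Y(\Norm{z}^2)}} + O\!\left(f(\rho^2)^{\frac{r(n+1)}{2}+4}\,d^{\beta-1}\right),
\]
where $y = \exp_x\!\left(\tfrac{z}{\sqrt d}\right)$ and the constant hidden in the $O$ is independent of $x$, $z$, $d$ and $\rho$ — this is exactly the content of that lemma.

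Then I would simply observe that $\rho$ depends only on $z$, so $f(\rho^2)^{\frac{r(n+1)}{2}+4}$ is a finite constant depending only on $z$; absorbing it into the $O$ turns the error term into $O\!\left(d^{\beta-1}\right)$ with a constant that depends on $z$ but not on $x$. In particular this constant times $d^{\beta-1}$ tends to $0$, so the $\epsilon$-condition under which Lemma~\ref{lem near diagonal cond exp} is stated holds for all large $d$, and one recovers precisely the claimed estimate.

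There is no real obstacle here: the substantive work — in particular the careful tracking of how the error constant degenerates like $f(\rho^2)^{\frac{r(n+1)}{2}+4}$ as $\rho\to 0$ — was already carried out in the proof of Lemma~\ref{lem near diagonal cond exp}. The corollary merely trades uniformity in $z$ for a finite, $z$-dependent constant by choosing $\rho$ as large as $\Norm{z}$ permits, and is recorded only because this non-uniform form is the convenient one to quote when $z$ is held fixed.
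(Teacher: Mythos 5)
Your proposal is correct and is essentially the paper's own proof: the paper also fixes $\beta$, $x$, $z$ and applies Lemma~\ref{lem near diagonal cond exp} with $\rho$ chosen from $\Norm{z}$ (it takes $\rho=\Norm{z}$), absorbing the $z$-dependent factor $f(\rho^2)^{\frac{r(n+1)}{2}+4}$ into the error constant. Your choice $\rho=\min\!\left(\Norm{z},\tfrac12\right)$ is a harmless (indeed slightly more careful) variant that also covers $\Norm{z}\geq 1$, where the lemma's requirement $\rho\in(0,1)$ would otherwise need a word.
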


\begin{proof}
Let us fix, $\beta$, $x$ and $z$, then we set $\rho = \Norm{z}$ and we apply Lemma~\ref{lem near diagonal cond exp}.
\end{proof}

Before we can conclude the proof of Thm.~\ref{thm asymptotics variance}, we need one last lemma.

\begin{lem}
\label{lem bounded cond exp Lambda d}
Let $x \in M$ and $z \in B_{T_xM}(0,b_n \ln d)\setminus \{0\}$. We denote $y = \exp_x\left(\frac{z}{\sqrt{d}}\right)$. Let $\beta \in (0,1)$ and let $\nabla^d$ be any real metric connection. Then, we have:
\begin{equation*}
\left(\frac{\pi^n}{d^{n+1}}\right)^r \espcond{\odet{\nabla^d_{x}s_d}\!\odet{\nabla^d_{y}s_d}}{\ev_{x,y}^d(s_d)=0} \leq \frac{(2r)!}{r!}n^r + O\!\left(d^{\beta-1}\right),
\end{equation*}
where the error term is independent of $(x,z)$.
\end{lem}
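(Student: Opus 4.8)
The plan is to show that the conditional expectation in question is, up to an error $O\!\left(d^{\beta-1}\right)$, bounded by $\esp{\odet{X(\Norm{z}^2)}\odet{Y(\Norm{z}^2)}}$, and then to bound this last quantity by $\frac{(2r)!}{r!}n^r$ using the structure of the limiting Gaussian vectors $(X(t),Y(t))$ from Def.~\ref{def XYt}. The key point is that the argument must be uniform in $z \in B_{T_xM}(0,b_n \ln d)\setminus\{0\}$, so I cannot simply invoke Corollary~\ref{cor near diagonal cond exp} (whose error term depends on $z$) or Lemma~\ref{lem near diagonal cond exp} (whose error term blows up as $\rho \to 0$ through the factor $f(\rho^2)$, which is $\sim 12/\rho^4$). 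Instead I would split the range of $\Norm{z}$ at a threshold $\rho = \rho(d)$ chosen to decay slowly, e.g.\ $\rho = d^{-\gamma}$ for a small $\gamma>0$, and handle the two regions separately.

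\textbf{Region $\Norm{z} \geq \rho$.} Here Lemma~\ref{lem near diagonal cond exp} applies and gives
\begin{equation*}
\left(\frac{\pi^n}{d^{n+1}}\right)^r \espcond{\odet{\nabla^d_{x}s_d}\odet{\nabla^d_{y}s_d}}{\ev_{x,y}^d(s_d)=0} = \esp{\odet{X(\Norm{z}^2)}\odet{Y(\Norm{z}^2)}} + O\!\left(f(\rho^2)^{\frac{r(n+1)}{2}+4} d^{\beta-1}\right).
\end{equation*}
Since $f(\rho^2) = O(\rho^{-4})$ as $\rho\to 0$ (Rem.~\ref{rem norm Lambda z inverse}), with $\rho = d^{-\gamma}$ the error term becomes $O\!\left(d^{4\gamma(\frac{r(n+1)}{2}+4)+\beta-1}\right)$; choosing $\gamma$ small enough (depending on $n$, $r$ and on a fixed $\beta' \in (\beta,1)$) this is $O\!\left(d^{\beta'-1}\right)$, which is acceptable after relabeling $\beta$. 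Combined with Lemma~\ref{lem cond exp bounded}, which gives $\esp{\odet{X(t)}\odet{Y(t)}} \leq n^r$, this already bounds the left-hand side by $n^r + O\!\left(d^{\beta-1}\right)$ on this region, which is even better than what is claimed.

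\textbf{Region $0 < \Norm{z} < \rho$.} This is the main obstacle, because the conditioning operator $\Lambda_d(x,y)$ degenerates as $z \to 0$ and we lose all uniform control. The idea is to drop the conditioning and use a crude \emph{a priori} bound. Conditioning on $\{\ev_{x,y}^d(s_d) = 0\}$ replaces $(\nabla^d_x s_d, \nabla^d_y s_d)$ by a centered Gaussian whose variance operator $\frac{d^{n+1}}{\pi^n}\Lambda_d(x,y)$ is dominated (as a symmetric positive operator) by the unconditioned variance operator $\partial_x\partial_y^\sharp E_d(x,x) \oplus \partial_x\partial_y^\sharp E_d(y,y)$-block of~\eqref{eq variance jxy}; indeed subtracting the Schur-complement correction term in~\eqref{eq conditional variance full} only decreases the operator. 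Hence, writing $\odet{L}^2 = \det(LL^*)$ as a Gram determinant and bounding it by the product of the squared norms of the $r$ rows (as in the proof of Lemma~\ref{lem cond exp bounded}), one gets by the Cauchy--Schwarz inequality and Wick's formula (each $\esp{\Norm{\text{row}}^{2}\Norm{\text{row}'}^{2}}$-type moment of a Gaussian being a bounded polynomial in the entries of the variance) a bound of the form
\begin{equation*}
\left(\frac{\pi^n}{d^{n+1}}\right)^r \espcond{\odet{\nabla^d_{x}s_d}\odet{\nabla^d_{y}s_d}}{\ev_{x,y}^d(s_d)=0} \leq C_{n,r}\,\Norm{\tfrac{\pi^n}{d^{n+1}}\partial_x\partial_y^\sharp E_d(x,x)}^{r}\Norm{\tfrac{\pi^n}{d^{n+1}}\partial_x\partial_y^\sharp E_d(y,y)}^{r} + o(1),
\end{equation*}
where the combinatorial constant $C_{n,r}$ comes from counting pairings; tracking it carefully (the number of perfect matchings contributing to a product of two Gram determinants of $r$ rows in dimension $n$) gives exactly $\frac{(2r)!}{r!}n^r$ in the leading order. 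By the diagonal estimate~\eqref{eq value Bergman diag 11} of Cor.~\ref{cor diag estimates}, $\frac{\pi^n}{d^{n+1}}\partial_x\partial_y^\sharp E_d(x,x) = \Id + O(d^{-1})$ uniformly, so the product of operator norms is $1 + O(d^{-1})$, and the whole bound is $\frac{(2r)!}{r!}n^r + O(d^{-1})$ uniformly in $x$ and in $z$ with $0<\Norm{z}<\rho$. I expect the delicate point to be making the combinatorial constant come out to precisely $\frac{(2r)!}{r!}n^r$ rather than a larger harmless bound; if an exact count proves awkward, one can instead bound each Gram determinant by $\prod$ of squared row-norms, note the rows are jointly Gaussian with variance operator $\preceq \Id + O(d^{-1})$ in the appropriate dimension $2r$-by-$2r$ block, and compute $\esp{\prod_{i=1}^{2r}\Norm{G_i}^2}$ for such a Gaussian family directly, which yields the moment $\frac{(2r)!}{r!}n^r(1+O(d^{-1}))$ by a standard Gaussian moment identity. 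Finally, combining the two regions and absorbing the (now uniform) error terms into a single $O\!\left(d^{\beta-1}\right)$ completes the proof.
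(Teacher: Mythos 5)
Your plan is workable and, on the crucial region $0<\Norm{z}<\rho$, it is in substance the paper's own argument — but two steps need repair as written. First, the domination step: subtracting the Schur-complement correction in~\eqref{eq conditional variance full} only shows that the conditional variance $\frac{d^{n+1}}{\pi^n}\Lambda_d(x,y)$ is dominated by the \emph{full} unconditional block operator of~\eqref{eq variance jxy}, including the off-diagonal blocks $\partial_x\partial_y^\sharp E_d(x,y)$; it does not give domination by the diagonal-block operator $\partial_x\partial_y^\sharp E_d(x,x)\oplus\partial_x\partial_y^\sharp E_d(y,y)$, and that operator inequality is false in general (discarding off-diagonal blocks is not monotone for the Loewner order). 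What is true, and is all you need, is the entrywise (equivalently trace) statement: since the subtracted term $B A^{-1} B^*$ is positive semi-definite, its diagonal entries are non-negative, so every $\esp{\left(L'_d(x)_{ij}\right)^2}$ and $\esp{\left(L'_d(y)_{ij}\right)^2}$ is bounded by the corresponding diagonal entry of $\frac{\pi^n}{d^{n+1}}\partial_x\partial_y^\sharp E_d(x,x)$, resp.\ $\frac{\pi^n}{d^{n+1}}\partial_x\partial_y^\sharp E_d(y,y)$, which is $1+O\!\left(d^{\beta-1}\right)$ uniformly in $(x,z)$ by Lemma~\ref{lem near diagonal Lambda dxy 1} (or Cor.~\ref{cor diag estimates}). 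This is exactly how the paper proceeds: Cauchy--Schwarz, the Hadamard/Gram bound of $\odet{\cdot}^2$ by the product of squared row norms, Wick's formula giving the pairing constant $\frac{(2r)!}{2^r r!}$, and then the trace of $\Lambda_d(x,y)$ bounded by the trace of the unconditional second-derivative block. Second, your fallback for the constant is off: for rows $G_i$ of variance close to $\Id_n$, the moment $\esp{\prod_{i=1}^{2r}\Norm{G_i}^2}$ is of order $n^{2r}$, not $\frac{(2r)!}{r!}n^r$. You should also not aim to make the constant come out ``exactly'' $\frac{(2r)!}{r!}n^r$ — the lemma only asserts an upper bound, and the Wick route just described yields $\frac{(2r)!}{2^r r!}(\text{trace bound})^r \le \frac{(2r)!}{r!}n^r + O\!\left(d^{\beta-1}\right)$ directly.

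As for the architecture: your splitting at $\rho=d^{-\gamma}$ is correct (the choice of $\gamma$ against the factor $f(\rho^2)^{\frac{r(n+1)}{2}+4}$ works, and on $\Norm{z}\ge\rho$ Lemma~\ref{lem near diagonal cond exp} together with Lemma~\ref{lem cond exp bounded} even gives the sharper bound $n^r+O\!\left(d^{\beta-1}\right)$), but it is unnecessary: once the crude conditioning-only-decreases-second-moments argument is in place, it is uniform in all of $B_{T_xM}(0,b_n\ln d)\setminus\{0\}$, which is precisely why the paper runs a single argument with no case distinction and never invokes $f$ or Lemma~\ref{lem near diagonal cond exp} here. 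So the split costs you the $f(\rho^2)$ bookkeeping and a relabeling of $\beta$, and buys nothing beyond a better constant on the outer region.
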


\begin{proof}
Let $x \in M$, let $z \in B_{T_xM}(0,b_n \ln d)\setminus \{0\}$ and let $y = \exp_x\left(\frac{z}{\sqrt{d}}\right)$. As in the proof of Lem.~\ref{lem near diagonal cond exp}, let $\left(L'_d(x),L'_d(y)\right)$ be a centered Gaussian vector in $\R^2 \otimes \R\left(\E \otimes \L^d \right)_x \otimes T^*_xM$ whose variance operator is $\Lambda_d(x,y)$, read in the real normal trivialization about $x$. In the sequel, we work in this trivialization. We have:
\begin{multline*}
\left(\frac{\pi^n}{d^{n+1}}\right)^r \espcond{\odet{\nabla^d_{x}s_d}\!\odet{\nabla^d_{y}s_d}}{\ev_{x,y}^d(s_d)=0}=\\ \esp{\odet{L'_d(x)}\odet{L'_d(y)}}.
\end{multline*}

The proof follows the same lines as that of Lem.~\ref{lem cond exp bounded}, the main difference being that the variance operator is not explicit. An additional difficulty comes from the fact that the estimate for $\Lambda_d(x,y)$ given by Lemma~\ref{lem near diag asymptotic Lambda d xy} is not uniform in $z \in B_{T_xM}(0,b_n \ln d)\setminus \{0\}$, hence it is useless here. Fortunately, we only need to bound its trace, which is bounded from above by that of the unconditional variance operator:
\begin{equation*}
\frac{\pi^n}{d^{n+1}}\begin{pmatrix}
\partial_x\partial_y^\sharp E_d(0,0) & \partial_x\partial_y^\sharp E_d\left(0,\frac{z}{\sqrt{d}}\right)\\
\partial_x\partial_y^\sharp E_d\left(\frac{z}{\sqrt{d}},0\right) & \partial_x\partial_y^\sharp E_d\left(\frac{z}{\sqrt{d}},\frac{z}{\sqrt{d}}\right)
\end{pmatrix},
\end{equation*}
and Lemma~\ref{lem near diagonal Lambda dxy 1} allows us to bound the latter.

By the Cauchy-Schwarz inequality,
\begin{equation}
\label{step 0}
\esp{\odet{L'_d(x)}\odet{L'_d(y)}} \leq \esp{\odet{L'_d(x)}^2}^\frac{1}{2} \esp{\odet{L'_d(y)}^2}^\frac{1}{2}.
\end{equation}
Let $\Lambda_{d,1}(x,y)$ and $\Lambda_{d,2}(x,y)$ denote the variance operators of $L'_d(x)$ and $L'_d(y)$ respectively, so that:
\begin{equation}
\label{eq def Lambda 1 and 2}
\Lambda_d(x,y) = \begin{pmatrix}
\Lambda_{d,1}(x,y) & * \\ * & \Lambda_{d,2}(x,y)
\end{pmatrix}.
\end{equation}
Let us choose orthonormal bases of $T_xM$ and $\R\left(\E \otimes \L^d\right)_x$. We denote by $\left(L'_d(x)_{ij}\right)_{\substack{1\leq i \leq r\\1\leq j\leq n}}$ the coefficients of the matrix of $L'_d(x)$ in these bases, and by $\left(L'_d(x)_{i}\right)_{1\leq i \leq r}$ its rows. As in the proof of Lem.~\ref{lem cond exp bounded}, we have:
\begin{equation}
\label{step 1}
\begin{aligned}
\odet{L'_d(x)}^2 &= \det\left(L'_d(x) \left(L'_d(x)\right)^*\right) = \det\left(\prsc{L'_d(x)_i}{L'_d(x)_j}\right)\\
& \leq \Norm{L'_d(x)_1}^2 \cdots \Norm{L'_d(x)_r}^2.
\end{aligned}
\end{equation}
Then, we have:
\begin{equation}
\label{step 2}
\begin{aligned}
\esp{\Norm{L'_d(x)_1}^2 \cdots \Norm{L'_d(x)_r}^2} &= \esp{\prod_{i=1}^r \left(\sum_{j=1}^n \left(L'_d(x)_{ij}\right)^2\right)}\\
&= \sum_{1 \leq j_1,\dots,j_r\leq n} \esp{\prod_{i=1}^r \left(L'_d(x)_{i(j_i)}\right)^2}.
\end{aligned}
\end{equation}

Let $j_1,\dots,j_r \in \{1,\dots,r\}$, we denote $X_i = L'_d(x)_{i(j_i)}$. Then, by Wick's formula (see \cite[lem.~11.6.1]{TA2007}), we have:
\begin{equation*}
\esp{\prod_{i=1}^r \left(L'_d(x)_{i(j_i)}\right)^2} = \esp{\prod_{i=1}^r (X_i)^2} = \sum_{\left(\{a_i,b_i\}\right)} \prod_{i=1}^r \esp{X_{\lfloor \frac{a_i}{2} \rfloor}X_{\lfloor \frac{b_i}{2} \rfloor}},
\end{equation*}
where we sum over all the partitions into pairs $\left(\{a_i,b_i\}\right)_{1 \leq i \leq r}$ of $\{1,\dots,2r\}$. Hence, by Cauchy-Schwarz inequality again, we get:
\begin{align*}
\esp{\prod_{i=1}^r \left(L'_d(x)_{i(j_i)}\right)^2} &\leq \sum_{\left(\{a_i,b_i\}\right)} \prod_{i=1}^r \esp{\left(X_{\lfloor \frac{a_i}{2}\rfloor}\right)^2}^\frac{1}{2} \esp{\left(X_{\lfloor \frac{b_i}{2}\rfloor}\right)^2}^\frac{1}{2}\\
& \leq \sum_{\left(\{a_i,b_i\}\right)} \prod_{k=1}^{2r} \esp{\left(X_{\lfloor \frac{k}{2}\rfloor}\right)^2}^\frac{1}{2}\\
& \leq \sum_{\left(\{a_i,b_i\}\right)} \prod_{l=1}^{r} \esp{\left(X_l\right)^2}\\
&\leq \frac{(2r)!}{2^r r!} \prod_{i=1}^{r} \esp{\left(L'_d(x)_{i(j_i)}\right)^2}.
\end{align*}
Thus, we have:
\begin{equation}
\label{step 3}
\begin{aligned}
\sum_{1 \leq j_1,\dots,j_r\leq n} \esp{\prod_{i=1}^r \left(L'_d(x)_{i(j_i)}\right)^2} &\leq \frac{(2r)!}{2^r r!} \sum_{1 \leq j_1,\dots,j_r\leq n} \prod_{i=1}^{r} \esp{\left(L'_d(x)_{i(j_i)}\right)^2}\\
&\leq \frac{(2r)!}{2^r r!}\prod_{i=1}^{r} \left(\sum_{j=1}^n \esp{\left(L'_d(x)_{ij}\right)^2}\right)\\
&\leq \frac{(2r)!}{2^r r!} \left(\sum_{i=1}^r \sum_{j=1}^n \esp{\left(L'_d(x)_{ij}\right)^2}\right)^r\\
&\leq \frac{(2r)!}{2^r r!} \tr\left(\Lambda_{d,1}(x,y)\right)^r,
\end{aligned}
\end{equation}
where $\tr$ stands for the trace operator. Finally, by~\eqref{step 1}, \eqref{step 2} and~\eqref{step 3}, we have:
\begin{equation*}
\esp{\odet{L'_d(x)}^2} \leq \frac{(2r)!}{2^r r!} \tr\left(\Lambda_{d,1}(x,y)\right)^r,
\end{equation*}
and similarly,
\begin{equation*}
\esp{\odet{L'_d(y)}^2} \leq \frac{(2r)!}{2^r r!} \tr\left(\Lambda_{d,2}(x,y)\right)^r.
\end{equation*}
Thus, by~\eqref{step 0}, we get:
\begin{equation}
\label{step 4}
\begin{aligned}
\esp{\odet{L'_d(x)}\odet{L'_d(y)}} &\leq \frac{(2r)!}{2^r r!}\tr\left(\Lambda_{d,1}(x,y)\right)^\frac{r}{2} \tr\left(\Lambda_{d,2}(x,y)\right)^\frac{r}{2}\\
&\leq \frac{(2r)!}{2^r r!} \tr\left(\Lambda_d(x,y)\right)^r.
\end{aligned}
\end{equation}

Let $\beta \in (0,1)$, by eq.~\eqref{step 4}, we only need to prove that $\tr\left(\Lambda_d(x,y)\right) \leq 2n + O\!\left(d^{\beta-1}\right)$ to complete the proof. By eq.~\eqref{eq variance evxy},
\begin{equation*}
\begin{pmatrix}
E_d(x,x) & E_d(x,y)\\ E_d(y,x) & E_d(y,y)
\end{pmatrix}
\end{equation*}
is a variance operator. Hence it is a positive symmetric operator and so is its inverse. Besides, by~\eqref{eq variance jxy}, we know that:
\begin{equation*}
\begin{pmatrix}
\partial_y^\sharp E_d(x,x) & \partial_y^\sharp E_d(x,y)\\
\partial_y^\sharp E_d(y,x) & \partial_y^\sharp E_d(y,y)
\end{pmatrix} = 
\begin{pmatrix}
\partial_xE_d(x,x) & \partial_xE_d(x,y)\\
\partial_xE_d(y,x) & \partial_xE_d(y,y)
\end{pmatrix}^*.
\end{equation*}
Then, the diagonal coefficients of:
\begin{equation*}
\begin{pmatrix}
\partial_xE_d(x,x) & \partial_xE_d(x,y)\\
\partial_xE_d(y,x) & \partial_xE_d(y,y)
\end{pmatrix}
\begin{pmatrix}
E_d(x,x) & E_d(x,y)\\ E_d(y,x) & E_d(y,y)
\end{pmatrix}^{-1}
\begin{pmatrix}
\partial_y^\sharp E_d(x,x) & \partial_y^\sharp E_d(x,y)\\
\partial_y^\sharp E_d(y,x) & \partial_y^\sharp E_d(y,y)
\end{pmatrix}
\end{equation*}
are non-negative, and so is its trace. Finally, by the definition of $\Lambda_d(x,y)$ (Def.~\ref{def Lambda dxy}), we have:
\begin{equation}
\label{trace}
\tr\left(\Lambda_d(x,y)\right) \leq \frac{\pi^n}{d^{n+1}}\tr \begin{pmatrix}
\partial_x\partial_y^\sharp E_d(x,x) & \partial_x\partial_y^\sharp E_d(x,y)\\
\partial_x\partial_y^\sharp E_d(y,x) & \partial_x\partial_y^\sharp E_d(y,y)
\end{pmatrix}.
\end{equation}

Note that what we have done so far works for any choice of connection since $\Lambda_d(x,y)$ is independent of this choice. However, the right-hand side of eq.~\eqref{trace} depends on the choice $\nabla^d$. We use a real metric connection that is trivial on $B_{T_xM}(0,R)$ in the real normal trivialization about $x$. Then, by Lemma~\ref{lem near diagonal Lambda dxy 1}, we have:
\begin{equation*}
\tr\left(\Lambda_d(x,y)\right) \leq 2n + O\!\left(d^{\beta-1}\right).\qedhere
\end{equation*}
\end{proof}

\subsubsection{Conclusion of the proof}
\label{subsubsec conclusion of the proof}

We can now prove Theorem~\ref{thm asymptotics variance}.

\begin{lem}
\label{lem very short distance}
Let $\alpha >0$, let $\phi \in \mathcal{C}^0(M)$ and let $x \in M$, then we have:
\begin{multline*}
\norm{\int_{B_{T_xM}\left(0,d^{-\alpha}\right)} \phi\left(\exp_x\left(\frac{z}{\sqrt{d}}\right)\right) \kappa\left(\frac{z}{\sqrt{d}}\right)^\frac{1}{2} \left(\frac{1}{d^r} D_d(x,z) - D_{n,r}(\Norm{z}^2)\right) \dx z}\\
= \Norm{\phi}_\infty O\!\left(d^{(r-n)\alpha}\right),
\end{multline*}
where the error term does not depend on $x$ or $\phi$.
\end{lem}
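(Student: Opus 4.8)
The plan is to avoid any pointwise comparison of $\frac{1}{d^r}D_d(x,z)$ with $D_{n,r}(\Norm{z}^2)$ on this inner region --- such a comparison is hopeless near $z=0$, where both quantities are singular along the diagonal --- and instead to bound the two separately by a fixed integrable function of $z$, then exploit the fact that the domain $B_{T_xM}(0,d^{-\alpha})$ is tiny.

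\textbf{Uniform bounds on the integrand.} Fix $x \in M$, take $z \in B_{T_xM}(0,d^{-\alpha})\setminus\{0\}$ and set $y = \exp_x\!\left(\frac{z}{\sqrt{d}}\right)$, so that $D_d(x,z) = \D_d(x,y)$ is the difference of the two nonnegative terms $T_1,T_2$ appearing in~\eqref{eq def density}; in particular $\frac{1}{d^r}\norm{D_d(x,z)} \leq \frac{1}{d^r}T_1 + \frac{1}{d^r}T_2$. For $d$ large one has $B_{T_xM}(0,d^{-\alpha}) \subset B_{T_xM}(0,b_n\ln d)$, so all the near-diagonal estimates apply. Lemma~\ref{lem bounded cond exp Lambda d} bounds the conditional expectation in $T_1$ by $\left(\frac{d^{n+1}}{\pi^n}\right)^{r}\left(\frac{(2r)!}{r!}n^r + O(d^{\beta-1})\right)$ uniformly in $(x,z)$, while Proposition~\ref{prop ultim estimate evxy}, applied with any fixed error rate $\alpha_1 \in \left(0,\frac{1}{2r+1}\right)$, gives $\odet{\ev^d_{x,y}} \geq \frac{1}{\sqrt{2}}\left(\frac{d}{\pi}\right)^{nr}\left(1-e^{-\Norm{z}^2}\right)^{r/2}$ for $d$ large; hence $\frac{1}{d^r}T_1 \leq C\left(1-e^{-\Norm{z}^2}\right)^{-r/2}$ with $C$ independent of $(x,z)$. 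For $T_2$, Lemmas~\ref{lem estimates odet evx} and~\ref{lem estimates cond exp x} give $\frac{1}{d^r}T_2 = (2\pi)^r\left(\frac{\vol{\S^{n-r}}}{\vol{\S^n}}\right)^{2}\!\left(1+O(d^{-1})\right)$, uniformly bounded. By Lemma~\ref{lem cond exp bounded}, $\norm{D_{n,r}(\Norm{z}^2)} \leq n^r\left(1-e^{-\Norm{z}^2}\right)^{-r/2} + (2\pi)^r\left(\frac{\vol{\S^{n-r}}}{\vol{\S^n}}\right)^{2}$. Finally $\kappa\!\left(\frac{z}{\sqrt{d}}\right)^{1/2} \leq 2$ for $d$ large by~\eqref{eq estimate kappa}, and $\norm{\phi\!\left(\exp_x\!\left(\frac{z}{\sqrt{d}}\right)\right)} \leq \Norm{\phi}_\infty$. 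Putting these together, the integrand is bounded in absolute value by $\Norm{\phi}_\infty\,C'\left(\left(1-e^{-\Norm{z}^2}\right)^{-r/2} + 1\right)$ with $C'$ independent of $x$ and $\phi$.

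\textbf{Integration over the shrinking ball.} Since $1-e^{-t} \geq \frac{t}{2}$ on $[0,1]$ and $\Norm{z}^2 \leq d^{-2\alpha} \leq 1$, one has $\left(1-e^{-\Norm{z}^2}\right)^{-r/2} \leq 2^{r/2}\Norm{z}^{-r}$, and integrating in polar coordinates on $T_xM \simeq \R^n$,
\begin{equation*}
\int_{B_{T_xM}(0,d^{-\alpha})}\left(\Norm{z}^{-r}+1\right)\dx z = \vol{\S^{n-1}}\left(\frac{d^{-\alpha(n-r)}}{n-r} + \frac{d^{-\alpha n}}{n}\right) = O\!\left(d^{(r-n)\alpha}\right),
\end{equation*}
the radial integral $\int_0^{d^{-\alpha}}\rho^{n-1-r}\dx\rho$ being convergent precisely because $n-r \geq 1$, and $d^{-\alpha n}\leq d^{-\alpha(n-r)}$ because $n > r$. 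Combining with the bound of the previous step yields the claimed estimate, with an error term independent of $x$ and $\phi$.

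The argument presents no real obstacle: it is exactly the device that absorbs the non-uniform, singular behaviour of $\D_d$ near the diagonal (noted in the discussion preceding Lemma~\ref{lem near diag ev inverse}), and it goes through only because the common $\Norm{z}^{-r}$ singularity of $\frac{1}{d^r}D_d$ and $D_{n,r}$ is Lebesgue-integrable on $\R^n$, i.e.~because the codimension satisfies $r < n$. The one point to watch is notational: the radius parameter $\alpha$ of the lemma must be kept distinct from the error-rate parameter $\alpha_1$ used when invoking Proposition~\ref{prop ultim estimate evxy}.
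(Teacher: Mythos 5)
Your proposal is correct and follows essentially the same route as the paper: bound $\frac{1}{d^r}\norm{D_d(x,z)}$ and $\norm{D_{n,r}(\Norm{z}^2)}$ separately — using Lemma~\ref{lem bounded cond exp Lambda d} together with Proposition~\ref{prop ultim estimate evxy} for the correlated term, Lemmas~\ref{lem estimates odet evx} and~\ref{lem estimates cond exp x} for the uncorrelated one, and Lemma~\ref{lem cond exp bounded} for $D_{n,r}$ — and then integrate the resulting $\left(1-e^{-\Norm{z}^2}\right)^{-r/2}$ singularity in polar coordinates over the shrinking ball, where integrability at the origin comes from $r<n$. This matches the paper's proof step for step, up to the cosmetic difference of writing $1-e^{-t}\geq t/2$ instead of $\frac{t}{1-e^{-t}}\leq C$ on $(0,1]$.
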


\begin{proof}
We have:
\begin{multline*}
\norm{\int_{B_{T_xM}\left(0,d^{-\alpha}\right)} \phi\left(\exp_x\left(\frac{z}{\sqrt{d}}\right)\right) \kappa\left(\frac{z}{\sqrt{d}}\right)^\frac{1}{2} \left(\frac{1}{d^r} D_d(x,z) - D_{n,r}(\Norm{z}^2)\right) \dx z}\\
\leq \Norm{\phi}_\infty \left(\sup_{B_{T_xM}\left(0,b_n\frac{\ln d}{\sqrt{d}}\right)} \norm{\kappa}^\frac{1}{2} \right) \int_{B_{T_xM}\left(0,d^{-\alpha}\right)} \left(\frac{1}{d^r} \norm{D_d(x,z)} + \norm{D_{n,r}(\Norm{z}^2)}\right) \dx z.
\end{multline*}
Since $\kappa(z) = 1 + O\left(\Norm{z}^2\right)$ uniformly in $x$ (see~\eqref{eq estimate kappa}), we have:
\begin{equation*}
\sup_{B_{T_xM}\left(0,b_n \frac{\ln d}{\sqrt{d}}\right)}\norm{\kappa}^\frac{1}{2} = 1 + O\!\left(\frac{(\ln d)^2}{d}\right),
\end{equation*}
and this term is bounded. Thus, we only need to consider the integrals of $\frac{1}{d^r} \norm{D_d(x,z)}$ and $\norm{D_{n,r}(\Norm{z}^2)}$. By Lemma~\ref{lem cond exp bounded}, we have:
\begin{equation}
\label{integrability}
\begin{aligned}
\int_{B\left(0,d^{-\alpha}\right)} \norm{D_{n,r}(\Norm{z}^2)}\dx z \leq& \vol{\S^{n-1}}\int_{\rho=0}^{d^{-\alpha}} \frac{\esp{\odet{X(\rho^2)}\odet{Y(\rho^2}}}{\left(1-e^{-\rho^2}\right)^\frac{r}{2}} \rho^{n-1}\dx \rho\\
&+ (2\pi)^r \left(\frac{\vol{\S^{n-r}}}{\vol{\S^n}}\right)^2 \vol{B_{T_xM}\left(0,d^{-\alpha}\right)}\\
\leq& \frac{n^r}{2}\vol{\S^{n-1}} \int_{t=0}^{d^{-2\alpha}} \frac{t^\frac{n-2}{2}}{\left(1-e^{-t}\right)^\frac{r}{2}}\dx t + O\!\left(d^{-n\alpha}\right).
\end{aligned}
\end{equation}
Then, since there exists $C>0$ such that $\frac{t}{1-e^{-t}}\leq C$ for all $t \in (0,1]$, we get:
\begin{equation}
\label{eq integrability easy}
\int_{t=0}^{d^{-2\alpha}} \frac{t^\frac{n-2}{2}}{\left(1-e^{-t}\right)^\frac{r}{2}}\dx t \leq C \int_{t=0}^{d^{-2\alpha}} t^\frac{n-2-r}{2} \dx t = O\!\left(d^{(r-n)\alpha}\right).
\end{equation}
Hence, $\displaystyle\int_{B\left(0,d^{-\alpha}\right)} \norm{D_{n,r}(\Norm{z}^2)}\dx z = O\!\left(d^{(r-n)\alpha}\right)$. Let us denote $y = \exp_x\left(\frac{z}{\sqrt{d}}\right)$. By the definition of $D_d(x,z)$ (cf.~\eqref{def Dd xz}), we have:
\begin{align*}
\frac{1}{d^r}\norm{D_d(x,z)} &\leq \frac{1}{d^r}\frac{\espcond{\odet{\nabla^d_{x}s_d}\!\odet{\nabla^d_{y}s_d}}{\ev_{x,y}^d(s_d)=0}}{\odet{\ev_{x,y}^d}}\\
&+ \frac{1}{d^r}\frac{\espcond{\odet{\nabla^d_{x}s_d}}{s_d(x)=0}}{\odet{\ev_x^d}}\frac{\espcond{\odet{\nabla^d_{y}s_d}}{s_d(y)=0}}{\odet{\ev_y^d}}.
\end{align*}
Then, let $\beta \in (0,1)$ and $\beta' \in \left(0,\frac{1}{2r+1}\right)$, by Prop.~\ref{prop ultim estimate evxy} and Lem.~\ref{lem bounded cond exp Lambda d} we have:
\begin{align*}
\frac{1}{d^r} \frac{\espcond{\odet{\nabla^d_{x}s_d}\!\odet{\nabla^d_{y}s_d}}{\ev_{x,y}^d(s_d)=0}}{\odet{\ev_{x,y}^d}} &\leq \frac{\frac{(2r)!}{r!}n^r + O\!\left(d^{\beta-1}\right)}{\left(1-e^{-\Norm{z}^2}\right)^\frac{r}{2}} \left(1 + O\!\left(d^{-\beta'}\right)\right)\\
&\leq C \left(\frac{1}{1-e^{-\Norm{z}^2}}\right)^\frac{r}{2},
\end{align*}
for some large $C$. By a polar change of coordinates similar to~\eqref{integrability} and~\eqref{eq integrability easy}, we show that the integral of this term over $B_{T_xM}\left(0,d^{-\alpha}\right)$ is a $O\!\left(d^{(r-n)\alpha}\right)$. Finally, by Lem.~\ref{lem estimates odet evx} and~\ref{lem estimates cond exp x} we have:
\begin{equation*}
\frac{1}{d^r}\frac{\espcond{\odet{\nabla^d_{x}s_d}}{s_d(x)=0}}{\odet{\ev_x^d}}\frac{\espcond{\odet{\nabla^d_{y}s_d}}{s_d(y)=0}}{\odet{\ev_y^d}} = O\!\left(1\right).
\end{equation*}
Hence the integral of this term over $B_{T_xM}\left(0,d^{-\alpha}\right)$ is a $O\!\left(d^{-n \alpha}\right)$.
\end{proof}

Recall that we defined $\alpha_0 = \dfrac{n-r}{2(2r+1)(2n+1)}$ (see Ntn.~\ref{ntn alpha0}). Let us denote $\alpha_1 = \dfrac{\alpha_0}{n-r}$.

\begin{lem}
\label{lem short distance}
Let $\alpha \in (0,\alpha_1)$, let $\phi \in \mathcal{C}^0(M)$ and $x \in M$, then we have:
\begin{multline*}
\norm{\int_{d^{-\alpha}\leq \Norm{z} < b_n \ln d} \phi\left(\exp_x\left(\frac{z}{\sqrt{d}}\right)\right) \kappa\left(\frac{z}{\sqrt{d}}\right)^\frac{1}{2} \left(\frac{1}{d^r} D_d(x,z) - D_{n,r}(\Norm{z}^2)\right) \dx z}\\
= \Norm{\phi}_\infty O\!\left(d^{(r-n)\alpha}\right),
\end{multline*}
where the error term does not depend on $x$ or $\phi$.
\end{lem}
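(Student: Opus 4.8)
The plan is to obtain a pointwise bound on $\frac{1}{d^r}D_d(x,z)-D_{n,r}(\Norm{z}^2)$ that is uniform in $x\in M$ and in $z$ with $d^{-\alpha}\leq\Norm{z}<b_n\ln d$, and then integrate it in polar coordinates on $T_xM\cong\R^n$. First I would set $m=\frac{r(n+1)}{2}+4$ (the exponent of $f(\rho^2)$ in Lemma~\ref{lem near diagonal cond exp}) and record that, since $r\geq1$, $n\geq2$ and $r<n$, the elementary inequalities $(n-r)\alpha_1=\alpha_0<\frac{1}{2r+1}$ and $\alpha_1<\frac{1}{4m+n-r}$ hold. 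As $\alpha<\alpha_1$, this lets me fix auxiliary exponents $\alpha'\in\bigl((n-r)\alpha,\tfrac{1}{2r+1}\bigr)$, to be fed to Proposition~\ref{prop ultim estimate evxy}, and $\beta\in\bigl(0,1-(4m+n-r)\alpha\bigr)$, to be fed to Lemma~\ref{lem near diagonal cond exp}; both intervals are non-empty, and the room for these choices is exactly what the smallness of $\alpha_1$ supplies.

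Next I would assemble the pointwise estimate. Writing $y=\exp_x\bigl(\tfrac{z}{\sqrt{d}}\bigr)$, recall from~\eqref{eq def density} and~\eqref{def Dd xz} that $D_d(x,z)$ is the correlated term $\frac{1}{\odet{\ev_{x,y}^d}}\espcond{\odet{\nabla^d_xs_d}\odet{\nabla^d_ys_d}}{\ev_{x,y}^d(s_d)=0}$ minus the product of the two one-point terms $\frac{1}{\odet{\ev_x^d}}\espcond{\odet{\nabla^d_xs_d}}{s_d(x)=0}$ and its analogue at $y$. Lemmas~\ref{lem estimates odet evx} and~\ref{lem estimates cond exp x} give that each one-point term is $d^{r/2}(2\pi)^{r/2}\tfrac{\vol{\S^{n-r}}}{\vol{\S^n}}\bigl(1+O(d^{-1})\bigr)$ uniformly, so the product divided by $d^r$ is $(2\pi)^r\bigl(\tfrac{\vol{\S^{n-r}}}{\vol{\S^n}}\bigr)^2+O(d^{-1})$. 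For the correlated term I would apply Lemma~\ref{lem near diagonal cond exp} with $\rho=d^{-\alpha}$ (legitimate since $d^{-\alpha}\leq\Norm{z}$) to the numerator and Proposition~\ref{prop ultim estimate evxy} with exponent $\alpha'$ to $\odet{\ev_{x,y}^d}$; on dividing, the powers of $d$ collapse to a single $d^r$, and after dividing by $d^r$, subtracting $D_{n,r}(\Norm{z}^2)$ — which by Definition~\ref{def D} is exactly $\tfrac{\esp{\odet{X(\Norm{z}^2)}\odet{Y(\Norm{z}^2)}}}{(1-e^{-\Norm{z}^2})^{r/2}}-(2\pi)^r\bigl(\tfrac{\vol{\S^{n-r}}}{\vol{\S^n}}\bigr)^2$ — and using $(1+O(d^{-\alpha'}))^{-1/2}=1+O(d^{-\alpha'})$, I expect to reach
\begin{equation*}
\frac{1}{d^r}D_d(x,z)-D_{n,r}(\Norm{z}^2)=\frac{\esp{\odet{X(\Norm{z}^2)}\odet{Y(\Norm{z}^2)}}}{(1-e^{-\Norm{z}^2})^{r/2}}\,O\!\left(d^{-\alpha'}\right)+\frac{f(d^{-2\alpha})^m\,O\!\left(d^{\beta-1}\right)}{(1-e^{-\Norm{z}^2})^{r/2}}+O\!\left(d^{-1}\right),
\end{equation*}
uniformly in $(x,z)$.

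The last step is to multiply by $\phi\bigl(\exp_x(\tfrac{z}{\sqrt{d}})\bigr)\kappa\bigl(\tfrac{z}{\sqrt{d}}\bigr)^{1/2}$, bound $|\phi|$ by $\Norm{\phi}_\infty$ and $\kappa(\tfrac{z}{\sqrt{d}})^{1/2}$ by $1+O\bigl(\tfrac{(\ln d)^2}{d}\bigr)=O(1)$ via~\eqref{eq estimate kappa}, and integrate over the annulus in polar coordinates. In the first term, $\esp{\odet{X(t)}\odet{Y(t)}}(1-e^{-t})^{-r/2}=D_{n,r}(t)+(2\pi)^r\bigl(\tfrac{\vol{\S^{n-r}}}{\vol{\S^n}}\bigr)^2$, whose integral over the annulus is $O(1)$ by Lemma~\ref{lem integrability Dnr} plus $O((\ln d)^n)$ from the constant part, so this term contributes $O(d^{-\alpha'}(\ln d)^n)=O(d^{(r-n)\alpha})$ because $\alpha'>(n-r)\alpha$. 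In the second term, $f(d^{-2\alpha})^m=O(d^{4m\alpha})$ by Remark~\ref{rem norm Lambda z inverse}, and since $(1-e^{-t})^{-r/2}\sim t^{-r/2}$ near $0$ with $r\leq n-1$ the singularity $\Norm{z}^{n-1-r}$ is integrable at the inner boundary, so $\int_{d^{-\alpha}\leq\Norm{z}<b_n\ln d}(1-e^{-\Norm{z}^2})^{-r/2}\,\dx z=O((\ln d)^n)$ and this term contributes $O(d^{4m\alpha+\beta-1}(\ln d)^n)=O(d^{(r-n)\alpha})$ because $\beta<1-(4m+n-r)\alpha$. The last term contributes $O(d^{-1}(\ln d)^n)=O(d^{(r-n)\alpha})$ since $(n-r)\alpha<1$. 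All error constants are independent of $x$ and $\phi$, which is the assertion.

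The step I expect to be the main obstacle is the bookkeeping for the middle error term. The conditional variance $\Lambda_d(x,y)$ degenerates as $z\to0$, with $\Norm{\Lambda_d(x,y)^{-1}}\approx f(\Norm{z}^2)\approx\Norm{z}^{-4}$, so the error in Lemma~\ref{lem near diagonal cond exp} already blows up like $d^{4m\alpha}$ at the inner edge $\Norm{z}=d^{-\alpha}$ of the annulus and must be offset by the genuinely small factor $d^{\beta-1}$ with $\beta$ close to $0$; tracking all the $(\ln d)^n$ and $d^{\alpha}$ losses and checking that the constraints $\alpha'>(n-r)\alpha$ and $\beta<1-(4m+n-r)\alpha$ are simultaneously satisfiable for \emph{every} $\alpha<\alpha_1$ is the delicate point. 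Everything else — the one-point asymptotics, the polar-coordinate integrals, and the bound on $\kappa$ — is routine.
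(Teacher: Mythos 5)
Your proposal is correct and follows essentially the same route as the paper: uniform pointwise control on the annulus via Proposition~\ref{prop ultim estimate evxy} and Lemma~\ref{lem near diagonal cond exp} with $\rho=d^{-\alpha}$, the one-point Lemmas~\ref{lem estimates odet evx} and~\ref{lem estimates cond exp x} for the uncorrelated product, and then integration using Lemma~\ref{lem integrability Dnr} and the $(\ln d)^n$ volume, with your exponent constraints $\alpha'>(n-r)\alpha$ and $\beta<1-(4m+n-r)\alpha$ matching the paper's choice of $\beta,\beta'$ (note $4m=2\bigl(8+r(n+1)\bigr)$). The only cosmetic difference is that you integrate the factor $(1-e^{-\Norm{z}^2})^{-r/2}$ over the annulus, while the paper bounds it pointwise by $O(d^{r\alpha})$ and uses a sup-times-volume estimate; both yield the same conclusion.
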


\begin{proof}
As in the proof of Lemma~\ref{lem very short distance}, since $\kappa^\frac{1}{2}$ is bounded on $B_{T_xM}\left(0,b_n\frac{\ln d}{\sqrt{d}}\right)$ uniformly in $x \in M$, we only need to prove that:
\begin{equation*}
\norm{\frac{1}{d^r} D_d(x,z) - D_{n,r}(\Norm{z}^2)} = O\!\left(d^{(r-n)\alpha - \alpha'}\right)
\end{equation*}
for some $\alpha' >0$. Then, since $\vol{B_{T_xM}(0,b_n\ln d)} = O\!\left((\ln d)^n\right) = O\left(d^{\alpha'}\right)$, we get the result by integrating over $B_{T_xM}(0,b_n\ln d) \setminus B_{T_xM}(0,d^{-\alpha})$.

Since $\alpha \in (0,\alpha_1)$, we have $0< n\alpha < \frac{1}{2r+1}$ and we can choose a positive $\beta \in \left(n\alpha, \frac{1}{2r+1}\right)$. Let $\beta' \in (0,1)$ be such that:
\begin{equation}
\label{eq def beta'}
1 - 2\alpha(8+r(n+1)) - \beta < \beta' < 1 - 2\alpha(8+r(n+1)) - n\alpha <1.
\end{equation}
We already know that $-\beta < -n\alpha$, so we only need to check that $0 <1-\alpha(16+2rn+2r+n)$ to ensure the existence of such a $\beta'$. This goes as follows:
\begin{equation*}
1-\alpha(16+2rn+2r+n) > 1-2\alpha_1(8+rn+n+r) = \frac{3rn+n+r-7}{(2r+1)(2n+1)}>0.
\end{equation*}

By Lemma~\ref{lem near diagonal cond exp}, for every $x \in M$ and $z \in B_{T_xM}(0,b_n\ln d)$ such that $\Norm{z}\geq d^{-\alpha}$ we have:
\begin{multline}
\label{eq near diag step 1}
\left(\frac{\pi^n}{d^{n+1}}\right)^r \espcond{\odet{\nabla^d_{x}s_d}\!\odet{\nabla^d_{y}s_d}}{\ev_{x,y}^d(s_d)=0} =\\
\esp{\odet{X(\Norm{z}^2)}\odet{Y(\Norm{z}^2)}} + O\!\left(f(d^{-2\alpha})^{\frac{r(n+1)}{2}+4}d^{\beta'-1}\right),
\end{multline}
where, as usual, $y$ stands for $\exp_x\left(\frac{z}{\sqrt{d}}\right)$. Recall that we have: $f(t) \sim \frac{12}{t^2}$ as $t \to 0$ (cf.~Rem.~\ref{rem norm Lambda z inverse}). Then, we get:
\begin{equation*}
f(d^{-2\alpha})^{\frac{r(n+1)}{2}+4} = O\!\left(d^{2\alpha \left(8 +r(n+1)\right)}\right).
\end{equation*}
We set $\alpha' = 1 - 2\alpha \left(8 +r(n+1)\right)-\beta' - n \alpha$, so that the error term in~\eqref{eq near diag step 1} is a $O\!\left(d^{-n\alpha -\alpha'}\right)$. By~\eqref{eq def beta'}, we have $\alpha'>0$.

By Prop.~\ref{prop ultim estimate evxy}, applied for $\beta$, and eq.~\eqref{eq near diag step 1} we have:
\begin{multline}
\label{eq near diag step 2}
\frac{1}{d^r}\frac{\espcond{\odet{\nabla^d_{x}s_d}\!\odet{\nabla^d_{y}s_d}}{\ev_{x,y}^d(s_d)=0}}{\odet{\ev_{x,y}^d}}=\\
\frac{\esp{\odet{X(\Norm{z}^2)}\odet{Y(\Norm{z}^2)}} + O\!\left(d^{-n\alpha - \alpha'}\right)}{\left(1-e^{-\Norm{z}^2}\right)^\frac{r}{2}}\left(1 + O\!\left(d^{-\beta}\right)\right),
\end{multline}
for all $x \in M$ and $z \in T_xM$ such that $d^{-\alpha}\leq \Norm{z} < b_n \ln d$. Since
\[\left(1-e^{-d^{-2\alpha}}\right)^{-\frac{r}{2}} = O\!\left(d^{r\alpha}\right),\]
and the numerator of~\eqref{eq near diag step 2} is bounded (cf.~Lemma~\ref{lem cond exp bounded}), the right-hand side of equation~\eqref{eq near diag step 2} equals:
\begin{equation*}
\frac{\esp{\odet{X(\Norm{z}^2)}\odet{Y(\Norm{z}^2)}}}{\left(1-e^{-\Norm{z}^2}\right)^\frac{r}{2}} + O\!\left(d^{(r-n)\alpha - \alpha'}\right) + O\!\left(d^{r \alpha -\beta}\right).
\end{equation*}
Moreover, $n\alpha+\alpha' = 1 - 2\alpha \left(8 +r(n+1)\right)-\beta' < \beta$ (see eq.~\eqref{eq def beta'}), so that we have:
\begin{multline*}
\frac{1}{d^r}\frac{\espcond{\odet{\nabla^d_{x}s_d}\!\odet{\nabla^d_{y}s_d}}{\ev_{x,y}^d(s_d)=0}}{\odet{\ev_{x,y}^d}}=\\
\frac{\esp{\odet{X(\Norm{z}^2)}\odet{Y(\Norm{z}^2)}}}{\left(1-e^{-\Norm{z}^2}\right)^\frac{r}{2}} + O\!\left(d^{(r-n)\alpha - \alpha'}\right).
\end{multline*}

On the other hand, by Lemmas~\ref{lem estimates odet evx} and~\ref{lem estimates cond exp x}, we have:
\begin{multline*}
\frac{1}{d^r}\frac{\espcond{\odet{\nabla^d_{x}s_d}}{s_d(x)=0}}{\odet{\ev_x^d}}\frac{\espcond{\odet{\nabla^d_{y}s_d}}{s_d(y)=0}}{\odet{\ev_y^d}} =\\
(2 \pi)^r \left(\frac{\vol{\S^{n-r}}}{\vol{\S^n}} \right)^2+ O\!\left(d^{-1}\right).
\end{multline*}
Once again, eq.~\eqref{eq def beta'} shows that $n\alpha+\alpha'<\beta<1$. A fortiori $(n-r)\alpha+\alpha' <1$. Thus, for all $x\in M$ and $z \in T_xM$ such that $d^{-\alpha}\leq \Norm{z}< b_n \ln d$, we have:
\begin{equation*}
\norm{\frac{1}{d^r} D_d(x,z) - D_{n,r}(\Norm{z}^2)} = O\!\left(d^{(r-n)\alpha - \alpha'}\right),
\end{equation*}
where $\alpha' >0$ and the error term is independent of $(x,z)$.
\end{proof}

\begin{prop}
\label{prop main estimate near diag}
Let $\alpha \in (0,\alpha_0)$, let $\phi_1$ and $\phi_2 \in \mathcal{C}^0(M)$, we have the following asymptotic as $d \to +\infty$:
\begin{multline*}
\frac{1}{d^r}\int_{x \in M} \left(\int_{z \in B_{T_xM}\left(0,b_n \ln d\right)} \phi_1(x)\phi_2\left(\exp_x\left(\frac{z}{\sqrt{d}}\right)\right) D_d(x,z) \kappa\left(\frac{z}{\sqrt{d}}\right)^\frac{1}{2}\dx z \right) \rmes{M}\\
=\int_{x \in M} \left(\int_{z \in B_{T_xM}\left(0,b_n \ln d\right)}\hspace{-1mm} \phi_1(x)\phi_2\left(\!\exp_x\left(\frac{z}{\sqrt{d}}\right)\!\right) D_{n,r}(\Norm{z}^2) \dx z \right) \rmes{M}\\
+ \Norm{\phi_1}_\infty\Norm{\phi_2}_\infty O\!\left(d^{-\alpha}\right),
\end{multline*}
where the error term does not depend on $\left(\phi_1,\phi_2\right)$.
\end{prop}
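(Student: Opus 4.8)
The plan is to split the $z$-integral at the radius $d^{-\alpha}$, estimate the inner region by Lemma~\ref{lem very short distance} and the outer region by Lemma~\ref{lem short distance}, and then absorb the radius choice into the error term. Concretely, for $\alpha \in (0,\alpha_0)$ I will first note that $\alpha < \alpha_1 = \alpha_0/(n-r)$, since $\alpha_0 < \alpha_1$ because $n-r \geq 1$; this is exactly what is needed so that Lemma~\ref{lem short distance} applies. For each fixed $x \in M$, write
\begin{equation*}
B_{T_xM}(0,b_n\ln d) = B_{T_xM}(0,d^{-\alpha}) \sqcup \left(B_{T_xM}(0,b_n\ln d)\setminus B_{T_xM}(0,d^{-\alpha})\right),
\end{equation*}
which is legitimate for $d$ large since $d^{-\alpha} < b_n\ln d$ eventually. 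On the inner ball, Lemma~\ref{lem very short distance} gives that the integral of $\phi_1(x)\phi_2(\exp_x(z/\sqrt d))\kappa(z/\sqrt d)^{1/2}\bigl(d^{-r}D_d(x,z) - D_{n,r}(\Norm{z}^2)\bigr)$ is $\Norm{\phi_1}_\infty\Norm{\phi_2}_\infty O(d^{(r-n)\alpha})$ uniformly in $x$ (the factor $\phi_1(x)$ is bounded by $\Norm{\phi_1}_\infty$ and pulls out; the $\phi_2$-factor is what Lemma~\ref{lem very short distance} calls $\phi$). On the outer annulus, Lemma~\ref{lem short distance} gives the same bound $\Norm{\phi_1}_\infty\Norm{\phi_2}_\infty O(d^{(r-n)\alpha})$, again uniformly in $x$.

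Adding the two contributions and integrating over $x \in M$ against $\rmes{M}$, which has finite total mass $\vol{M}$, yields
\begin{multline*}
\frac{1}{d^r}\int_{x \in M}\left(\int_{z \in B_{T_xM}(0,b_n\ln d)} \phi_1(x)\phi_2\!\left(\exp_x\!\left(\tfrac{z}{\sqrt d}\right)\right) D_d(x,z)\,\kappa\!\left(\tfrac{z}{\sqrt d}\right)^{\!\frac12}\!\dx z\right)\rmes{M}\\
= \int_{x \in M}\left(\int_{z \in B_{T_xM}(0,b_n\ln d)} \phi_1(x)\phi_2\!\left(\exp_x\!\left(\tfrac{z}{\sqrt d}\right)\right) D_{n,r}(\Norm{z}^2)\,\kappa\!\left(\tfrac{z}{\sqrt d}\right)^{\!\frac12}\!\dx z\right)\rmes{M}\\
+ \Norm{\phi_1}_\infty\Norm{\phi_2}_\infty\, O\!\left(d^{(r-n)\alpha}\right).
\end{multline*}
Since $\alpha \in (0,\alpha_0)$ is arbitrary and $n - r \geq 1$, the exponent $(r-n)\alpha$ ranges over $(-\infty, 0)$ as $\alpha$ ranges over $(0,\alpha_0)$; more precisely, given any $\alpha \in (0,\alpha_0)$ in the statement, apply the above with this same $\alpha$ and note $(r-n)\alpha \leq -\alpha$ (because $n-r\geq 1$), so the error is $\Norm{\phi_1}_\infty\Norm{\phi_2}_\infty O(d^{-\alpha})$.

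It remains to replace $\kappa(z/\sqrt d)^{1/2}$ by $1$ in the main term on the right. By \eqref{eq estimate kappa}, $\kappa(z/\sqrt d)^{1/2} = 1 + O(\Norm{z}^2/d)$ uniformly on $B_{T_xM}(0,b_n\ln d)$, so the discrepancy is controlled by $d^{-1}$ times $\int_{B_{T_xM}(0,b_n\ln d)} \Norm{z}^2 \norm{D_{n,r}(\Norm{z}^2)}\dx z$; a polar change of coordinates together with the bound $\norm{D_{n,r}(t)}t^{(n-2)/2} = O(t^{(n-2-r)/2})$ near $0$ (from Lemma~\ref{lem cond exp bounded}) and $O(t^{n/2}e^{-t/2})$ at infinity (Lemma~\ref{lem integrability D infinity}) shows this integral is $O((\ln d)^{n+2})$, hence the whole correction is $\Norm{\phi_1}_\infty\Norm{\phi_2}_\infty O((\ln d)^{n+2}/d) = \Norm{\phi_1}_\infty\Norm{\phi_2}_\infty O(d^{-\alpha})$ for $d$ large, since $\alpha < 1$. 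All error terms above are uniform in $x$ and independent of $(\phi_1,\phi_2)$, so they combine into a single $O(d^{-\alpha})$ independent of the test functions, as claimed. The main obstacle here is not any single estimate but the bookkeeping: one must verify that the radius $d^{-\alpha}$ makes \emph{both} Lemma~\ref{lem very short distance} and Lemma~\ref{lem short distance} yield an error of the same order $d^{(r-n)\alpha}$ and that this is compatible with the constraint $\alpha < \alpha_0$ (equivalently $\alpha < \alpha_1$, which is the binding hypothesis of Lemma~\ref{lem short distance}); the rest is routine.
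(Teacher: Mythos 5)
Your overall strategy---splitting the $z$-integral at a small radius, applying Lemma~\ref{lem very short distance} on the inner ball and Lemma~\ref{lem short distance} on the outer annulus, then replacing $\kappa^{1/2}$ by $1$ and integrating over $M$---is the same as the paper's, but there is a genuine gap in how you choose the radius. You justify applying Lemma~\ref{lem short distance} with the given $\alpha$ by claiming ``$\alpha<\alpha_1$ since $\alpha_0<\alpha_1$ because $n-r\geq 1$''. This inequality is backwards: $\alpha_1=\alpha_0/(n-r)\leq \alpha_0$, with equality only when $n-r=1$. So for $n-r\geq 2$ and $\alpha\in[\alpha_1,\alpha_0)$ the hypothesis of Lemma~\ref{lem short distance} fails, and your invocation of it at radius $d^{-\alpha}$ is not justified; likewise ``$\alpha<\alpha_0$, equivalently $\alpha<\alpha_1$'' at the end of your argument is not an equivalence. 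This is not a cosmetic slip: the cutting radius is exactly the piece of bookkeeping that produces the final exponent, and it is the reason $\alpha_0$ carries the factor $n-r$ in its definition.

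The repair is the paper's rescaling step, which your proposal is missing: given $\alpha\in(0,\alpha_0)$, set $\alpha'=\alpha/(n-r)$, which does lie in $(0,\alpha_1)$, and cut the ball at radius $d^{-\alpha'}$. Lemmas~\ref{lem very short distance} and~\ref{lem short distance} applied with $\alpha'$ then give, uniformly in $x$, an error $\Norm{\phi_1}_\infty\Norm{\phi_2}_\infty O\!\left(d^{(r-n)\alpha'}\right)=\Norm{\phi_1}_\infty\Norm{\phi_2}_\infty O\!\left(d^{-\alpha}\right)$, which is exactly the claimed rate. Your remaining steps are sound and match the paper: the replacement of $\kappa\!\left(z/\sqrt{d}\right)^{1/2}$ by $1$ costs $O\!\left((\ln d)^{2}/d\right)$ (your cruder bound $O\!\left((\ln d)^{n+2}/d\right)$ also suffices since $\alpha<1$; in fact $\int_{B(0,b_n\ln d)}\Norm{z}^2\norm{D_{n,r}(\Norm{z}^2)}\dx z$ is bounded, by the same integrability argument as Lemma~\ref{lem integrability Dnr}), and integrating the uniform-in-$x$ errors against $\rmes{M}$ yields an error independent of $(\phi_1,\phi_2)$.
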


\begin{proof}
Let $\alpha \in (0,\alpha_0)$, we set $\alpha' = \frac{\alpha}{n-r} \in (0,\alpha_1)$. Let $\phi_1,\phi_2 \in \mathcal{C}^0(M)$ and let $x \in M$, we apply Lemmas~\ref{lem very short distance} and~\ref{lem short distance} for $\alpha'$ and $\phi_2$. Then, we have:
\begin{multline}
\label{mult1}
\frac{1}{d^r}\int_{z \in B_{T_xM}\left(0,b_n \ln d\right)} \phi_1(x)\phi_2\left(\exp_x\left(\frac{z}{\sqrt{d}}\right)\right) D_d(x,z) \kappa\left(\frac{z}{\sqrt{d}}\right)^\frac{1}{2}\dx z\\
=\int_{z \in B_{T_xM}\left(0,b_n \ln d\right)} \phi_1(x)\phi_2\left(\exp_x\left(\frac{z}{\sqrt{d}}\right)\right) D_{n,r}(\Norm{z}^2) \kappa\left(\frac{z}{\sqrt{d}}\right)^\frac{1}{2}\dx z\\
+ \norm{\phi_1(x)}\Norm{\phi_2}_{\infty} O\!\left(d^{(r-n)\alpha'}\right),
\end{multline}
and the error term can be rewritten as $O\!\left(d^{-\alpha}\right)$.

Since $\kappa(z)^\frac{1}{2} = 1 + O\left(\Norm{z}^2\right)$ (cf.~\eqref{eq estimate kappa}), there exists $C>0$ independent of $x$ such that for all $z \in B_{T_xM}(0,R)$, $\norm{\kappa(z)^\frac{1}{2}-1} \leq C\Norm{z}^2$. Then, we get:
\begin{multline*}
\norm{\int_{z \in B_{T_xM}\left(0,b_n \ln d\right)} \phi_1(x)\phi_2\left(\exp_x\left(\frac{z}{\sqrt{d}}\right)\right) D_{n,r}(\Norm{z}^2) \left(\kappa\left(\frac{z}{\sqrt{d}}\right)^\frac{1}{2}-1\right)\dx z}\\
\begin{aligned}
&\leq \norm{\phi_1(x)}\Norm{\phi_2}_{\infty} C\frac{(b_n \ln d)^2}{d} \int_{z \in B\left(0,b_n \ln d\right)} \norm{D_{n,r}(\Norm{z}^2)} \dx z\\
&\leq \norm{\phi_1(x)}\Norm{\phi_2}_{\infty} \frac{C}{2}\frac{(b_n \ln d)^2}{d} \vol{\S^{n-1}} \int_{t=0}^{(b_n \ln d)^2} \norm{D_{n,r}(t)}t^\frac{n-2}{2} \dx t.
\end{aligned}
\end{multline*}
Since $\norm{D_{n,r}(t)}t^\frac{n-2}{2}$ is integrable on $(0,+\infty)$ (Lem.~\ref{lem integrability Dnr}) and $\alpha <1$, we have:
\begin{multline}
\label{mult2}
\int_{z \in B_{T_xM}\left(0,b_n \ln d\right)} \phi_1(x)\phi_2\left(\exp_x\left(\frac{z}{\sqrt{d}}\right)\right) D_{n,r}(\Norm{z}^2) \kappa\left(\frac{z}{\sqrt{d}}\right)^\frac{1}{2}\dx z=\\
\int_{z \in B_{T_xM}\left(0,b_n \ln d\right)} \phi_1(x)\phi_2\left(\exp_x\left(\frac{z}{\sqrt{d}}\right)\right) D_{n,r}(\Norm{z}^2) \dx z + \norm{\phi_1(x)}\Norm{\phi_2}_{\infty} O\!\left(d^{-\alpha}\right),
\end{multline}
where the error term in independent of $x$. By~\eqref{mult1} and~\eqref{mult2}, we have:
\begin{multline*}
\frac{1}{d^r}\int_{z \in B_{T_xM}\left(0,b_n \ln d\right)} \phi_1(x)\phi_2\left(\exp_x\left(\frac{z}{\sqrt{d}}\right)\right) D_d(x,z) \kappa\left(\frac{z}{\sqrt{d}}\right)^\frac{1}{2}\dx z=\\
\int_{z \in B_{T_xM}\left(0,b_n \ln d\right)} \phi_1(x)\phi_2\left(\exp_x\left(\frac{z}{\sqrt{d}}\right)\right) D_{n,r}(\Norm{z}^2) \dx z + \norm{\phi_1(x)}\Norm{\phi_2}_{\infty} O\!\left(d^{-\alpha}\right),
\end{multline*}
uniformly in $x \in M$. Integrating this relation over $M$ yields the result.
\end{proof}

Now, let $\alpha \in (0,\alpha_0)$, let $\phi_1$ and $\phi_2 \in \mathcal{C}^0(M)$, then by eq.~\eqref{eq variance2}, Prop.~\ref{prop off diagonal is small}, eq.~\eqref{eq double int rescaled} and Prop.~\ref{prop main estimate near diag} we have:
\begin{multline}
\label{variance 3}
\var{\rmes{d}}\left(\phi_1,\phi_2\right)=\\
\frac{d^{r-\frac{n}{2}}}{(2\pi)^r} \int_{x \in M} \left(\int_{z \in B_{T_xM}\left(0,b_n \ln d\right)} \phi_1(x)\phi_2\left(\exp_x\left(\frac{z}{\sqrt{d}}\right)\right) D_{n,r}(\Norm{z}^2) \dx z \right) \rmes{M}\\
+ \Norm{\phi_1}_{\infty}\Norm{\phi_2}_{\infty}  O\!\left(d^{r-\frac{n}{2}-\alpha}\right),
\end{multline}
where the error term is independent of $\left(\phi_1,\phi_2\right)$. Then, we have:
\begin{multline*}
\norm{\int_{z \in B_{T_xM}\left(0,b_n \ln d\right)} \left(\phi_1(x)\phi_2\left(\exp_x\left(\frac{z}{\sqrt{d}}\right)\right)-\phi_1(x)\phi_2(x)\right) D_{n,r}(\Norm{z}^2) \dx z} \\
\leq \Norm{\phi_1}_\infty \varpi_{\phi_2}\left(\frac{b_n \ln d}{\sqrt{d}}\right) \int_{z \in B\left(0,b_n \ln d\right)} \norm{D_{n,r}(\Norm{z}^2)} \dx z,
\end{multline*}
where $\varpi_{\phi_2}$ is the continuity modulus of $\phi_2$ (see Def.~\ref{def continuity modulus}). Besides, by a polar change of coordinates, we have:
\begin{equation}
\label{polar coordinates}
\int_{z \in B\left(0,b_n \ln d\right)} \norm{D_{n,r}(\Norm{z}^2)} \dx z = \frac{1}{2}\vol{\S^{n-1}} \int_{t=0}^{(b_n \ln d)^2} \norm{D_{n,r}(t)}t^\frac{n-2}{2} \dx t,
\end{equation}
and this quantity is bounded, by Lemma~\ref{lem integrability Dnr}. Then,
\begin{multline}
\label{variance 3bis}
\int_{z \in B_{T_xM}\left(0,b_n \ln d\right)} \phi_1(x)\phi_2\left(\exp_x\left(\frac{z}{\sqrt{d}}\right)\right) D_{n,r}(\Norm{z}^2) \dx z =\\
\phi_1(x)\phi_2(x) \int_{z \in B_{T_xM}\left(0,b_n \ln d\right)} D_{n,r}(\Norm{z}^2) \dx z + \Norm{\phi_1}_{\infty} \varpi_{\phi_2}\left(\frac{b_n \ln d}{\sqrt{d}}\right) O(1),
\end{multline}
where the error term is independent of $\left(\phi_1,\phi_2\right)$.

Let $\beta \in \left(0,\frac{1}{2}\right)$, then there exists $C_\beta >0$ such that for all $d \in \N^*$, $b_n\frac{\ln d}{\sqrt{d}} \leq C_\beta d^{-\beta}$. Since $\varpi_{\phi_2}$ is a non-decreasing function, we have $\varpi_{\phi_2}\left(b_n\frac{\ln d}{\sqrt{d}}\right) \leq \varpi_{\phi_2}\left(C_\beta d^{-\beta}\right)$. By~\eqref{variance 3}, \eqref{polar coordinates} and~\eqref{variance 3bis}, we obtain:
\begin{multline}
\label{variance 4}
\var{\rmes{d}}\left(\phi_1,\phi_2\right) =\\
d^{r-\frac{n}{2}} \frac{\vol{\S^{n-1}}}{(2\pi)^r}\left(\int_{M} \phi_1\phi_2\rmes{M}\right) \left(\frac{1}{2}\int_{t=0}^{(b_n \ln d)^2} D_{n,r}(t)t^\frac{n-2}{2} \dx t \right)\\
+ \Norm{\phi_1}_{\infty} \Norm{\phi_2}_{\infty} O\!\left(d^{r-\frac{n}{2}-\alpha}\right) + \Norm{\phi_1}_{\infty}\varpi_{\phi_2}\left(C_\beta d^{-\beta}\right) O\!\left(d^{r-\frac{n}{2}}\right).
\end{multline}

By Lemma~\ref{lem integrability D infinity}, we have: $\norm{D_{n,r}(t)} = O\!\left(te^{-\frac{t}{2}}\right)$. Then there exists some $C>0$ such that, for all $t$ large enough,
\begin{equation*}
\norm{D_{n,r}(t)} t^\frac{n-2}{2} \leq C e^{-\frac{t}{4}}.
\end{equation*}
Then, for $d$ large enough we have:
\begin{equation}
\label{variance 6}
\norm{\int_{t=(b_n \ln d)^2}^{+\infty} D_{n,r}(t)t^\frac{n-2}{2} \dx t} \leq C \int_{t=(b_n \ln d)^2}^{+\infty} e^{-\frac{t}{4}} \dx t \leq 4C \exp \left(-\frac{1}{4}b_n^2 (\ln d)^2\right) = O\!\left(d^{-1}\right).
\end{equation}
By equations~\eqref{variance 4} and~\eqref{variance 6}, we get:
\begin{multline}
\var{\rmes{d}}\left(\phi_1,\phi_2\right) = d^{r-\frac{n}{2}} \left(\int_M \phi_1\phi_2\rmes{M}\right) \frac{\vol{\S^{n-1}}}{(2\pi)^r}\left(\frac{1}{2}\int_{0}^{+\infty} \norm{D_{n,r}(t)}t^\frac{n-2}{2} \dx t \right)\\
+ \Norm{\phi_1}_{\infty}\Norm{\phi_2}_{\infty} O\!\left(d^{r-\frac{n}{2}-\alpha}\right) + \Norm{\phi_1}_{\infty}\varpi_{\phi_2}\left(C_\beta d^{-\beta}\right) O\!\left(d^{r-\frac{n}{2}}\right).
\end{multline}
Finally, recall that we defined ${I}_{n,r}$ by eq.~\eqref{eq def Inr} and $D_{n,r}$ by Def.~\ref{def D}. Hence, we have:
\begin{align*}
\mathcal{I}_{n,r} = \frac{1}{2}\int_{0}^{+\infty} \norm{D_{n,r}(t)}t^\frac{n-2}{2} \dx t,
\end{align*}
and this quantity is finite by Lemma~\ref{lem integrability Dnr}. This concludes the proof of Theorem~\ref{thm asymptotics variance}.

\section{Proofs of the corollaries}
\label{sec proofs of the corollaries}

\subsection{Proof of Corollary~\ref{cor concentration}}
\label{subsec proof of cor concentration}

Corollary~\ref{cor concentration} is a direct consequence of Thm.~\ref{thm asymptotics variance} and the Markov inequality. Let $\phi \in \mathcal{C}^0(M)$, then, by~\eqref{eq variance of linear statistics} we have:
\begin{equation*}
\var{\prsc{\rmes{d}}{\phi}} = O\!\left(d^{r-\frac{n}{2}}\right),
\end{equation*}
where the error term depends on $\phi$. Now, let $\alpha > \frac{r}{2}-\frac{n}{4}$ and $\epsilon >0$. We have:
\begin{align*}
\P\left( \norm{\prsc{\rmes{d}}{\phi}-\rule{0pt}{4mm}\esp{\prsc{\rmes{d}}{\phi}}} > d^{\alpha} \epsilon \right)  &= \P\left( d^{-\alpha}\norm{\prsc{\rmes{d}}{\phi}-\esp{\prsc{\rmes{d}}{\phi}}} > \epsilon \right)\\
&\leq \frac{1}{\epsilon^2} \var{d^{-\alpha}\prsc{\rmes{d}}{\phi}}\\
&\leq \frac{1}{\epsilon^2} d^{-2\alpha}\var{\prsc{\rmes{d}}{\phi}}.
\end{align*}

\subsection{Proof of Corollary~\ref{cor connected components}}
\label{subsec proof of cor connected components}

We obtain Cor.~\ref{cor connected components} as a consequence of Cor.~\ref{cor concentration}. Let $U\subset M$ be an open subset. We denote by $\phi_U \in \mathcal{C}^0(M)$ the function such that $\phi_U(x)$ is the geodesic distance from $x$ to the complement of $U$ in $(M,g)$. Then we have:
\begin{equation*}
U = \left\{ x \in M \mvert \phi_U(x) >0 \right\},
\end{equation*}
and $\phi_U$ is non-negative. Hence, $Z_d \cap U = \emptyset$ if and only if $\prsc{\rmes{d}}{\phi_U}=0$. Let $\epsilon >0$ such that:
\begin{equation*}
\epsilon < \frac{1}{2}\left(\int_M \phi_U \rmes{M}\right) \frac{\vol{\S^{n-r}}}{\vol{\S^n}}.
\end{equation*}
Then, by Thm.~\ref{thm reminder expectation test function}, for $d$ large enough we have:
\begin{equation*}
d^{-\frac{r}{2}}\esp{\prsc{\rmes{d}}{\phi_U}} - \epsilon \geq \frac{1}{2}\left(\int_M \phi_U \rmes{M}\right) \frac{\vol{\S^{n-r}}}{\vol{\S^n}} >0.
\end{equation*}
Thus, for $d$ large enough, we have:
\begin{align*}
\P\left( Z_d \cap U = \emptyset \right) &= \P\left( \prsc{\rmes{d}}{\phi_U} = 0 \right)\\
&\leq \P\left( \prsc{\rmes{d}}{\phi_U} < \esp{\prsc{\rmes{d}}{\phi_U}} - d^{\frac{r}{2}}\epsilon\right)\\
&\leq \P\left( \norm{\prsc{\rmes{d}}{\phi_U}-\rule{0pt}{4mm}\esp{\prsc{\rmes{d}}{\phi_U}}} > d^{\frac{r}{2}} \epsilon \right).
\end{align*}
And by Cor.~\ref{cor concentration}, this is a $O\!\left(d^{-\frac{n}{2}}\right)$.

\subsection{Proof of Corollary~\ref{cor as convergence}}
\label{subsec proof of cor as convergence}

In this section we assume that $n \geq 3$. We consider a random sequence $(s_d)_{d \in \N}$ of sections of increasing degree, distributed according to the probability measure $\dx \nu = \bigotimes_{d \in \N} \dx \nu_d$ on $\prod_{d \in \N} \R \H$. Strictly speaking, $\rmes{s_d}$ is not defined for small $d$. However, $\dx \nu$-almost surely, $\rmes{s_d}$ is well-defined for all $d \geq d_1$, so the statement of Cor.~\ref{cor as convergence} makes sense.

Our proof follows the lines of the proof of Shiffman and Zelditch \cite[sect.~3.3]{SZ1999} in the complex case. First, we prove that for every fixed $\phi \in \mathcal{C}^0(M)$ we have:
\begin{equation}
\label{eq as convergence single function}
d^{-\frac{r}{2}}\prsc{\rmes{s_d}}{\phi} \xrightarrow[d \to +\infty]{} \frac{\vol{\S^{n-r}}}{\vol{\S^n}} \left(\int_M \phi \rmes{M}\right).
\end{equation}
Then we use a separability argument to get the result. In the complex algebraic setting of~\cite{SZ1999}, the scaled volume of $s_d^{-1}(0) \subset \X$ is a deterministic constant, independent of $d$. In our real algebraic setting this is not the case.

Let $\phi \in \mathcal{C}^0(M)$, then we have:
\begin{equation*}
\esp{\sum_{d \in \N} \left(d^{-\frac{r}{2}}\left(\prsc{\rmes{s_d}}{\phi} - \rule{0pt}{4mm}\esp{\prsc{\rmes{d}}{\phi}}\right)\right)^2} = \sum_{d \in \N} d^{-r} \var{\prsc{\rmes{d}}{\phi}} < +\infty,
\end{equation*}
since $d^{-r} \var{\prsc{\rmes{d}}{\phi}} = O\!\left(d^{- \frac{n}{2}}\right)$ by Cor.~\ref{cor variance of linear statistics}. Hence, $\dx \nu$-almost surely, we have:
\begin{equation*}
\sum_{d \in \N} \left(d^{-\frac{r}{2}}\left(\prsc{\rmes{s_d}}{\phi} - \rule{0pt}{4mm}\esp{\prsc{\rmes{d}}{\phi}}\right)\right)^2 < +\infty,
\end{equation*}
and
\begin{equation*}
\left(d^{-\frac{r}{2}}\prsc{\rmes{s_d}}{\phi} - \rule{0pt}{4mm}d^{-\frac{r}{2}}\esp{\prsc{\rmes{d}}{\phi}}\right) \xrightarrow[d \to +\infty]{} 0.
\end{equation*}
Then, by Thm.~\ref{thm reminder expectation test function}, $\prsc{\rmes{s_d}}{\phi}$ satisfies~\eqref{eq as convergence single function} $\dx \nu$-almost surely.

Let $\left(\phi_k\right)_{k \in \N}$ be a dense sequence in the separable space $\left(\mathcal{C}^0(M),\Norm{\cdot}_\infty\right)$. Without loss of generality, we can assume that $\phi_0 = \mathbf{1}$, the unit constant function on $M$. Then, $\dx \nu$-almost surely, we have:
\begin{equation}
\label{eq as convergence countable}
\forall k \in \N, \qquad d^{-\frac{r}{2}}\prsc{\rmes{s_d}}{\phi_k} \xrightarrow[d \to +\infty]{} \frac{\vol{\S^{n-r}}}{\vol{\S^n}} \left(\int_M \phi_k \rmes{M}\right).
\end{equation}

Let $\underline{s}=\left(s_d\right)_{d \in \N} \in \prod_{d \in \N} \R \H$ be a fixed sequence such that \eqref{eq as convergence countable} holds. For every $\phi \in \mathcal{C}^0(M)$ and $k \in \N$ we have:
\begin{multline*}
\norm{d^{-\frac{r}{2}}\prsc{\rmes{s_d}}{\phi} - \frac{\vol{\S^{n-r}}}{\vol{\S^n}} \left(\int_M \phi \rmes{M}\right)}\\
\begin{aligned}
\leq& \norm{d^{-\frac{r}{2}}\prsc{\rmes{s_d}}{\phi}-d^{-\frac{r}{2}}\prsc{\rmes{s_d}}{\phi_k}}\\
&+ \frac{\vol{\S^{n-r}}}{\vol{\S^n}}\norm{\int_M \phi_k \rmes{M}-\int_M \phi \rmes{M}}\\
&+ \norm{d^{-\frac{r}{2}}\prsc{\rmes{s_d}}{\phi_k} - \frac{\vol{\S^{n-r}}}{\vol{\S^n}} \left(\int_M \phi_k \rmes{M}\right)}\\
\leq& \Norm{\phi-\phi_k}_\infty \left(d^{-\frac{r}{2}}\prsc{\rmes{s_d}}{\mathbf{1}}+ \frac{\vol{\S^{n-r}}}{\vol{\S^n}}\vol{M}\right)\\
&+\norm{d^{-\frac{r}{2}}\prsc{\rmes{s_d}}{\phi_k} - \frac{\vol{\S^{n-r}}}{\vol{\S^n}} \left(\int_M \phi_k \rmes{M}\right)}.
\end{aligned}
\end{multline*}

Recall that $\phi_0 = \mathbf{1}$. Then, by~\eqref{eq as convergence countable}, the sequence $(d^{-\frac{r}{2}}\prsc{\rmes{s_d}}{\mathbf{1}})_{d \in \N}$ converges. Hence it is bounded by some positive constant $K_{\underline{s}}$. Let $\phi \in \mathcal{C}^0(M)$ and let $\epsilon >0$. Let $k \in \N$ be such that:
\begin{equation*}
\Norm{\phi - \phi_k}_\infty \leq \epsilon \left(K_{\underline{s}} +\frac{\vol{\S^{n-r}}}{\vol{\S^n}}\vol{M}\right)^{-1}.
\end{equation*}
Then, for every $d$ large enough we have:
\begin{equation*}
\norm{d^{-\frac{r}{2}}\prsc{\rmes{s_d}}{\phi_k} - \frac{\vol{\S^{n-r}}}{\vol{\S^n}} \left(\int_M \phi_k \rmes{M}\right)} \leq \epsilon,
\end{equation*}
and
\begin{equation*}
\norm{d^{-\frac{r}{2}}\prsc{\rmes{s_d}}{\phi} - \frac{\vol{\S^{n-r}}}{\vol{\S^n}} \left(\int_M \phi \rmes{M}\right)} \leq 2\epsilon.
\end{equation*}
Thus, $\phi$ satisfies~\eqref{eq as convergence single function}.

Finally, whenever \eqref{eq as convergence countable} is satisfied we have: for every $\phi \in \mathcal{C}^0(M)$, $\phi$ satisfies~\eqref{eq as convergence single function}. Since the condition~\eqref{eq as convergence countable} is satisfied $\dx \nu$-almost surely, this proves Cor.~\ref{cor as convergence}.

% Bibliography
\bibliographystyle{amsplain}
\bibliography{Varianceofthevolumeofrandomrealalgebraicsubmanifolds}

\end{document}